\newtheorem{theo}{Theorem}[section]
\newtheorem{lemm}[theo]{Lemma}
\newtheorem{cor}[theo]{Corollary}
\newtheorem{prop}[theo]{Proposition}
\numberwithin{equation}{section}
\theoremstyle{definition}
\newtheorem{defi}[theo]{Definition}
\newtheorem{exam}[theo]{Example}
\newtheorem{rema}[theo]{Remark}
\newcommand{\BN}{\mathbb{N}}
\newcommand{\BR}{\mathbb{R}}
\newcommand{\BS}{\mathbb{S}}
\newcommand{\bA}{\boldsymbol{A}}
\newcommand{\bb}{\boldsymbol{b}}
\newcommand{\bff}{\boldsymbol{f}}
\newcommand{\bh}{\boldsymbol{h}}
\newcommand{\bl}{\boldsymbol{l}}
\newcommand{\bn}{\boldsymbol{n}}
\newcommand{\bu}{\boldsymbol{u}}
\newcommand{\bU}{\boldsymbol{U}}
\newcommand{\bv}{\boldsymbol{v}}
\newcommand{\bw}{\boldsymbol{w}}
\newcommand{\bz}{\boldsymbol{z}}
\newcommand{\bfeta}{\boldsymbol{\eta}}
\newcommand{\bfpsi}{\boldsymbol{\psi}}
\newcommand{\bfvarphi}{\boldsymbol{\varphi}}
\newcommand{\bfxi}{\boldsymbol{\xi}}
\newcommand{\CF}{\mathcal{F}}
\newcommand{\Fa}{\mathfrak{a}}
\newcommand{\Fb}{\mathfrak{b}}
\newcommand{\SH}{\mathscr{H}}
\newcommand{\SL}{\mathscr{L}}
\newcommand{\SR}{\mathscr{R}}
\newcommand{\const}{{\mathrm{const}}}
\newcommand{\curl}{\mathrm{curl}\,}
\newcommand{\diam}{\mathop{\rm diam}}
\newcommand{\dist}{\mathrm{dist}\,}
\newcommand{\dv}{\mathrm{div}\,}
\newcommand{\supp}{\mathrm{supp}\,}
\newcommand{\esssup}{\mathop{\rm ess\,sup}}
\newcommand{\dx}{\,\mathrm{d}x}
\newcommand{\be}{\begin{equation}}
\newcommand{\ee}{\end{equation}}
\newcommand{\ba}{\begin{aligned}}
\newcommand{\ea}{\end{aligned}}
\newcommand{\abs}[1]{\lvert#1\rvert}
\newcommand{\norm}[1]{\lVert#1\rVert}
\let\oldmathchoice\mathchoice
\let\newmathchoice\mathchoice
\def\dashint{\let\mathchoice\oldmathchoice\,\ThisStyle{\ensurestackMath{%
            \stackinset{c}{.2\LMpt}{c}{.5\LMpt}{\SavedStyle-}{%
            \SavedStyle\phantom{\int}}}%
        \setbox0=\hbox{$\SavedStyle\int\,$}\kern-\wd0}\int%
        \let\mathchoice\newmathchoice}
\begin{document}
\title[]{Existence theorems for the steady-state Navier-Stokes equations with nonhomogeneous slip boundary conditions in two-dimensional multiply-connected bounded domains}
\author[]{Giovanni P. Galdi}
\address{Giovanni P. Galdi, Department of Mechanical Engineering and Materials Science, University of Pittsburgh, Pittsburgh, PA 15261, USA}
\email{galdi@pitt.edu}

\author[]{Tatsuki Yamamoto}
\address{Tatsuki Yamamoto, Department of Mathematics, Faculty of Science and Engineering, Waseda University, Tokyo 169--8555, Japan}
\email{tatsu-yamamoto@akane.waseda.jp}
\subjclass[2020]{35Q30, 76D03, 76D05}
\keywords{Stationary Stokes and Navier-Stokes equations, Slip boundary conditions, Nonhomogeneous boundary value problem, Multiply connected domains.}
\begin{abstract}
We study the nonhomogeneous boundary value problem for the steady-state Navier-Stokes equations under the slip boundary conditions in two-dimensional multiply-connected bounded domains. Employing the approach of Korobkov-Pileckas-Russo (Ann. Math. 181(2), 769-807, 2015), we prove that this problem has a solution if the friction coefficient is sufficiently large compared with the kinematic viscosity constant and the curvature of the boundary. No additional assumption (other than the necessary requirement of zero total flux through the boundary) is imposed on the boundary data. We also show that such an assumption on the friction coefficient is redundant for the existence of a solution in the case when the fluxes across each connected component of the boundary are sufficiently small, or the domain and the given data satisfy certain symmetry conditions. The crucial ingredient of our proof is the fact that the total head pressure corresponding to the solution to the steady Euler equations takes a constant value on each connected component of the boundary.
\end{abstract}

\maketitle
\date{}

\section{Introduction}\label{Intro}
Let $\Omega$ be a bounded domain of $\BR^2$ with $C^{\infty}$-boundary $\partial\Omega=\cup_{j=0}^N \Gamma_j$ consisting of $N+1$ disjoint components $\Gamma_j$, i.e., 
\be\label{domain}
\Omega=\Omega_0\setminus\bigl(\bigcup\limits_{j=1}^N\overline{\Omega}_j\bigr),
\ee
where $\Omega_j$ ($j=0,1,\dots,N$) are simply-connected bounded domains such that (i) $\overline{\Omega}_j\subseteq\Omega_0$ for all $j=1,2,\dots,N$; (ii) $\overline{\Omega}_i\cap\overline{\Omega}_j=\emptyset$ if $i,j\in\{1,2,\dots,N\}$ and $i\neq j$; (iii) $\Gamma_j=\partial\Omega_j$ ($j=0,1,\dots,N$) are smooth closed curves. Consider in $\Omega$ the steady-state Navier-Stokes equations
\be \left.\ba\label{NS}
\smallskip
-\nu\Delta\bu+\bu\cdot\nabla\bu+\nabla p=\bff\\
\dv\bu=0\ea\right\}\ \ \mbox{in $\Omega$}\,
\ee
under the nonhomogeneous slip boundary conditions
\be \left.\ba\label{slip}
\smallskip
\bu \cdot\bn= a_{\ast}\\
[T(\bu,p)\bn]_{\tau}+\beta \bu_{\tau}=\bb_{\ast}\ea\right\}\ \ \mbox{on $\partial\Omega$.}\,
\ee
Here $\bu=\bu(x)=(u_1(x),u_2(x))$ and $p=p(x)$ denote the unknown velocity field and the unknown pressure at the point $x=(x_1,x_2)\in\Omega$, $\nu>0$ is the given kinematic viscosity constant, $\bff=\bff(x)=(f_1(x), f_2(x))$ is the given external force field defined on $\Omega$, while $\bn$ is the unit outer normal to $\partial\Omega$, and $T(\bu,p)$ is the Cauchy stress tensor given by
\be
T(\bu,p)=-pI+\nu S(\bu),\quad S(\bu)= \nabla \bu+(\nabla \bu)^\top,
\ee
where $I$ is the $2\times2$ identity matrix. Also, $\beta=\beta(x)\ge0$ is the given nonnegative scalar-valued function defined on $\partial\Omega$, which is called the friction coefficient, and the subscript $\tau$ denotes the tangential component of a vector, i.e., $\bu_{\tau}=\bu-\bn(\bn \cdot \bu)$. Finally, $a_{\ast}=a_{\ast}(x)$ and $\bb_{\ast}=\bb_{\ast}(x)=(b_{\ast1}(x),b_{\ast2}(x))$ are the given boundary data. In view of the incompressibility condition $\dv\bu=0$ in $\Omega$, the boundary datum $a_{\ast}$ is required to satisfy the following compatibility condition:
\be\label{flux}
\int_{\partial\Omega} a_{\ast} \, ds=\sum_{j=0}^{N} \int_{\Gamma_{j}} a_{\ast} \, ds=0.
\ee
Moreover, another boundary datum $\bb_{\ast}$ must fulfill the condition $\bb_{\ast}\cdot\bn=0$ on $\partial\Omega$. 

The boundary value problem for the Navier-Stokes equations \eqref{NS} in a domain $\Omega\subseteq\BR^n$, $n=2,3$, with the boundary $\partial\Omega$ has been intensively studied under the no-slip boundary condition
\be\label{no_slip}
\bu=\bu_{\ast} \quad\text{on }\partial \Omega,
\ee
which requires that the particles of the liquid adhere to the boundary $\partial\Omega$ in the case when the region of flow $\Omega$ is surrounded by solid (undeformable) walls. However, there are situations in which the no-slip boundary condition \eqref{no_slip} may not be valid. In such situations, a standard alternative is the slip boundary conditions \eqref{slip}, which was proposed for the first time by Navier \cite{Na1823}. The slip boundary conditions \eqref{slip} are employed as an appropriate model for free surface problems and for flows past chemically reacting walls; see, e.g., \cite{MR884296}. They are also considered in the presence of rough boundaries \cites{MR2585993,MR1813101}. For a further description of the slip boundary conditions \eqref{slip}, we refer the reader to \cite{MR2761078}*{Sections 2 and 3}, \cite{MR2808162}*{p.\,5 and p.\,22}, \cite{MR108116}*{Section 64} and the references cited therein.

The main objective of this paper is to prove the existence of a weak solution $\bu\in W^{1,2}(\Omega)$ to the Navier-Stokes problem \eqref{NS}-\eqref{slip} under no restrictions on $a_{\ast}$ other than \eqref{flux} in two-dimensional multiply-connected bounded domains. The first study on the solvability of the nonhomogeneous boundary value problem for the stationary Navier-Stokes equations goes back to Leray \cite{MR3533002}. In his celebrated paper \cite{MR3533002} published in 1933, Leray proved that the Navier-Stokes problem \eqref{NS} with the nonhomogeneous no-slip boundary condition \eqref{no_slip} has a solution provided
\be\label{zero_flux_no_slip}
\int_{\Gamma_{j}} \bu_{\ast}\cdot\bn \, ds=0,\quad j=0,\dots,N.
\ee
Clearly, if $N\ge1$ (the boundary $\partial\Omega$ consists of more than one connected component $\Gamma_j$), the condition \eqref{zero_flux_no_slip} is stronger than the natural requirement
\be\label{total_flux_no_slip}
\int_{\partial\Omega} \bu_{\ast}\cdot\bn \, ds=\sum_{j=0}^{N} \int_{\Gamma_{j}} \bu_{\ast}\cdot\bn \, ds=0,
\ee
and in particular, it does not allow for the presence of sinks and sources. The question whether the problem \eqref{NS} with the no-slip boundary condition \eqref{no_slip} admits a solution only under the natural restriction \eqref{total_flux_no_slip} was left open by Leray, and it is known as Leray's problem. Since the appearance of the pioneer work by Leray \cite{MR3533002}, several partial answers to this problem have been given by a number of authors. For instance, the existence of a weak solution $\bu\in W^{1,2}(\Omega)$ to the problem \eqref{NS} with \eqref{no_slip} was established under the restriction \eqref{zero_flux_no_slip} (see, e.g., \cites{MR132307,MR5003,MR720205,MR254401}), or for sufficiently small fluxes $\int_{\Gamma_{j}} \bu_{\ast}\cdot\bn \, ds$\footnote{This condition does not assume the norm of the boundary value $\bu_{\ast}$ to be small.} (see, e.g., \cites{MR1308742,MR1228739,MR2808162,MR2491599,MR2550139,MR2679374}\footnote{Concerning the approach employed by Neustupa \cite{MR2679374}, see also \cite{KPR2024}*{Section 3.5}.}), or under certain symmetry assumptions on the planar domain $\Omega\subseteq\BR^2$, the boundary value $\bu_{\ast}$, and the external force $\bff$ (see, e.g., \cites{MR763943,MR1773581,MR2336076,MR1268380}), or for arbitrary large fluxes satisfying the inflow condition\footnote{The flux through the outer boundary $\Gamma_{0}$ is not positive, i.e., $\int_{\Gamma_{0}} \bu_{\ast}\cdot\bn \, ds\le0$.} in a doubly-connected, bounded planar domain $\Omega=\Omega_{0}\setminus\overline{\Omega}_1\subseteq\BR^2$ (see \cite{MR3004771}). Recently, Korobkov-Pileckas-Russo \cite{MR3275850} have successfully proved the solvability of the problem \eqref{NS} with \eqref{no_slip} only under the natural restriction \eqref{total_flux_no_slip} in arbitrary bounded planar or three-dimensional axially symmetric domains. However, Leray's problem for an arbitrary three-dimensional domain still remains open. For a more detailed survey of Leray's problem, we refer the reader to the survey papers \cites{MR3400557,MR3916777} and the book by Korobkov-Pileckas-Russo \cite{KPR2024}.

On the other hand, in contrast to the problem \eqref{NS} with the no-slip boundary condition \eqref{no_slip}, not much is known about the solvability of the stationary Navier-Stokes equations under the nonhomogeneous slip boundary conditions \eqref{slip} in a two-dimensional multiply-connected bounded domain. Under a certain assumption on the shape of the domain, the existence of a weak solution to the problem \eqref{NS}-\eqref{slip} corresponding to $\bb_{\ast}\equiv\boldsymbol{0}$ was shown by Mucha \cite{MR2150351} without any restrictions on $a_{\ast}$ (other than \eqref{flux}). However, it is not clear whether the condition imposed on the domain in \cite{MR2150351} could be fulfilled by a domain with more than one hole. Konieczny-Mucha \cite{MR2256131} proved the existence of a weak solution to the problem \eqref{NS}-\eqref{slip} in a simply-connected bounded domain. Afterward, it was shown in \cite{MR2491326}*{Section 1.1} that the kernel of the operator $\curl$, which is defined on the space of solenoidal vector fields satisfying the slip boundary conditions \eqref{slip} with $a_{\ast}\equiv0$ and $\bb_{\ast}\equiv\boldsymbol{0}$ is trivial in a doubly-connected bounded domain. As noted in \cite{MR2491326}, this fact might be used to extend the result of Konieczny-Mucha \cite{MR2256131} to the case when $\Omega$ is a multiply-connected bounded domain under the proper assumptions on the boundary data. Similar problem with the following nonhomogeneous boundary conditions
\be \left.\ba\label{Hodge}
\smallskip
\bu \cdot\bn= a_{\ast}\\
\curl\bu=b_{\ast}\ea\right\}\ \ \mbox{on $\partial\Omega$}\,
\ee
has been addressed by Neustupa \cite{MR3377379}. Here $\curl\bu\equiv\partial_2u_1-\partial_1u_2$ is the vorticity of a velocity vector $\bu=(u_1,u_2)$. In \cite{MR3377379}, given an external force $\bff$, which belongs to a certain subset of $L^2(\Omega)$, the existence of a strong solution to the problem \eqref{NS} with \eqref{Hodge} has been proved in arbitrary two-dimensional multiply-connected bounded domains only under the natural requirement \eqref{flux}. It should be noted that, in general, the boundary conditions \eqref{Hodge} are different from the slip boundary conditions \eqref{slip} even in the case that the friction coefficient $\beta\equiv0$. To see this, suppose that $\bu\cdot\bn=0$ on $\partial\Omega$ (i.e., $a_{\ast}\equiv0$). Then,  from Lemma \ref{lem_def-curl} and the identity $[T(\bu,p)\bn]_{\tau}=[\nu S(\bu)\bn]_{\tau}$ we deduce that
\be
\frac{1}{\nu}[T(\bu,p)\bn]_{\tau}-\curl\bu(n_2,-n_1)^\top=2W^\top\bu\enskip\text{on }\partial\Omega,
\ee
where $W$ is the Weingarten map of the boundary $\partial\Omega$ (in the direction of $\bn$) representing the curvature of $\partial\Omega$; see Proposition \ref{prop:Weingarten} for its definition and properties. The above identity implies that, if $\bu\cdot\bn=0$ on $\partial\Omega$ (i.e., $a_{\ast}\equiv0$), the boundary conditions \eqref{Hodge} and \eqref{slip} with $\beta\equiv0$ are different from each other by the term $2W^\top\bu$ unless the boundary $\partial\Omega$ is flat.

In this paper we shall show that the Navier-Stokes problem \eqref{NS}-\eqref{slip} admits at least one weak solution in arbitrary bounded planar domains if the friction coefficient $\beta$ is sufficiently large compared with the kinematic viscosity constant $\nu$ and the curvature of the boundary $\partial\Omega$; see Theorem \ref{theo1}. It should be emphasized that, in Theorem \ref{theo1}, no restriction other than the natural requirement \eqref{flux} is imposed on the fluxes of the boundary datum $a_{\ast}$ through each connected component $\Gamma_j$ of $\partial\Omega$. The assumption made on the friction coefficient $\beta$ in Theorem \ref{theo1} is redundant for the solvability of the problem \eqref{NS}-\eqref{slip} in some cases. Indeed, regardless of the value of $\beta$, the existence of a weak solution to the problem \eqref{NS}-\eqref{slip} is established for arbitrary large fluxes satisfying the outflow condition\footnote{The flux through the outer boundary $\Gamma_{0}$ is nonnegative, i.e., $\int_{\Gamma_{0}} \bu_{\ast}\cdot\bn \, ds\ge0$.} in a certain doubly-connected bounded domain (see Theorem \ref{theo2}), or for sufficiently small fluxes $\int_{\Gamma_j}a_{\ast}\,ds$ in arbitrary bounded domains (see Theorem \ref{theo4}), or under certain symmetry assumptions on the domain $\Omega$ and the given data $\bff, \beta, a_{\ast}$ and $\bb_{\ast}$ (see Theorem \ref{theo3}).

The Navier-Stokes problem \eqref{NS}-\eqref{slip} can be reduced to an operator equation in a Hilbert space with a compact operator, and so the existence of a weak solution to the problem \eqref{NS}-\eqref{slip} is readily established by the Leray-Schauder principle once we are able to show an appropriate a priori estimate for solutions to the operator equation. In \cite{MR3533002}, Leray developed two different approaches to prove such an a priori estimate for solutions to the problem \eqref{NS} with \eqref{no_slip}. The first approach in \cite{MR3533002} makes use of a special extension of the boundary datum $\bu_{\ast}$ into $\Omega$ of the form $\bA_{\epsilon}(x)=\curl(\theta(\epsilon,x)\bb(x))$, where $\theta(\epsilon,x)$ is Hopf's cut-off function \cite{MR5003}. For such an extension the estimate
\be\label{Leray-Hopf}
-\int_{\Omega}(\bw\cdot\nabla)\bA_{\epsilon}\cdot\bw\,dx\le\epsilon c\int_{\Omega}\abs{\nabla\bw}^2\,dx
\ee
holds for all solenoidal vector fields $\bw\in W^{1,2}_{0}(\Omega)$, where $\epsilon>0$ can be taken arbitrary small and the constant $c$ is independent of $\epsilon$; see, e.g., \cite{MR1846644}*{Section 1.4 in Chapter 2}. The inequality \eqref{Leray-Hopf} is referred to as Leray-Hopf's inequality.  It is well known that, in multiply-connected domains $\Omega\subseteq\BR^n$, $n=2,3$, some restrictions on the fluxes such as \eqref{zero_flux_no_slip} are indispensable for the validity of Leray-Hopf's inequality \eqref{Leray-Hopf}, that is, one cannot extend the boundary datum $\bu_{\ast}$ in any manner as a solenoidal extension $\bA_{\epsilon}$ satisfying the inequality \eqref{Leray-Hopf} only under the natural requirement \eqref{total_flux_no_slip}; see \cite{MR2957621}, \cite{MR2808162}*{pp.\,605-607}, \cites{MR3208789,MR2824494,MR1197051}. Also, it is worth noting that the classical method due to Hopf \cite{MR5003} for proving the inequality \eqref{Leray-Hopf} relies on the Hardy inequality
\be
\left\lVert{\frac{\bw}{d}}\right\rVert_{L^2(\Omega)}\le c\norm{\nabla\bw}_{L^2(\Omega)}\quad\text{for all }\bw\in W^{1,2}_{0}(\Omega),
\ee
where $d=d(x)=\dist(x,\partial\Omega)$ is the distance function from $\partial\Omega$; see, e.g., \cite{MR1846644}*{Proof of Lemma 1.8 in Chapter 2}. Since the slip boundary conditions \eqref{slip} with the homogeneous boundary data ($a_{\ast}\equiv0$ and $\bb_{\ast}\equiv\boldsymbol{0}$) do not imply $\bu|_{\partial\Omega}=\boldsymbol{0}$ (no information is available about the tangential component of $\bu$ on the boundary), the method of Hopf is not directly applicable to the problem \eqref{NS}-\eqref{slip} even if the domain $\Omega$ is simply-connected. In this respect, we refer the reader to Konieczny-Mucha \cite{MR2256131} for a modification of Hopf's method, which is adopted to the problem \eqref{NS}-\eqref{slip}. 

The second approach, suggested by Leray \cite{MR3533002}, is based on a contradiction argument. This idea has been subsequently used and modified in many papers (see, e.g., \cites{MR763943,MR720205,MR3004771,MR2550139,MR254401,MR2679374}), and it was further developed in the paper by Korobkov-Pileckas-Russo \cite{MR3275850} in which Leray's problem has been solved in arbitrary bounded planar and three-dimensional axially symmetric domains. In the present paper, we shall employ the same contradiction argument to prove the desired a priori estimate for solutions to the problem \eqref{NS}-\eqref{slip}. The crucial ingredient of our proof is the fact that the total head pressure corresponding to the solution to the steady Euler equations takes a constant value on each connected component $\Gamma_j$ of $\partial\Omega$; see Corollary \ref{cor:Bernoulli}. It is worth remarking that this fact fails in the three-dimensional case (see the counterexample given in Remark \ref{rem:Bernoulli} (iii)), and hence our argument is not directly applicable to the three-dimensional problem even in the case when the fluxes of the boundary datum $a_{\ast}$ satisfy the stronger condition
\be\label{zero_flux_slip}
\int_{\Gamma_{j}} a_{\ast}\, ds=0,\quad j=0,\dots,N.
\ee

We wish to end this introductory section with a remark on a priori estimates for solutions to the problem \eqref{NS}-\eqref{slip}. After this work was completed, we learned from Mikhail V. Korobkov that, in the case when $\Omega=\{x\in\BR^2:1<\abs{x}<2\}$, for some particular data we are able to find infinitely many explicit solutions $\{\bu_k\}_{k\in\BR}$ to the problem \eqref{NS}-\eqref{slip} such that $\norm{\nabla\bu_k}_{L^2(\Omega)}\to\infty$ as $k\to\infty$; see Example \ref{exam:Hamel}. This example implies that, in contrast to the problem \eqref{NS} with the no-slip boundary condition \eqref{no_slip} (see \cite{MR3275850}), solutions to the problem \eqref{NS}-\eqref{slip} do not admit a priori estimates in general, and that the additional assumptions (such as the requirement on the friction coefficient and the outflow condition) in our existence theorems are indispensable for deriving the required a priori estimate of solutions; see Remark \ref{rem:Hamel}. On the other hand, the fundamental question of whether the problem \eqref{NS}-\eqref{slip} is solvable for any kinematic viscosity constant $\nu>0$ and any friction coefficient $\beta=\beta(x)\ge0$ in two-dimensional multiply-connected bounded domains only under the natural restriction \eqref{flux} remains open.

The plan of the paper is as follows. In the next section, after formulating the problem \eqref{NS}-\eqref{slip} we shall state our existence theorems. In Section \ref{Prelimi}, we first collect some auxiliary results and then present existence, uniqueness, and global regularity results for the Stokes problem under the slip boundary conditions. For completeness, the proof of the global regularity result (Theorem \ref{thm:grS}) will be reported in the Appendix. In Sections \ref{Proof of Theorem 1} and \ref{Proof of Theorem 2}, we prove our existence theorems for arbitrary large fluxes (Theorems \ref{theo1} and \ref{theo2}) by employing the approach of Korobkov-Pileckas-Russo \cite{MR3275850}. Section \ref{Proof of Theorem 4} is dedicated to the proofs of our existence results for sufficiently small fluxes (Theorem \ref{theo4} and Corollary \ref{cor:harmonic}). The objective of Section \ref{Proof of Theorem 3} is to prove the existence of a symmetric solution to the problem \eqref{NS}-\eqref{slip} under suitable symmetry assumptions on the domain and the given data (Theorem \ref{theo3}). The proof of Theorem \ref{theo3} is based on a generalization of \cite{MR763943}*{Theorem 2.3}. More precisely, the key ingredient of the proof is the fact that the total head pressure corresponding to the solution to the steady Euler equation takes the same constant value on every connected component $\Gamma_j$ of $\partial\Omega$, provided the domain $\Omega$ and the solution to the Euler equation satisfy certain symmetry conditions; see Theorem \ref{thm:Amick_symmetric}.

\section{Results}\label{Results}
We shall begin by giving the definition of weak solutions to the Navier-Stokes problem \eqref{NS}-\eqref{slip}. To this end, we first recall some definitions and notations which will be frequently used throughout the paper. For a given normed space $X$, the corresponding norm is denoted by $\norm{\cdot}_{X}$. The set of all bounded linear functionals on $X$ is called the dual space of $X$ and is denoted by $X'$. The symbol $\langle\cdot,\cdot\rangle$ denotes a generic duality pairing.

By a domain we mean an open connected set. Let $\Omega\subseteq\BR^2$ be a bounded domain with Lipschitz boundary $\partial\Omega$. We use standard notation for function spaces\footnote{We shall use the same font style to denote function spaces for scalar and vector-valued functions.}: $C^m(\Omega)$, $C^m_{0}(\Omega)$, $C^m(\overline{\Omega})$, $C^m(\partial\Omega)$, $L^q(\Omega)$, $L^q(\partial\Omega)$, $W^{k,q}(\Omega)$, $W^{k,q}_{0}(\Omega)$, where  $m\in\BN_{0}$ or $m=\infty$, $q\in[1,\infty]$ and $k\in\BN_{0}$ ($W^{0,q}\equiv W^{0,q}_{0}\equiv L^q$). We denote by $C^{\infty}_{0,\sigma}(\Omega)$ the set of all $C^{\infty}$ vector-valued functions $\bfvarphi=(\varphi_1,\varphi_2)$ with compact supports in $\Omega$ such that $\dv\bfvarphi=0$ in $\Omega$. 

Let $1\le q<\infty$. Then each $f\in W^{1,q}(\Omega)$ has a well defined trace $f|_{\partial\Omega}\in L^q(\partial\Omega)$. More precisely, there exists a bounded linear operator 
\be
\gamma\colon W^{1,q}(\Omega)\to L^q(\partial\Omega)
\ee 
such that 
\be
\gamma (f)=f\enskip\text{on }\partial\Omega 
\ee
for all $f\in W^{1,q}(\Omega)\cap C(\overline{\Omega})$; see \cite{MR3409135}*{Theorem 4.6}, \cite{MR3726909}*{Theorem 18.1}. Let $m\in\BN$. The image space $\gamma(W^{m,q}(\Omega))\subseteq L^q(\partial\Omega)$ is denoted by $W^{m-1/q,q}(\partial\Omega)$ and is called the trace space. The space $W^{m-1/q,q}(\partial\Omega)$ is a Banach space with respect to the norm
\be
\norm{g}_{W^{m-1/q,q}(\partial\Omega)}\equiv\inf_{G\in W^{m,q}(\Omega);\gamma(G)=g}\norm{G}_{W^{m,q}(\Omega)}.
\ee

Let $0<\beta<1$ and $1<q<\infty$. The Sobolev space $W^{-\beta,q}(\partial\Omega)$ of negative order $-\beta$ is defined as the dual space of $W^{\beta,q'}(\partial\Omega)$, $q'=\frac{q}{q-1}$,
\be
W^{-\beta,q}(\partial\Omega)\equiv W^{\beta,q'}(\partial\Omega)'.
\ee
For $F\in W^{-\beta,q}(\partial\Omega)$, the norm of $F$ is defined by 
\be
\norm{F}_{W^{-\beta,q}(\partial\Omega)}\equiv\sup_{0\neq u\in W^{\beta,q'}(\partial\Omega)}\abs{\langle F,u \rangle_{\partial\Omega}}/\norm{u}_{W^{\beta,q'}(\partial\Omega)},
\ee
where $\langle \cdot,\cdot \rangle_{\partial\Omega}$ is the duality pairing between $W^{-\beta,q}(\partial\Omega)$ and $W^{\beta,q'}(\partial\Omega)$.

Other notations will be introduced according to necessity.
\subsection{Variational formulation and weak solutions}\label{subsec:variational_formulation}We next give a variational (or weak) formulation of the Navier-Stokes problem \eqref{NS}-\eqref{slip}. Let $(\bu,p)$ be a classical solution to the problem \eqref{NS}-\eqref{slip}, for example, $\bu\in C^2(\overline{\Omega})$ and $p\in C^1(\overline{\Omega})$, and let $\bfvarphi\in C^{\infty}(\overline{\Omega})$ satisfy $\bfvarphi\cdot\bn=0$ on $\partial\Omega$. Since
\be\label{08091423}
\dv(T(\bu,p)\bfvarphi)=\frac{\nu}{2}S(\bu):S(\bfvarphi)+(\nu\Delta\bu-\nabla p)\cdot\bfvarphi-p\,\dv\bfvarphi\enskip\text{in }\Omega,
\ee
where
\be
S(\bu):S(\bfvarphi)\equiv\sum_{i,j=1}^{2}\left(\frac{\partial u_i}{\partial x_j}+\frac{\partial u_j}{\partial x_i}\right)\left(\frac{\partial \varphi_i}{\partial x_j}+\frac{\partial \varphi_j}{\partial x_i}\right),
\ee
integrating the identity \eqref{08091423} over $\Omega$, we obtain the formula
\be\label{iden:Green}
\int_{\Omega}(-\nu\Delta\bu+\nabla p)\cdot\bfvarphi\,dx+\int_{\Omega}p\,\dv\bfvarphi\,dx=\frac{\nu}{2}\int_{\Omega}S(\bu):S(\bfvarphi)\,dx-\int_{\partial\Omega}[T(\bu,p)\bn]_{\tau}\cdot\bfvarphi\,ds.
\ee
Multiplying $\eqref{NS}_1$ by $\bfvarphi\in C^{\infty}(\overline{\Omega})$ with $\dv\bfvarphi=0$ in $\Omega$ and $\bfvarphi\cdot\bn=0$ on $\partial\Omega$, using the formula \eqref{iden:Green} and the boundary condition $\eqref{slip}_2$, we have the following integral identity:
\be
\frac{\nu}{2}\int_{\Omega}S(\bu):S(\bfvarphi)\,dx+\int_{\partial\Omega}\beta\bu_{\tau}\cdot\bfvarphi\,ds-\langle \bb_{\ast},\bfvarphi \rangle_{\partial\Omega}+\int_{\Omega}(\bu\cdot\nabla)\bu\cdot\bfvarphi\,dx=\langle \bff,\bfvarphi \rangle_{\Omega}.
\ee
We shall need two closed subspaces $H(\Omega)$ and $J(\Omega)$ of $W^{1,2}(\Omega)$ defined by
\be
\ba
H(\Omega)&\equiv\{\bu\in W^{1,2}(\Omega):\gamma(\bu)\cdot\bn=0\enskip\text{on }\partial\Omega\},\\
J(\Omega)&\equiv\{\bu \in H(\Omega):\dv \bu=0\enskip\text{in }\Omega\}.
\ea
\ee
We now give the following definition.
\begin{defi}
Let $\beta\in C(\partial\Omega)$ be nonnegative, and let $\bff\in H(\Omega)'$. Suppose that $a_{\ast}\in W^{1/2,2}(\partial\Omega)$ and $\bb_{\ast}\in W^{-1/2,2}(\partial\Omega)$ satisfy the following compatibility conditions: 
\be\label{theo1_comp}
\int_{\partial\Omega} a_{\ast}\,ds=\sum_{j=0}^{N} \int_{\Gamma_{j}} a_{\ast} \, ds=0\enskip\text{and}\enskip\bb_{\ast}\cdot\bn=0\enskip\text{on}\enskip\partial\Omega.
\ee
Then a vector field $\bu:\Omega\to\BR^2$ is called a weak (or generalized) solution to the Navier-Stokes problem \eqref{NS}-\eqref{slip} if 
\begin{enumerate}
\item $\bu\in W^{1,2}(\Omega)$; 
\item $\bu$ is divergence-free in $\Omega$;
\item $\bu$ satisfies the boundary condition $\eqref{slip}_{1}$ in the trace sense, namely,
\be
\gamma(\bu)\cdot\bn=a_{\ast}\quad\text{in}\enskip W^{1/2,2}(\partial\Omega);
\ee
\item $\bu$ obeys the integral identity
\be\label{def:weakNS_nonhom}
\frac{\nu}{2}\int_{\Omega} S(\bu): S(\bfvarphi) \,dx+\int_{\partial\Omega} \beta\bu_{\tau}\cdot\bfvarphi \,ds-\langle {\bb_{\ast}},\bfvarphi \rangle_{\partial\Omega}+\int_{\Omega}(\bu\cdot\nabla)\bu\cdot\bfvarphi\,dx=\langle \bff,\bfvarphi \rangle_{\Omega}
\ee
for all $\bfvarphi\in J(\Omega)$. 
\end{enumerate}
\end{defi}
\begin{rema}\label{rem:def_weak}(i) By virtue of Lemma \ref{pressure}, for a weak solution $\bu\in W^{1,2}(\Omega)$ to the problem \eqref{NS}-\eqref{slip} we are able to associate a pressure $p\in L^2(\Omega)$ such that $\int_{\Omega}p(x)\,dx=0$ and the identity
\be
\frac{\nu}{2}\int_{\Omega} S(\bu): S(\bfvarphi) \,dx+\int_{\partial\Omega} \beta\bu_{\tau}\cdot\bfvarphi \,ds-\langle {\bb_{\ast}},\bfvarphi \rangle_{\partial\Omega}+\int_{\Omega}(\bu\cdot\nabla)\bu\cdot\bfvarphi\,dx=\langle \bff,\bfvarphi \rangle_{\Omega}+\int_{\Omega}p\,\dv\bfvarphi\,dx
\ee 
holds for all $\bfvarphi\in H(\Omega)$.

(ii) If the given data are appropriately smooth, we can show that every weak solution $\bu$ to the problem \eqref{NS}-\eqref{slip} and the pressure $p$ associated with $\bu$ are more regular, i.e., $\bu\in W^{2,2}(\Omega)$ and $p\in W^{1,2}(\Omega)$. In such a case, the boundary condition $\eqref{slip}_2$ is satisfied in the following sense
\be
[T(\bu,p)\bn]_{\tau}+\beta \bu_{\tau}=\bb_{\ast}\enskip\text{in }L^2(\partial\Omega);
\ee
see Theorem \ref{thm:grNS}. 
\end{rema}

\subsection{Existence results for arbitrary large fluxes} We are now in a position to state our existence theorems. Our first result shows that the Navier-Stokes problem \eqref{NS}-\eqref{slip} is solvable in arbitrary bounded planar domains under no restrictions on fluxes other than the natural requirement \eqref{flux} if the friction coefficient is sufficiently large compared with the kinematic viscosity constant and the curvature of the boundary. More precisely, we shall prove the following.
\begin{theo}\label{theo1}
Let $\Omega\subseteq\BR^2$ be a smooth bounded domain defined by \eqref{domain}, and let the friction coefficient $\beta\in C^1(\partial\Omega)$ be a nonnegative function, which is not identically equal to zero. Suppose that the boundary data $a_{\ast}\in W^{3/2,2}(\partial\Omega)$ and $\bb_{\ast}\in W^{1/2,2}(\partial\Omega)$ satisfy the compatibility conditions \eqref{theo1_comp}, and the external force $\bff$ is in $W^{1,2}(\Omega)$. If the inequality
\be\label{theo1_assumption}
-2\kappa(x)\le\frac{\beta(x)}{\nu}
\ee
holds for all $x\in\partial\Omega$, then the Navier-Stokes problem \eqref{NS}-\eqref{slip} admits at least one weak solution $\bu\in W^{1,2}(\Omega)$. Here $\kappa=\kappa(x)$ is the curvature of the boundary $\partial\Omega$ at the point $x\in\partial\Omega$ in the direction of the unit outer normal to $\partial\Omega$ $($for its definition see Subsection $\ref{subsec:Weingarten})$. 
\end{theo}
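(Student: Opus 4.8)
The plan is to obtain the weak solution through the Leray--Schauder principle, thereby reducing everything to a single a priori estimate, and to establish that estimate by the contradiction argument of Korobkov--Pileckas--Russo \cite{MR3275850} whose engine is the Bernoulli property of Corollary~\ref{cor:Bernoulli}. Concretely, I would fix a solenoidal extension $\bA$ of $a_{\ast}$ (with $\bA\cdot\bn=a_{\ast}$ on $\partial\Omega$) and write $\bu=\bA+\bw$, $\bw\in J(\Omega)$, turning \eqref{def:weakNS_nonhom} into an equation for $\bw$. Relying on the solvability and regularity of the Stokes problem under slip conditions from Section~\ref{Prelimi} --- whose cornerstone is the coercivity on $J(\Omega)$ of the form $\tfrac{\nu}{2}\int_\Omega S(\cdot):S(\cdot)\,dx+\int_{\partial\Omega}\beta(\cdot)_\tau\cdot(\cdot)_\tau\,ds$, valid because $\beta\not\equiv0$ makes its kernel trivial (a rigid motion tangent to $\partial\Omega$ and vanishing on $\{\beta>0\}$ must vanish) and Korn's second inequality then yields full control of $\norm{\cdot}_{W^{1,2}(\Omega)}$ --- I would define a compact operator $\Lambda\colon J(\Omega)\to J(\Omega)$ whose fixed points are exactly the weak solutions, the convective term being compact through $W^{1,2}(\Omega)\hookrightarrow\hookrightarrow L^4(\Omega)$. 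It then suffices to bound, uniformly in $\lambda\in[0,1]$, all solutions of the homotopy $\bw=\lambda\Lambda(\bw)$ by some $M$ independent of $\lambda$.

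I would argue by contradiction. If no such $M$ exists, there are $\lambda_k\to\lambda_0$ and solutions $\bu_k=\bA+\bw_k$ with $J_k:=\norm{\nabla\bu_k}_{L^2(\Omega)}\to\infty$. Testing the equation with $\bw_k$ and reorganising the viscous term by a Korn-type identity produces an energy identity in which the boundary quadratic form carries the coefficient $\beta+2\nu\kappa$; hypothesis \eqref{theo1_assumption} is precisely the requirement that this coefficient be nonnegative, so that the dissipation $\nu\norm{\nabla\bu_k}_{L^2(\Omega)}^2$ is dominated by the convective and data terms. From this one reads off $\norm{\bu_k}_{W^{1,2}(\Omega)}\simeq J_k$ and $\lambda_0>0$. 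Normalising $\hat{\bu}_k:=\bu_k/J_k$ (so $\norm{\nabla\hat{\bu}_k}_{L^2(\Omega)}=1$), I would pass to a subsequence with $\hat{\bu}_k\rightharpoonup\bv$ in $W^{1,2}(\Omega)$ and $\hat{\bu}_k\to\bv$ in $L^4(\Omega)$; since the normal trace $a_{\ast}/J_k\to0$, one gets $\bv\in J(\Omega)$ with $\bv\cdot\bn=0$ on $\partial\Omega$. Dividing the momentum balance by $J_k^2$ annihilates the viscous, friction and force terms (each $O(J_k^{-1})$ or $O(J_k^{-2})$) while the convective term survives, so that $\bv$ solves the stationary Euler system: there is a total head pressure $\Phi\in W^{1,2}(\Omega)$ with $\nabla\Phi=(\curl\bv)\,(-v_2,v_1)^\top$, whence $\nabla\Phi\cdot\bv=0$ and $\Phi$ is constant along the streamlines of $\bv$.

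The decisive point is now Corollary~\ref{cor:Bernoulli}: because $\bv\cdot\bn=0$, every component $\Gamma_j$ is itself a streamline, so $\Phi\equiv\hat p_j$ is constant on $\Gamma_j$. With the boundary values of $\Phi$ pinned down, I would run the level-set analysis of \cite{MR3275850}: by the Morse--Sard theorem for Sobolev functions almost every value of $\Phi$ is regular and the corresponding level set is a finite family of $C^1$ closed curves; combining the coarea formula, Bernoulli's law along these curves, and the flux data \eqref{flux} yields a relation incompatible with $J_k\to\infty$ (it forces $\nabla\bv$ to vanish on a set of positive measure while a strictly positive dissipation is simultaneously required). This contradiction furnishes the a priori bound $M$ and, by Leray--Schauder, a weak solution $\bu\in W^{1,2}(\Omega)$.

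I expect the genuine obstacle to be this final, intrinsically two-dimensional, level-set mechanism --- making Bernoulli's law rigorous for the merely Sobolev field $\bv$ and extracting the contradiction from the geometry of the level sets of $\Phi$ --- which is exactly where planarity is used (cf. the three-dimensional counterexample in Remark~\ref{rem:Bernoulli}(iii)). A secondary, more bookkeeping-type difficulty, absent in the no-slip theory, is that the tangential trace of $\bu_k$ does not vanish: the convective boundary term $\tfrac12\int_{\partial\Omega}a_{\ast}\abs{\bu_{k,\tau}}^2\,ds$ and the curvature term both enter the energy identity, and it is precisely the need to control them with the right sign that makes \eqref{theo1_assumption} indispensable.
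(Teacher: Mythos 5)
Your overall architecture (Leray--Schauder reduction, contradiction argument with $J_k\to\infty$, rescaled Euler limit, Bernoulli's law via Corollary~\ref{cor:Bernoulli}, then the level-set machinery of \cite{MR3275850}) is the same as the paper's. However, there is a genuine gap: you place the hypothesis \eqref{theo1_assumption} in the wrong step, and in the step where it is actually indispensable you do not explain how it enters. In the energy identity obtained by testing with $\bw_k$, no sign condition on $\beta+2\nu\kappa$ is needed at all: coercivity of the form $\tfrac{\nu}{2}\int_\Omega S(\cdot):S(\cdot)\,dx+\int_{\partial\Omega}\beta\abs{(\cdot)_\tau}^2\,ds$ on $J(\Omega)$ follows from the Korn-type inequality \eqref{ineq:Korn} using only $\beta\not\equiv0$, and the paper never converts $S(\cdot):S(\cdot)$ into $\abs{\nabla\cdot}^2$ plus a curvature boundary term (moreover, if you do perform that integration by parts for a solenoidal tangent field, the resulting boundary coefficient is $\beta+\nu\kappa$, not $\beta+2\nu\kappa$, so your claim that \eqref{theo1_assumption} is ``precisely'' the nonnegativity of that coefficient does not match the computation). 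The energy step only yields the limit identity $1/\lambda_0=-\int_{\partial\Omega}\Phi\, a_{\ast}\,ds$; in particular the boundary convective term $\int_{\partial\Omega}\tfrac12\abs{\bw_k}^2a_{\ast}\,ds$ is not ``controlled by sign'' but survives and carries the head pressure to the boundary.

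The place where \eqref{theo1_assumption} is genuinely needed is the key level-set estimate (Lemma~\ref{lem:keyestimate}): one integrates the equation $\Delta\Phi_k=\omega_k^2+\nu_k^{-1}\dv(\Phi_k\bu_k)-\nu_k^{-1}\bff_k\cdot\bu_k$ over domains $\Omega^h_{ik}(t)$ whose boundary includes a piece near $\widetilde{\Gamma}$ (the components where $\Phi$ attains its maximum), and this produces the boundary vorticity flux $\int_{\widetilde{\Gamma}}\omega_k\bu_k^{\perp}\cdot\bn\,ds$. In the no-slip setting of \cite{MR3275850} this term is harmless because the full trace of $\bu_k$ is prescribed; under slip conditions the tangential trace is unknown and not uniformly controlled as $\nu_k\to0$, and this is exactly the new difficulty of the paper. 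It is resolved by combining the identity \eqref{iden:def-curl} (Weingarten map) with the slip condition $\eqref{eq:NS_k}_4$, which rewrites the flux as data terms of order $\nu_k$ minus $\int_{\widetilde{\Gamma}}\bigl(\tfrac{\beta}{\nu}+2\kappa\bigr)(v_{k,1})^2\,ds$; the hypothesis \eqref{theo1_assumption} makes this last term nonpositive (and this identity is where the factor $2$ in \eqref{theo1_assumption} originates). Without this step the estimate $\int_{S_{ik}(t)}\abs{\nabla\Phi_k}\,ds<\CF t$ cannot be closed. Finally, your description of the endgame is also off: the contradiction is geometric --- the coarea formula plus \eqref{keyestimate} bounds $\int\SH^1(S_{ik}(t))\,dt$ by $\sqrt{\CF'}\,t_i\,\SL^2(E_{ik})^{1/2}$ with $\SL^2(E_{ik})\to0$, while each separating level line must satisfy $\SH^1(S_{ik}(t))\ge\min(\diam\Gamma_j,\diam\Gamma_N)$ --- not an assertion that $\nabla\bv$ vanishes on a set of positive measure.
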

\begin{exam}
Suppose that $\Omega$ is the domain defined by \eqref{domain} with $\Omega_0=\{x\in\BR^2:\abs{x}<R_0\}$ for some $R_0>0$ and $N$ convex domains $\Omega_j$ $(j=1,\dots,N)$. In this case, $\kappa(x)=-\frac{1}{R_0}$ for $x\in\Gamma_{0}$ and $\kappa(x)\ge0$ for $x\in\Gamma_j$ $(j=1,\dots,N)$. Then the condition \eqref{theo1_assumption} is necessarily satisfied on the inner boundaries, and hence the problem \eqref{NS}-\eqref{slip} is solvable if the friction coefficient $\beta$ satisfies the condition
\be
\frac{2}{R_0}\le\frac{\beta(x)}{\nu}\enskip\text{for all }x\in\Gamma_{0}=\{x\in\BR^2:\abs{x}=R_0\}.
\ee
\end{exam}
\begin{rema}When the domain $\Omega$ is doubly-connected (i.e., $N=1$), Mucha \cite{MR2150351} proved the existence of a weak solution to the problem \eqref{NS}-\eqref{slip} (with no restriction on $a_{\ast}$ other than \eqref{flux}) under the assumption that
\be\label{202409221057}
\norm{2\chi-\frac{\beta}{\nu}}_{L^{\infty}(\partial\Omega)}<C(\Omega),
\ee
where $\chi$ is the curvature of $\partial\Omega$ in the direction of the inner unit normal to $\partial\Omega$ (i.e., $\chi(x)=-\kappa(x)$, $x\in\partial\Omega$) and $C(\Omega)$ is a constant depending only on $\Omega$. Since the condition \eqref{202409221057} is equivalently rewritten as 
\be
-C(\Omega)<-2\kappa(x)-\frac{\beta(x)}{\nu}<C(\Omega)\enskip\text{for all }x\in\partial\Omega,
\ee
his result is not completely covered by Theorem \ref{theo1}.
\end{rema}
\vspace{0.3cm}
Theorem \ref{theo1} is valid for any bounded domains and any values of the fluxes of $a_{\ast}$ through $\Gamma_j$, $j=0,\dots,N$. On the other hand, the friction coefficient $\beta$ is required to be large (condition \eqref{theo1_assumption}). When $\Omega$ is a doubly-connected bounded domain with a convex inner hole, the assumption on the friction coefficient $\beta$ such as \eqref{theo1_assumption} is redundant for the existence of a weak solution if the flux through the outer boundary $\Gamma_{0}$ is nonnegative (outflow condition). Our second result now reads:
\begin{theo}\label{theo2}
Let $\Omega\subseteq\BR^2$ be a smooth bounded domain defined by \eqref{domain} with $N=1$, and let the friction coefficient $\beta\in C^1(\partial\Omega)$ be nonnegative. Suppose that the boundary data $a_{\ast}\in W^{3/2,2}(\partial\Omega)$ and $\bb_{\ast}\in W^{1/2,2}(\partial\Omega)$ satisfy the following compatibility conditions:
\be
\int_{\partial\Omega} a_{\ast}\,ds=\int_{\Gamma_0} a_{\ast}\,ds+\int_{\Gamma_1} a_{\ast}\,ds=0\enskip\text{and}\enskip\bb_{\ast}\cdot\bn=0\enskip\text{on}\enskip\partial\Omega,
\ee
and the external force $\bff$ is in $W^{1,2}(\Omega)$. Assume further that $\beta\not\equiv0$ in the case when the domain $\Omega$ is an annulus. If the inner hole $\Omega_1$ of the domain $\Omega$ is convex and 
\be\label{outflow_condition}
\int_{\Gamma_0} a_{\ast}\,ds\ge0,
\ee
then the Navier-Stokes problem \eqref{NS}-\eqref{slip} admits at least one weak solution $\bu\in W^{1,2}(\Omega)$.
\end{theo}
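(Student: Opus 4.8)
The plan is to follow the same scheme as in the proof of Theorem \ref{theo1}: recast \eqref{NS}-\eqref{slip} as an operator equation with a compact operator in $J(\Omega)$ and a homotopy parameter $\lambda\in[0,1]$, and invoke the Leray--Schauder principle. Since the largeness hypothesis \eqref{theo1_assumption} on $\beta$ is no longer available, the whole difficulty is again the uniform a priori bound: one must exhibit a constant $M$, independent of $\lambda$, with $\norm{\bu}_{W^{1,2}(\Omega)}\le M$ for every solution of the homotopy family. The point of Theorem \ref{theo2} is that, in the doubly-connected case $N=1$, this bound can be extracted from the geometry (convex inner hole) and the sign of the flux (outflow), rather than from friction.

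Arguing by contradiction, suppose no such $M$ exists. Then there are $\lambda_k\to\lambda_0$ and solutions $\bu_k$ with $J_k:=\norm{\nabla\bu_k}_{L^2(\Omega)}\to\infty$. Setting $\wh\bu_k:=\bu_k/J_k$, fixing a solenoidal extension $\bA$ with $\bA\cdot\bn=a_{\ast}$ on $\partial\Omega$ (whose normalized contribution is $O(1/J_k)$), and passing to a subsequence, $\wh\bu_k\rightharpoonup\bv$ weakly in $W^{1,2}(\Omega)$, strongly in $L^2(\Omega)$ and, by compactness of the trace, strongly in $L^2(\partial\Omega)$. As in Korobkov--Pileckas--Russo, the limit solves the stationary Euler system $\bv\cdot\nabla\bv+\nabla\fp=0$, $\dv\bv=0$ in $\Omega$, with $\bv\cdot\bn=0$ on $\partial\Omega$, the boundary flux having rescaled to zero. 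By Corollary \ref{cor:Bernoulli} the total head pressure $\Phi:=\fp+\tfrac12\abs{\bv}^2$ is constant on each component, $\Phi|_{\Gamma_0}=\wh p_0$ and $\Phi|_{\Gamma_1}=\wh p_1$.

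The decisive step is a Leray-type flux identity. Testing the weak formulation for $\bu_k$ with $\bw_k:=\bu_k-\bA\in J(\Omega)$, dividing by $J_k^2$ and letting $k\to\infty$, the force and the datum $\bb_{\ast}$ drop out; using $\dv\bA=0$ and the Euler equation, the nonlinear term converges to $\int_{\partial\Omega}\Phi\,a_{\ast}\,ds$, which by the constancy of $\Phi$ and the compatibility relation $F_0:=\int_{\Gamma_0}a_{\ast}\,ds=-\int_{\Gamma_1}a_{\ast}\,ds$ equals $F_0(\wh p_0-\wh p_1)$. After rearrangement the nonnegative limiting dissipation therefore satisfies
\be
\lim_{k\to\infty}\Big(\frac{\nu}{2}\int_\Omega \abs{S(\wh\bu_k)}^2\dx+\int_{\partial\Omega}\beta\,\abs{(\wh\bu_k)_\tau}^2\ds\Big)=F_0(\wh p_1-\wh p_0).
\ee
Granting the geometric inequality $\wh p_1\le\wh p_0$ below, and using $F_0\ge0$ from the outflow condition \eqref{outflow_condition}, the right-hand side is $\le0$, hence the left-hand side vanishes: $\norm{S(\wh\bu_k)}_{L^2(\Omega)}\to0$ and $\int_{\partial\Omega}\beta\abs{(\wh\bu_k)_\tau}^2\ds\to0$. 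Consequently $S(\bv)=0$, so $\bv$ is a rigid motion tangent to both components; when $\Omega$ is not an annulus this forces $\bv=\mathbf{0}$, whereas for the annulus the kernel is spanned by the rotation and the hypothesis $\beta\not\equiv0$ is exactly what eliminates it, again giving $\bv=\mathbf{0}$. Korn's second inequality $\norm{\nabla\wh\bu_k}_{L^2(\Omega)}\le C(\norm{S(\wh\bu_k)}_{L^2(\Omega)}+\norm{\wh\bu_k}_{L^2(\Omega)})$ then yields $\norm{\nabla\wh\bu_k}_{L^2(\Omega)}\to0$, contradicting the normalization $\norm{\nabla\wh\bu_k}_{L^2(\Omega)}=1$.

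The main obstacle is precisely the geometric inequality $\wh p_1\le\wh p_0$ between the two boundary constants. I would derive it from the boundary behavior of $\Phi$ on the convex component $\Gamma_1$: since $\Gamma_1$ is a streamline one has $\partial_{\bn}\Phi=\omega\,v_\tau$ with $\omega=\curl\bv$, and $\omega$ is linked to the tangential velocity and the curvature through the Weingarten map (Proposition \ref{prop:Weingarten}), so that convexity ($\kappa\ge0$ on $\Gamma_1$) constrains the sign of the normal derivative of $\Phi$ there. Converting this pointwise information into the global comparison of the constants $\wh p_0$ and $\wh p_1$ is the delicate part and is where convexity is genuinely used; I expect to do this via the Korobkov--Pileckas--Russo level-set technology, that is, the Morse--Sard theorem for the Sobolev function $\Phi\in W^{2,2}(\Omega)$ together with the coarea formula along the nested family of level curves of $\Phi$ separating $\Gamma_1$ from $\Gamma_0$. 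A secondary technical point, shared with Theorem \ref{theo1}, is the rigorous justification of the passage to the Euler limit and of Bernoulli's law for the merely $W^{1,2}$ field $\bv$, which likewise rests on the regularity $\Phi\in W^{2,2}(\Omega)$ and on Morse--Sard.
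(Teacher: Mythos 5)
Your scaffolding (Leray--Schauder reduction, blow-up normalization, passage to an Euler limit $\bv$, Bernoulli constants $\wh p_0,\wh p_1$ on $\Gamma_0,\Gamma_1$, and the flux identity) is exactly the paper's, and your flux identity is the paper's \eqref{202309221134}: with $F_0\equiv\int_{\Gamma_0}a_{\ast}\,ds$ one has $1/\lambda_0=(\wh p_1-\wh p_0)F_0$. The genuine gap is the step you yourself flag as decisive: the claimed geometric inequality $\wh p_1\le\wh p_0$. First, it is false for general Euler flows tangent to the boundary of a domain with convex inner hole: in the annulus $1<\abs{x}<2$ the field $\bv=\abs{x}^{-3}x^{\perp}$ with $p=-\tfrac14\abs{x}^{-4}$ solves \eqref{Euler}, and $\Phi=\tfrac14\abs{x}^{-4}$ is strictly decreasing in $\abs{x}$, so $\wh p_1>\wh p_0$. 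Second, your proposed derivation cannot produce it: on $\Gamma_1$ one has $\nabla\Phi=\omega\nabla\psi$ with $\nabla\psi$ normal to $\Gamma_1$, but neither $\omega$ nor $\bv_{\tau}$ has a determined sign for the limit; the link between boundary vorticity, curvature and tangential velocity coming from the slip condition (identity \eqref{202309121217} combined with $\eqref{eq:NS_k}_4$) holds for each approximation $\bu_k$, but it concerns the \emph{traces} of $\omega_k$, which are not controlled under the weak convergence $\omega_k\rightharpoonup\omega$ in $L^2(\Omega)$, and the limit $\omega\in L^2(\Omega)$ has no trace at all. Third, and structurally: your own flux identity shows that in any blow-up scenario $F_0(\wh p_1-\wh p_0)=1/\lambda_0>0$, so under the outflow condition $F_0>0$ one \emph{necessarily} has $\wh p_1>\wh p_0$. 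Hence proving $\wh p_1\le\wh p_0$ for the limits in question is equivalent to proving the theorem itself, and no soft argument about the Euler limit alone can close the proof; the contradiction must be extracted at the level of the approximating sequence.

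That is exactly what the paper does, and it is where your plan diverges from a workable one. From $\wh p_1>\wh p_0$ (read as a consequence of outflow, not as something to contradict) the paper concludes that, when the maximum of $\Phi$ is attained on $\partial\Omega$, it is attained on the inner component $\Gamma_1$; it then reruns the entire level-set machinery of Theorem \ref{theo1} with $\widetilde{\Gamma}=\Gamma_1$. Convexity enters exactly once, through the Navier--Stokes approximations: in the proof of the key estimate \eqref{202309111529}, the identity \eqref{202311131230} produces the boundary terms $-\tfrac1\nu\int_{\Gamma_1}\beta\abs{{\bu_k}_{\tau}}^2\,ds-2\int_{\Gamma_1}\kappa(v_{k,1})^2\,ds$, both nonpositive because $\beta\ge0$ and $\kappa\ge0$ on the convex $\Gamma_1$, so they may be dropped and \eqref{202309111529} follows; compare Remark \ref{rem:inflow}, which explains why this fails when the maximum sits on $\Gamma_0$, where $\kappa<0$. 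A further symptom of the limit/approximation confusion in your plan: you invoke Morse--Sard for ``$\Phi\in W^{2,2}(\Omega)$'', but the Euler limit's head pressure lies only in $W^{1,q}(\Omega)$, $q<2$; the $W^{2,2}_{\mathrm{loc}}$ regularity (and hence Morse--Sard and the coarea argument) is available for the $\Phi_k$ and for the stream function $\psi$, which is precisely why the separation-of-boundary-components argument in Lemma \ref{lem:keyestimate} and Subsection \ref{theo1_contradiction} is carried out on $\Phi_k$ rather than on $\Phi$.
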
 
\begin{rema}(i) Note that the assumption on the sign of $\int_{\Gamma_0} a_{\ast}\,ds$ in Theorem \ref{theo2} is opposite to that of \cite{MR3004771}*{Theorem 4.1}. 

(ii) In the case when the domain $\Omega$ is an annulus, the additional assumption that $\beta\not\equiv0$ in Theorem \ref{theo2} (and Theorem \ref{theo4} below) is required for the validity of the Korn-type inequality \eqref{ineq:Korn}. See also Remark \ref{rem:Korn} (ii).
\end{rema}

\begin{rema}\label{rem:theo1_reg} Under the smoothness assumptions on $\Omega$, $\bff$, $a_{\ast}$, $\beta$ and $\bb_{\ast}$ as in Theorems \ref{theo1} and \ref{theo2}, every weak solution $\bu$ and the pressure $p$ associated with $\bu$ become more regular,  i.e.,  $\bu\in W^{2,2}(\Omega)\cap W^{3,2}_{\mathrm{loc}}(\Omega)$ and $p\in W^{1,2}(\Omega)\cap W^{2,2}_{\text{loc}}(\Omega)$; see also Theorem \ref{thm:grNS}. Such regularity assumptions on the data are necessary in order to apply the Morse-Sard theorem for Sobolev functions to each $\Phi_k\in W^{2,2}_{\text{loc}}(\Omega)$ (see the comments below \eqref{202310041719}) and for the validity of the identities \eqref{202309121218} and \eqref{202309121217} in the sense of $L^2(\partial\Omega)$.
\end{rema}

\subsection{A remark on a priori estimates for solutions}The proof of Theorem \ref{theo1} [respectively, Theorem \ref{theo2}] is based on the Leray-Schauder fixed point theorem, and the required a priori estimate for solutions is established under the condition \eqref{theo1_assumption} [respectively, the outflow condition \eqref{outflow_condition}]. Then it is natural to ask if such assumptions are indeed necessary for the solvability of the problem \eqref{NS}-\eqref{slip} when the fluxes of $a_{\ast}$ through each connected component $\Gamma_j$ of $\partial\Omega$ are large. Even though an answer to this question is not yet known, the following example\footnote{The authors learned Example \ref{exam:Hamel} from Mikhail V. Korobkov through personal communication in January 2024.} implies that, in contrast to the problem \eqref{NS} with the no-slip boundary condition \eqref{no_slip} (see \cites{MR3004771,MR3275850}), solutions to the problem \eqref{NS}-\eqref{slip} do not admit a priori estimates in general, and that the assumptions \eqref{theo1_assumption} and \eqref{outflow_condition} are indispensable for deriving the required a priori estimate for solutions.
\begin{exam}[due to Mikhail V. Korobkov and Xiao Ren]\label{exam:Hamel} Let $\Omega=\{x\in\BR^2:1<\abs{x}<2\}$, $\Gamma_0=\{x\in\BR^2:\abs{x}=2\}$, and $\Gamma_1=\{x\in\BR^2:\abs{x}=1\}$. Let $\beta\equiv3/4$ on $\Gamma_0$ and $\beta\equiv0$ on $\Gamma_1$. Suppose that the kinematic viscosity constant $\nu=1$ and $\bff=\bb_{\ast}=\boldsymbol{0}$. Then for the boundary datum $a_{\ast}\in C^{\infty}(\partial\Omega)$ defined by $a_{\ast}\equiv-3/2$ on $\Gamma_0$ and $a_{\ast}\equiv3$ on $\Gamma_1$, the Navier-Stokes problem \eqref{NS}-\eqref{slip} possesses infinitely many explicit solutions $\{\bu_k,p_k\}_{k\in\BR}$ of the form
\be\label{202409241725}
\bu_k(x)=-\frac{3}{\abs{x}^2}x+k\frac{3\abs{x}-2}{\abs{x}^3}x^{\perp},\enskip p_k(x)=-\frac{\abs{\bu_k(x)}^2}{2},\enskip k\in\BR,
\ee
where $(x_1,x_2)^{\perp}=(-x_2,x_1)$. Here it should be noticed that $\norm{\nabla\bu_k}_{L^2(\Omega)}\to\infty$ as $k\to\infty$.
\end{exam}
\begin{rema}\label{rem:Hamel}(i) The explicit solutions given in \eqref{202409241725} are known as the Hamel solutions; see \cite{MR2808162}*{Section X\hspace{-1.2pt}I\hspace{-1.2pt}I.2}, \cite{Hamel1917}*{Section 11}.

(ii) In Example \ref{exam:Hamel}, the condition \eqref{theo1_assumption} is not satisfied. Indeed, since $\kappa(x)=-1/2$ on $\Gamma_0$, it holds that $-2\kappa(x)=1>3/4=\beta/\nu$ on $\Gamma_0$.

(iii) Concerning the boundary datum $a_{\ast}$ in Example \ref{exam:Hamel}, we note that
\be
\int_{\partial\Omega}a_{\ast}\,ds=\int_{\abs{x}=2}-\frac{3}{2}\,ds+\int_{\abs{x}=1}3\,ds=-6\pi+6\pi=0,
\ee
and that the outflow condition \eqref{outflow_condition} is not fulfilled.
\end{rema}

\subsection{Existence results for sufficiently small fluxes} We shall next show that, even if the condition \eqref{theo1_assumption} is not satisfied, the problem \eqref{NS}-\eqref{slip} admits at least one weak solution in arbitrary bounded domains provided that the fluxes $\int_{\Gamma_j} a_{\ast}\,ds$ through $\Gamma_j$, $j=0,\dots,N$, are sufficiently small. In order to state our next result, let us first recall the result due to Kozono-Yanagisawa \cite{MR2550139} concerning the harmonic part of solenoidal extensions of the boundary datum $a_{\ast}$. Denote by $\tilde{V}_{\text{har}}(\Omega)$ the space of the harmonic vector fields defined by
\be
\tilde{V}_{\text{har}}(\Omega)\equiv\{\bh=(h_1,h_2)\in C^{\infty}(\overline{\Omega}):\dv\bh=0, \curl\bh=0\enskip\text{in }\Omega,\,\bh\wedge\bn=0\enskip\text{on }\partial\Omega\},
\ee
where $\curl\bh=\frac{\partial h_1}{\partial x_2}-\frac{\partial h_2}{\partial x_1}$ and $\bh\wedge\bn=h_2n_1-h_1n_2$. It is known that (see \cite{MR2550139}*{Theorem 3.20 (I)}) the space $\tilde{V}_{\text{har}}(\Omega)$ of harmonic vector fields is $N$-dimensional and a basis of $\tilde{V}_{\text{har}}(\Omega)$ is given by $\{\nabla q_1,\dots,\nabla q_N\}$, where $q_{k}$ are solutions of the following Dirichlet boundary value problem for the Laplace equation:
\be\label{eq:Laplace}
\Delta q_{k}=0 \quad \text{in }\Omega\quad\text{and }q_{k}|_{\Gamma_j}=\delta_{jk}\quad \text{for }j=0,\dots,N.
\ee
By applying the Schmidt orthonormalization procedure to the basis $\{\nabla q_1,\dots,\nabla q_N\}$ in $L^2(\Omega)$, we obtain the orthonormal basis $\{\bfpsi_1,\dots,\bfpsi_N\}$ of $\tilde{V}_{\text{har}}(\Omega)$. Then there is an $N\times N$ regular matrix $(\alpha_{ik})_{1\le i,k\le N}$ depending on $\Omega$ such that
\be\label{Schmidt}
\bfpsi_i(x)=\sum_{k=1}^{N}\alpha_{ik}\nabla q_{k}(x),\enskip x\in\Omega,\enskip j=1,\dots,N.
\ee
We shall need the following result.
\begin{prop}[see Lemma \ref{solenoidalextension} and \cite{MR2550139}*{Proposition 1.1}]\label{prop:harmonic} Let $\Omega\subseteq\BR^2$ be a smooth bounded domain defined by \eqref{domain}. Assume that the boundary datum $a_{\ast}\in W^{1/2,2}(\partial\Omega)$ satisfies the compatibility condition \eqref{flux}.
\begin{enumerate}
\item There exists a vector field $\bA\in W^{1,2}(\Omega)$ such that
\be\label{202312151104}
\dv\bA=0\enskip\text{in }\Omega\enskip\text{and }\bA\cdot\bn=a_{\ast}\enskip\text{on }\partial\Omega.
\ee
\item For any vector field $\bA\in W^{1,2}(\Omega)$ satisfying \eqref{202312151104}, there exist $\bh\in \tilde{V}_{\text{har}}(\Omega)$ and $w\in W^{2,2}(\Omega,\BR)$ such that $\bA$ can be represented as
\be\label{202312151105}
\bA=\bh+\nabla^{\perp}w\enskip\text{in }\Omega,
\ee
where $\nabla^{\perp}w=(-\frac{\partial}{\partial x_2}w,\frac{\partial}{\partial x_1}w)$.
\item The harmonic part $\bh$ in \eqref{202312151105} is given explicitly as
\be\label{iden:harmonic}
\bh=\sum_{i,j=1}^{N}\alpha_{ij}\bfpsi_{i}\int_{\Gamma_j}a_{\ast}\,ds=\sum_{k=1}^{N}\nabla q_{k}(x)\sum_{i=1}^{N}\alpha_{ik}\sum_{j=1}^{N}\alpha_{ij}\int_{\Gamma_j}a_{\ast}\,ds.
\ee
Here $(\alpha_{ik})_{1\le i,k \le N}$, $\{\bfpsi_1,\dots,\bfpsi_N\}$ and $\{q_1,\dots,q_N\}$ are the $N\times N$ regular matrix, the orthonormal basis of $\tilde{V}_{\text{har}}(\Omega)$ in the $L^2(\Omega)$-sense, and $N$ harmonic functions appearing in \eqref{eq:Laplace} and \eqref{Schmidt}, respectively.
\end{enumerate}
\end{prop}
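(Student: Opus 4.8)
The plan is to handle the three assertions in turn, viewing (i) and (ii) as the classical construction of a solenoidal extension together with its Helmholtz--Weyl decomposition (so that Lemma~\ref{solenoidalextension} and \cite{MR2550139}*{Proposition 1.1} may be invoked), and reserving the real work for the explicit formula~\eqref{iden:harmonic} in (iii), which follows from two integrations by parts once the decomposition is in hand. For (i), I would produce the extension by solving the Neumann problem $\Delta\phi=0$ in $\Omega$ with $\nabla\phi\cdot\bn=a_{\ast}$ on $\partial\Omega$, whose solvability is guaranteed precisely by the compatibility condition $\int_{\partial\Omega}a_{\ast}\ds=0$ from \eqref{flux}; then $\bA:=\nabla\phi\in W^{1,2}(\Omega)$ is divergence-free with $\bA\cdot\bn=a_{\ast}$, giving \eqref{202312151104}.

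For (ii), given any $\bA$ satisfying \eqref{202312151104}, I would look for $w$ so that $\bh:=\bA-\nabla^{\perp}w$ lands in $\tilde{V}_{\text{har}}(\Omega)$. Since $\dv\nabla^{\perp}w=0$, the requirement $\dv\bh=0$ holds automatically. A direct computation in the paper's conventions gives $\curl\nabla^{\perp}w=-\Delta w$ and $\nabla^{\perp}w\wedge\bn=\nabla w\cdot\bn$, so that $\curl\bh=0$ and $\bh\wedge\bn=0$ collapse to the single Neumann problem
\[
\Delta w=-\curl\bA\ \text{in }\Omega,\qquad \nabla w\cdot\bn=\bA\wedge\bn\ \text{on }\partial\Omega.
\]
Its compatibility condition $\int_{\Omega}(-\curl\bA)\dx=\int_{\partial\Omega}\bA\wedge\bn\ds$ is satisfied automatically, since both sides equal $\int_{\partial\Omega}(A_2n_1-A_1n_2)\ds$ by the divergence theorem. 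Elliptic regularity then gives $w\in W^{2,2}(\Omega)$, and because the div-free, curl-free field $\bh$ has harmonic components and $\partial\Omega$ is $C^{\infty}$, one obtains $\bh\in C^{\infty}(\overline{\Omega})$, hence $\bh\in\tilde{V}_{\text{har}}(\Omega)$ and the representation \eqref{202312151105}.

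For (iii), I would expand $\bh$ in the orthonormal basis, $\bh=\sum_{i}(\bh,\bfpsi_i)_{L^2(\Omega)}\bfpsi_i$, and use \eqref{Schmidt} to reduce everything to the pairings $\int_{\Omega}\bh\cdot\nabla q_k\dx$. Integrating by parts and using $\dv\bh=0$ yields $\int_{\Omega}\bh\cdot\nabla q_k\dx=\int_{\partial\Omega}(\bh\cdot\bn)q_k\ds=\int_{\Gamma_k}\bh\cdot\bn\ds$, the last step via the normalization $q_k|_{\Gamma_j}=\delta_{jk}$ from \eqref{eq:Laplace}. It then remains to identify the flux of $\bh$ through $\Gamma_k$ with that of $a_{\ast}$: from \eqref{202312151105} one has $\bA\cdot\bn=\bh\cdot\bn+\nabla^{\perp}w\cdot\bn$, and $\nabla^{\perp}w\cdot\bn$ is, up to sign, the tangential derivative of $w$, which integrates to zero over the closed curve $\Gamma_j$; hence $\int_{\Gamma_j}a_{\ast}\ds=\int_{\Gamma_j}\bh\cdot\bn\ds$. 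Combining these gives $(\bh,\bfpsi_i)_{L^2(\Omega)}=\sum_{j}\alpha_{ij}\int_{\Gamma_j}a_{\ast}\ds$, which is exactly the first equality of \eqref{iden:harmonic}, and substituting \eqref{Schmidt} once more produces the second equality.

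The routine parts are the two integrations by parts and the sign bookkeeping for $\curl$, $\nabla^{\perp}$ and $\wedge$. The genuinely load-bearing point is the existence step in (ii): verifying that the Neumann problem is solvable—i.e.\ that its compatibility condition is met automatically for \emph{any} divergence-free $\bA$—since this is precisely what makes the decomposition \eqref{202312151105} exist. Once that is secured, everything in (iii) is a clean consequence of $\dv\bh=0$, the normalization $q_k|_{\Gamma_j}=\delta_{jk}$, and the vanishing of the closed-loop integral of the tangential derivative of $w$.
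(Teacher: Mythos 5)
Your proposal is correct in outline and, unlike the paper, is essentially self-contained: the paper proves part (i) exactly as you do (this is Lemma \ref{solenoidalextension}: solve the Neumann problem \eqref{Neumann} and take $\bA=\nabla q$), but for parts (ii) and (iii) it does not argue at all — it cites \cite{MR2550139}*{Proposition 1.1 and Theorem 3.20}. Your reduction of (ii) to the Neumann problem $\Delta w=-\curl\bA$ in $\Omega$, $\nabla w\cdot\bn=\bA\wedge\bn$ on $\partial\Omega$ is the right move; the sign identities $\curl\nabla^{\perp}w=-\Delta w$ and $\nabla^{\perp}w\wedge\bn=\nabla w\cdot\bn$ are correct in the paper's conventions, and the compatibility condition does hold automatically (both sides equal $\int_{\partial\Omega}(A_2n_1-A_1n_2)\,ds$, by the divergence theorem applied to $(A_2,-A_1)$ — note this needs no divergence-freeness of $\bA$; that hypothesis enters only to give $\dv\bh=0$). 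The computation in (iii) — expansion in the orthonormal basis, the identity $\int_{\Omega}\bh\cdot\nabla q_k\,dx=\int_{\Gamma_k}\bh\cdot\bn\,ds$ from $\dv\bh=0$ and $q_k|_{\Gamma_j}=\delta_{jk}$, and $\int_{\Gamma_j}\nabla^{\perp}w\cdot\bn\,ds=0$ because this is the integral of the tangential derivative of the single-valued function $w$ over a closed curve — is also correct.

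The one step that does not hold as written is the assertion that $\bh=\bA-\nabla^{\perp}w$ belongs to $C^{\infty}(\overline{\Omega})$ ``because the div-free, curl-free field $\bh$ has harmonic components and $\partial\Omega$ is $C^{\infty}$''. Interior harmonicity gives only $\bh\in C^{\infty}(\Omega)$; a harmonic function on a smooth domain need not be smooth up to the boundary (its boundary regularity is governed by its boundary data, which here are a priori only of class $W^{1/2,2}$). Since the paper's definition of $\tilde{V}_{\text{har}}(\Omega)$ demands smoothness on $\overline{\Omega}$, and both the conclusion of (ii) and the basis expansion you use in (iii) hinge on $\bh\in\tilde{V}_{\text{har}}(\Omega)$, this step — not the Neumann solvability — is the genuinely load-bearing one, and it needs an argument that uses the boundary condition $\bh\wedge\bn=0$. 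A short repair within your framework: since $\curl\bh=0$ in $\Omega$ and the tangential component $\bh\cdot\tau=\bh\wedge\bn$ (with $\tau=(-n_2,n_1)$) vanishes on $\partial\Omega$, the circulation of $\bh$ along every cycle in $\Omega$ is zero (circulations of curl-free fields are homology invariants, and every cycle in $\Omega$ is homologous to a combination of the $\Gamma_j$, on which the tangential trace vanishes); hence $\bh=\nabla\phi$ for a single-valued $\phi\in W^{2,2}(\Omega)$, which is harmonic because $\Delta\phi=\dv\bh=0$ and whose tangential derivative vanishes on $\partial\Omega$, so that $\phi\equiv d_j=\const$ on each $\Gamma_j$, $j=0,\dots,N$. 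By uniqueness for the Dirichlet problem, $\phi-d_0=\sum_{k=1}^{N}(d_k-d_0)q_k$, whence $\bh=\sum_{k=1}^{N}(d_k-d_0)\nabla q_k$, which lies in $C^{\infty}(\overline{\Omega})$ by elliptic regularity for \eqref{eq:Laplace}. This simultaneously yields $\bh\in\tilde{V}_{\text{har}}(\Omega)$ and, after computing the fluxes, gives an independent derivation of \eqref{iden:harmonic}; alternatively, one may invoke elliptic regularity for the div--curl system with prescribed tangential component, or simply quote \cite{MR2550139}*{Theorem 3.20} as the paper does.
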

Henceforth any vector field $\bA$ satisfying \eqref{202312151104} will be called a solenoidal extension of the boundary datum $a_{\ast}$.
\begin{rema}(i) The regular matrix $(\alpha_{ik})_{1\le i,k\le N}$ in \eqref{Schmidt} and \eqref{iden:harmonic} can be represented by means of the harmonic functions  $\{q_1,\dots,q_N\}$ in \eqref{eq:Laplace}; see \cite{MR2550139}*{Proposition 1.1 (I\hspace{-1.2pt}I)}.

(ii) It follows from \eqref{iden:harmonic} and the above remark that the harmonic part $\bh$ of $\bA$ depends only on the basis $\{\nabla q_1,\dots,\nabla q_N\}$ of $\tilde{V}_{\text{har}}(\Omega)$ and the fluxes $\int_{\Gamma_j}a_{\ast}\,ds$ through $\Gamma_j$, $1\le j\le N$. Thus, in particular, the harmonic part of solenoidal extensions of the boundary datum $a_{\ast}$ is independent of the particular choice of the extension.
\end{rema}
With Proposition \ref{prop:harmonic} in hand, we can prove the following.
\begin{theo}\label{theo4} Let $\Omega$ be a smooth bounded domain defined by \eqref{domain}, and let the friction coefficient $\beta\in C(\partial\Omega)$ be nonnegative. Suppose that the boundary data $a_{\ast}\in W^{1/2,2}(\partial\Omega)$ and $\bb_{\ast}\in W^{-1/2,2}(\partial\Omega)$ satisfy the compatibility conditions \eqref{theo1_comp} and the external force $\bff$ is in $H(\Omega)'$. Assume further that $\beta\not\equiv0$ in the case when the domain $\Omega$ has a circular symmetry around some point.\footnote{Define $\Omega_{\theta}^{g}$ as the image of $\Omega$ by the rotation of angle $\theta$ around the point $g\in\BR^2$. We say that $\Omega$ has a circular symmetry around the point $g$ if the equality $\Omega=\cup_{0\le\theta<2\pi}\Omega_{\theta}^{g}$ holds.\label{def:circular_symmetry}} Let $\bh$ be the harmonic part of the solenoidal extension of $a_{\ast}$ given by \eqref{iden:harmonic}. Then, if the inequality
\be\label{theo4_assumption}
\sup_{\bz\in E(\Omega), \bz\neq\boldsymbol{0}}\frac{-\displaystyle\int_{\Omega}\bh(x)\cdot(\curl\bz)(x)\left(-z_2(x),z_1(x)\right)\,dx}{\norm{\bz}_{W^{1,2}(\Omega)}^2}<\frac{\nu}{2}K(\Omega, \frac{2\beta}{\nu})^{-1}
\ee
holds, the Navier-Stokes problem \eqref{NS}-\eqref{slip} admits at least one weak solution $\bu\in W^{1,2}(\Omega)$. Here
\be
E(\Omega)\equiv\{\bz=\bz(x)=(z_1(x),z_2(x))\in J(\Omega):\int_{\Omega}(\bz\cdot\nabla)\bz\cdot\bfxi\,dx=0\enskip\text{for all }\bfxi\in C^{\infty}_{0,\sigma}(\Omega)\}
\ee
is the set of weak solutions to the steady Euler equation with homogeneous boundary conditions and $K(\Omega,\frac{2\beta}{\nu})$ is the best constant of the following Korn-type inequality:
\be\label{theo4_Korn}
\norm{\bv}_{W^{1,2}(\Omega)}^2\le K(\Omega,\frac{2\beta}{\nu})\left(\int_{\Omega}S(\bv):S(\bv)\,dx+\int_{\partial\Omega}\frac{2\beta}{\nu}\bv_{\tau}\cdot\bv_{\tau}\,ds\right)\enskip\text{for all }\bv\in H(\Omega).
\ee
\end{theo}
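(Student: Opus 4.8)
The plan is to obtain the solution via the Leray--Schauder fixed point theorem, so that everything reduces to an a priori bound, uniform in the homotopy parameter $\lambda\in[0,1]$, for solutions of the family of problems obtained by multiplying the convective term $\int_\Omega(\bu\cdot\nabla)\bu\cdot\bfvarphi\,dx$ in \eqref{def:weakNS_nonhom} by $\lambda$. First I would fix a solenoidal extension $\bA=\bh+\nabla^{\perp}w$ of $a_\ast$ as in Proposition \ref{prop:harmonic}, with $\bh$ the harmonic part \eqref{iden:harmonic} and $w\in W^{2,2}(\Omega)$, and write $\bu=\bv+\bA$ with $\bv\in J(\Omega)$. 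Substituting and testing the resulting identity with $\bfvarphi=\bv$, the purely cubic term $\int_\Omega(\bv\cdot\nabla)\bv\cdot\bv\,dx$ vanishes since $\bv\cdot\bn=0$, and the whole convective contribution collapses, after integration by parts, to the single critical quadratic functional
\[
Q(\bv):=\frac12\int_{\partial\Omega}a_\ast\abs{\bv}^2\,ds+\int_\Omega(\bv\cdot\nabla)\bA\cdot\bv\,dx,
\]
all the remaining contributions of $\bA$, $\bff$ and $\bb_\ast$ being at most linear in $\bv$.

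I would then argue by contradiction in the spirit of Korobkov--Pileckas--Russo. If the bound fails, there is a sequence of solutions $\bu^{(k)}=\bv^{(k)}+\bA$ at parameters $\lambda_k\to\lambda_0$ with $J_k:=\norm{\bv^{(k)}}_{W^{1,2}(\Omega)}\to\infty$. Setting $\hat\bv^{(k)}:=\bv^{(k)}/J_k$, dividing the weak formulation by $J_k^2$ and passing to the limit, the viscous and data terms are $O(1/J_k)$ when tested against fixed functions, so the weak limit $\hat\bv$ is a homogeneous weak Euler solution, $\hat\bv\in E(\Omega)$; dividing the energy identity by $J_k^2$ gives
\[
\lim_{k\to\infty}\left(\frac\nu2\int_\Omega S(\hat\bv^{(k)}):S(\hat\bv^{(k)})\,dx+\int_{\partial\Omega}\beta\abs{\hat\bv^{(k)}_\tau}^2\,ds\right)=-\lambda_0\,Q(\hat\bv),
\]
where the compact trace and Rellich embeddings furnish the strong convergence needed to identify $\lim_k Q(\hat\bv^{(k)})=Q(\hat\bv)$.

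The heart of the matter is to evaluate $Q$ on Euler solutions. For $\bz\in E(\Omega)$, integration by parts (using $\dv\bz=0$ and $\bz\cdot\bn=0$) turns $\int_\Omega(\bz\cdot\nabla)\bA\cdot\bz\,dx$ into $-\int_\Omega(\bz\cdot\nabla)\bz\cdot\bA\,dx$, and the algebraic identity $(\bz\cdot\nabla)\bz=\tfrac12\nabla\abs{\bz}^2-(\curl\bz)(-z_2,z_1)$ shows that the boundary term $\tfrac12\int_{\partial\Omega}a_\ast\abs{\bz}^2\,ds$ cancels exactly, leaving $Q(\bz)=\int_\Omega\bA\cdot(\curl\bz)(-z_2,z_1)\,dx$. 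To replace $\bA$ by $\bh$ I would use that $\bz$ is an Euler solution: then $(\curl\bz)(-z_2,z_1)=\nabla\Phi$ for the total head pressure $\Phi$, so the gradient part of $\bA$ contributes $\int_\Omega\nabla^{\perp}w\cdot\nabla\Phi\,dx$, which integration by parts converts into boundary integrals of $(\partial_\tau w)\Phi$ over the closed curves $\Gamma_j$; by Corollary \ref{cor:Bernoulli} the total head pressure $\Phi$ is \emph{constant} on each $\Gamma_j$, and since $\int_{\Gamma_j}\partial_\tau w\,ds=0$ this term vanishes. Hence $Q(\bz)=\int_\Omega\bh\cdot(\curl\bz)(-z_2,z_1)\,dx=-N(\bz)$, where $N(\bz):=-\int_\Omega\bh\cdot(\curl\bz)(-z_2,z_1)\,dx$ is the numerator of the quotient in \eqref{theo4_assumption}.

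Finally I would combine this with the Korn-type inequality \eqref{theo4_Korn}. Applying \eqref{theo4_Korn} to each $\hat\bv^{(k)}$ and passing to the limit yields $1\le\frac{2}{\nu}K(\Omega,\frac{2\beta}{\nu})\bigl(-\lambda_0 Q(\hat\bv)\bigr)=\frac{2}{\nu}K(\Omega,\frac{2\beta}{\nu})\lambda_0 N(\hat\bv)$, whence $N(\hat\bv)\ge\frac\nu2K(\Omega,\frac{2\beta}{\nu})^{-1}>0$; in particular $\hat\bv\neq\boldsymbol{0}$. On the other hand $\norm{\hat\bv}_{W^{1,2}(\Omega)}\le1$ by weak lower semicontinuity, so the assumption \eqref{theo4_assumption} forces $N(\hat\bv)<\frac\nu2K(\Omega,\frac{2\beta}{\nu})^{-1}\norm{\hat\bv}_{W^{1,2}(\Omega)}^2\le\frac\nu2K(\Omega,\frac{2\beta}{\nu})^{-1}$, a contradiction; this gives the a priori estimate and hence existence (the hypothesis $\beta\not\equiv0$ in the circularly symmetric case being precisely what guarantees the finiteness of the Korn constant in \eqref{theo4_Korn}). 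I expect the main obstacle to be the rigorous justification of the limit functional and of the integration by parts leading to $Q(\bz)=-N(\bz)$: one must ensure enough regularity and strong convergence to pass to the limit in $Q(\hat\bv^{(k)})$, verify that $\hat\bv\in E(\Omega)$ with total head pressure admitting a well-defined boundary trace so that Corollary \ref{cor:Bernoulli} applies, and confirm that $\lambda_0>0$ (the case $\lambda_0=0$ being settled by the linear a priori bound).
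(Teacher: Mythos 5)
Your proposal is correct and takes essentially the same route as the paper: the Korobkov--Pileckas--Russo blow-up argument producing a normalized limit $\bv\in E(\Omega)$ with pressure, the reduction of the critical quadratic form to $\int_{\Omega}\bh\cdot(\curl\bv)(-v_2,v_1)\,dx$ by splitting off $\nabla^{\perp}w$ and killing it with Bernoulli's law (Corollary \ref{cor:Bernoulli}), exactly as in \eqref{202410181029}--\eqref{202312291113}, and finally the Korn inequality \eqref{theo4_Korn} together with \eqref{theo4_assumption} to reach the contradiction. The only deviations are cosmetic: the paper normalizes by the $J(\Omega)$-norm and uses the weak Stokes solution $\widetilde{\bU}$ as the solenoidal extension (so the data terms vanish from the homotopy identity), while you normalize by the $W^{1,2}$-norm, keep a generic extension $\bA=\bh+\nabla^{\perp}w$ with the linear data terms, and obtain the key identity via the pointwise relation $(\bz\cdot\nabla)\bz=\tfrac12\nabla\abs{\bz}^2-(\curl\bz)(-z_2,z_1)$ rather than via the boundary integral $-\int_{\partial\Omega}\Phi a_{\ast}\,ds$ of \eqref{08251507}.
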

As an immediate consequence of Theorem \ref{theo4}, we can prove that the problem \eqref{NS}-\eqref{slip} has a solution provided that the harmonic part of the solenoidal extension of the boundary datum $a_{\ast}$ is sufficiently small. More precisely, we have
\begin{cor}\label{cor:harmonic} Let $\Omega$, $\bff$, $\beta$, $a_{\ast}$, $\bb_{\ast}$ and $\bh$ be as in Theorem $\ref{theo4}$. Let $2<q<\infty$. If the inequality
\be\label{202312201219}
\sqrt{2}C_r\norm{\bh}_{L^q(\Omega)}<\frac{\nu}{2}K(\Omega,\frac{2\beta}{\nu})^{-1}
\ee
holds for $r=\frac{2q}{q-2}$, then there exists at least one weak solution $\bu\in W^{1,2}(\Omega)$ to the problem \eqref{NS}-\eqref{slip}. Here $C_r$ $(2<r<\infty)$ is the best constant of the Sobolev inequality
\be\label{theo4_Sobolev}
\norm{\bv}_{L^r(\Omega)}\le C_r\norm{\bv}_{W^{1,2}(\Omega)}\enskip\text{for all }\bv\in W^{1,2}(\Omega).
\ee
\end{cor}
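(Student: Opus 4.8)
The plan is to derive Corollary \ref{cor:harmonic} directly from Theorem \ref{theo4} by showing that the smallness hypothesis \eqref{202312201219} forces the supremum appearing in assumption \eqref{theo4_assumption} to be strictly smaller than $\frac{\nu}{2}K(\Omega,\frac{2\beta}{\nu})^{-1}$. Thus everything reduces to producing, for every $\bz=(z_1,z_2)\in E(\Omega)$, the estimate
\[
-\int_{\Omega}\bh\cdot(\curl\bz)(-z_2,z_1)\,dx\le\sqrt{2}\,C_r\norm{\bh}_{L^q(\Omega)}\norm{\bz}_{W^{1,2}(\Omega)}^2.
\]
Once this is in hand, the supremum in \eqref{theo4_assumption} is bounded above by $\sqrt{2}\,C_r\norm{\bh}_{L^q(\Omega)}$, and \eqref{202312201219} is exactly the statement that this upper bound is less than $\frac{\nu}{2}K(\Omega,\frac{2\beta}{\nu})^{-1}$, so Theorem \ref{theo4} applies and yields the desired weak solution.

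First I would bound the integrand pointwise. Since $|(-z_2,z_1)|=|\bz|$, the absolute value of the numerator does not exceed $\int_{\Omega}|\bh|\,|\curl\bz|\,|\bz|\,dx$. I then apply H\"older's inequality to this triple product with the three exponents $q$, $2$, and $r$. The balance condition $\frac{1}{q}+\frac{1}{2}+\frac{1}{r}=1$ is precisely what singles out $r=\frac{2q}{q-2}$, the value prescribed in the statement, and it produces the factorization $\norm{\bh}_{L^q(\Omega)}\norm{\curl\bz}_{L^2(\Omega)}\norm{\bz}_{L^r(\Omega)}$.

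Next I would estimate the two $\bz$-factors separately. For the middle factor I use the elementary pointwise inequality $|\curl\bz|^2=(\partial_2 z_1-\partial_1 z_2)^2\le 2|\nabla\bz|^2$, which gives $\norm{\curl\bz}_{L^2(\Omega)}\le\sqrt{2}\,\norm{\nabla\bz}_{L^2(\Omega)}\le\sqrt{2}\,\norm{\bz}_{W^{1,2}(\Omega)}$; this is the origin of the constant $\sqrt{2}$ in the statement. For the remaining factor I invoke the Sobolev inequality \eqref{theo4_Sobolev}, namely $\norm{\bz}_{L^r(\Omega)}\le C_r\norm{\bz}_{W^{1,2}(\Omega)}$. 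Combining the three bounds yields the displayed estimate.

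Since the argument is merely an application of H\"older's inequality together with two standard embeddings, I do not anticipate any genuine obstacle. The only point requiring care is the bookkeeping of the conjugate exponents: one must verify that the prescribed relation $r=\frac{2q}{q-2}$ is exactly the one making H\"older applicable, and that the assumption $2<q<\infty$ forces $2<r<\infty$, so that the constant $C_r$ in \eqref{theo4_Sobolev} is indeed finite and the inequality is available.
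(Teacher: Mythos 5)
Your proposal is correct and follows essentially the same route as the paper's proof: H\"older's inequality with exponents $q$, $2$, $r=\frac{2q}{q-2}$ applied to $\bh$, $\curl\bz$, $(-z_2,z_1)$, followed by the pointwise bound $\abs{\curl\bz}\le\sqrt{2}\abs{\nabla\bz}$ and the Sobolev inequality \eqref{theo4_Sobolev}, after which \eqref{202312201219} immediately yields \eqref{theo4_assumption} and Theorem \ref{theo4} applies. Your careful bookkeeping of the conjugate exponents and the verification that $2<q<\infty$ forces $2<r<\infty$ is exactly the right point to check.
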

\begin{rema}\label{rem:small_flux} In view of \eqref{iden:harmonic}, it is not difficult to show that the condition \eqref{202312201219} can be replaced by
\be
\sum_{j=1}^{N}c_j\left|\int_{\Gamma_j}a_{\ast}\,ds\right|<\frac{\nu}{2}K(\Omega,\frac{2\beta}{\nu})^{-1},
\ee
where $c_j$ are certain constants depending only on $\Omega$ and the exponent $q$. It should also be noted that, if the boundary datum $a_{\ast}$ satisfies the restriction \eqref{zero_flux_slip}, then $\bh\equiv\boldsymbol{0}$, and hence the inequality \eqref{theo4_assumption} is necessarily satisfied regardless of the shape of $\Omega$ and the values of $\nu$ and $\beta$. 
 \end{rema}
 With the aid of Corollary \ref{cor:harmonic} and Remark \ref{rem:small_flux}, Theorem \ref{theo2} can be strengthened to the following one:
\begin{cor} Let $\Omega, \bff, \beta, a_{\ast}$ and $\bb_{\ast}$  be as in Theorem $\ref{theo2}$. Suppose that the inner hole $\Omega_1$ of the domain $\Omega$ is convex. Then there exists a positive constant $\CF_{0}=\CF_{0}(\Omega,\nu,\beta)>0$ depending only on $\Omega$, $\nu$ and $\beta$ such that if
\be
\int_{\Gamma_{0}}a_{\ast}\,ds\in(-\CF_{0},+\infty),
\ee
the problem \eqref{NS}-\eqref{slip} admits at least one weak solution.
\end{cor}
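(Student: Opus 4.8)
The plan is to patch together the two solvability results already available in the doubly-connected setting: Theorem~\ref{theo2}, which covers \emph{all} nonnegative values of the flux through the outer boundary under the convexity of $\Omega_1$, and the small-flux result encoded in Corollary~\ref{cor:harmonic} together with Remark~\ref{rem:small_flux}, which covers a neighborhood of zero flux. Since the hypotheses here are exactly those of Theorem~\ref{theo2} (with $N=1$ and $\Omega_1$ convex), and since the regularity assumptions $a_{\ast}\in W^{3/2,2}(\partial\Omega)$, $\bb_{\ast}\in W^{1/2,2}(\partial\Omega)$, $\bff\in W^{1,2}(\Omega)$ are stronger than those required in Theorem~\ref{theo4}, both results apply. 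The negative-flux gap left open by the outflow condition \eqref{outflow_condition} is precisely the regime where the outer flux is small in absolute value, so the two admissible ranges overlap and their union is a half-line.

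First I would fix the threshold $\CF_0$. By Remark~\ref{rem:small_flux}, specialized to $N=1$, the problem \eqref{NS}-\eqref{slip} is solvable whenever
\[
c_1\abs{\int_{\Gamma_1}a_{\ast}\,ds}<\frac{\nu}{2}K(\Omega,\tfrac{2\beta}{\nu})^{-1},
\]
where $c_1$ is a constant depending only on $\Omega$ and a fixed exponent $q\in(2,\infty)$. One checks $c_1>0$ from \eqref{iden:harmonic}: for $N=1$ the harmonic part reduces to $\bh=\alpha_{11}^2\bigl(\int_{\Gamma_1}a_{\ast}\,ds\bigr)\nabla q_1$, and since $q_1$ is the nonconstant harmonic function with $q_1|_{\Gamma_0}=0$, $q_1|_{\Gamma_1}=1$, we have $\nabla q_1\not\equiv\boldsymbol{0}$, while regularity of the matrix $(\alpha_{ik})$ forces $\alpha_{11}\neq0$. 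Accordingly I set
\[
\CF_0\equiv\frac{\nu}{2c_1}\,K(\Omega,\tfrac{2\beta}{\nu})^{-1}>0,
\]
which by construction depends only on $\Omega$, $\nu$, and $\beta$, as claimed.

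Next comes the case distinction on the sign of the outer flux. If $\int_{\Gamma_0}a_{\ast}\,ds\ge0$, then the outflow condition \eqref{outflow_condition} holds and Theorem~\ref{theo2} yields a weak solution at once. If instead $-\CF_0<\int_{\Gamma_0}a_{\ast}\,ds<0$, then the compatibility relation $\int_{\Gamma_0}a_{\ast}\,ds+\int_{\Gamma_1}a_{\ast}\,ds=0$ gives $\abs{\int_{\Gamma_1}a_{\ast}\,ds}=\abs{\int_{\Gamma_0}a_{\ast}\,ds}<\CF_0$, and the very definition of $\CF_0$ shows the displayed small-flux inequality is satisfied; hence Corollary~\ref{cor:harmonic}, through Remark~\ref{rem:small_flux}, provides a weak solution. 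Taking the union of the two ranges covers exactly $\int_{\Gamma_0}a_{\ast}\,ds\in(-\CF_0,+\infty)$, which is the assertion.

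I expect no genuine analytic obstacle, since the hard a priori estimates are already packaged inside Theorems~\ref{theo2} and~\ref{theo4}; the proof is essentially a gluing argument. The only point demanding care is the bookkeeping needed to ensure $\CF_0$ is independent of the boundary data, which reduces to the two verifications above: that $c_1>0$, and that the Korn constant $K(\Omega,\tfrac{2\beta}{\nu})$ appearing in \eqref{theo4_Korn} is finite. The latter is exactly the Korn-type inequality \eqref{theo4_Korn}, whose validity in the annular (equivalently, circularly symmetric) case is secured by the standing hypothesis $\beta\not\equiv0$ inherited from Theorem~\ref{theo2}.
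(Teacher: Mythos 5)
Your proposal is correct and is exactly the argument the paper intends: the corollary is stated right after the sentence ``With the aid of Corollary \ref{cor:harmonic} and Remark \ref{rem:small_flux}, Theorem \ref{theo2} can be strengthened,'' i.e., Theorem \ref{theo2} handles $\int_{\Gamma_0}a_{\ast}\,ds\ge0$ and the small-flux criterion of Remark \ref{rem:small_flux} (with $\int_{\Gamma_1}a_{\ast}\,ds=-\int_{\Gamma_0}a_{\ast}\,ds$) handles the interval $(-\CF_0,0)$, with $\CF_0=\frac{\nu}{2c_1}K(\Omega,\frac{2\beta}{\nu})^{-1}$ depending only on $\Omega,\nu,\beta$. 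Your additional checks (that $c_1>0$ via $\alpha_{11}\neq0$ and $\nabla q_1\not\equiv\boldsymbol{0}$, and that the Korn constant is finite under the inherited hypothesis $\beta\not\equiv0$ in the annular case) are accurate and fill in details the paper leaves implicit.
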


\subsection{An existence result in symmetric domains}Finally, we shall show that, under certain symmetry assumptions on the domain $\Omega$ and the given data $\bff$, $\beta$, $a_{\ast}$, and $\bb_{\ast}$, the problem \eqref{NS}-\eqref{slip} admits at least one symmetric solution. Following Amick \cite{MR763943}, we give two definitions.
\begin{defi}\label{def:admissible}A smooth bounded domain $\Omega\subseteq\BR^2$ defined by \eqref{domain} with $N\ge1$ is said to be admissible if 
\begin{enumerate}
\item $\Omega$ is symmetric with respect to the $x_1$-axis; 
\item each component $\Gamma_j$ intersects the $x_1$-axis.
\end{enumerate}
\end{defi}
\begin{defi}A vector field $\bh=(h_1,h_2)$ mapping $\Omega$ or $\partial\Omega$ into $\BR^2$ is said to be symmetric (with respect to the $x_1$-axis) if $h_1$ is an even function of $x_2$, while $h_2$ is an odd function of $x_2$. Likewise, a scalar function $h\colon\partial\Omega\to\BR$ is said to be symmetric (with respect to the $x_1$-axis) if the equality $h(x_1,x_2)=h(x_1,-x_2)$ holds for all $(x_1,x_2)\in\partial\Omega$.
\end{defi}
Our theorem on existence of symmetric solutions now reads: 
\begin{theo}\label{theo3} Let $\Omega\subseteq\BR^2$ be an admissible domain, and let the friction coefficient $\beta^{S}\in C(\partial\Omega)$ be nonnegative and symmetric with respect to the $x_1$-axis. Suppose that $\bff^{S}\in L^2(\Omega)$, $a_{\ast}^{S}\in W^{1/2,2}(\partial\Omega)$ and $\bb_{\ast}^{S}\in L^2(\partial\Omega)$ are symmetric with respect to the $x_1$-axis and the boundary data $a_{\ast}^{S}$ and $\bb_{\ast}^{S}$ enjoy the compatibility conditions
\be\label{flux_theo3}
\int_{\partial\Omega} a_{\ast}^{S}\,ds=\sum_{j=0}^{N} \int_{\Gamma_{j}} a_{\ast}^{S} \, ds=0\enskip\text{and}\enskip\bb_{\ast}^{S}\cdot\bn=0\enskip\text{on}\enskip\partial\Omega.
\ee
Then the Navier-Stokes problem \eqref{NS}-\eqref{slip} admits at least one symmetric solution $\bu^{S}\in W^{1,2}(\Omega)$. 
\end{theo}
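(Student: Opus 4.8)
\emph{Strategy.} The plan is to solve the problem by the Leray--Schauder principle inside the subspace of fields symmetric about the $x_1$-axis, the required a priori bound being obtained by the contradiction argument of Korobkov--Pileckas--Russo \cite{MR3275850}; the decisive new ingredient is that, under the symmetry hypotheses, the total head pressure of the limiting Euler flow takes one and the same constant value on every $\Gamma_j$ (Theorem \ref{thm:Amick_symmetric}), which is a slip-boundary analogue of \cite{MR763943}*{Theorem 2.3}.

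\emph{Symmetric reduction.} First I would fix the symmetric framework: let $H^{S}(\Omega)$ and $J^{S}(\Omega)$ be the closed (hence weakly closed) subspaces of $H(\Omega)$ and $J(\Omega)$ consisting of symmetric fields. Writing $\sigma(x_1,x_2)=(x_1,-x_2)$ and, for a field $\bA$, its reflection $(\mathcal{R}\bA)(x)=(A_1(\sigma x),-A_2(\sigma x))$, one checks that $\mathcal{R}$ preserves the divergence-free condition and that $\mathcal{R}\bA\cdot\bn=a_{\ast}^{S}$ on $\partial\Omega$ whenever $\bA\cdot\bn=a_{\ast}^{S}$, because $\Omega$ and $a_{\ast}^{S}$ are symmetric. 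Hence, starting from the extension furnished by Proposition \ref{prop:harmonic} and replacing it by $\tfrac12(\bA+\mathcal{R}\bA)$, we obtain a \emph{symmetric} solenoidal extension $\bA$ of $a_{\ast}^{S}$. Setting $\bu=\bw+\bA$ with $\bw\in J^{S}(\Omega)$, the weak formulation \eqref{def:weakNS_nonhom} turns into a fixed-point equation $\bw=\lambda\,\mathcal{T}(\bw)$ in $J^{S}(\Omega)$, $\lambda\in[0,1]$, where $\mathcal{T}$ is compact (via the compact embedding $W^{1,2}(\Omega)\hookrightarrow L^{4}(\Omega)$) and maps $J^{S}(\Omega)$ into itself since every term respects the symmetry of the data. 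The solvability of the linear part rests on the Korn-type inequality \eqref{theo4_Korn}; here the admissibility of $\Omega$ is what allows one to drop the hypothesis $\beta\not\equiv0$ present in Theorems \ref{theo2} and \ref{theo4}, since the only rigid field tangent to $\partial\Omega$ that can invalidate \eqref{theo4_Korn} is the rotation $\boldsymbol{x}^{\perp}=(-x_2,x_1)$, and $\boldsymbol{x}^{\perp}$ is antisymmetric, hence absent from $J^{S}(\Omega)$. By Leray--Schauder it then suffices to bound $\norm{\bw}_{W^{1,2}(\Omega)}$ a priori, uniformly in $\lambda$.

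\emph{Blow-up to an Euler flow.} Suppose no such bound exists: there are $\lambda_k\in[0,1]$ and symmetric solutions $\bu_k=\bw_k+\bA$ with $R_k:=\norm{\nabla\bu_k}_{L^2(\Omega)}\to\infty$. Normalizing $\hat\bu_k:=\bu_k/R_k$, a subsequence converges weakly in $W^{1,2}(\Omega)$ and strongly in $L^{4}(\Omega)$ to a symmetric $\bv$ (symmetry passes to the limit since $J^{S}(\Omega)$ is weakly closed), with $\lambda_k\to\lambda_0$. Dividing the equation by $R_k^{2}$ annihilates the viscous, friction, force and boundary terms (all $O(R_k^{-1})$ once tested against a fixed $\bfvarphi\in J(\Omega)$, and $a_{\ast}^{S}/R_k\to0$), while the convective term passes to the limit by strong $L^4$-convergence; one concludes $\int_{\Omega}(\bv\cdot\nabla)\bv\cdot\bfvarphi\,dx=0$ for all $\bfvarphi\in J(\Omega)$, i.e. $\bv\in E(\Omega)$. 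By de Rham there is a pressure $p$ with $(\bv\cdot\nabla)\bv=-\nabla p$, and the total head pressure $\Phi=p+\tfrac12\abs{\bv}^{2}$ satisfies $\nabla\Phi=\omega\,\bv^{\perp}$ with $\omega=\curl\bv$, so that $\bv\cdot\nabla\Phi=0$ (Bernoulli's law). Passing to the limit in the energy balance for $\bu_k$ produces, in addition, a normalization identity that expresses a limiting constant as the boundary value of $\Phi$, and it is this identity that must ultimately be contradicted.

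\emph{Closing via the symmetric Bernoulli law, and the main obstacle.} By Corollary \ref{cor:Bernoulli}, $\Phi$ is constant on each $\Gamma_j$; because $\Omega$ and $\bv$ are symmetric, Theorem \ref{thm:Amick_symmetric} strengthens this to the assertion that $\Phi$ takes the \emph{same} constant value on all of $\Gamma_0,\dots,\Gamma_N$. This equalization is precisely what, in Theorems \ref{theo1} and \ref{theo2}, had to be bought with the hypotheses \eqref{theo1_assumption} and \eqref{outflow_condition}. With it in hand, the Korobkov--Pileckas--Russo machinery applies: after invoking the interior regularity of the approximating solutions (Theorem \ref{thm:grNS}) and the Morse--Sard theorem for Sobolev functions to control the level sets of the head pressure, the coarea formula together with $\bv\cdot\nabla\Phi=0$ forces the flux estimates across nested level sets to collapse, contradicting the normalization identity from the previous step. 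This last step is the crux and the main obstacle: one must both verify that the symmetric structure genuinely survives in the weak limit (so that Theorem \ref{thm:Amick_symmetric} is available --- this is where the admissibility requirement that every $\Gamma_j$ meet the axis is used to pin down the common constant) and adapt the level-set and Bernoulli estimates of \cite{MR3275850} to the slip setting, in which $\partial\Omega$ is merely a streamline of $\bv$ rather than a locus where $\bv$ vanishes, so that the boundary behaviour of $\Phi$ must be read off directly from $\bv\cdot\bn=0$.
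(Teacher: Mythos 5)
Your symmetric reduction (the reflected solenoidal extension, the observation that $J^{S}(\Omega)$ contains no nonzero rigid rotation so the Korn inequality and the Hilbert structure survive even for $\beta^{S}\equiv0$), the Leray--Schauder setup on $J^{S}(\Omega)$, and the blow-up to a symmetric Euler limit with a normalization identity all coincide with the paper's argument, as does the appeal to the equalization result (Theorem \ref{thm:Amick_symmetric}). The genuine gap is in your closing step. Once $\widehat\Phi_0=\widehat\Phi_1=\cdots=\widehat\Phi_N$ is known, no level-set machinery of any kind is needed: the blow-up identity \eqref{202309101143_symmetric} together with the total-flux condition \eqref{flux_theo3} gives
\be
\frac{1}{\lambda_0}=-\sum_{j=0}^{N}\widehat\Phi_j\int_{\Gamma_j}a_{\ast}^{S}\,ds=-\widehat\Phi_0\int_{\partial\Omega}a_{\ast}^{S}\,ds=0,
\ee
which is impossible since $\lambda_0\in(0,1]$; this single line ends the proof. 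Your plan instead runs the Korobkov--Pileckas--Russo apparatus (Morse--Sard, coarea, nested level sets) ``with equalization in hand'' and calls its adaptation the crux. That inverts the logic: the apparatus of Subsection \ref{theo1_contradiction} is a \emph{substitute} for equalization when the constants $\widehat\Phi_j$ differ (which can happen absent symmetry, see Remark \ref{rem:Bernoulli} (ii)); it is not a consumer of it. Worse, taken literally your route would fail here: its key boundary estimate, Lemma \ref{lem:keyestimate} via \eqref{202309111529}, is precisely where Theorems \ref{theo1} and \ref{theo2} pay with the hypotheses \eqref{theo1_assumption} and \eqref{outflow_condition}, neither of which is assumed in Theorem \ref{theo3}; moreover, with only $\bff^{S}\in L^2(\Omega)$, $a_{\ast}^{S}\in W^{1/2,2}(\partial\Omega)$, $\bb_{\ast}^{S}\in L^2(\partial\Omega)$, the $W^{2,2}_{\mathrm{loc}}$-regularity of $\Phi_k$ required by the Morse--Sard step is not available (cf.\ Remark \ref{rem:theo1_reg}). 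Your side remark that \eqref{theo1_assumption} and \eqref{outflow_condition} ``buy'' equalization in Theorems \ref{theo1}--\ref{theo2} is inaccurate for the same reason: they buy the boundary estimate, while equalization genuinely fails there.

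A second, smaller gap: you cite Theorem \ref{thm:Amick_symmetric} as a black box, but it is the heart of the matter and a complete proof must establish it. The paper proves it by Amick's thin-strip argument: by admissibility each $\Gamma_j$ meets the $x_1$-axis (this is where admissibility enters --- not in ``pinning down the common constant'' inside a level-set scheme); near two neighbouring intersection points one integrates $\partial_1\Phi=-\omega v_2$ over the strip $A(\delta)$ between consecutive components, uses that $v_2$ is odd in $x_2$ (hence vanishes on the axis) together with the one-dimensional Hardy inequality to obtain $\abs{\widehat\Phi_{j+1}-\widehat\Phi_{j}}\le c\,\norm{\nabla v_2}_{L^2(A(\delta))}$ with $c$ independent of $\delta$, and lets $\delta\to0$. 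Note finally that this theorem concerns only the Euler system with $\bv\cdot\bn=0$, so there is nothing ``slip-specific'' to adapt in it; the slip-specific boundary analysis you worry about (that $\partial\Omega$ is a streamline rather than a zero set of $\bv$) is already handled by Lemma \ref{lem:stream} and Corollary \ref{cor:Bernoulli} of Subsection \ref{Leray_arg_Euler}.
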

\begin{rema} 
In Theorem \ref{theo3}, the friction coefficient $\beta^{S}$ is allowed to be identically equal to zero even in the case when the domain $\Omega$ is an annulus.
\end{rema}

\section{Preliminaries}\label{Prelimi}
For a subset $A$ of $\BR^n$, we write $\SL^n(A)$ for its $n$-dimensional Lebesgue outer measure. Denote by $\SH^1$ and $\SH^1_{\infty}$ the $1$-dimensional Hausdorff measure and the $1$-dimensional Hausdorff content, respectively. Recall that for any subset $A$ of $\BR^n$ we have by definition
\be
\SH^1(A)=\lim_{\delta\to0}\SH^1_{\delta}(A)=\sup_{\delta>0}\SH^1_{\delta}(A),
\ee
where for each $0<\delta\le\infty$,
\be
\SH^1_{\delta}\equiv\inf\Big\{\sum\limits_{j=1}^{\infty} \mathrm{diam}\,C_j:A\subseteq\bigcup\limits_{j=1}^{\infty}C_j,\,\mathrm{diam}\,C_j\le\delta\Big\}.
\ee

\subsection{Properties of Sobolev functions}
Working with locally integrable functions, we shall always assume that their precise representatives are chosen. For $w\in L^1_{\mathrm{loc}}(\Omega)$ the precise representative $w^{\ast}$ is defined for all $x\in\Omega$ by
\begin{align} 
w^{\ast}(x)\equiv\left\{
\ba
\lim_{r\to0}\dashint_{B(x,r)}&w(y)\,dy\enskip&&\text{if the limit exists and is finite,}\\
&0\enskip&&\text{otherwise,}
\ea 
\right. 
\end{align}
where 
\be
\dashint_{B(x,r)}w(y)\,dy\equiv\frac{1}{\SL^n(B(x,r))}\int_{B(x,r)}w(y)\,dy,
\ee
and $B(x,r)\equiv\{y\in \BR^n:\abs{y-x}<r\}$ is the open ball of radius $r$ centered at $x$. The following lemma describes fine properties of Sobolev functions.
\begin{lemm}[see \cite{MR3409135}*{Theorems 4.19 and 4.21}]\label{lem:fine} Suppose that $w\in W^{1,q}(\BR^2)$, $1\le q\le\infty$.
\begin{enumerate}
\item There exists a Borel set $A_w\subseteq\BR^2$ such that $\SH^1(A_w)=0$ and
\be
\lim_{r\to0}\dashint_{B(x,r)}w(y)\,dy
\ee
exists for each $x\in\BR^2\setminus A_w$.
\item In addition,
\be
\lim_{r\to0}\dashint_{B(x,r)}\abs{w(y)-w^{\ast}(x)}^2\,dy=0
\ee
for each $x\in\BR^2\setminus A_w$.
\item For each $\epsilon>0$, there exists an open set $V\subseteq\BR^2$ with $\SH^1_{\infty}(V)\le\epsilon$ and $A_w\subseteq V$ such that the function $w^{\ast}$ is continuous on $\BR^2\setminus V$.
\item For any unit vector $\bl \in\partial B(0,1)$ and almost all straight lines $L$ parallel to $\bl$, the restriction $w^{\ast}|_{L}$ is an absolutely continuous function of one variable.
\end{enumerate}
\end{lemm}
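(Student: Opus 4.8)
The four assertions are the standard ``fine properties'' of Sobolev functions, and the plan is to deduce all of them from two ingredients: a gradient-controlled oscillation estimate on balls, and a weak-type bound for the Riesz potential $I_1f(x)=\int_{\BR^2}\abs{f(y)}/\abs{x-y}\dy$ measured in Hausdorff \emph{content}. After multiplying by a cutoff we may assume $\nabla w\in L^1(\BR^2)$, which is harmless since all statements are local. Writing $w_{B(x,r)}=\dashint_{B(x,r)}w\dy$ for the average, the Sobolev–Poincaré inequality (recall $W^{1,1}(\BR^2)\hookrightarrow L^2(\BR^2)$, so $L^2$ is the natural endpoint mean in the plane) gives $(\dashint_{B(x,r)}\abs{w-w_{B(x,r)}}^2\dy)^{1/2}\le Cr\dashint_{B(x,r)}\abs{\nabla w}\dy$. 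Comparing consecutive dyadic balls and telescoping yields the pointwise bound
\[
\Bigl(\dashint_{B(x,r)}\abs{w-w^{\ast}(x)}^2\dy\Bigr)^{1/2}\le C\int_{B(x,r)}\frac{\abs{\nabla w(y)}}{\abs{x-y}}\dy
\]
at every $x$ where the right-hand side is finite and tends to $0$ as $r\to0$. The problem thus reduces to showing that the truncated potential on the right tends to $0$ for $\SH^1$-almost every $x$. For this I would invoke the Hausdorff-content estimate $\SH^1_{\infty}(\{x:I_1(\nabla w)(x)>\lambda\})\le C\lambda^{-1}\norm{\nabla w}_{L^1(\BR^2)}$, proved by a Vitali covering of the super-level set. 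Letting $\lambda\to\infty$ shows $I_1(\nabla w)<\infty$ outside a set $A_w$ of zero $\SH^1$-content, hence of zero $\SH^1$-measure, giving (i); at each $x\notin A_w$ dominated convergence forces the truncated potential to $0$ as $r\to0$, and the displayed bound then yields the $L^2$-Lebesgue-point statement (ii).

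For the quasicontinuity (iii) I would run the same estimate along a smooth approximation. Choosing $w_j\in C^{\infty}_0(\BR^2)$ with $\norm{\nabla(w-w_j)}_{L^1}\le 4^{-j}$ and setting $V=\bigcup_{j\ge j_0}\{I_1(\nabla(w-w_j))>2^{-j}\}$ produces an \emph{open} set (super-level sets of a Riesz potential are open, by lower semicontinuity) with $\SH^1_{\infty}(V)\le\epsilon$ for $j_0$ large. Off $V$ the oscillation bound splits as $\abs{w_{B(x,r)}-w^{\ast}(x)}\le C\int_{B(x,r)}\abs{\nabla w_j(y)}/\abs{x-y}\dy+C2^{-j}$; the first term tends to $0$ uniformly in $x$ as $r\to0$ since $\nabla w_j$ is bounded, so $w_{B(x,r)}\to w^{\ast}(x)$ uniformly on $\BR^2\setminus V$. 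As a uniform limit of the continuous maps $x\mapsto w_{B(x,r)}$, the limit $w^{\ast}$ is continuous on $\BR^2\setminus V$, and $A_w\subseteq V$; this is (iii).

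For the absolute continuity on lines (iv), I would first establish the classical ACL property by mollification and Fubini: since $\partial_{\bl}w\in L^q$, along $\SL^1$-almost every line $L$ parallel to $\bl$ the mollified functions converge to a representative $\tilde w$ that is absolutely continuous on $L$ with derivative $\partial_{\bl}w$. It then remains to identify $\tilde w$ with the precise representative $w^{\ast}$. Here the sharp conclusion $\SH^1(A_w)=0$ is essential: orthogonal projection onto $\bl^{\perp}$ is $1$-Lipschitz, so the projection of $A_w$ is $\SH^1$-null, and therefore $\SL^1$-almost every line $L$ parallel to $\bl$ is \emph{entirely disjoint} from $A_w$. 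On such a line every point is a Lebesgue point, so $w^{\ast}=\tilde w$ at \emph{every} point of $L$ by (ii), whence $w^{\ast}|_L$ is absolutely continuous.

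The main obstacle is the Hausdorff-content weak-type bound for $I_1$ at the borderline case $n-q=1$: it is precisely this estimate, rather than a coarser $(1,q)$-capacity or Hausdorff-dimension bound, that pins the exceptional set down to $\SH^1$-measure zero. The second delicate point is the reconciliation in (iv), which succeeds only because the $\SH^1$-nullity of $A_w$ survives projection and thus removes $A_w$ from almost every line completely, not merely up to a one-dimensional null subset; had we only known $\SL^2(A_w)=0$, we could not conclude pointwise absolute continuity of $w^{\ast}|_L$.
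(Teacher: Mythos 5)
Your Sobolev--Poincar\'e step, the dyadic telescoping, and the resulting pointwise oscillation bound by the truncated Riesz potential are all correct, but the lemma everything else rests on --- the content weak-type estimate $\SH^1_{\infty}(\{I_1f>\lambda\})\le C\lambda^{-1}\norm{f}_{L^1(\BR^2)}$ --- is \emph{false}, and the failure occurs exactly in the case $q=1$ that your cutoff reduction forces you into. A Vitali covering does prove this bound for the fractional maximal function $M_1f(x)=\sup_{r>0}r^{-1}\int_{B(x,r)}\abs{f}\,dy$, but it does not pass to the potential, which accumulates contributions from all scales. Concretely, let $f_\epsilon=\tfrac{1}{2\epsilon}\chi_{[0,1]\times[-\epsilon,\epsilon]}$, so $\norm{f_\epsilon}_{L^1}=1$. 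For $x=(x_1,x_2)$ with $x_1\in[\tfrac14,\tfrac34]$ and $\epsilon\le\abs{x_2}\le\tfrac14$,
\be
I_1f_\epsilon(x)\ge\int_0^1\frac{dy_1}{\abs{x_1-y_1}+2\abs{x_2}}\ge\log\frac{1}{8\abs{x_2}},
\ee
so $\{I_1f_\epsilon>\lambda\}$ contains a horizontal segment of length $\tfrac12$ whenever $\epsilon<e^{-\lambda}/8$; since a connected set has $\SH^1_{\infty}$ at least its diameter, taking $\lambda=\tfrac12\log(1/\epsilon)$ gives $\SH^1_{\infty}(\{I_1f_\epsilon>\lambda\})\ge\tfrac12$ while $\lambda^{-1}\norm{f_\epsilon}_{L^1}\to0$. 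Worse, the qualitative statement you actually need --- $I_1(\abs{\nabla w})<\infty$ off an $\SH^1$-null set --- also fails for genuine $W^{1,1}(\BR^2)$ functions: with $\epsilon_k=e^{-4^k}$ and a cutoff $\phi$, the function $w(x)=\phi(x_1)\sum_k2^{-k}\max(0,1-\abs{x_2}/\epsilon_k)$ lies in $W^{1,1}(\BR^2)$ (it is even continuous, by uniform convergence of the series), its vertical derivatives have no cancellation, and $I_1(\abs{\nabla w})\ge c\sum_k2^{-k}\log(1/\epsilon_k)=c\sum_k2^k=\infty$ on a full subsegment of $[0,1]\times\{0\}$. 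So your set $A_w$ need not be $\SH^1$-null, and parts (i)--(iii) are unproven at $q=1$. Ironically, for $1<q\le\infty$ your scheme \emph{is} salvageable by the very tools you dismiss in your closing paragraph: keeping $\nabla w\in L^q_{\mathrm{loc}}$ rather than discarding down to $L^1$, the set where $I_1(\abs{\nabla w}\chi_{B_R})=\infty$ has Riesz $(1,q)$-capacity zero, hence $\SH^s$-measure zero for every $s>2-q$, and $2-q<1$ already gives $\SH^1(A_w)=0$.

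The proof behind the citation (Evans--Gariepy) avoids the potential altogether at $p=1$, and this is the missing idea. The exceptional set is controlled through the gradient's fractional density: $A_w$ is placed inside $\{x:\limsup_{r\to0}r^{-1}\int_{B(x,r)}\abs{\nabla w}\,dy>0\}$, and this set is genuinely $\SH^1$-null (not merely of finite content) because one may split $\nabla w$ into a bounded part, whose scaled integrals vanish identically in the limit, plus a part of arbitrarily small $L^1$-norm, to which the Vitali argument applies with constant tending to zero. Convergence of averages and quasicontinuity then come from telescoping smooth approximations $w_j$ against a capacitary weak-type inequality for the \emph{functions} themselves, $\mathrm{Cap}_1(\{\abs{w-w_j}>\lambda\})\le C\lambda^{-1}\norm{\nabla(w-w_j)}_{L^1}$, proved via the coarea formula and the isoperimetric inequality, combined with the equivalence of $\mathrm{Cap}_1$-null and $\SH^1$-null sets. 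Your part (iv) --- projecting the $\SH^1$-null set $A_w$ onto $\bl^{\perp}$ to conclude that almost every line misses $A_w$ entirely, then identifying the ACL representative with $w^{\ast}$ --- is a correct and genuinely elegant observation, but as written it inherits the gap, since it presupposes (i) and (ii).
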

Henceforth we omit special notation for the precise representative writing simply $w^{\ast}=w$. Let us next recall some differentiability properties of Sobolev functions.
\begin{lemm}[see \cite{MR1029687}*{Proposition 1}]\label{lem:Dorr} If $\psi\in W^{2,1}(\BR^2)$, then $\psi$ is a continuous function and there exists a set $A_{\psi}\subseteq\BR^2$ with $\SH^1(A_{\psi})=0$ such that $\psi$ is differentiable in the classical sense at each $x\in\BR^2\setminus A_{\psi}$. Furthermore, its classical derivative at such a point $x$ coincides with the value of $\nabla\psi(x)=\lim\limits_{r\to0}\dashint_{B(x,r)}\nabla\psi(y)\,dy$, and $\lim\limits_{r\to0}\dashint_{B(x,r)}\abs{\nabla\psi(y)-\nabla\psi(x)}^2\,dy=0$.
\end{lemm}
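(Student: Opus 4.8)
The plan is to prove the three assertions—continuity of $\psi$, classical differentiability off an $\SH^1$-null set, and the identification of the classical derivative with the $L^2$-Lebesgue value of $\nabla\psi$—by reducing differentiability to a single quantitative first-order Taylor estimate whose error is controlled by a truncated Riesz potential of the Hessian $D^2\psi$. For continuity I would first invoke the borderline embedding $W^{2,1}(\BR^2)\hookrightarrow C(\BR^2)$: for $\psi\in C_0^\infty(\BR^2)$ the representation $\psi(x_1,x_2)=\int_{-\infty}^{x_1}\int_{-\infty}^{x_2}\partial_1\partial_2\psi\,dy_2\,dy_1$ gives $\norm{\psi}_{L^\infty}\le\norm{\partial_1\partial_2\psi}_{L^1}\le\norm{\psi}_{W^{2,1}}$, and applying this bound to the differences of a sequence $\psi_m\in C_0^\infty$ with $\psi_m\to\psi$ in $W^{2,1}(\BR^2)$ shows $\{\psi_m\}$ is uniformly Cauchy, so its uniform limit is a continuous representative of $\psi$. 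Next I would apply Lemma \ref{lem:fine} to each component $\partial_i\psi\in W^{1,1}(\BR^2)$, obtaining a Borel set $A_\psi$ with $\SH^1(A_\psi)=0$ such that for $x\notin A_\psi$ the value $\nabla\psi(x)=\lim_{r\to0}\dashint_{B(x,r)}\nabla\psi\,dy$ exists and $\lim_{r\to0}\dashint_{B(x,r)}\abs{\nabla\psi(y)-\nabla\psi(x)}^2\,dy=0$ (part (ii)). This already yields the last displayed conclusion of the lemma; it remains to upgrade this averaged control to genuine pointwise differentiability.

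The heart of the argument is to show that, for $\SH^1$-a.e.\ $x$,
\[
\lim_{R\to0}\int_{B(x,R)}\frac{\abs{D^2\psi(y)}}{\abs{x-y}}\,dy=0 .
\]
I would establish this by a Hausdorff-content estimate for Riesz potentials. Given $\epsilon>0$, split $\abs{D^2\psi}=g+b$ with $g$ continuous and compactly supported and $\norm{b}_{L^1}<\epsilon$. The continuous part contributes at most $\norm{g}_{L^\infty}\cdot 2\pi R\to0$ at every point, while the remainder is dominated by the full potential $I_1 b(x)=\int_{\BR^2}\abs{b(y)}\abs{x-y}^{-1}\,dy$, for which Adams' inequality gives $\SH^1_{\infty}(\{I_1 b>\lambda\})\le C\lambda^{-1}\norm{b}_{L^1}\le C\epsilon/\lambda$. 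Letting $\epsilon\to0$ and then $\lambda=1/k\to0$ shows the exceptional set has zero $\SH^1$-content, hence zero $\SH^1$-measure, and I would adjoin it to $A_\psi$.

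With this in hand, fix $x\notin A_\psi$, put $\ell_x(z)=\psi(x)+\nabla\psi(x)\cdot(z-x)$ and $R=2\abs{h}$, and let $Q\psi$ be the degree-one averaged Taylor polynomial of $\psi$ over $B(x,R)$. In dimension two the Bramble--Hilbert/Sobolev remainder kernel $\abs{z-y}^{2-n}$ is \emph{non-singular}, so $\abs{\psi(x+h)-Q\psi(x+h)}\le C\int_{B(x,R)}\abs{D^2\psi}\,dy$; since $\abs{x-y}\le R$ on $B(x,R)$, the right-hand side is $\le CR\int_{B(x,R)}\abs{D^2\psi}\abs{x-y}^{-1}\,dy=o(R)$. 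A direct computation—expanding $\psi(y)-\psi(x)-\nabla\psi(y)\cdot(y-x)$ by Taylor, changing variables and using Fubini—bounds $\abs{Q\psi(x+h)-\ell_x(x+h)}$ by $\abs{h}\,\abs{(\nabla\psi)_{B(x,R)}-\nabla\psi(x)}+C\abs{h}\int_{B(x,R)}\abs{D^2\psi}\abs{x-y}^{-1}\,dy$, which is again $o(\abs{h})$ by the Lebesgue-value property and the smallness condition above. Adding the two contributions gives $\abs{\psi(x+h)-\psi(x)-\nabla\psi(x)\cdot h}=o(\abs{h})$, i.e.\ classical differentiability at $x$ with derivative $\nabla\psi(x)$.

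The step I expect to be the main obstacle is precisely the passage from averaged ($L^1$/$L^2$) control of $\nabla\psi$ and $D^2\psi$ to pointwise differentiability off an $\SH^1$-null—rather than merely Lebesgue-null—set. A naive two-point estimate forces an off-center Riesz potential $\int_{B(x,2\abs{h})}\abs{\nabla\psi-\nabla\psi(x)}\abs{x+h-y}^{-1}\,dy$ whose singularity sits at the moving point $x+h$; Cauchy--Schwarz there loses a logarithm, and the $L^2$-Lebesgue property at $x$ gives no control of $\nabla\psi$ on small balls centered at $x+h$. The averaged Taylor representation circumvents this because its two-dimensional remainder is centered entirely at the good point $x$, but the price is that the correct measure of smallness becomes the truncated Riesz potential of $D^2\psi$; showing that this vanishes outside an $\SH^1$-null set is exactly where the capacity/Hausdorff-content estimate for Riesz potentials, rather than the elementary Lebesgue differentiation theorem, is indispensable.
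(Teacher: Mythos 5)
The paper offers no proof of this lemma---it is quoted directly from Dorronsoro---so the only question is whether your argument stands on its own. It does not: the capacitary inequality on which your Step 3 rests is false, and so is the intermediate claim you derive from it. There is no estimate of the form $\SH^1_\infty(\{I_1 b>\lambda\})\le C\lambda^{-1}\norm{b}_{L^1}$ for general $b\in L^1(\BR^2)$. Indeed, let $b_\delta=\frac{1}{4\delta}\mathbf{1}_{\{\abs{y_1}\le1,\,\abs{y_2}\le\delta\}}$, so that $\norm{b_\delta}_{L^1}=1$; for every $x=(x_1,0)$ with $\abs{x_1}\le\frac12$ one has
\be
I_1b_\delta(x)\ \ge\ \frac{1}{4\delta}\int_{-\delta}^{\delta}\int_{\abs{y_1-x_1}\le\frac12}\frac{dy_1\,dy_2}{\abs{y_1-x_1}+\abs{y_2}}\ \ge\ \log\frac{1}{2\delta},
\ee
while the segment $J=[-\frac12,\frac12]\times\{0\}$ has $\SH^1_\infty(J)=1$; taking $\lambda=\log\frac{1}{2\delta}\to\infty$ as $\delta\to0$ violates the claimed bound. (Content estimates of this weak type do hold for the fractional maximal function $M_1b(x)=\sup_{r>0}r^{-1}\int_{B(x,r)}\abs{b}\,dy$, by a Vitali covering, and for $I_1$ only with Hardy-space data; the logarithm you yourself flagged is exactly what separates the two operators.) Worse, the statement you want to deduce---vanishing of the truncated potential of $\abs{D^2\psi}$ at $\SH^1$-a.e.\ point---fails for genuine $W^{2,1}$ functions: with $a_k=2^{-k}$, $\delta_k=e^{-4^k}$ one can build $\psi(y)=\sum_k a_k\chi(y_1)\Psi_k(y_2)\in W^{2,1}(\BR^2)$, where $\chi,\Psi_k$ are compactly supported, $\norm{\Psi_k}_{W^{2,1}(\BR)}\le C$ uniformly, and $\Psi_k''\ge\delta_k^{-1}$ on $\abs{t}\le\delta_k/2$; then $\abs{D^2\psi}\ge a_k\delta_k^{-1}$ on each strip $\{\abs{y_1}\le1,\abs{y_2}\le\delta_k/2\}$, whence for every $x\in J$
\be
\int_{B(x,\sqrt{\delta_k})}\frac{\abs{D^2\psi(y)}}{\abs{x-y}}\,dy\ \ge\ c\,a_k\log\frac{\sqrt{\delta_k}}{\delta_k}\ =\ \frac{c}{2}\,a_k4^k\ =\ \frac{c}{2}\,2^k\ \longrightarrow\ \infty.
\ee
So the truncated Riesz potential of the Hessian diverges on a set of positive $\SH^1$-measure, and no repair of the invoked inequality can save the reduction in the form you stated it.

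The good news is that your surrounding architecture does not need the potential at all. In dimension two the averaged-Taylor remainder bound you quote reads $\abs{\psi(z)-Q\psi(z)}\le C\int_{B(x,R)}\abs{D^2\psi}\,dy$ for all $z\in B(x,R)$, so what is actually required is only the density condition $\int_{B(x,R)}\abs{D^2\psi}\,dy=o(R)$; and since $Q\psi-\ell_x$ is affine, with constant term $Q\psi(x)-\psi(x)$ (controlled by the same remainder bound at $z=x$) and linear coefficient $\nabla Q\psi-\nabla\psi(x)=\int\bigl(\nabla\psi(y)-\nabla\psi(x)\bigr)\phi(y)\,dy=o(1)$ by the Lebesgue-point property of $\nabla\psi$ from Lemma \ref{lem:fine}, the off-center potential never arises. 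The density condition does hold $\SH^1$-a.e.: the set $E_\lambda=\{x:\limsup_{r\to0}r^{-1}\int_{B(x,r)}\abs{D^2\psi}\,dy>\lambda\}$ is Lebesgue-null, hence by absolute continuity of $\abs{D^2\psi}\,dy$ it lies in an open set $U$ with $\int_U\abs{D^2\psi}\,dy<\epsilon$, and a Vitali $5r$-covering of $E_\lambda$ by balls $B(x,r)\subseteq U$ with $\int_{B(x,r)}\abs{D^2\psi}\,dy>\lambda r$ gives $\SH^1_\infty(E_\lambda)\le 10\lambda^{-1}\epsilon$, so $\SH^1(E_\lambda)=0$. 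Replacing your Step 3 by this (true) weak-type estimate for the fractional maximal operator, and reorganizing Step 4 through the affine decomposition of $Q\psi-\ell_x$, turns your sketch into a correct proof.
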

\begin{rema}\label{rem:extension_trace} By the Sobolev extension theorem (see, e.g., \cite{MR3409135}*{Theorems 4.7}) the analogs of Lemmas \ref{lem:fine} and \ref{lem:Dorr} are true for functions $w\in W^{1,q}(\Omega)$, where $\Omega\subseteq\BR^2$ is a bounded Lipschitz domain. By the trace theorem, for each $w\in W^{1,1}(\Omega)$ the value of $w(x)$ is well-defined for $\SH^{1}$-almost all $x\in\partial\Omega$. Therefore, in what follows, we shall assume that every function $w\in W^{1,1}(\Omega)$ is defined on $\overline{\Omega}$.
\end{rema}
A subset of a topological space is called an arc if it is homeomorphic to the unit interval $[0,1]$. The following theorem due to Bourgain, Korobkov and Kristensen \cite{MR3010119} describes Luzin $N$ and Morse-Sard properties for functions from the Sobolev space $W^{2,1}(\BR^2)$. 
\begin{theo}\label{thm:Morse-Sard} Let $\Omega\subseteq\BR^2$ be a bounded domain with Lipschitz boundary and let $\psi\in W^{2,1}(\Omega, \BR)$. Then
\begin{enumerate}
\item $\SH^1\big(\{\psi(x):x\in\overline{\Omega}\setminus A_{\psi}\enskip\text{and}\enskip\nabla\psi(x)=\boldsymbol{0}\}\big)=0$.
\item For every $\epsilon>0$ there exists $\delta>0$ such that for any set $V\subseteq\overline{\Omega}$ with $\SH^1_{\infty}(V)<\delta$ the inequality $\SH^1(\psi(V))<\epsilon$ holds. In particular, $\SH^1(\psi(V))=0$ whenever $\SH^1(V)=0$.
\item For $\SH^1$-almost all $y\in\psi(\overline{\Omega})\subseteq\BR$ the preimage $\psi^{-1}(y)$ is a finite disjoint union of $C^1$-curves $S_j$, $j=1,2,\dots,N(y)$. Each $S_j$ is either a cycle in $\Omega$ $($i.e., $S_j\subseteq\Omega$ is homeomorphic to the unit circle $\BS^1)$ or a simple arc with endpoints on $\partial\Omega$ $($in this case $S_j$ is transversal to $\partial\Omega)$.
\end{enumerate}
\end{theo}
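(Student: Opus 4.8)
The plan is to establish the three assertions in the order (ii), (i), (iii), treating the quantitative content estimate (ii) as the engine that drives the rest. By the Sobolev extension theorem (Remark \ref{rem:extension_trace}) I may assume $\Omega=\BR^2$ and that $\psi\in W^{2,1}(\BR^2)$ has compact support; the measure-theoretic assertions (i)--(ii) are insensitive to this reduction, and the boundary behaviour needed in (iii) is restored at the end. Throughout I work with the precise representative and use Lemmas \ref{lem:fine} and \ref{lem:Dorr}: $\psi$ is continuous, classically differentiable off an $\SH^1$-null set $A_{\psi}$, and $\nabla\psi\in W^{1,1}(\BR^2)$ is absolutely continuous on almost every line.

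The core is (ii). The analytic input is the first-order Haj\l asz pointwise inequality applied to $\nabla\psi\in W^{1,1}$, namely
\[
\abs{\nabla\psi(x)-\nabla\psi(y)}\le C\abs{x-y}\bigl(Mg(x)+Mg(y)\bigr),\qquad g\equiv\abs{\nabla^2\psi},
\]
where $M$ is the Hardy--Littlewood maximal operator, together with its second-order counterpart $\abs{\psi(y)-\psi(x)-\nabla\psi(x)\cdot(y-x)}\le C\abs{x-y}^2\bigl(Mg(x)+Mg(y)\bigr)$, obtained by integrating the first along the segment $[x,y]$. Because the Hessian lies only in $L^1(\BR^2)$, Lebesgue measure must be replaced by the Hausdorff content: the crucial fact is the Adams-type weak estimate $\SH^1_{\infty}(\{I_1g>\lambda\})\le C\lambda^{-1}\norm{g}_{L^1}$ for the Riesz potential $I_1g(x)=\int_{\BR^2}\abs{x-z}^{-1}g(z)\,dz$. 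Given $V$ with $\SH^1_{\infty}(V)<\delta$, I would split $V=V_{\mathrm{good}}\cup V_{\mathrm{bad}}$ along a threshold $\lambda$ for the maximal and potential quantities attached to $g$ and to $\abs{\nabla\psi}$: on $V_{\mathrm{bad}}$, whose content is $\le C\lambda^{-1}\norm{g}_{L^1}$, the image is controlled through the weak estimate and the absolute continuity of $\int_E g$; on $V_{\mathrm{good}}$ the pointwise bounds make $\psi$ Lipschitz with constant $\lesssim\lambda$, so that $\psi$ distorts Hausdorff content by at most a factor $\lambda$ and $\SH^1(\psi(V_{\mathrm{good}}))\lesssim\lambda\,\SH^1_{\infty}(V)\lesssim\lambda\delta$. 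Choosing first $\lambda$ large and then $\delta$ small yields $\SH^1(\psi(V))<\epsilon$.

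Assertion (i) follows from the same covering machinery restricted to the critical set $Z\equiv\{x\in\BR^2\setminus A_{\psi}:\nabla\psi(x)=\boldsymbol{0}\}$. Covering $Z$ by balls $B(x_i,r_i)$ centred at critical points with bounded overlap and lying inside a neighbourhood $G\supseteq Z$ with $\int_G g$ small, the vanishing of the linear term upgrades the diameter bound from $\diam\psi(B_i)\lesssim r_i$ to $\diam\psi(B_i)\lesssim r_i^2\,(\text{Hessian potential at }x_i)$; pushing this through the content estimate produces $\SH^1(\psi(Z))\le\eta$ for every $\eta>0$, hence $\SH^1(\psi(Z))=0$. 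Since $\SH^1(A_{\psi})=0$ gives $\SH^1_{\infty}(A_{\psi})=0$, part (ii) also yields $\SH^1(\psi(A_{\psi}))=0$, and (i) follows.

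For (iii), combine (i) and (ii). By (i) the critical values form an $\SH^1$-null set, and by Lemma \ref{lem:fine}(iii) there is, for each small $\eta$, an open set $U$ with $\SH^1_{\infty}(U)<\eta$ off which $\nabla\psi$ is continuous; by (ii), $\psi(U)$ has small measure. Hence for $\SH^1$-almost all $y\in\psi(\overline{\Omega})$ the level set $\psi^{-1}(y)$ avoids both $Z$ and $\overline{U}$, i.e.\ it lies in the open set where $\nabla\psi$ is continuous and nonzero; there the implicit function theorem shows $\psi^{-1}(y)$ is a $C^1$ one-manifold. Compactness forces finitely many components, each a cycle $S_j\cong\BS^1$ or a simple arc with endpoints on $\partial\Omega$, and transversality to $\partial\Omega$ for a.e.\ $y$ is obtained from a one-dimensional Sard argument for the restriction $\psi|_{\partial\Omega}$. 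The main obstacle is (ii) in the borderline exponent: with the Hessian only in $L^1(\BR^2)$ no pointwise Sobolev bound on $\nabla\psi$ or $\psi$ is available, and one is forced to run the entire covering argument with Hausdorff content in place of Lebesgue measure, relying on the Adams/Orobitg--Verdera boundedness of maximal operators and Riesz potentials on the Choquet space of $\SH^1_{\infty}$; making the good/bad splitting quantitative and uniform is the delicate heart of the proof.
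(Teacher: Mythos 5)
You should first be aware that the paper does not prove this statement at all: Theorem \ref{thm:Morse-Sard} is quoted, with attribution, as a known result of Bourgain--Korobkov--Kristensen \cite{MR3010119}, and the authors' ``proof'' consists of the citation. So there is no internal argument to compare against; the only meaningful question is whether your sketch would stand on its own as a proof of the cited theorem.

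As such it does not. Your roadmap correctly identifies the ingredients of the modern approach (pointwise Haj\l asz-type inequalities for $\nabla\psi$, the Adams weak-type estimate for the Riesz potential with respect to $\SH^1_{\infty}$, Orobitg--Verdera-type boundedness on the Choquet space, and a good/bad splitting), but the step that makes this theorem a genuine theorem --- and the reason \cite{MR3010119} is a substantial paper --- is exactly the step you defer as ``the delicate heart.'' In the borderline case $g=\abs{\nabla^2\psi}\in L^1(\BR^2)$ the naive estimates fail: $Mg$ need not be locally integrable, so thresholding on $Mg$ does not by itself control the Choquet integral of the bad set; and in your argument for (i), the bound $\diam\,\psi(B_i)\lesssim r_i^2\,(\text{Hessian potential at }x_i)$ cannot simply be summed over a Besicovitch cover of the critical set $Z$, because $Z$ may have positive Lebesgue measure and the localized potentials $\int_{2B_i}r_i\abs{z-x_i}^{-1}g(z)\,dz$ are not dominated by $\int_{2B_i}g$ (the kernel blows up near $x_i$, and bounded overlap of the balls does not tame it). Asserting that ``pushing this through the content estimate'' works is precisely what has to be proved; without it, both (i) and (ii) are unestablished.

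There are also two smaller gaps in (iii). Finiteness of the components is not a consequence of compactness alone: one needs the accumulation argument that a limit point of infinitely many components would itself lie in the level set, hence in the open region where $\nabla\psi$ is continuous and nonzero, where the implicit function theorem forbids such accumulation. And transversality of the arcs to $\partial\Omega$ via ``a one-dimensional Sard argument for $\psi|_{\partial\Omega}$'' is not routine when $\partial\Omega$ is merely Lipschitz: the restriction of $\psi$ to a Lipschitz curve has limited regularity, and a Sard-type statement for it requires justification. In short, your proposal is a reasonable high-level outline of the strategy behind the cited result, but it leaves the decisive quantitative arguments unproved, whereas the paper's intended treatment of this statement is simply to invoke \cite{MR3010119}.
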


\subsection{The curvature of the boundary and the Weingarten map}\label{subsec:Weingarten}
We recall that a rigid motion $T\colon\BR^2\to\BR^2$ is an affine map given by $T(x)=Rx+b$, $x\in\BR^2$, where $R$ is a $2\times2$ rotation matrix and $b\in\BR^2$ is a constant vector. Let $\Omega\subseteq\BR^2$ be a bounded domain with $C^2$-smooth boundary $\partial\Omega$ and let $\bn=\bn(x)=(n_1(x),n_2(x))$ be the unit outer normal vector at the point $x\in\partial\Omega$. Then for each $x_0\in\partial\Omega$ there exist a rigid motion $T\colon\BR^2\to\BR^2$ with $T(x_0)=0$, a $C^2$ function $\zeta\colon\BR\to\BR$ with $\zeta(0)=0$ and $\frac{d\zeta}{dy_1}(0)=0$, and $r>0$ such that, setting $y=T(x)$, the $y_2$ coordinate axis lies in the direction $\bn(x_0)$ and we have
\be
T(\Omega\cap B(x_0,r))=\{y=(y_1,y_2)\in B(0,r):y_2<\zeta(y_1)\}.
\ee
The coordinates $y$ are called local coordinates. The value $\frac{d^2\zeta}{dy_1^2}(0)$ is called the curvature of $\partial\Omega$ at $x_0$ (in the direction of $\bn(x_0)$) and is denoted by $\kappa(x_0)$.

The distance from $x\in\BR^2$ to a subset $S\subseteq\BR^2$ is indicated by $\dist(x,S)$, where $\dist(x,S)\equiv\inf_{y\in S}\abs{x-y}$. A vector field $N\in C^{1}(U_a,\BR^2)$, where $U_a\equiv\{x\in\BR^2: \dist(x,\partial\Omega)<a\}$, $a>0$, will be called an extended unit outer normal to $\partial\Omega$ if (i) $N=\bn$ on $\partial\Omega$; (ii) $\abs{N}=1$ in $U_a$; (iii) $(\nabla N)N=\boldsymbol{0}$ on $\partial\Omega$.
\begin{exam} Let $d^{\ast}=d^{\ast}(x)$ be the signed distance from $\partial\Omega$ defined by 
\be
\ba
d^{\ast}(x) \equiv \left\{
\begin{array}{ll}
-\dist(x,\partial\Omega)&\text{if}\quad x\in\Omega,\\
\dist(x,\partial\Omega)&\text{if}\quad x\in\BR^2\setminus\Omega.
\end{array}
\right.
\ea
\ee
If $\Omega\subseteq\BR^2$ is a bounded domain of class $C^k$, $k\ge2$, there exists a positive constant $a$ depending on $\Omega$ such that $d^{\ast}\in C^{k}(U_a)$ and for each $x\in U_a$ there is a unique point $\pi(x)\in\partial\Omega$ satisfying
\be
x=\pi(x)+d^{\ast}(x)\bn(\pi(x)),\quad\overline{B_{\abs{d^{\ast}(x)}}(x)}\cap\partial\Omega=\{\pi(x)\}\quad\text{and}\quad\nabla d^{\ast}(x)=\bn(\pi(x)).
\ee
Moreover, the inequality $\abs{\kappa(x)}<1/a$ holds for all $x\in\partial\Omega$; see  \cite{MR1814364}*{Section 14.6}, \cite{MR3524106}*{Chapter 2, Section 2.3}. Then the vector field $\nabla d^{\ast}\in C^{k-1}(U_a)$ gives an extended unit outer normal to $\partial\Omega$.   
\end{exam}

\begin{prop}[see \cite{MR2242962}*{Propositions 3.1 and 3.4}]\label{prop:Weingarten} Let $\Omega$ be a bounded domain with $C^2$-smooth boundary $\partial\Omega$, and let $N\in C^{1}(U_a,\BR^2)$ be an extended unit outer normal to $\partial\Omega$. Then the $2\times2$ matrix-valued function
\be
W(x)\equiv-\nabla N(x)=-(\partial_j N_i)_{1\le i,j\le2},\quad x\in U_a
\ee
satisfies the following properties:
\begin{enumerate}
\item $W^{\top}N=\boldsymbol{0}$ in $U_a$;
\item The value of $W$ on $\partial\Omega$ depends only on $\partial\Omega$ and not on the choice of an extension $N$\footnote{The extension $N$ of the unit outer normal $\bn$ into a neighborhood of $\partial\Omega$ is not unique.};
\item $W=W^\top$ on $\partial\Omega$;
\item For any vector field $\bu\colon\partial\Omega\to\BR^2$, $W\bu$ is a tangential vector field, i.e., $(W\bu)\cdot\bn=0$ on $\partial\Omega$;
\item The eigenvalues of the matrix $W(x)$ at $x\in\partial\Omega$ consist of $0$ and the curvature $\kappa(x)$ of the boundary at $x$ in the direction of $\bn(x)$.
\end{enumerate}
\end{prop}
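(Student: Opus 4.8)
The plan is to establish the five properties in the order (i), (iv), (ii), (iii), (v), since each item leans on the previous ones and the only genuinely geometric input enters at the very end. First I would prove (i) by differentiating the constraint $\abs{N}^2\equiv1$ in $U_a$. Writing $W_{ij}=-\pd_jN_i$, one computes $(W^{\top}N)_k=-\sum_iN_i\,\pd_kN_i=-\tfrac12\,\pd_k\abs{N}^2=0$ throughout $U_a$, which is exactly (i). Property (iv) is then immediate on $\partial\Omega$, where $N=\bn$: for any $\bu$ one has $(W\bu)\cdot\bn=\bu\cdot(W^{\top}\bn)=\bu\cdot(W^{\top}N)=0$ by (i).

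Next I would settle the extension-independence (ii), which is the key that unlocks (iii) and (v). Given two extended unit outer normals $N$ and $\wt N$, set $M=N-\wt N$. Since $N=\wt N=\bn$ on $\partial\Omega$, $M$ vanishes identically on $\partial\Omega$; differentiating $M$ along any curve contained in $\partial\Omega$ shows that the directional derivative $(\nabla M)\boldsymbol{\tau}=\boldsymbol{0}$ for every tangent vector $\boldsymbol{\tau}$. For the normal direction I would invoke condition (iii) in the definition of an extended normal, namely $(\nabla N)N=(\nabla\wt N)\wt N=\boldsymbol{0}$ on $\partial\Omega$; evaluating at points of $\partial\Omega$ (where $N=\wt N=\bn$) and subtracting gives $(\nabla M)\bn=\boldsymbol{0}$. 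As $\{\boldsymbol{\tau},\bn\}$ is a basis of $\BR^2$, it follows that $\nabla M=0$ on $\partial\Omega$, i.e. $\nabla N=\nabla\wt N$ there, proving (ii). With (ii) in hand, property (iii) follows by testing against one convenient extension: taking $N=\nabla d^{\ast}$ with $d^{\ast}$ the signed distance function of the Example (admissible since $d^{\ast}\in C^2(U_a)$), the Jacobian $\nabla N=\nabla^2 d^{\ast}$ is a Hessian and hence symmetric everywhere in $U_a$, so $W=-\nabla^2 d^{\ast}$ is symmetric on $\partial\Omega$ for this extension; by (ii) the boundary value of $W$ is independent of the extension, whence $W=W^{\top}$ on $\partial\Omega$ for every extended normal.

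Finally, for the spectrum (v): on $\partial\Omega$ properties (i) and (iii) give $W\bn=W^{\top}\bn=W^{\top}N=\boldsymbol{0}$, so $\bn$ is an eigenvector for the eigenvalue $0$. To identify the second eigenvalue I would pass to the local coordinates $y=T(x)$ of Subsection \ref{subsec:Weingarten}, in which $\partial\Omega$ near $x_0$ is the graph $y_2=\zeta(y_1)$ with $\zeta(0)=\zeta'(0)=0$ and $\zeta''(0)=\kappa(x_0)$. Parametrizing the boundary by $s\mapsto(s,\zeta(s))$, the unit tangent at $s=0$ is $\boldsymbol{\tau}=(1,0)$ and the unit outer normal along the curve is $\bn(s)=(1+\zeta'(s)^2)^{-1/2}(-\zeta'(s),1)$; a short computation gives $\tfrac{d}{ds}\bn|_{s=0}=(-\kappa(x_0),0)$. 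Since $N=\bn$ on $\partial\Omega$, one has $W\boldsymbol{\tau}=-(\boldsymbol{\tau}\cdot\nabla)N=-\tfrac{d}{ds}\bn|_{s=0}=\kappa(x_0)\boldsymbol{\tau}$. Because $T$ is a rigid motion, $W$ transforms by orthogonal conjugation $W\mapsto RWR^{\top}$ and its spectrum is unchanged, so the eigenvalues of $W(x_0)$ in the original frame are exactly $0$ and $\kappa(x_0)$.

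I expect the main obstacle to be this last step (v): one must set up the graph parametrization correctly, keep track of the normalization of the normal field, and verify that the Weingarten matrix really transforms by orthogonal conjugation under $T$, so that the curvature $\kappa(x_0)$ — defined through the second derivative of $\zeta$ — can be read off as the nonzero eigenvalue in the original coordinates. By contrast, items (i)--(iv) are short differentiations of the unit-length and normal-derivative constraints together with the basis argument of (ii).
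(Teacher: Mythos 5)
Your proof is correct, but note that there is no argument in the paper to compare it with: Proposition \ref{prop:Weingarten} is quoted directly from Duduchava--Mitrea--Mitrea \cite{MR2242962}*{Propositions 3.1 and 3.4}, where these facts are established for hypersurfaces in $\BR^n$ in much greater generality. What you have produced is therefore an elementary, self-contained planar substitute for that citation, and it holds up: (i) comes from differentiating $\abs{N}^2\equiv1$; (iv) from (i) together with $N=\bn$ on $\partial\Omega$; (ii) from the basis argument, where tangential derivatives of $N-\wt N$ vanish because the difference vanishes identically on $\partial\Omega$, and normal derivatives vanish by requirement (iii) in the definition of an extended normal; (iii) from (ii) tested against the signed-distance extension $N=\nabla d^{\ast}$, whose Jacobian is the symmetric Hessian $\nabla^{2}d^{\ast}$ (available because $C^2$-smoothness of $\partial\Omega$ gives $d^{\ast}\in C^{2}(U_a)$, as the paper's Example records); and (v) from exhibiting the eigenpairs $(0,\bn)$ and $(\kappa(x_0),\boldsymbol{\tau})$.

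Two details should be written out to make this airtight, both of which you correctly flagged as the delicate spots. First, in (ii) the two extensions may be defined on different collars $U_a$ and $U_{a'}$; since only the boundary values of $W$ are at stake, one simply works on the intersection, which is still a neighborhood of $\partial\Omega$. Second, in (v) the conjugation identity is a one-line chain rule: setting $\wt N(y)\equiv R\,N(T^{-1}(y))$, which is an extended unit outer normal for the transformed domain, one gets $\nabla_{y}\wt N=R\,(\nabla_{x}N)\,R^{\top}$ and hence $\wt W=RWR^{\top}$, so the spectrum is preserved; moreover the identity $W\boldsymbol{\tau}=-(\boldsymbol{\tau}\cdot\nabla)N=-\frac{d}{ds}\bn(s)\big|_{s=0}$ is legitimate precisely because a tangential directional derivative of $N$ at a boundary point depends only on the restriction $N|_{\partial\Omega}=\bn$, i.e.\ it may be computed along the boundary curve $s\mapsto(s,\zeta(s))$. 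Your sign conventions also check out against the paper: for $\Omega_{0}=\{x\in\BR^2:\abs{x}<R_{0}\}$ with $N=x/\abs{x}$ one finds $W=-\frac{1}{R_{0}}\left(I-\bn\otimes\bn\right)$ on $\Gamma_{0}$, so the nonzero eigenvalue is $\kappa=-1/R_{0}$ there, in agreement with the disk example given after Theorem \ref{theo1}.
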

The function $W$ in the above proposition is called the Weingarten map of $\partial\Omega$ (in the direction of $\bn$).

For a scalar function $f\in C^1(\partial\Omega, \BR)$, we define the tangential gradient $\nabla_{\tau}f(x)$ of $f$ at $x\in\partial\Omega$ by
\be
\nabla_{\tau}f(x)\equiv\nabla\tilde{f}(x)-\bn(x)(\bn(x)\cdot\nabla\tilde{f}(x))=(I-\bn(x)\otimes\bn(x))\nabla\tilde{f}(x),\enskip x\in\partial\Omega,
\ee
where $I$ is the $2\times2$ identity matrix and $\tilde{f}$ is a $C^1$-extension of $f$ into some neighborhood of $\partial\Omega$ with $\tilde{f}|_{\partial\Omega}=f$. The following identity describes the relationship between the boundary conditions \eqref{slip} and \eqref{Hodge}.
\begin{lemm}[see \cite{MR2492825}*{Section 2}]\label{lem_def-curl} Let $\Omega\subseteq\BR^2$ be a bounded domain of class $C^3$, and let $\bu\in W^{2,q}(\Omega)$, $1\le q<\infty$. Then the following identity holds
\be\label{iden:def-curl}
[\gamma(S(\bu))\bn]_{\tau}=\gamma(\curl\bu)(n_2,-n_1)^\top+2\nabla_{\tau}(\gamma(\bu)\cdot\bn)+2W^\top\gamma(\bu)\quad\text{in}\enskip L^q(\partial\Omega).
\ee
Here $\gamma\colon W^{1,q}(\Omega)\to L^q(\partial\Omega)$ is the trace operator.
\end{lemm}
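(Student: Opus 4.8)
The plan is to prove \eqref{iden:def-curl} first as a pointwise identity on $\partial\Omega$ for $\bu\in C^{\infty}(\overline{\Omega})$, and then to pass to the general case $\bu\in W^{2,q}(\Omega)$ by a density and continuity argument. For the smooth case I would separate the symmetric and antisymmetric parts of $\nabla\bu$: setting $A(\bu)\equiv\nabla\bu-(\nabla\bu)^{\top}$, in two dimensions one has $A(\bu)=\curl\bu\,J$ with $J=\left(\begin{smallmatrix}0&1\\-1&0\end{smallmatrix}\right)$, since $A(\bu)_{12}=\partial_2u_1-\partial_1u_2=\curl\bu$. Because $J\bn=(n_2,-n_1)^{\top}$, this gives $A(\bu)\bn=\curl\bu\,(n_2,-n_1)^{\top}$, which is already the first term on the right of \eqref{iden:def-curl}. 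As $S(\bu)\bn=A(\bu)\bn+2(\nabla\bu)^{\top}\bn$, the whole problem reduces to identifying the single term $(\nabla\bu)^{\top}\bn$.

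For that term I would bring in the extended unit outer normal $N\in C^{1}(U_a,\BR^2)$ and the Weingarten map $W=-\nabla N$ from Proposition \ref{prop:Weingarten}. Writing $((\nabla\bu)^{\top}\bn)_i=\sum_j(\partial_iu_j)\,n_j$ and replacing $\bn$ by $N$ on $\partial\Omega$, the Leibniz rule gives $\sum_j(\partial_iu_j)N_j=\partial_i(\bu\cdot N)-\sum_j u_j\,\partial_iN_j=\partial_i(\bu\cdot N)+(W^{\top}\bu)_i$, where I used $\partial_iN_j=-W_{ji}$ (the definition $W_{ij}=-\partial_jN_i$). Hence, on $\partial\Omega$,
\be
S(\bu)\bn=\curl\bu\,(n_2,-n_1)^{\top}+2\nabla(\bu\cdot N)+2W^{\top}\bu.
\ee
It then remains to take the tangential part of each contribution. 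The curl term is already tangential, since $(n_2,-n_1)^{\top}$ is orthogonal to $\bn$. For the second term, the tangential gradient depends only on the restriction of $\bu\cdot N$ to $\partial\Omega$, and since $N=\bn$ there, $[\nabla(\bu\cdot N)]_{\tau}=\nabla_{\tau}(\gamma(\bu)\cdot\bn)$; this uses the definition $\nabla_{\tau}f=(I-\bn\otimes\bn)\nabla\widetilde{f}$ and its independence of the chosen $C^1$-extension $\widetilde{f}$. Finally $W^{\top}\bu=W\bu$ on $\partial\Omega$ by Proposition \ref{prop:Weingarten}(iii), and $W\bu$ is tangential by Proposition \ref{prop:Weingarten}(iv), so $[W^{\top}\bu]_{\tau}=W^{\top}\bu$. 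Collecting the three pieces yields \eqref{iden:def-curl} for smooth $\bu$.

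To reach $\bu\in W^{2,q}(\Omega)$ I would approximate $\bu$ by $\bu_m\in C^{\infty}(\overline{\Omega})$ in $W^{2,q}(\Omega)$ and pass to the limit in $L^q(\partial\Omega)$ in every term. The left-hand side converges because $S(\bu_m)\to S(\bu)$ in $W^{1,q}(\Omega)$ and $\gamma\colon W^{1,q}(\Omega)\to L^q(\partial\Omega)$ is bounded; the curl term converges for the same reason; the term $\nabla_{\tau}(\gamma(\bu_m)\cdot\bn)$ converges because $\gamma(\bu_m)\cdot\bn\to\gamma(\bu)\cdot\bn$ in $W^{1,q}(\partial\Omega)$ (the trace from $W^{2,q}(\Omega)$ being continuous and $\bn\in C^2$) while $\nabla_{\tau}$ is bounded on $W^{1,q}(\partial\Omega)$; and the Weingarten term converges since $W\in C^{1}$ and $\gamma(\bu_m)\to\gamma(\bu)$ in $L^q(\partial\Omega)$.

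I expect the computation to be essentially elementary once the splitting $S=A+2(\nabla\bu)^{\top}$ is in place; the only genuinely delicate point is the identification $[\nabla(\bu\cdot N)]_{\tau}=\nabla_{\tau}(\gamma(\bu)\cdot\bn)$, i.e.\ the extension-independence of the tangential gradient, together with keeping track of the factor $2$ and of the fact that the Weingarten term is tangential so it survives the projection unchanged.
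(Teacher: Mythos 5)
Your proof is correct. There is, however, nothing in the paper to compare it against: the authors do not prove Lemma \ref{lem_def-curl} at all, but quote it from Mitrea--Monniaux \cite{MR2492825}*{Section 2}, so your argument is a self-contained substitute for an external citation rather than a variant of an internal proof.

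On the merits: the computation is sound and matches the paper's conventions. With $(\nabla\bu)_{ij}=\partial_j u_i$ one indeed has $\nabla\bu-(\nabla\bu)^{\top}=\curl\bu\,J$, $J\bn=(n_2,-n_1)^{\top}$, and hence $S(\bu)\bn=\curl\bu\,(n_2,-n_1)^{\top}+2(\nabla\bu)^{\top}\bn$; the Leibniz computation with the extended normal, using $\partial_i N_j=-W_{ji}$ from the paper's definition $W=-\nabla N$, correctly produces $(\nabla\bu)^{\top}\bn=\nabla(\bu\cdot N)+W^{\top}\bu$ on $\partial\Omega$, with the factor $2$ landing where it should. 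The three projection steps are exactly the delicate points, and you handle each: orthogonality of $(n_2,-n_1)^{\top}$ to $\bn$, extension-independence of the tangential gradient (which is what legitimizes replacing $[\nabla(\bu\cdot N)]_{\tau}$ by $\nabla_{\tau}(\gamma(\bu)\cdot\bn)$), and tangentiality plus symmetry of the Weingarten term via Proposition \ref{prop:Weingarten} (iii)--(iv). The density step is also right, and it is worth noting it is where the $C^3$ regularity of $\partial\Omega$ is actually consumed: one needs $\bn\in C^2$ and $W\in C^1$ so that multiplication by them is bounded on the relevant trace spaces, the boundedness of $\gamma\colon W^{2,q}(\Omega)\to W^{1,q}(\partial\Omega)$ together with that of $\nabla_{\tau}\colon W^{1,q}(\partial\Omega)\to L^q(\partial\Omega)$ (this also fixes the meaning of $\nabla_{\tau}(\gamma(\bu)\cdot\bn)$ for a Sobolev, rather than $C^1$, boundary datum), and density of $C^{\infty}(\overline{\Omega})$ in $W^{2,q}(\Omega)$. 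One could shorten the limit passage by observing that both sides of the smooth identity are compositions of $\gamma$ with first-order differential operators having $C^1(\overline{\Omega})$ coefficients near $\partial\Omega$, but your term-by-term treatment is perfectly adequate.
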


\subsection{The Stokes problem under the slip boundary conditions}
In what follows, we shall denote by $c$ a generic constant, which may change from line to line. If necessary, we shall denote by $c(\ast,\dots,\ast)$ the constant depending only on the quantities appearing in the parenthesis. 

The objective of this subsection is to collect some preliminary results concerning the Stokes problem under the slip boundary conditions, which will be needed in Subsection \ref{Leray_arg}. The results given below are not considered as sharp, but they are sufficient for our purposes. For a more detailed study on the Stokes problem under the slip boundary conditions, we refer the reader to \cites{MR2098066,MR4429780,SoSc73} for the $L^2$-theory and to \cites{MR4231512,MR3145765} for the $L^q$-theory, $1<q<\infty$.

Throughout this subsection, we shall assume that $\Omega\subseteq\BR^2$ is a smooth bounded domain. Consider in $\Omega$ the Stokes equations with the nonhomogeneous slip boundary conditions 
\begin{align} \label{S}
\left\{
\ba
-\nu \Delta \bu+ \nabla p & =\bff \quad & & \text{in} \enskip \Omega,\\
\dv \bu & =0 \quad & & \text{in} \enskip \Omega,\\
\bu \cdot\bn & = a_{\ast} \quad & & \text{on} \enskip \partial \Omega,\\
[T(\bu,p)\bn]_{\tau}+\beta \bu_{\tau} & =\bb_{\ast}  \quad & &\text{on} \enskip \partial \Omega.
\ea 
\right. 
\end{align}
We shall suppose that the friction coefficient $\beta$ is a nonnegative scalar-valued function defined on $\partial\Omega$ and the boundary data $a_{\ast}$ and $\bb_{\ast}$ satisfy the compatibility conditions
\be\label{Stokes_comp}
\int_{\partial\Omega}a_{\ast}\,ds=0\enskip\text{and }\bb_{\ast}\cdot\bn=0\quad\text{on}\enskip\partial\Omega.
\ee 
In order to introduce the notion of $q$-weak solutions to the Stokes problem \eqref{S}, let us define two closed subspaces $H_q(\Omega)$ and $J_q(\Omega)$ of $W^{1,q}(\Omega)$, $1<q<\infty$, by
\be\label{202401011200}
H_q(\Omega)\equiv\{\bu\in W^{1,q}(\Omega):\gamma(\bu)\cdot\bn=0\enskip\text{on}\enskip\partial\Omega\},
\ee
\be\label{202401011201}
J_q(\Omega)\equiv\{\bu\in H_q(\Omega):\dv\bu=0\enskip\text{in}\enskip\Omega\}. 
\ee
For $q=2$ we simply write $H$ and $J$ in place of $H_2$ and $J_2$.
\begin{defi}
Let $1<q<\infty$ and $q'=\frac{q}{q-1}$. Let $\beta\in C(\partial\Omega,\BR)$ be nonnegative, and let $\bff\in H_{q'}(\Omega)'$. Suppose that $a_{\ast}\in W^{1-1/q,q}(\partial\Omega)$ and $\bb_{\ast}\in W^{-1/q,q}(\partial\Omega)$ satisfy the compatibility conditions \eqref{Stokes_comp}. Then a vector field $\bu:\Omega\to\BR^2$ is called a q-weak (or q-generalized) solution to the Stokes problem \eqref{S} if 
\begin{enumerate}
\item $\bu\in W^{1,q}(\Omega)$; 
\item $\bu$ is divergence-free in $\Omega$;
\item $\bu$ satisfies the boundary condition $\eqref{S}_{3}$ in the trace sense, namely,
\be
\gamma(\bu)\cdot\bn=a_{\ast}\quad\text{in}\enskip W^{1-1/q,q}(\partial\Omega);
\ee
\item $\bu$ satisfies the identity
\be\label{def:weakS}
\frac{\nu}{2}\int_{\Omega} S(\bu): S(\bfvarphi) \,dx+\int_{\partial\Omega} \beta\bu_{\tau}\cdot\bfvarphi \,ds-\langle {\bb_{\ast}},\bfvarphi \rangle_{\partial\Omega}=\langle \bff,\bfvarphi \rangle_{\Omega}
\ee
for all $\bfvarphi\in J_{q'}(\Omega)$, $1/q+1/q'=1$.
\end{enumerate}
If $q=2$, $\bu$ will be called a weak (or generalized) solution.
\end{defi}
The basis for this definition is a weak (or variational) formulation of the Stokes problem \eqref{S}, which could be performed in the same way as we did for the Navier-Stokes problem \eqref{NS}-\eqref{slip} in Subsection \ref{subsec:variational_formulation}. For a q-weak solution $\bu$ to the Stokes problem \eqref{S}, one can associate a suitable pressure $p$. 
\begin{lemm}[see \cite{MR2808162}*{Theorem I\hspace{-1.2pt}I\hspace{-1.2pt}I.5.3 and Lemma I\hspace{-1.2pt}V.1.1}]\label{pressure_q} Let $\beta\in C(\partial\Omega)$ be nonnegative and let $1<q<\infty$, $q'=\frac{q}{q-1}$. Suppose that $\bu$ is a $q$-weak solution to the Stokes problem \eqref{S} corresponding to $\bff\in H_{q'}(\Omega)'$, $a_{\ast}\in W^{1-1/q,q}(\partial\Omega)$ and $\bb_{\ast}\in W^{-1/q,q}(\partial\Omega)$. Then there exists a scalar function $p\in L^{q}(\Omega)$ with $\int_{\Omega} p(x)\,dx=0$ such that the integral identity
\be
\frac{\nu}{2}\int_{\Omega} S(\bu): S(\bfvarphi) \,dx+\int_{\partial\Omega} \beta\bu_{\tau}\cdot\bfvarphi \,ds-\langle {\bb_{\ast}},\bfvarphi \rangle_{\partial\Omega}-\langle \bff,\bfvarphi \rangle_{\Omega}=\int_{\Omega} p\,\dv\bfvarphi \,dx 
\ee
holds for all $\bfvarphi\in H_{q'}(\Omega)$. Moreover, $p$ admits the estimate
\be\label{202409300718}
\norm{p}_{L^q(\Omega)}\le c(\norm{\bu}_{W^{1,q}(\Omega)}+\norm{\bff}_{H_{q'}(\Omega)'}+\norm{\bb_{\ast}}_{W^{-1/q,q}(\partial\Omega)})
\ee
with $c=c(q,\Omega,\nu,\beta)$.
\end{lemm}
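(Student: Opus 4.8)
The plan is to recover the pressure by a de Rham–type argument: the functional formed by the difference of the two sides of \eqref{def:weakS} annihilates the divergence-free test fields $J_{q'}(\Omega)$, and I represent it in the form $\into p\,\dv\bfvarphi\dx$. Accordingly, define the linear functional
\be
\langle F,\bfvarphi\rangle\equiv\frac{\nu}{2}\into S(\bu):S(\bfvarphi)\dx+\int_{\partial\Omega}\beta\bu_{\tau}\cdot\bfvarphi\ds-\langle\bb_{\ast},\bfvarphi\rangle_{\partial\Omega}-\langle\bff,\bfvarphi\rangle_{\Omega},\qquad\bfvarphi\in H_{q'}(\Omega).
\ee
First I would verify that $F\in H_{q'}(\Omega)'$ together with the bound $\norm{F}_{H_{q'}(\Omega)'}\le c(\norm{\bu}_{W^{1,q}(\Omega)}+\norm{\bff}_{H_{q'}(\Omega)'}+\norm{\bb_{\ast}}_{W^{-1/q,q}(\partial\Omega)})$, $c=c(q,\Omega,\nu,\beta)$: the stress term is estimated by H\"older, the friction term by $\beta\in C(\partial\Omega)$ and the trace inequalities $W^{1,q}(\Omega)\hookrightarrow L^{q}(\partial\Omega)$, $W^{1,q'}(\Omega)\hookrightarrow L^{q'}(\partial\Omega)$, the datum $\bb_{\ast}$ by the duality between $W^{-1/q,q}(\partial\Omega)$ and $W^{1/q,q'}(\partial\Omega)$ combined with the trace embedding $W^{1,q'}(\Omega)\hookrightarrow W^{1/q,q'}(\partial\Omega)$, and the force term directly. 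By the very definition of a $q$-weak solution, $\langle F,\bfvarphi\rangle=0$ for every $\bfvarphi\in J_{q'}(\Omega)$.

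The structural heart of the argument is that $\dv$ maps $H_{q'}(\Omega)$ onto the zero-mean space $L^{q'}_{0}(\Omega)\equiv\{g\in L^{q'}(\Omega):\into g\dx=0\}$ with a bounded right inverse. Indeed, for $\bfvarphi\in H_{q'}(\Omega)$ one has $\into\dv\bfvarphi\dx=\int_{\partial\Omega}\bfvarphi\cdot\bn\ds=0$, so $\dv$ does land in $L^{q'}_{0}(\Omega)$; conversely, by the solvability of the divergence equation on bounded Lipschitz domains (the Bogovskii operator, \cite{MR2808162}) every $g\in L^{q'}_{0}(\Omega)$ equals $\dv\bw$ for some $\bw\in W^{1,q'}_{0}(\Omega)\subseteq H_{q'}(\Omega)$ with $\norm{\bw}_{W^{1,q'}(\Omega)}\le c\norm{g}_{L^{q'}(\Omega)}$. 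Since $\Ker(\dv|_{H_{q'}})=J_{q'}(\Omega)$ and $F$ vanishes there, $F$ descends to a well-defined functional $\wt F$ on $L^{q'}_{0}(\Omega)$ by setting $\wt F(g)\equiv\langle F,\bfvarphi\rangle$ for any $\bfvarphi\in H_{q'}(\Omega)$ with $\dv\bfvarphi=g$; evaluating on the right inverse $\bw$ yields $\abs{\wt F(g)}\le c\norm{F}_{H_{q'}(\Omega)'}\norm{g}_{L^{q'}(\Omega)}$, so $\wt F\in(L^{q'}_{0}(\Omega))'$.

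It then remains to represent $\wt F$. Since $(L^{q'}_{0}(\Omega))'$ is isomorphic to $L^q(\Omega)$ modulo constants, choosing the zero-mean representative produces a unique $p\in L^q(\Omega)$ with $\into p\dx=0$, $\wt F(g)=\into p\,g\dx$ for all $g\in L^{q'}_{0}(\Omega)$, and $\norm{p}_{L^q(\Omega)}\le c\norm{\wt F}\le c\norm{F}_{H_{q'}(\Omega)'}$. Unwinding the definition of $\wt F$ gives $\langle F,\bfvarphi\rangle=\into p\,\dv\bfvarphi\dx$ for every $\bfvarphi\in H_{q'}(\Omega)$, which is exactly the asserted identity, and chaining the last inequality with the bound on $\norm{F}_{H_{q'}(\Omega)'}$ established above yields \eqref{202409300718}.

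I expect the only genuinely nontrivial ingredient to be the surjectivity of $\dv$ with a bounded right inverse for arbitrary $1<q'<\infty$ on a (merely Lipschitz) domain, i.e. the Bogovskii estimate; but this is precisely the content of the results cited from \cite{MR2808162}, so it is imported rather than reproved. Everything else is routine bookkeeping with the trace and H\"older inequalities and the standard duality $(L^{q'}_{0})'\cong L^q/\BR$.
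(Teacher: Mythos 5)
Your proposal is correct and follows essentially the same route as the paper: the paper gives no independent proof of this lemma but imports it from Galdi's book (Theorem III.5.3 and Lemma IV.1.1), whose underlying argument is exactly the de Rham--Bogovskii scheme you write out — boundedness of the defect functional on $H_{q'}(\Omega)$, its vanishing on $J_{q'}(\Omega)$ by the definition of a $q$-weak solution, surjectivity of $\dv\colon H_{q'}(\Omega)\to L^{q'}_{0}(\Omega)$ with a bounded right inverse taking values in $W^{1,q'}_{0}(\Omega)\subseteq H_{q'}(\Omega)$, and the duality $(L^{q'}_{0}(\Omega))'\cong L^{q}(\Omega)/\BR$. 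Your quotient formulation also cleanly handles the passage from compactly supported test fields to all of $H_{q'}(\Omega)$ (equivalently, the decomposition $H_{q'}(\Omega)=W^{1,q'}_{0}(\Omega)+J_{q'}(\Omega)$), which is the only point left implicit when one simply cites the literature, so there is no gap.
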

We begin to consider existence and uniqueness of weak solutions to the problem \eqref{S}. In order to deal with the nonhomogeneous boundary datum $a_{\ast}$ in \eqref{S}, we extend $a_{\ast}$ into $\Omega$ by a solenoidal vector field $\bA$ such that $\bA\cdot\bn=a_{\ast}$ on $\partial\Omega$.
\begin{lemm}
\label{solenoidalextension}
Let $m\ge 1$. If $a_{\ast} \in {W}^{m-1/2,2}(\partial\Omega)$ satisfies
\be
\int_{\partial \Omega} a_{\ast} \, ds=0,
\ee
then there exists a vector field $\bA \in {W}^{m,2}(\Omega)$ such that $\dv \bA=0$ in $\Omega$, $\bA\cdot\bn=a_{\ast}$ on $\partial\Omega$ and $\norm{\bA}_{W^{m,2}(\Omega)} \le c\norm{a_{\ast}}_{{W}^{m-1/2,2}(\partial \Omega)}$.
\end{lemm}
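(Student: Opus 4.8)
The plan is to reduce the construction to the solvability and regularity theory of the Neumann problem for the Laplacian. In two dimensions a divergence-free field whose normal trace equals $a_{\ast}$ can be produced simply as the gradient of a harmonic function: if $\phi$ solves $\Delta\phi=0$ in $\Omega$ with $\bn\cdot\nabla\phi=a_{\ast}$ on $\partial\Omega$, then $\bA\equiv\nabla\phi$ satisfies $\dv\bA=\Delta\phi=0$ in $\Omega$ and $\bA\cdot\bn=a_{\ast}$ on $\partial\Omega$ automatically, so the whole lemma becomes a statement about the Neumann problem. Note that the relevant compatibility condition for this Neumann problem is precisely $\int_{\partial\Omega}a_{\ast}\,ds=0$, obtained by integrating $\Delta\phi=0$ over $\Omega$ and using the divergence theorem, which is exactly the hypothesis. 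In particular the individual fluxes $\int_{\Gamma_j}a_{\ast}\,ds$ play no role, in contrast to a stream-function construction $\bA=\nabla^{\perp}\psi$, which would force $\psi$ to be single-valued on each $\Gamma_j$ and hence require each such flux to vanish separately.

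First I would establish existence of a weak solution. Consider the bilinear form $a(\phi,v)=\into\nabla\phi\cdot\nabla v\,dx$ on the quotient space $\dot{W}^{1,2}(\Omega)\equiv W^{1,2}(\Omega)/\BR$ together with the linear functional $v\mapsto\langle a_{\ast},v\rangle_{\partial\Omega}$. By the Poincar\'e--Wirtinger inequality the form $a(\cdot,\cdot)$ is coercive on $\dot{W}^{1,2}(\Omega)$, while the compatibility condition $\int_{\partial\Omega}a_{\ast}\,ds=0$ guarantees that the functional is well defined on the quotient (constants pair to zero). Lax--Milgram then yields a unique $\phi\in W^{1,2}(\Omega)$ with $\into\phi\,dx=0$ such that $\into\nabla\phi\cdot\nabla v\,dx=\langle a_{\ast},v\rangle_{\partial\Omega}$ for all $v\in W^{1,2}(\Omega)$; this $\phi$ is the desired weak solution of the Neumann problem, and $\bA=\nabla\phi$ already provides a divergence-free $W^{1,2}$ extension of $a_{\ast}$ with the correct normal trace.

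The remaining, and principal, work is the regularity estimate. Since $\Omega$ is smooth, the elliptic regularity theory for the Neumann problem raises the Sobolev order by the expected amount: for $a_{\ast}\in W^{m-1/2,2}(\partial\Omega)$ one obtains $\phi\in W^{m+1,2}(\Omega)$ together with $\norm{\phi}_{W^{m+1,2}(\Omega)}\le c\,\norm{a_{\ast}}_{W^{m-1/2,2}(\partial\Omega)}$, the constant depending only on $\Omega$ and $m$, after fixing the additive constant by $\into\phi\,dx=0$. Differentiating once gives $\bA=\nabla\phi\in W^{m,2}(\Omega)$ with $\norm{\bA}_{W^{m,2}(\Omega)}\le\norm{\phi}_{W^{m+1,2}(\Omega)}\le c\,\norm{a_{\ast}}_{W^{m-1/2,2}(\partial\Omega)}$, which is the asserted bound. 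I expect the main obstacle to be exactly this regularity step: one must invoke the Neumann elliptic estimates at a general Sobolev order $m\ge1$ and, at the lowest level $m=1$, take care that the identity $\bn\cdot\nabla\phi=a_{\ast}$ is first read in the weak sense above and only afterward promoted to a genuine $W^{m-1/2,2}(\partial\Omega)$ identity once the regularity has been bootstrapped. Everything else is standard.
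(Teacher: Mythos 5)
Your proposal is correct and is essentially identical to the paper's proof: the paper also constructs $\bA=\nabla q$ where $q$ solves the Neumann problem $\Delta q=0$ in $\Omega$, $\partial q/\partial n=a_{\ast}$ on $\partial\Omega$, citing its solvability and $W^{m+1,2}$-regularity under the compatibility condition $\int_{\partial\Omega}a_{\ast}\,ds=0$. You merely spell out the details (Lax--Milgram on the quotient space and the elliptic regularity step) that the paper invokes as standard.
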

\begin{proof}
Consider the Neumann problem
\begin{align} \label{Neumann}
\left\{
\ba
\Delta q & =0 \quad & & \text{in} \enskip \Omega,\\
\frac{\partial q}{\partial n} & =a_{\ast} & & \text{on} \enskip \partial \Omega.
\ea 
\right. 
\end{align}
Under our assumption, the Neumann problem \eqref{Neumann} has a solution $q\in W^{m+1,2}(\Omega)$ which is unique up to an additive constant. Furthermore, the following estimate holds
\be
\norm{\nabla q}_{W^{m,2}(\Omega)}\le c\norm{a_{\ast}}_{W^{m-1/2,2}(\partial\Omega)},
\ee
with $c=c(\Omega,m)>0$. The desired solenoidal vector field $\bA\in W^{m,2}(\Omega)$ is then given by $\bA=\nabla q$.
\end{proof}

We look for a weak solution to the problem \eqref{S} of the form $\bu=\bw+\bA$, where $\bw\in J(\Omega)$ satisfies the integral identity
\be\label{}
\frac{\nu}{2} \int_{\Omega} S(\bw):S(\bfvarphi) \, dx+\int_{\partial \Omega} \beta \bw_{\tau} \cdot \bfvarphi_{\tau} \, ds=-\frac{\nu}{2} \int_{\Omega} S(\bA):S(\bfvarphi) \, dx-\int_{\partial \Omega} \beta \bA_{\tau} \cdot \bfvarphi \, ds+\langle \bff,\bfvarphi \rangle_{\Omega}+\langle \bb_{\ast},\bfvarphi \rangle_{\partial\Omega}
\ee
for all $\bfvarphi \in J(\Omega)$. Here $\bA\in W^{1,2}(\Omega)$ is a solenoidal extension of $a_{\ast}\in W^{1/2,2}(\partial\Omega)$ from Lemma \ref{solenoidalextension}. The following Korn-type inequality provides the appropriate scalar product on the space $J(\Omega)$ for test functions.
\begin{lemm}[see \cite{MR4231512}*{Proposition 3.13}]\label{lem:Korn}
Let $\alpha\in C(\partial\Omega)$ be a nonnegative function such that $\alpha\not\equiv0$. Then there exists a constant $c=c(\Omega,\alpha)>0$ such that
\be\label{ineq:Korn}
\norm{\bw}_{W^{1,2}(\Omega)}\le c\left(\norm{S(\bw)}_{L^2(\Omega)}+\norm{\sqrt{\alpha}\bw_{\tau}}_{L^2(\partial\Omega)}\right)\quad\forall\bw\in H(\Omega).
\ee
\end{lemm}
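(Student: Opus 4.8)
The plan is to argue by contradiction via a compactness argument, reducing the claim to the elementary fact that the only rigid motion tangent to $\partial\Omega$ and vanishing on the set $\{\alpha>0\}$ is the trivial one. Suppose \eqref{ineq:Korn} is false. Then for every $n\in\BN$ there exists $\bw_n\in H(\Omega)$ with $\norm{\bw_n}_{W^{1,2}(\Omega)}=1$ and
\be
\norm{S(\bw_n)}_{L^2(\Omega)}+\norm{\sqrt{\alpha}\,(\bw_n)_{\tau}}_{L^2(\partial\Omega)}\longrightarrow0\quad(n\to\infty).
\ee
Since $\{\bw_n\}$ is bounded in $W^{1,2}(\Omega)$, I would first extract a subsequence (not relabelled) with $\bw_n\rightharpoonup\bw$ weakly in $W^{1,2}(\Omega)$, and by the Rellich--Kondrachov theorem $\bw_n\to\bw$ strongly in $L^2(\Omega)$.

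Next I would identify the limit as a rigid motion and upgrade the convergence. Weak convergence in $W^{1,2}(\Omega)$ gives $S(\bw_n)\rightharpoonup S(\bw)$ weakly in $L^2(\Omega)$; since also $S(\bw_n)\to\boldsymbol{0}$ strongly in $L^2(\Omega)$, the weak and strong limits coincide and $S(\bw)=\boldsymbol{0}$ in $\Omega$. As $\Omega$ is connected, the kernel of the symmetric gradient consists exactly of the infinitesimal rigid motions, so $\bw(x)=\bc+b\,x^{\perp}$ for some constant vector $\bc\in\BR^2$ and some $b\in\BR$, where $x^{\perp}=(-x_2,x_1)$. To see that $\bw\neq\boldsymbol{0}$, I would invoke the classical second Korn inequality $\norm{\bv}_{W^{1,2}(\Omega)}\le c(\norm{\bv}_{L^2(\Omega)}+\norm{S(\bv)}_{L^2(\Omega)})$, valid for all $\bv\in W^{1,2}(\Omega)$ on a bounded Lipschitz domain. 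Applying it to $\bv=\bw_n-\bw$ and using $S(\bw)=\boldsymbol{0}$ yields
\be
\norm{\bw_n-\bw}_{W^{1,2}(\Omega)}\le c\bigl(\norm{\bw_n-\bw}_{L^2(\Omega)}+\norm{S(\bw_n)}_{L^2(\Omega)}\bigr)\longrightarrow0,
\ee
so $\bw_n\to\bw$ strongly in $W^{1,2}(\Omega)$, whence $\norm{\bw}_{W^{1,2}(\Omega)}=1$ and, by continuity of the trace operator, $\gamma(\bw_n)\to\gamma(\bw)$ strongly in $L^2(\partial\Omega)$.

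Finally I would extract a contradiction from the boundary data, and this last step is where I expect the genuine content of the lemma to lie and where the hypothesis $\alpha\not\equiv0$ is indispensable. From $\gamma(\bw_n)\cdot\bn=0$ and the strong trace convergence we get $\gamma(\bw)\cdot\bn=0$, i.e. $\bw\in H(\Omega)$ and $\bw=\bw_{\tau}$ on $\partial\Omega$. Since $\alpha$ is bounded and continuous, $\sqrt{\alpha}\,(\bw_n)_{\tau}\to\sqrt{\alpha}\,\bw_{\tau}$ in $L^2(\partial\Omega)$, and the vanishing of the norm forces $\sqrt{\alpha}\,\bw_{\tau}=\boldsymbol{0}$; hence $\bw=\boldsymbol{0}$ on $\{x\in\partial\Omega:\alpha(x)>0\}$. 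As $\alpha\not\equiv0$ is continuous, this set is nonempty and relatively open in $\partial\Omega$, so it contains an arc with infinitely many points. But a nontrivial rigid motion $\bw(x)=\bc+b\,x^{\perp}$ vanishes at no point (when $b=0$, $\bc\neq\boldsymbol{0}$) or at exactly one point (when $b\neq0$); vanishing at two distinct points therefore forces $\bc=\boldsymbol{0}$ and $b=0$, i.e. $\bw\equiv\boldsymbol{0}$, contradicting $\norm{\bw}_{W^{1,2}(\Omega)}=1$. This proves \eqref{ineq:Korn}. The necessity of $\alpha\not\equiv0$ is transparent here: on a rotationally symmetric domain such as an annulus the rotation field $\bw(x)=x^{\perp}$ lies in $H(\Omega)$ with $S(\bw)=\boldsymbol{0}$, so the inequality must fail for $\alpha\equiv0$, and the weighted boundary term is precisely what eliminates such a tangential rigid motion in the limit.
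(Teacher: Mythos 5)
Your proof is correct, and it is essentially the standard argument for this type of Korn inequality: the paper itself gives no proof of Lemma \ref{lem:Korn}, citing instead \cite{MR4231512}*{Proposition 3.13}, where the inequality is obtained by the same compactness-and-contradiction scheme you use (normalized sequence, classical second Korn inequality to upgrade to strong $W^{1,2}$ convergence, identification of the limit as a rigid displacement $\bc+b\,x^{\perp}$, and elimination of that rigid displacement via the boundary term). Your closing observation also matches the paper's own Remark \ref{rem:Korn}: when $\alpha\equiv0$ the inequality can only fail through a tangential rigid rotation, which exists precisely when $\Omega$ has a circular symmetry, and the weighted term $\norm{\sqrt{\alpha}\bw_{\tau}}_{L^2(\partial\Omega)}$ is exactly what rules it out.
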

\begin{rema}\label{rem:Korn}
(i) In general
\be
\bu\in W^{1,2}(\Omega),\enskip S(\bu)=O\enskip\text{(zero matrix)}\Leftrightarrow\bu\in\SR(\Omega),
\ee
where $\SR(\Omega)\equiv\{\bu:\bu(x)=\boldsymbol{a}+b(-x_2,x_1), \boldsymbol{a}=(a_1,a_2)\in\BR^2, b\in\BR\}$ is the set of rigid displacements. Suppose that $\bu_{0}=\boldsymbol{a}+b(-x_2,x_1)$ is not identically equal to zero. Then $\bu_{0}$ belongs to the space $H(\Omega)$, that is, $\bu_{0}\cdot\bn=0$ on $\partial\Omega$, only if $b\neq0$ and $\Omega$ has a circular symmetry around the point $(-\frac{a_2}{b},\frac{a_1}{b})$.\footnote{See footnote \ref{def:circular_symmetry}.} In such a situation, $\bu_{0}\in J(\Omega)\subseteq H(\Omega)$ and
\be
H(\Omega)\cap\SR(\Omega)=\mathrm{Span}\{\bu_{0}\}.
\ee

(ii) Let $V$ be a closed subspace of $H(\Omega)$ such that $V\cap\SR(\Omega)=\{\boldsymbol{0}\}$. Then there exists a constant $c=c(\Omega)>0$ such that the inequality
\be\label{ineq:Korn2}
\norm{\bu}_{W^{1,2}(\Omega)}\le c\norm{S(\bu)}_{L^2(\Omega)}
\ee
holds for any $\bu\in V$; see \cite{MR1195131}*{Chapter I, Theorem 2.5 and Corollary 2.6}, \cite{SoSc73}*{Lemma 4}. In particular, if $\Omega$ does not have a circular symmetry around any points, then $H(\Omega)\cap\SR(\Omega)=\{\boldsymbol{0}\}$, and hence the inequality \eqref{ineq:Korn2} holds for any $\bu\in H(\Omega)$; see \cite{MR3145765}*{Lemma 3.3}, \cite{MR1932965}*{Theorem 3}.
\end{rema}
\begin{rema}\label{rem:case_symmetry} It is clear from the identity \eqref{def:weakS} that in the case when the friction coefficient $\beta\equiv0$ and $H_{q'}(\Omega)\cap\SR(\Omega)=H(\Omega)\cap\SR(\Omega)=\mathrm{Span}\{\bu_{0}\}$ for some $\bu_{0}\neq\boldsymbol{0}$\footnote{The spaces $H_{q'}(\Omega)\cap\SR(\Omega)$, $1<q'<\infty$, do not depend on the exponent $q'$.\label{foot:kernel}} (the domain $\Omega$ has a circular symmetry around some point), an external force $\bff$ and a boundary datum $\bb_{\ast}$ should satisfy the condition
\be\label{Stokes_comp1}
\langle \bff,\bu_{0} \rangle_{\Omega}+\langle \bb_{\ast},\bu_{0} \rangle_{\partial\Omega}=0.
\ee

\end{rema}
\vspace{0.3cm}
Concerning the domain $\Omega$ and the friction coefficient $\beta$, we suppose that either of the following conditions holds:\footnote{See Solonnikov-\v{S}\v{c}adilov \cite{SoSc73} for the case when $\beta\equiv0$ and the domain $\Omega$ has a circular symmetry around some point.}
\begin{enumerate}
\item[$(\mathrm{a})$] $\beta\not\equiv0$;
\item[$(\mathrm{b})$] $\beta\equiv0$ and the domain $\Omega$ does not possess a circular symmetry around any points.
\end{enumerate}
Then, according to Lemma \ref{lem:Korn} and Remark \ref{rem:Korn} (ii), $J(\Omega)$ is a Hilbert space with respect to the scalar product
\be\label{scalar_product}
(\bw,\bfvarphi)_{J(\Omega)}\equiv\frac{\nu}{2} \int_{\Omega} S(\bw):S(\bfvarphi) \, dx+\int_{\partial \Omega} \beta \bw_{\tau} \cdot \bfvarphi_{\tau} \, ds,
\ee 
where $\nu>0$ is the kinematic viscosity constant. Also, the norms $\norm{\bw}_{J(\Omega)}\equiv(\bw,\bw)^{1/2}_{J(\Omega)}$ and $\norm{\bw}_{W^{1,2}(\Omega)}$ are equivalent, namely, there exists a constant $c=c(\Omega,\nu,\beta)>0$ such that
\be
c^{-1}\norm{\bw}_{W^{1,2}(\Omega)}\le\norm{\bw}_{J(\Omega)}\le c\norm{\bw}_{W^{1,2}(\Omega)}\quad\text{for all}\quad\bw\in J(\Omega).
\ee
By means of standard arguments (see, e.g., \cite{MR2808162}*{Theorem I\hspace{-1.2pt}V.1.1}, \cite{MR254401}*{Chapter 2, Theorems 1 and 2}), we can prove the following existence and uniqueness theorem. See also \cite{SoSc73}*{Theorem $1'$}.
\begin{theo}\label{thm:exist_unique_S}
\noindent\begin{enumerate}\setlength{\parskip}{0cm}\setlength{\itemsep}{0cm}
\item Let the friction coefficient $\beta\in C(\partial\Omega)$ be nonnegative. Suppose that the external force $\bff$ is in $H(\Omega)'$, and the boundary data $a_{\ast}\in W^{1/2,2}(\partial\Omega)$ and $\bb_{\ast}\in W^{-1/2,2}(\partial\Omega)$ satisfy the compatibility conditions \eqref{Stokes_comp}. Assume further that $\beta\not\equiv0$ in the case when the domain $\Omega$ possesses a circular symmetry around some point. Then there exists a unique weak solution $\bu\in W^{1,2}(\Omega)$ to the Stokes problem \eqref{S}. Moreover, if we denote by $p$ the pressure associated with $\bu$ by Lemma $\ref{pressure_q}$, the pair $(\bu,p)$ satisfies the inequality
\be\label{202409281612}
\norm{\bu}_{W^{1,2}(\Omega)}+\norm{p}_{L^2(\Omega)}\le c\Bigl(\norm{\bff}_{H(\Omega)'}+\norm{a_{\ast}}_{W^{1/2,2}(\partial\Omega)}+\norm{\bb_{\ast}}_{W^{-1/2,2}(\partial\Omega)}\Bigr)
\ee
with $c=c(\Omega,\nu,\beta)$.
\item Let the friction coefficient $\beta$ be identically equal to zero. Suppose that the domain $\Omega$ has a circular symmetry around some point and $H(\Omega)\cap\SR(\Omega)=\mathrm{Span}\{\bu_{0}\}$, where $\bu_{0}$ is a nonzero rigid displacement. Then for arbitrary $\bff\in H(\Omega)'$, $a_{\ast}\in W^{1/2,2}(\partial\Omega)$, and $\bb_{\ast}\in W^{-1/2,2}(\partial\Omega)$ satisfying the compatibility conditions \eqref{Stokes_comp} and \eqref{Stokes_comp1}, the Stokes problem \eqref{S} admits a unique weak solution $\bu$ in the class 
\be\label{202410010838}
\{\bu\in W^{1,2}(\Omega):(\bu,\bu_{0})_{L^2(\Omega)}=0\}. 
\ee
Furthermore, if we denote by $p$ the pressure associated with $\bu$ by Lemma $\ref{pressure_q}$, the pair $(\bu,p)$ satisfies the estimate \eqref{202409281612} with $c=c(\Omega,\nu,\bu_0)>0$.
\end{enumerate}
\end{theo}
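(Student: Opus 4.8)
The plan is to reduce the nonhomogeneous problem to a coercive symmetric variational problem on a Hilbert space and then invoke the Riesz representation theorem, treating the two parts separately according to whether the bilinear form \eqref{scalar_product} is positive definite on the entire test space. For part (i), I would first use Lemma \ref{solenoidalextension} to produce a solenoidal extension $\bA\in W^{1,2}(\Omega)$ of $a_{\ast}$ with $\norm{\bA}_{W^{1,2}(\Omega)}\le c\norm{a_{\ast}}_{W^{1/2,2}(\partial\Omega)}$, and seek the solution in the form $\bu=\bw+\bA$ with $\bw\in J(\Omega)$. Substituting into the weak formulation, $\bw$ must satisfy $(\bw,\bfvarphi)_{J(\Omega)}=F(\bfvarphi)$ for all $\bfvarphi\in J(\Omega)$, where $(\cdot,\cdot)_{J(\Omega)}$ is the form \eqref{scalar_product} and
\[
F(\bfvarphi)\equiv-\frac{\nu}{2}\int_{\Omega}S(\bA):S(\bfvarphi)\,dx-\int_{\partial\Omega}\beta\bA_{\tau}\cdot\bfvarphi\,ds+\langle\bff,\bfvarphi\rangle_{\Omega}+\langle\bb_{\ast},\bfvarphi\rangle_{\partial\Omega}.
\]
Under hypothesis (a) or (b) --- which is precisely guaranteed by the standing assumption that $\beta\not\equiv0$ whenever $\Omega$ has a circular symmetry --- Lemma \ref{lem:Korn} together with Remark \ref{rem:Korn}(ii) shows that $(\cdot,\cdot)_{J(\Omega)}$ is an inner product on $J(\Omega)$ whose induced norm is equivalent to $\norm{\cdot}_{W^{1,2}(\Omega)}$, so $J(\Omega)$ is a Hilbert space.

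I would then verify that $F$ is a bounded linear functional on $J(\Omega)$: the volume term is controlled by $\norm{\bA}_{W^{1,2}(\Omega)}$, the friction term by the trace inequality and the continuity (hence boundedness) of $\beta$, the force term by $\norm{\bff}_{H(\Omega)'}$, and the boundary term by the duality pairing of $W^{-1/2,2}(\partial\Omega)$ with the trace $\gamma(\bfvarphi)\in W^{1/2,2}(\partial\Omega)$, where the compatibility $\bb_{\ast}\cdot\bn=0$ permits replacing $\bfvarphi$ by its tangential part $\bfvarphi_{\tau}$. The Riesz representation theorem then yields a unique $\bw\in J(\Omega)$, and $\bu=\bw+\bA$ is the desired weak solution; uniqueness follows since the difference of two solutions lies in $J(\Omega)$ and is $(\cdot,\cdot)_{J(\Omega)}$-orthogonal to all of $J(\Omega)$, hence vanishes by positive definiteness. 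Testing with $\bfvarphi=\bw$ gives $\norm{\bw}_{J(\Omega)}\le\norm{F}_{J(\Omega)'}$, and combining the norm equivalence, the bound on $F$, and the estimate for $\bA$ produces the velocity part of \eqref{202409281612}; the pressure part then comes directly from Lemma \ref{pressure_q}.

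For part (ii), $\beta\equiv0$ and $\bu_{0}\in J(\Omega)$ spans the one-dimensional kernel $H(\Omega)\cap\SR(\Omega)$, so the form \eqref{scalar_product} degenerates along $\bu_{0}$ and I must restrict to $\tilde{J}(\Omega)\equiv\{\bw\in J(\Omega):(\bw,\bu_{0})_{L^2(\Omega)}=0\}$. Since $\tilde{J}(\Omega)\cap\SR(\Omega)=\{\boldsymbol{0}\}$, Remark \ref{rem:Korn}(ii) supplies the Korn inequality \eqref{ineq:Korn2} on $\tilde{J}(\Omega)$, which is therefore again a Hilbert space under \eqref{scalar_product}, and Riesz representation gives a unique $\bw\in\tilde{J}(\Omega)$ with $(\bw,\bfvarphi)_{J(\Omega)}=F(\bfvarphi)$ for all $\bfvarphi\in\tilde{J}(\Omega)$. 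The step I expect to require the most care is checking that this identity in fact holds for every $\bfvarphi\in J(\Omega)$, i.e.\ also for $\bfvarphi=\bu_{0}$: because $S(\bu_{0})=\boldsymbol{0}$ and $\beta\equiv0$ one has $(\bw,\bu_{0})_{J(\Omega)}=0$ and $F(\bu_{0})=\langle\bff,\bu_{0}\rangle_{\Omega}+\langle\bb_{\ast},\bu_{0}\rangle_{\partial\Omega}$, so the compatibility condition \eqref{Stokes_comp1} is exactly what forces $F(\bu_{0})=0$ and closes the argument. Finally, the full solution set is the affine line $\bw+\bA+\mathrm{Span}\{\bu_{0}\}$, and I would single out the unique representative in the class \eqref{202410010838} by choosing the multiple of $\bu_{0}$ making $(\bu,\bu_{0})_{L^2(\Omega)}=0$; the estimate \eqref{202409281612} is obtained as in part (i), now invoking \eqref{ineq:Korn2} in place of \eqref{ineq:Korn}.
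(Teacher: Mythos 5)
Your proposal is correct and follows essentially the same route as the paper: the paper reduces to the homogeneous problem via the solenoidal extension of Lemma \ref{solenoidalextension}, equips $J(\Omega)$ with the scalar product \eqref{scalar_product} (justified by Lemma \ref{lem:Korn} and Remark \ref{rem:Korn} (ii)), and then appeals to the standard Riesz-representation argument exactly as you do, with the pressure and the estimate \eqref{202409281612} coming from Lemma \ref{pressure_q}. Your treatment of part (ii) --- solving on the $L^2$-orthogonal complement of $\bu_{0}$, using \eqref{Stokes_comp1} to extend the variational identity from that subspace to all of $J(\Omega)$, and then adding the appropriate multiple of $\bu_{0}$ to land in the class \eqref{202410010838} --- correctly supplies the details that the paper delegates to the reference Solonnikov--\v{S}\v{c}adilov.
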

\begin{rema}\label{rem:Unique_S} In the case when $\beta\equiv0$ and $H(\Omega)\cap\SR(\Omega)=\mathrm{Span}\{\bu_{0}\}$ for some nonzero rigid displacement $\bu_{0}$, the Stokes problem \eqref{S} admits infinitely many weak solutions in the class $W^{1,2}(\Omega)$. Indeed, if $\bu\in W^{1,2}(\Omega)$ is a weak solution to the Stokes problem \eqref{S}, it is easily seen that $\bu+\lambda\bu_0$, $\lambda\in\BR$, are also weak solutions to the problem \eqref{S} corresponding to the same data.\footnote{In particular, the vector field
\be
\tilde{\bu}=\bu-\frac{(\bu,\bu_0)_{L^2(\Omega)}}{\norm{\bu_0}_{L^2(\Omega)}^2}\bu_0
\ee
is a weak solution to the problem \eqref{S} corresponding to the same data such that $(\tilde{\bu},\bu_0)_{L^2(\Omega)}=0$.} Therefore, some restrictions such as $(\bu,\bu_0)_{L^2(\Omega)}=0$ in \eqref{202410010838} are necessary for the uniqueness of weak solutions in Theorem \ref{thm:exist_unique_S} (ii).
\end{rema}

We shall also need the following global regularity result for the Stokes problem \eqref{S} in Subsection \ref{Leray_arg}.
\begin{theo}\label{thm:grS}
Let $1<q<\infty$ and let $\beta\in C^1(\partial\Omega)$ be nonnegative. Suppose that $\bu$ is a q-weak solution to the Stokes problem \eqref{S} corresponding to 
\be
\bff\in L^{q}(\Omega),\;a_{\ast}\in W^{2-1/q,q}(\partial\Omega)\;\text{and }\bb_{\ast}\in W^{1-1/q,q}(\partial\Omega). 
\ee
Assume further that $\int_{\Omega} \bu\cdot\bu_0\,dx=0$ in the case when $\beta\equiv0$ and $H(\Omega)\cap\SR(\Omega)=\mathrm{Span}\{\bu_{0}\}$,\footnote{See footnote \ref{foot:kernel}.} where $\bu_{0}$ is a nonzero rigid displacement.
Then 
\be
\bu\in W^{2,q}(\Omega),\; p\in W^{1,q}(\Omega),
\ee
where $p$ is the pressure associated with $\bu$ by Lemma $\ref{pressure_q}$. Furthermore, the following estimate holds:
\be\label{est:grS}
\norm{\bu}_{W^{2,q}(\Omega)}+\norm{p}_{W^{1,q}(\Omega)}\le c\Bigl(\norm{\bff}_{L^{q}(\Omega)}+\norm{a_{\ast}}_{W^{2-1/q,q}(\partial\Omega)}+\norm{\bb_{\ast}}_{W^{1-1/q,q}(\partial\Omega)}\Bigr)
\ee
with $c=c(q,\Omega,\nu,\beta)$.
\end{theo}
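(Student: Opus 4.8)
The plan is to derive the global estimate \eqref{est:grS} by localization combined with the Agmon--Douglis--Nirenberg (ADN) theory for the Stokes system under the slip boundary conditions, and then to promote this a priori estimate to genuine regularity by approximation together with the uniqueness established in Theorem \ref{thm:exist_unique_S}. First I would reduce to the case $a_{\ast}\equiv0$. By the $L^q$-analogue of Lemma \ref{solenoidalextension} (solve the Neumann problem $\Delta q=0$, $\partial q/\partial n=a_{\ast}$ and use $L^q$ elliptic regularity), choose the solenoidal extension $\bA=\nabla q\in W^{2,q}(\Omega)$ with $\norm{\bA}_{W^{2,q}(\Omega)}\le c\norm{a_{\ast}}_{W^{2-1/q,q}(\partial\Omega)}$. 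Since $q$ is harmonic, $\Delta\bA=\boldsymbol 0$, so $\wt{\bu}=\bu-\bA$ is a $q$-weak solution of \eqref{S} with the \emph{same} force $\bff$, homogeneous normal condition $\wt{\bu}\cdot\bn=0$, and tangential datum $\wt{\bb}_{\ast}=\bb_{\ast}-\nu[S(\bA)\bn]_{\tau}-\beta\bA_{\tau}\in W^{1-1/q,q}(\partial\Omega)$. Here it is crucial that $-p\bn$ is purely normal, so that $[T(\bu,p)\bn]_{\tau}=\nu[S(\bu)\bn]_{\tau}$ and the pressure never enters the tangential boundary condition.

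Interior regularity $\wt{\bu}\in W^{2,q}_{\mathrm{loc}}(\Omega)$, $p\in W^{1,q}_{\mathrm{loc}}(\Omega)$ with the usual Calder\'on--Zygmund bounds is classical for the Stokes system. For the boundary estimate I would cover $\partial\Omega$ by finitely many coordinate patches with a subordinate partition of unity, and in each patch flatten the boundary by a smooth diffeomorphism onto a half-ball. Under this change of variables the Stokes operator becomes a uniformly elliptic system with smooth coefficients that differs from the constant-coefficient model only by lower-order terms, while the slip condition $\nu[S(\wt{\bu})\bn]_{\tau}+\beta\wt{\bu}_{\tau}=\wt{\bb}_{\ast}$ transforms, via Lemma \ref{lem_def-curl} and Proposition \ref{prop:Weingarten}, into the flat-boundary slip operator plus lower-order terms carrying the curvature (Weingarten) data and the derivatives of the diffeomorphism. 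The analytic point is that the model slip boundary-value problem satisfies the Shapiro--Lopatinski (complementing) condition, so the ADN estimates hold for the frozen-coefficient model problem in the half-space. A standard freezing-of-coefficients and perturbation argument then yields, in each patch,
\[
\norm{\wt{\bu}}_{W^{2,q}(\Omega)}+\norm{p}_{W^{1,q}(\Omega)}\le c\bigl(\norm{\bff}_{L^{q}(\Omega)}+\norm{a_{\ast}}_{W^{2-1/q,q}(\partial\Omega)}+\norm{\bb_{\ast}}_{W^{1-1/q,q}(\partial\Omega)}+\norm{\bu}_{W^{1,q}(\Omega)}\bigr),
\]
the last term absorbing all lower-order contributions. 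Summing over the patches and combining with the interior estimate gives the same bound globally.

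It remains to remove the term $\norm{\bu}_{W^{1,q}(\Omega)}$ and to recover the pressure. Once $\bu\in W^{2,q}(\Omega)$ is known, the momentum equation gives $\nabla p=\bff+\nu\Delta\bu\in L^{q}(\Omega)$, and together with $p\in L^{q}(\Omega)$ from Lemma \ref{pressure_q} this yields $p\in W^{1,q}(\Omega)$. To bound $\norm{\bu}_{W^{1,q}(\Omega)}$ by the data alone I would use the $W^{1,2}$ theory of Theorem \ref{thm:exist_unique_S} together with the two-dimensional Sobolev embedding (or a direct $L^q$ solvability estimate); in the exceptional case $\beta\equiv0$ with $H(\Omega)\cap\SR(\Omega)=\mathrm{Span}\{\bu_{0}\}$ the hypothesis $\int_{\Omega}\bu\cdot\bu_{0}\,dx=0$ removes the one-dimensional kernel and the Korn inequality \eqref{ineq:Korn2} (Remark \ref{rem:Korn}) supplies the required control. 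Finally, the a priori estimate is turned into genuine $W^{2,q}\times W^{1,q}$ regularity in the usual way: approximate the data by smooth functions, solve, use the uniform bound just obtained, and pass to the limit, identifying the limit with $\bu$ by uniqueness.

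I expect the main obstacle to be the boundary analysis of the slip condition: verifying the Shapiro--Lopatinski condition for the model slip operator, and, after flattening, keeping careful track of the lower-order curvature terms produced by Lemma \ref{lem_def-curl} and Proposition \ref{prop:Weingarten} so that they are genuinely of lower order and can be absorbed into $\norm{\bu}_{W^{1,q}(\Omega)}$. A secondary subtlety is the kernel case ($\beta\equiv0$ with circular symmetry), where the orthogonality constraint and the Korn inequality must be invoked to close the estimate.
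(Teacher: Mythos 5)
Your strategy is genuinely different from the paper's. The paper never localizes or invokes Agmon--Douglis--Nirenberg theory; instead it exploits the two-dimensional structure: after the same reduction by the harmonic extension $\bA=\nabla q$, it solves a scalar Dirichlet problem for the would-be vorticity (Proposition \ref{prop:vorticity}), identifies $\nabla^{\perp}\curl\bv$ with the gradient of that solution by testing against a Helmholtz--Weyl decomposition and using the curl-form of the weak formulation (Proposition \ref{weak_curl}, which is where Lemma \ref{lem_def-curl} and the Weingarten map enter), and then obtains $W^{2,q}$-regularity from the div-curl characterization of Sobolev spaces (Proposition \ref{prop:regularity}); the lower-order term $\norm{\bu}_{W^{1,q}(\Omega)}$ is removed afterwards by a compactness/contradiction argument resting on Lemma \ref{lem:unique_q}. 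Your ADN route is the classical alternative and would in principle extend beyond two dimensions, at the price of verifying the complementing (Shapiro--Lopatinski) condition for the slip operator, which you assert but do not check; that part is known and is an acceptable black box for a plan.

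The genuine gap is in your weak-to-strong step, and it is not cosmetic: for $1<q<2$ the scheme ``approximate the data, solve, pass to the limit, identify the limit with $\bu$ by uniqueness'' is circular as written. Theorem \ref{thm:exist_unique_S} gives uniqueness only within the class of $W^{1,2}$ (2-weak) solutions; your given solution lies merely in $W^{1,q}$ with $q<2$, hence is not known to belong to that class, so that theorem cannot identify it with the regular solution you construct. Uniqueness in the $q$-weak class for $q<2$ is exactly the content of the paper's Lemma \ref{lem:unique_q}, whose proof in turn relies on the regularity statement you are trying to prove --- the paper escapes this circle only because its vorticity method regularizes the \emph{given} weak solution directly, with no need to construct a second solution and identify it. The same circularity infects your removal of $\norm{\bu}_{W^{1,q}(\Omega)}$ when $q<2$: you cannot appeal to the $W^{1,2}$ theory before $\bu$ is known to lie in $W^{1,2}$. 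The standard repair is to order the argument by exponent: first complete your proof for $q\ge2$, where $W^{1,q}\subseteq W^{1,2}$ and $J_{2}(\Omega)\subseteq J_{q'}(\Omega)$ make the identification via Theorem \ref{thm:exist_unique_S} legitimate; then, for $1<q<2$, prove uniqueness of $q$-weak solutions with zero data by duality, testing the homogeneous solution $\bz\in J_q(\Omega)$ against the $W^{2,q'}$ solution ($q'>2$, already covered) with arbitrary force $\bF\in C^{\infty}_{0}(\Omega)$ and using the symmetry of the bilinear form to conclude $\int_{\Omega}\bF\cdot\bz\,dx=0$; in the exceptional case $\beta\equiv0$, $H(\Omega)\cap\SR(\Omega)=\mathrm{Span}\{\bu_0\}$, restrict to $\bF$ with $\int_{\Omega}\bF\cdot\bu_0\,dx=0$ and invoke the orthogonality hypothesis. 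With that uniqueness lemma in hand, both your identification step and the removal of the lower-order term close.
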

The proof of Theorem \ref{thm:grS} will be given in the Appendix.

\section
{Proof of Theorem \ref{theo1}}\label{Proof of Theorem 1}
In this section we shall prove Theorem \ref{theo1}.
\subsection{Leray's argument reductio ad absurdum}
\label{Leray_arg}
Consider the Navier-Stokes problem \eqref{NS}-\eqref{slip} in a smooth bounded domain $\Omega\subseteq\BR^2$ defined by \eqref{domain} with the external force $\bff\in W^{1,2}(\Omega)$. Without loss of generality, we may assume that $\bff=\nabla^{\perp}g$ with $g\in W^{2,2}(\Omega,\BR)$\footnote{By the Helmholtz-Weyl decomposition of vector fields over smooth bounded planar domains, every $\bff\in W^{1,2}(\Omega)$ can be represented as $\bff=\nabla^{\perp}g+\nabla h$ with $g,h\in W^{2,2}(\Omega,\BR)$; see, e.g., \cite{MR2550139}*{Theorem 3.20}. Then the gradient part is included into the pressure term.}, where $\nabla^{\perp}g=(-\partial_2g,\partial_1g)$. Since the boundary datum $a_{\ast}\in W^{3/2,2}(\partial\Omega)$ satisfies 
\be
\int_{\partial\Omega} a_{\ast} \, ds=\sum_{j=0}^{N} \int_{\Gamma_{j}} a_{\ast} \, ds=0,
\ee
by Lemma \ref{solenoidalextension} there exists a solenoidal extension $\bA\in W^{2,2}(\Omega)$ of $a_{\ast}$ such that
\be
\norm{\bA}_{W^{2,2}(\Omega)} \le c\norm{a_{\ast}}_{{W}^{3/2,2}(\partial \Omega)}.
\ee
Consider the Stokes problem \eqref{S} with the data $\bff$, $\beta$, $a_{\ast}$ and $\bb_{\ast}$ given in the statement of Theorem \ref{theo1}. By Lemma \ref{pressure_q}, Theorem \ref{thm:exist_unique_S} (i) and Theorem \ref{thm:grS}, we can find a weak solution $\bU\in W^{2,2}(\Omega)$ to the Stokes problem \eqref{S} and the pressure $P\in W^{1,2}(\Omega)$ associated with $\bU$ such that $\bU-\bA\in J(\Omega)\cap W^{2,2}(\Omega)$ and $\int_{\Omega}P\,dx=0$. The pair $(\bU,P)$ satisfies the integral identity
\be\label{07040912}
\frac{\nu}{2}\int_{\Omega} S(\bU): S(\bfvarphi) \,dx+\int_{\partial\Omega} \beta\bU_{\tau}\cdot\bfvarphi \,ds-\int_{\partial\Omega}\bb_{\ast}\cdot\bfvarphi\,ds-\int_{\Omega}\bff\cdot\bfvarphi\,dx=\int_{\Omega} P\,\dv\bfvarphi\,dx
\ee
for all $\bfvarphi\in H(\Omega)$ and obeys the inequality
\be\label{07040736}
\norm{\bU}_{W^{2,2}(\Omega)}+\norm{P}_{W^{1,2}(\Omega)}\le c\Bigl(\norm{\bff}_{L^{2}(\Omega)}+\norm{a_{\ast}}_{W^{3/2,2}(\partial\Omega)}+\norm{\bb_{\ast}}_{W^{1/2,2}(\partial\Omega)}\Bigr)
\ee
with $c=(\Omega,\nu,\beta)>0$. We take $\bU\in W^{2,2}(\Omega)$ as a solenoidal extension of $a_{\ast}$ and look for a weak solution to the Navier-Stokes problem \eqref{NS}-\eqref{slip} of the form $\bu=\bw+\bU$ where $\bw\in J(\Omega)$ satisfies the integral identity
\be\label{def:weakNShomo}
\ba
\frac{\nu}{2} \int_{\Omega} S(\bw):S(\bfvarphi) \, dx+\int_{\partial \Omega} \beta \bw_{\tau} \cdot \bfvarphi \, ds=&- \int_{\Omega} (\bU \cdot \nabla)\bU \cdot \bfvarphi \,dx- \int_{\Omega} (\bU \cdot \nabla)\bw \cdot \bfvarphi \, dx\\
&- \int_{\Omega} (\bw \cdot \nabla)\bw \cdot \bfvarphi \, dx- \int_{\Omega} (\bw \cdot \nabla)\bU \cdot \bfvarphi \,dx
\ea
\ee
for all $\bfvarphi\in J(\Omega)$. We recall that the space $J(\Omega)$ is a Hilbert space with respect to the scalar product \eqref{scalar_product}. For fixed $\bw\in J(\Omega)$, the right-hand side of the integral identity \eqref{def:weakNShomo} defines a bounded linear functional on $\bfvarphi\in J(\Omega)$. According to the Riesz representation theorem, there exists one and only one element ${\mathcal{K}}\bw\in J(\Omega)$ such that
\be
\ba
({\mathcal{K}}\bw,\bfvarphi)_{J(\Omega)}=- \int_{\Omega} (\bU \cdot \nabla)\bU \cdot \bfvarphi \,dx- \int_{\Omega} (\bU \cdot \nabla)\bw \cdot \bfvarphi \, dx- \int_{\Omega} (\bw \cdot \nabla)\bw \cdot \bfvarphi \, dx- \int_{\Omega} (\bw \cdot \nabla)\bU \cdot \bfvarphi \,dx
\ea
\ee
for all $\bfvarphi\in J(\Omega)$. Thus the integral identity \eqref{def:weakNShomo} is equivalent to an operator equation in the Hilbert space $J(\Omega)$: 
\be\label{eq:08091216}
\bw={\mathcal{K}}\bw.
\ee 
Moreover, in the same way as in the proof of \cite{MR254401}*{Chapter 5, Theorem 1}, we can prove that the operator $\mathcal{K}$ is a compact mapping from $J(\Omega)$ into itself. Hence we are able to apply the Leray-Schauder theorem (see, e.g., \cite{MR254401}*{Chapter 1, section 3}) to prove the solvability of the equation \eqref{eq:08091216}. To this end, we need to show that all possible solutions $\bw^{(\lambda)}$ to the integral identity
\be\label{eq:Leray-Schauder_lambda}
\ba
\frac{\nu}{2} \int_{\Omega} S(\bw^{(\lambda)}):&S(\bfvarphi)\, dx+\int_{\partial \Omega} \beta \bw^{(\lambda)} \cdot \bfvarphi \, ds=-\lambda\int_{\Omega} (\bU \cdot \nabla)\bU \cdot \bfvarphi \,dx-\lambda\int_{\Omega} (\bU \cdot \nabla)\bw^{(\lambda)} \cdot \bfvarphi \, dx\\
&-\lambda\int_{\Omega} (\bw^{(\lambda)} \cdot \nabla)\bw^{(\lambda)} \cdot \bfvarphi \, dx-\lambda\int_{\Omega} (\bw^{(\lambda)} \cdot \nabla)\bU \cdot \bfvarphi \,dx \quad\forall\bfvarphi\in J(\Omega)
\ea
\ee
are uniformly bounded in $J(\Omega)$ with respect to $\lambda\in[0,1]$. Assume that this is false. Then there exist sequences $\{ \lambda_{k} \}_{k\in\BN} \subset [0,1]$ and $\{ \widehat{\bw}_{k}=\widehat{\bw}^{(\lambda_k)} \}_{k\in\BN} \subset J(\Omega)$ such that
\be\label{eq:weaklambda}
\ba
\frac{\nu}{2} \int_{\Omega} S(\widehat{\bw}_{k}):&S(\bfvarphi) \, dx+\int_{\partial \Omega} \beta\widehat{\bw}_{k} \cdot \bfvarphi\,ds=-\lambda_{k}\int_{\Omega} (\bU \cdot \nabla)\bU \cdot \bfvarphi \,dx-\lambda_{k}\int_{\Omega} (\bU \cdot \nabla)\widehat{\bw}_{k} \cdot \bfvarphi \, dx\\
&-\lambda_{k}\int_{\Omega} (\widehat{\bw}_{k} \cdot \nabla)\widehat{\bw}_{k}\cdot \bfvarphi \, dx-\lambda_{k}\int_{\Omega} (\widehat{\bw}_{k} \cdot \nabla)\bU \cdot \bfvarphi \,dx \quad\forall\bfvarphi\in J(\Omega)
\ea
\ee
and
\be
\lim_{k \to \infty} \lambda_{k} =\lambda_{0} \in [0,1], \quad \lim_{k \to \infty} J_k = \lim_{k \to \infty} \| \widehat{\bw}_{k} \|_{J(\Omega)} = \infty.
\ee
Choose $\bfvarphi = J^{-2}_k \widehat{\bw}_{k}$ in \eqref{eq:weaklambda} and set $\bw_{k}= J^{-1}_k \widehat{\bw}_{k}$, we have
\be\label{07031746}
\ba
&\frac{\nu}{2} \int_{\Omega} S(\bw_{k}):S(\bw_{k}) \, dx+\int_{\partial \Omega} \beta \bw_{k} \cdot \bw_{k} \, ds\\
=&-\frac{\lambda_{k}}{J_k}\int_{\Omega} (\bU \cdot \nabla)\bU \cdot \bw_{k} \,dx- \lambda_{k}\int_{\partial\Omega}  \frac{\abs{\bw_k}^{2}}{2} a_{\ast} \,ds- \lambda_{k}\int_{\Omega} (\bw_{k} \cdot \nabla)\bU \cdot \bw_{k} \,dx,
\ea
\ee
because
\be
\int_{\Omega} (\bU \cdot \nabla) \bw_k \cdot \bw_k \, dx=\int_{\Omega} \bU \cdot \nabla\frac{\abs{\bw_k}^{2}}{2} \,dx=\int_{\Omega} \dv\Bigr(\frac{\abs{\bw_k}^{2}}{2} \bU \Bigl) \,dx=\int_{\partial\Omega} \frac{\abs{\bw_k}^{2}}{2} \bU\cdot\bn \,ds=\int_{\partial\Omega} \frac{\abs{\bw_k}^{2}}{2} a_{\ast} \,ds
\ee
and 
\be\label{202312281626}
\int_{\Omega} (\widehat{\bw}_{k} \cdot \nabla)\widehat{\bw}_{k} \cdot \widehat{\bw}_{k} \, dx=0.
\ee
Since $\norm{\bw_{k}}_{J(\Omega)}=1$, extracting a subsequence if necessary, we may assume that the sequence $\{ \bw_{k} \}_{k\in\BN}$ converges weakly in $J(\Omega)$ to some vector field $\bv\in J(\Omega)$. By the compact embeddings
\be
J(\Omega)\hookrightarrow L^r(\Omega), \quad J(\Omega)\hookrightarrow L^r(\partial\Omega) \quad \forall r\in[1,\infty),
\ee
the subsequence $\{ \bw_{k} \}_{k\in\BN}$ converges strongly in $L^r(\Omega)$ and $L^r(\partial\Omega)$. Therefore, letting $k\to\infty$ in the equality \eqref{07031746}, we deduce
\be\label{08251220}
\ba
1&=- \lambda_{0}\int_{\partial\Omega}  \frac{\abs{\bv}^{2}}{2} a_{\ast} \,ds-\lambda_{0}\int_{\Omega} (\bv\cdot \nabla)\bU \cdot \bv \,dx\\
&=- \lambda_{0}\int_{\partial\Omega}  \frac{\abs{\bv}^{2}}{2} a_{\ast} \,ds+\lambda_{0}\int_{\Omega} (\bv\cdot \nabla)\bv\cdot\bU\,dx.
\ea
\ee
In particular $\lambda_{0}>0$, and so $\lambda_{k}>0$ for all sufficiently large $k$.

Let us go back to the integral identity \eqref{eq:weaklambda}. Define the functional $l_{k}(\bfvarphi)$ on $\bfvarphi\in H(\Omega)$ by
\be\label{07040911}
\ba
l_{k}(\bfvarphi)\equiv&\frac{\nu}{2} \int_{\Omega} S(\widehat{\bw}_{k}):S(\bfvarphi) \, dx+\int_{\partial \Omega} \beta \widehat{\bw}_{k} \cdot \bfvarphi \, ds+\lambda_{k}\int_{\Omega} (\bU \cdot \nabla)\bU \cdot \bfvarphi \,dx\\
&+\lambda_{k}\int_{\Omega} (\bU \cdot \nabla)\widehat{\bw}_{k} \cdot \bfvarphi \, dx+\lambda_{k}\int_{\Omega} (\widehat{\bw}_{k} \cdot \nabla)\widehat{\bw}_{k}\cdot \bfvarphi \, dx+\lambda_{k}\int_{\Omega} (\widehat{\bw}_{k} \cdot \nabla)\bU \cdot \bfvarphi \,dx.
\ea
\ee
Obviously, $l_{k}(\bfvarphi)$ is a linear functional on $H(\Omega)$. By using the Sobolev embedding theorem and \eqref{07040736}, we obtain
\be
\ba
\abs{l(\bfvarphi)}&\le c\big(\norm{\widehat{\bw}_{k}}_{J(\Omega)}+\norm{\widehat{\bw}_{k}}^{2}_{J(\Omega)}+\norm{\bU}^{2}_{W^{1,2}(\Omega)}\big)\norm{\bfvarphi}_{H(\Omega)}\\
&\le c\big(\norm{\widehat{\bw}_{k}}_{J(\Omega)}+\norm{\widehat{\bw}_{k}}^{2}_{J(\Omega)}+\norm{\bff}^{2}_{L^{2}(\Omega)}+\norm{a_{\ast}}^{2}_{W^{3/2,2}(\partial\Omega)}+\norm{\bb_{\ast}}^{2}_{W^{1/2,2}(\partial\Omega)}\big)\norm{\bfvarphi}_{H(\Omega)}
\ea
\ee
with constant $c=c(\Omega,\nu,\beta)$ independent of $k$. Moreover, it follows from \eqref{eq:weaklambda} that $l_{k}(\bfvarphi)=0$ for all $\bfvarphi\in J(\Omega)$. Thus, in view of Lemma \ref{pressure} stated below, there exist scalar functions $\widetilde{p_{k}}\in L^{2}(\Omega)$ with $\int_{\Omega} \widetilde{p_{k}}\,dx=0$ such that
\be\label{07040800}
l_{k}(\bfvarphi)=\int_{\Omega} \widetilde{p_{k}}\dv\bfvarphi\,dx\quad\forall\bfvarphi\in H(\Omega)
\ee
and
\be
\norm{\widetilde{p_{k}}}_{L^2(\Omega)}\le c\big(\norm{\widehat{\bw}_{k}}_{J(\Omega)}+\norm{\widehat{\bw}_{k}}^{2}_{J(\Omega)}+\norm{\bff}^{2}_{L^{2}(\Omega)}+\norm{a_{\ast}}^{2}_{W^{3/2,2}(\partial\Omega)}+\norm{\bb_{\ast}}^{2}_{W^{1/2,2}(\partial\Omega)}\big)
\ee
with constant $c$ independent of $k$. 
\begin{lemm}[see \cite{SoSc73}]\label{pressure} Let $\Omega\subseteq\BR^2$ be a bounded domain with Lipschitz boundary, and let $l(\bfvarphi)$ be a bounded linear functional on $\bfvarphi\in H(\Omega)$. If $l(\bfvarphi)=0$ for all $\bfvarphi \in J(\Omega)$, then there exists a scalar function $p\in L^2(\Omega)$ with $\int_{\Omega} p(x)\,dx=0$ such that
\be
l(\bfvarphi)=\int_{\Omega} p\,\dv\bfvarphi \,dx 
\ee
for all $\bfvarphi\in H(\Omega)$. Moreover, $\norm{p}_{L^{2}(\Omega)}$ is equivalent to $\norm{l}_{H(\Omega)'}$.
\end{lemm}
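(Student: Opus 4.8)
The statement is a de Rham–type pressure recovery lemma: we must show that a bounded functional on $H(\Omega)$ which annihilates the solenoidal subspace $J(\Omega)$ is represented by integration against $\dv\bfvarphi$. The plan is to exploit the structural fact that $\dv$ maps $H(\Omega)$ \emph{onto} the space $L^2_0(\Omega)\equiv\{q\in L^2(\Omega):\into q\,dx=0\}$ with kernel exactly $J(\Omega)$, and then to transplant $l$ into a functional on $L^2_0(\Omega)$, to which the Riesz representation theorem applies.

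First I would record the two elementary facts that set up the factorization. For $\bfvarphi\in H(\Omega)$ one has $\into\dv\bfvarphi\,dx=\int_{\partial\Omega}\gamma(\bfvarphi)\cdot\bn\,ds=0$ by the zero normal-trace condition, so $\dv$ indeed maps $H(\Omega)$ into $L^2_0(\Omega)$, and by definition $J(\Omega)=\{\bfvarphi\in H(\Omega):\dv\bfvarphi=0\}$ is precisely its kernel. The one genuinely analytic ingredient is the surjectivity: for any $q\in L^2_0(\Omega)$ there is a field solving $\dv\bfvarphi=q$ that lies in $H(\Omega)$ and obeys $\norm{\bfvarphi}_{W^{1,2}(\Omega)}\le c_\Omega\norm{q}_{L^2(\Omega)}$. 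This is the classical Bogovskii estimate for bounded Lipschitz domains (the solution can even be taken in $W^{1,2}_0(\Omega)\subseteq H(\Omega)$); it is the standard inf-sup/LBB input and I would simply invoke it.

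With surjectivity in hand, I would construct $p$ directly rather than through quotient spaces. For $q\in L^2_0(\Omega)$ pick any $\bfvarphi_q\in H(\Omega)$ with $\dv\bfvarphi_q=q$ and set $\Lambda(q)\equiv l(\bfvarphi_q)$. This is well defined: if $\dv\bfvarphi_q=\dv\bfvarphi_q'=q$ then $\bfvarphi_q-\bfvarphi_q'\in J(\Omega)$, whence $l(\bfvarphi_q)=l(\bfvarphi_q')$ because $l$ annihilates $J(\Omega)$. Choosing $\bfvarphi_q$ to be the Bogovskii field gives $\abs{\Lambda(q)}\le\norm{l}_{H(\Omega)'}\norm{\bfvarphi_q}_{W^{1,2}(\Omega)}\le c_\Omega\norm{l}_{H(\Omega)'}\norm{q}_{L^2(\Omega)}$, so $\Lambda\in L^2_0(\Omega)'$. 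The Riesz theorem on the Hilbert space $L^2_0(\Omega)$ then furnishes a unique $p\in L^2_0(\Omega)$ (that is, $p\in L^2(\Omega)$ with $\into p\,dx=0$) such that $\Lambda(q)=\into p\,q\,dx$ for all $q\in L^2_0(\Omega)$.

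Finally, for an arbitrary $\bfvarphi\in H(\Omega)$ I take $q=\dv\bfvarphi$; since $\bfvarphi$ is itself an admissible choice of $\bfvarphi_q$, well-definedness gives $l(\bfvarphi)=\Lambda(\dv\bfvarphi)=\into p\,\dv\bfvarphi\,dx$, the asserted identity. The norm equivalence is then immediate: Cauchy–Schwarz yields $\abs{l(\bfvarphi)}\le\norm{p}_{L^2(\Omega)}\norm{\dv\bfvarphi}_{L^2(\Omega)}\le c\norm{p}_{L^2(\Omega)}\norm{\bfvarphi}_{W^{1,2}(\Omega)}$, hence $\norm{l}_{H(\Omega)'}\le c\norm{p}_{L^2(\Omega)}$, while testing against the Bogovskii field $\bfvarphi$ solving $\dv\bfvarphi=p$ gives $\norm{p}_{L^2(\Omega)}^2=l(\bfvarphi)\le\norm{l}_{H(\Omega)'}\norm{\bfvarphi}_{W^{1,2}(\Omega)}\le c_\Omega\norm{l}_{H(\Omega)'}\norm{p}_{L^2(\Omega)}$, i.e. $\norm{p}_{L^2(\Omega)}\le c_\Omega\norm{l}_{H(\Omega)'}$. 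The only real obstacle is the surjectivity of the divergence onto zero-mean $L^2$ fields; everything else is soft functional analysis together with Riesz representation.
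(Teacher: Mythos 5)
Your proof is correct. Note first that the paper itself contains no proof of Lemma \ref{pressure}: the lemma is stated as a known result with a citation to Solonnikov--\v{S}\v{c}adilov \cite{SoSc73}, so there is no in-paper argument to compare against; your write-up supplies the proof that the paper delegates to the literature. Your route is the standard one for de Rham-type lemmas, and every step checks out: the divergence maps $H(\Omega)$ into $L^2_0(\Omega)\equiv\{q\in L^2(\Omega):\int_{\Omega}q\,dx=0\}$ with kernel exactly $J(\Omega)$; surjectivity with the bound $\norm{\bfvarphi}_{W^{1,2}(\Omega)}\le c\,\norm{q}_{L^2(\Omega)}$ is the Bogovskii estimate, valid in bounded Lipschitz domains, and the solution may indeed be taken in $W^{1,2}_{0}(\Omega)\subseteq H(\Omega)$ (see, e.g., \cite{MR2808162}*{Theorem III.3.1}); well-definedness of the transplanted functional $\Lambda$ on $L^2_0(\Omega)$ uses precisely the hypothesis $l|_{J(\Omega)}=0$, its boundedness uses the Bogovskii constant, and Riesz representation on the Hilbert space $L^2_0(\Omega)$ produces $p$ with $\int_{\Omega}p\,dx=0$. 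The recovery of the identity for arbitrary $\bfvarphi\in H(\Omega)$ (taking $q=\dv\bfvarphi$ and using that $\bfvarphi$ is an admissible preimage) and both directions of the norm equivalence (Cauchy--Schwarz one way, testing with the Bogovskii field solving $\dv\bfvarphi=p$ the other) are argued correctly. The only ingredient you invoke without proof, solvability of the divergence equation with the $W^{1,2}$ bound, is exactly the ingredient any proof of this lemma must rest on, so the reliance is appropriate rather than a gap.
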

\vspace{0.3cm}Putting $\widehat{\bu}_k=\widehat{\bf w}_k+\bU$ and $\widehat p_k=\widetilde{p_{k}}+P$. From \eqref{07040912}, \eqref{07040911} and \eqref{07040800}, we find that the pairs $\big(\widehat{\bu}_k, \, \widehat p_k\big)$ satisfy the integral identity
\be\label{07041623}
\ba
&\frac{\nu}{2} \int_{\Omega} S(\widehat{\bu}_{k}):S(\bfvarphi) \, dx+\int_{\partial \Omega} \beta \widehat{\bu}_{k} \cdot \bfvarphi \, ds\\
&=-\lambda_{k}\int_{\Omega} (\widehat{\bu}_{k} \cdot \nabla)\widehat{\bu}_{k}\cdot \bfvarphi \, dx+\int_{\Omega}\bff\cdot\bfvarphi\,dx+\int_{\partial\Omega}\bb_{\ast}\cdot\bfvarphi\,ds+\int_{\Omega} {\widehat p_k}\dv\bfvarphi\,dx
\ea
\ee
for all $\bfvarphi\in H(\Omega)$. Since $\widehat{\bu}_{k}\in W^{1,2}(\Omega)$, we deduce $(\widehat{\bu}_{k} \cdot \nabla)\widehat{\bu}_{k}\in L^{q}(\Omega)$ for $q\in [1,2)$ with the estimate
\be
\ba
\norm{(\widehat{\bu}_{k} \cdot \nabla)\widehat{\bu}_{k}}_{L^{q}(\Omega)}&\le\norm{\widehat{\bu}_{k}}_{L^{2q/(2-q)}(\Omega)}\norm{\nabla\widehat{\bu}_{k}}_{L^{2}(\Omega)}\le c\bigl(\norm{\widehat{\bw}_{k}}_{W^{1,2}(\Omega)}+\norm{\bU}_{W^{1,2}(\Omega)}\bigr)^2\\
&\le c\Bigl(\norm{\widehat{\bw}_{k}}^{2}_{J(\Omega)}+\norm{\bff}^{2}_{L^{2}(\Omega)}+\norm{a_{\ast}}^{2}_{W^{3/2,2}(\partial\Omega)}+\norm{\bb_{\ast}}^{2}_{W^{1/2,2}(\partial\Omega)}\bigr),
\ea
\ee
where $c=c(q,\Omega,\nu,\beta)$. Thus the pairs $\big(\widehat{\bu}_k=\widehat{\bf w}_k+\bU, \, \widehat p_k=\widetilde{p_{k}}+P\big)$ might be considered as $q$-weak solutions ($1<q<2$) to the Stokes problem
\be
\ba\label{}
\left\{
\ba
-\nu \Delta \widehat{\bu}_k+\nabla \widehat p_k & =\bff-\lambda_k(\widehat{\bu}_k \cdot \nabla)\widehat{\bu}_k \quad & & \text{in} \enskip \Omega,\\
\dv \widehat{\bu}_k & =0 \quad & & \text{in} \enskip \Omega,\\
\widehat{\bu}_k \cdot\bn & = a_{\ast} \quad & & \text{on} \enskip \partial \Omega,\\
[T(\widehat{\bu}_k, \widehat p_k)\bn]_{\tau}+\beta \widehat {\bu}_{k\tau} & =\mathbf b_{\ast} \quad & &\text{on} \enskip \partial \Omega.
\ea 
\right. 
\ea
\ee
Here $T(\widehat{\bu}_k, \widehat p_k)=-\widehat p_kI+\nu S(\widehat{\bu}_k)$. By Theorem \ref{thm:grS}, we have $\widehat{\bu}_k\in W^{2,q}(\Omega)$, $\widehat p_k\in W^{1,q}(\Omega)$, $1<q<2$, along with the estimate
\be\label{07041635}
\ba
&\norm{\widehat{\bu}_k}_{W^{2,q}(\Omega)}+\norm{\widehat p_k}_{W^{1,q}(\Omega)}\\
\le c\Bigl(&\norm{\bff}_{L^{q}(\Omega)}+\norm{\lambda_k(\widehat{\bu}_k \cdot \nabla)\widehat{\bu}_k}_{L^{q}(\Omega)}+\norm{a_{\ast}}_{W^{2-1/q,q}(\partial\Omega)}+\norm{\bb_{\ast}}_{W^{1-1/q,q}(\partial\Omega)}\Bigr)\\
\le c\Bigl(&\norm{\bff}_{L^{2}(\Omega)}+\norm{\widehat{\bw}_{k}}^{2}_{J(\Omega)}+\norm{\bff}^{2}_{L^{2}(\Omega)}+\norm{a_{\ast}}^{2}_{W^{3/2,2}(\partial\Omega)}+\norm{\bb_{\ast}}^{2}_{W^{1/2,2}(\partial\Omega)}\\
&+\norm{a_{\ast}}_{W^{3/2,2}(\partial\Omega)}+\norm{\bb_{\ast}}_{W^{1/2,2}(\partial\Omega)}\Bigr)
\ea
\ee
with $c=c(q,\Omega,\nu,\beta)$. We now show by a simple boot-strap argument that, in fact, $\widehat{\bu}_k\in W^{2,2}(\Omega)\cap W^{3,2}_{\text{loc}}(\Omega)$ and $\widehat{p}_k\in W^{1,2}(\Omega)\cap W^{2,2}_{\text{loc}}(\Omega)$. Since $\widehat{\bu}_k\in W^{2,3/2}(\Omega)$, by the embeddings
\be
W^{2,3/2}(\Omega)\hookrightarrow W^{1,6}(\Omega)\hookrightarrow L^{\infty}(\Omega),
\ee
we have $\bff-\lambda_k(\widehat{\bu}_{k} \cdot \nabla)\widehat{\bu}_{k}\in W^{1,3/2}(\Omega)$. Again by Theorem \ref{thm:grS}, we deduce $\widehat{\bu}_k\in W^{2,2}(\Omega)$ and $\widehat{p}_k\in W^{1,2}(\Omega)$, which yields $\bff-\lambda_k(\widehat{\bu}_{k} \cdot \nabla)\widehat{\bu}_{k}\in W^{1,2}(\Omega)$. Then, in view of the interior regularity result for the Stokes equations\footnote{The interior regularity of the solution does not depend on the regularity of the boundary $\partial\Omega$ and of the boundary data $a_{\ast}$ and $\bb_{\ast}$.} (see \cite{MR2808162}*{Theorem I\hspace{-1.2pt}V.4.1}), we conclude that $\widehat{\bu}_k\in W^{2,2}(\Omega)\cap W^{3,2}_{\text{loc}}(\Omega)$ and $\widehat{p}_k\in W^{1,2}(\Omega)\cap W^{2,2}_{\text{loc}}(\Omega)$.

Put $\nu_k\equiv(\lambda_kJ_k)^{-1}\nu$. Multiplying \eqref{07041623} by $\frac{1}{\lambda_kJ^2_k}=\frac{\lambda_k\nu^2_k}{\nu^2}$, we find that the pair $\big(\bu_k\equiv\frac{1}{J_k}\widehat{\bu}_k, \, p_k\equiv\frac{1}{\lambda_kJ^2_k}\widehat p_k\big)$ satisfies the following identity for all $\bfvarphi\in H(\Omega)$:
\be\label{08191350}
\ba
\frac{\nu_k}{2} \int_{\Omega} S(\bu_{k}):S(\bfvarphi) \, dx+\int_{\partial \Omega} \beta_k {\bu_k}_{\tau} \cdot \bfvarphi \, ds=&-\int_{\Omega} (\bu_{k} \cdot \nabla)\bu_k\cdot \bfvarphi \, dx+\int_{\Omega} \bff_k\cdot\bfvarphi\,dx\\
&+\int_{\partial\Omega} \bb_k\cdot\bfvarphi \,ds+\int_{\Omega} p_k\,\dv\bfvarphi\,dx.
\ea
\ee
Here $\beta_k\equiv\frac{\nu_k}{\nu}\beta$, $\bff_k\equiv\frac{\lambda_k\nu^2_k}{\nu^2}\bff$ and $\bb_k\equiv\frac{\lambda_k\nu^2_k}{\nu^2}\bb_{\ast}$. Thus the pair $\big(\bu_k, \, p_k\big)$ is a weak solution to the Navier-Stokes system
\begin{align} \label{eq:NS_k}
\left\{
\ba
-\nu_k \Delta \bu_k+(\bu_k \cdot \nabla)\bu_k+ \nabla p_k & =\bff_k \quad & & \text{in} \enskip \Omega,\\
\dv \bu_k & =0 \quad & & \text{in} \enskip \Omega,\\
\bu_k \cdot\bn & = a_k \quad & & \text{on} \enskip \partial \Omega,\\
[T(\bu_k,p_k)\bn]_{\tau}+\beta_k \bu_{k\tau} & =\bb_k  \quad & &\text{on} \enskip \partial \Omega,
\ea 
\right. 
\end{align}
where $a_k\equiv\frac{\lambda_k\nu_k}{\nu}a_{\ast}$ and $T(\bu_k, p_k)=-p_kI+\nu_kS(\bu_k)$. From \eqref{07041635} we find that the sequence $\{p_k\}$ is uniformly bounded  in $W^{1,q}(\Omega)$ for each $q\in [1,2)$. Thus, extracting a subsequence (if necessary), we may assume that $p_k\rightharpoonup p$ in $W^{1,q}(\Omega)$ for each $q\in [1,2)$. Also, it is immediately verified that the sequence $\{\bu_{k}\}$ is uniformly bounded in $W^{1,2}(\Omega)$, $\bu_{k}\rightharpoonup \bv$ in $W^{1,2}(\Omega)$ and ${\bu}_k\in W^{2,2}(\Omega)\cap W^{3,2}_{\text{loc}}(\Omega)$, ${p}_k\in W^{1,2}(\Omega)\cap W^{2,2}_{\text{loc}}(\Omega)$. Let $\bfxi$ be an arbitrary vector field from $C^{\infty}_{0}(\Omega)$. Taking $\bfvarphi=\bfxi$ in \eqref{08191350} and then letting $k\to\infty$, we have
\be
0=-\int_{\Omega} (\bv\cdot\nabla)\bv\cdot\bfxi\,dx+\int_{\Omega} p\,\dv\bfxi\,dx \quad\forall\bfxi\in C^{\infty}_{0}(\Omega).
\ee
Integrating by parts gives the identity
\be
\int_{\Omega} (\bv\cdot\nabla)\bv\cdot\bfxi\,dx+\int_{\Omega} \nabla p\cdot\bfxi\,dx=0 \quad\forall\bfxi\in C^{\infty}_{0}(\Omega).
\ee
Hence the pair $(\bv,p)$ satisfies, for almost all $x\in\Omega$, the Euler system (recall that $\bv\in J(\Omega)$)
\begin{align} \label{Euler}
\left\{
\ba
(\bv\cdot\nabla)\bv+\nabla p & =\boldsymbol{0} \quad & & \text{in} \enskip \Omega,\\
\dv \bv & =0 \quad & & \text{in} \enskip \Omega,\\
\bv \cdot\bn & = 0 \quad & & \text{on} \enskip \partial \Omega.
\ea 
\right. 
\end{align}
Since $\dv\bU=0$ in $\Omega$ and $\bU\cdot\bn=a_{\ast}$ on $\partial\Omega$, it follows from \eqref{08251220} and $\eqref{Euler}_1$ that
\be\label{08251507}
\ba
\frac{1}{\lambda_{0}}&=-\int_{\partial\Omega}  \frac{\abs{\bv}^{2}}{2} a_{\ast} \,ds+\int_{\Omega} (\bv\cdot \nabla)\bv\cdot\bU\,dx=-\int_{\partial\Omega}  \frac{\abs{\bv}^{2}}{2} a_{\ast} \,ds-\int_{\Omega} \nabla p\cdot\bU\,dx\\
&=-\int_{\partial\Omega}  \frac{\abs{\bv}^{2}}{2} a_{\ast} \,ds-\int_{\Omega} \dv(p\bU)\,dx=-\int_{\partial\Omega}  \frac{\abs{\bv}^{2}}{2} a_{\ast} \,ds-\int_{\partial\Omega} p\,\bU\cdot\bn\,ds\\
&=-\int_{\partial\Omega} (p+\frac{\abs{\bv}^{2}}{2}) a_{\ast}\,ds=-\int_{\partial\Omega}\Phi a_{\ast}\,ds,
\ea
\ee
where $\Phi\equiv p+\frac{\abs{\bv}^2}{2}$ is the total head pressure corresponding to the solution $(\bv,p)$ to the steady Euler equations \eqref{Euler}. Obviously, $\Phi\in W^{1,q}(\Omega)$ for all $q\in[1,2)$.

The results obtained so far can be summarized in the following.
\begin{lemm}\label{lem_Leray} Let $\Omega$ be a smooth bounded domain defined by \eqref{domain} and let $\beta\in C^1(\partial\Omega)$ be nonnegative and $\beta\not\equiv0$. Suppose that $a_{\ast}\in W^{3/2,2}(\partial\Omega)$ and $\bb_{\ast}\in W^{1/2,2}(\partial\Omega)$ satisfy \eqref{theo1_comp} and $\bff=\nabla^{\perp}g$, $g\in W^{2,2}(\Omega,\BR)$. If there are no weak solutions to the Navier-Stokes problem \eqref{NS}-\eqref{slip}, then there exist $\bv$, $p$ with the following properties:
\begin{itemize}
\item[$(\mathrm{E})$] $\bv\in J(\Omega)$, $p\in W^{1,q}(\Omega)$, $q\in[1,2)$, and the pair $(\bv,p)$ satisfies the steady Euler equations \eqref{Euler}.
\item[$(\mathrm{E\mathchar`-NS})$] There exist sequences of functions ${\bu}_k\in W^{2,2}(\Omega)\cap W^{3,2}_{\text{loc}}(\Omega)$, ${p}_k\in W^{1,2}(\Omega)\cap W^{2,2}_{\text{loc}}(\Omega)$ and numbers $\nu_k\to+0$, $\lambda_k\to\lambda_0>0$ such that the norms $\norm{\bu_k}_{W^{1,2}(\Omega)}$, $\norm{p_k}_{W^{1,q}(\Omega)}$ are uniformly bounded for each $q\in[1,2)$, the pairs $(\bu_k,p_k)$ satisfy the Navier-Stokes system \eqref{eq:NS_k} with $\beta_k=\frac{\nu_k}{\nu}\beta$, $\bff_k=\frac{\lambda_k\nu^2_k}{\nu^2}\bff$, $a_k=\frac{\lambda_k\nu_k}{\nu}a_{\ast}$, $\bb_k=\frac{\lambda_k\nu^2_k}{\nu^2}\bb_{\ast}$, and
\be
\ba
\bu_{k}\rightharpoonup \bv\enskip\text{in }W^{1,2}&(\Omega),\quad p_k\rightharpoonup p\enskip\text{in }W^{1,q}(\Omega)\quad\forall q\in[1,2),\\
&\frac{1}{\lambda_{0}}=-\int_{\partial\Omega}\Phi a_{\ast}\,ds.
\ea
\ee
Here $\Phi=p+\frac{\abs{\bv}^2}{2}\in W^{1,q}(\Omega)$, $q\in[1,2)$.
\end{itemize}

\end{lemm}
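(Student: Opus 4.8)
The plan is to read Lemma \ref{lem_Leray} as a compact repackaging of the \emph{reductio ad absurdum} developed throughout this subsection, so the proof amounts to assembling the pieces already assembled above and verifying that they hold together under the stated hypotheses. The backbone is the reduction of the Navier-Stokes problem \eqref{NS}-\eqref{slip} to the operator equation $\bw=\mathcal{K}\bw$ in the Hilbert space $\bigl(J(\Omega),(\cdot,\cdot)_{J(\Omega)}\bigr)$, the scalar product \eqref{scalar_product} being admissible precisely because $\beta\not\equiv0$ activates the Korn inequality of Lemma \ref{lem:Korn}. The operator $\mathcal{K}$, obtained by Riesz representation of the right-hand side of \eqref{def:weakNShomo}, is compact (as in \cite{MR254401}), so the Leray-Schauder principle applies: the nonexistence of a weak solution forces the requisite a priori bound to fail, and hence produces a sequence $\widehat{\bw}_k$ solving \eqref{eq:Leray-Schauder_lambda} with $J_k=\norm{\widehat{\bw}_k}_{J(\Omega)}\to\infty$ and $\lambda_k\to\lambda_0\in[0,1]$.

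Next I would normalize, $\bw_k=J_k^{-1}\widehat{\bw}_k$ with $\norm{\bw_k}_{J(\Omega)}=1$, extract a weak limit $\bw_k\rightharpoonup\bv\in J(\Omega)$, and upgrade to strong $L^r(\Omega)$ and $L^r(\partial\Omega)$ convergence via the compact embeddings. Testing \eqref{eq:weaklambda} with $\bfvarphi=J_k^{-2}\widehat{\bw}_k$ and exploiting the two cancellations $\int_{\Omega}(\bU\cdot\nabla)\bw_k\cdot\bw_k\,dx=\int_{\partial\Omega}\tfrac{1}{2}\abs{\bw_k}^2a_{\ast}\,ds$ (from $\dv\bU=0$, $\bU\cdot\bn=a_{\ast}$) and $\int_{\Omega}(\widehat{\bw}_k\cdot\nabla)\widehat{\bw}_k\cdot\widehat{\bw}_k\,dx=0$ (from $\widehat{\bw}_k\in J(\Omega)$) yields, in the limit, the identity \eqref{08251220}, whose left-hand side equals $1$; this is what pins down $\lambda_0>0$ and guarantees a nontrivial limit.

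The regularity part is where the real labor lies. Reinstating the $W^{2,2}$ Stokes extension $\bU$ and the associated pressures $\widetilde{p}_k$ from Lemma \ref{pressure}, the pairs $\widehat{\bu}_k=\widehat{\bw}_k+\bU$, $\widehat{p}_k=\widetilde{p}_k+P$ solve a Stokes system forced by $\bff-\lambda_k(\widehat{\bu}_k\cdot\nabla)\widehat{\bu}_k\in L^q(\Omega)$, $1<q<2$. I would apply the global estimate of Theorem \ref{thm:grS}, then bootstrap through $W^{2,3/2}(\Omega)\hookrightarrow L^{\infty}(\Omega)$ and interior Stokes theory to reach $\widehat{\bu}_k\in W^{2,2}(\Omega)\cap W^{3,2}_{\mathrm{loc}}(\Omega)$ and $\widehat{p}_k\in W^{1,2}(\Omega)\cap W^{2,2}_{\mathrm{loc}}(\Omega)$. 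Rescaling by $\nu_k=(\lambda_kJ_k)^{-1}\nu$ produces $(\bu_k,p_k)$ satisfying the vanishing-viscosity system \eqref{eq:NS_k}; letting $k\to\infty$ (so $\nu_k\to0$) gives the Euler system \eqref{Euler} for $(\bv,p)$, and integrating by parts in \eqref{08251220} converts it into the total-head-pressure formula $\lambda_0^{-1}=-\int_{\partial\Omega}\Phi a_{\ast}\,ds$ with $\Phi=p+\tfrac{1}{2}\abs{\bv}^2$.

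The principal obstacle, to my mind, is not any isolated estimate but the coordination of the regularity bootstrap with the rescaling and the weak limit: one must push enough differentiability through the quadratic term to land $(\bv,p)$ and $\Phi$ in the Sobolev classes ($\bv\in J(\Omega)$, $p,\Phi\in W^{1,q}_{\mathrm{loc}}$, $q<2$) demanded by the later Morse-Sard/Bernoulli analysis (cf.\ Remark \ref{rem:theo1_reg}), while keeping all $W^{1,2}$ and $L^2$ bounds uniform in $k$ as $\nu_k\to0$. The cancellation $\int_{\Omega}(\widehat{\bw}_k\cdot\nabla)\widehat{\bw}_k\cdot\widehat{\bw}_k\,dx=0$ and the divergence-form rewriting of the convective term are the delicate points that make the limiting identity meaningful rather than vacuous.
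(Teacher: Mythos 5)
Your proposal is correct and follows essentially the same route as the paper: Stokes extension $\bU$ via Theorems \ref{thm:exist_unique_S} and \ref{thm:grS}, reduction to $\bw=\mathcal{K}\bw$ and Leray--Schauder, normalization with the two cancellations yielding \eqref{08251220} and $\lambda_0>0$, pressure recovery via Lemma \ref{pressure}, the $W^{2,q}\to W^{2,2}\cap W^{3,2}_{\mathrm{loc}}$ bootstrap, rescaling by $\nu_k=(\lambda_k J_k)^{-1}\nu$, and passage to the Euler limit with the integration by parts giving $\lambda_0^{-1}=-\int_{\partial\Omega}\Phi a_{\ast}\,ds$. The only slip is writing $p,\Phi\in W^{1,q}_{\mathrm{loc}}$ where the lemma asserts (and your own use of the global estimate \eqref{est:grS} in Theorem \ref{thm:grS} actually delivers) the global bounds $p,\Phi\in W^{1,q}(\Omega)$, $q\in[1,2)$.
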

\subsection{Some properties of solutions to the steady Euler equations}\label{Leray_arg_Euler} Let us next investigate the values of the total head pressure $\Phi$ on the boundary $\partial\Omega$. From $\eqref{Euler}_{2,3}$ there exists a stream function $\psi\in W^{2,2}(\Omega)$ such that
\be\label{iden:stream}
\nabla\psi=(-v_2,v_1)\enskip\text{in }\overline{\Omega}.
\ee
By the Sobolev embedding theorem, $\psi$ is continuous on $\overline{\Omega}$. The level sets of $\psi$ are called streamlines. Applying Lemmas \ref{lem:fine}, \ref{lem:Dorr} and Remark \ref{rem:extension_trace} to functions $\bv$, $\psi$ and $\Phi$, we have the following.
\begin{lemm}\label{lem:fine_Euler}The stream function $\psi$ is continuous on $\overline{\Omega}$ and there exists a set $A_{\bv}\subseteq\BR^2$ satisfying the following properties:

$(\mathrm{i})$ $\SH^1(A_{\bv})=0$. 

$(\mathrm{ii})$ For all $x\in\Omega\setminus A_{\bv}$,
\be
\lim_{r\to0}\dashint_{B(x,r)}\abs{\bv(y)-\bv(x)}^2\,dy=\lim_{r\to0}\dashint_{B(x,r)}\abs{\Phi(y)-\Phi(x)}^2\,dy=0.
\ee
Moreover, the function $\psi$ is differentiable in the classical sense at $x$ and $\nabla\psi(x)=(-v_2(x),v_1(x))$.

$(\mathrm{iii})$ For each $\epsilon>0$, there exists an open set $V\subseteq\BR^2$ such that $\SH^1_{\infty}(V)\le\epsilon$, $A_{\bv}\subseteq V$ and the functions $\bv$ and $\Phi$ are continuous on $\overline{\Omega}\setminus V$.
\end{lemm}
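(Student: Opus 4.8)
The plan is to obtain Lemma \ref{lem:fine_Euler} by assembling the fine-property statements of Lemmas \ref{lem:fine} and \ref{lem:Dorr}, transferred to the bounded Lipschitz domain $\Omega$ via Remark \ref{rem:extension_trace}, applied to the three relevant objects: the components $v_1,v_2\in W^{1,2}(\Omega)$ of $\bv$, the total head pressure $\Phi\in W^{1,q}(\Omega)$ $(q\in[1,2))$, and the stream function $\psi\in W^{2,2}(\Omega)$. Since $\psi\in W^{2,2}(\Omega)\subseteq W^{2,1}(\Omega)$ on the bounded domain $\Omega$, the $\Omega$-version of Lemma \ref{lem:Dorr} guaranteed by Remark \ref{rem:extension_trace} already yields that $\psi$ is continuous on $\overline{\Omega}$ and produces a set $A_\psi$ with $\SH^1(A_\psi)=0$ such that $\psi$ is classically differentiable at every $x\in\overline{\Omega}\setminus A_\psi$, with $\nabla\psi(x)=\lim_{r\to0}\dashint_{B(x,r)}\nabla\psi(y)\,dy$ and the $L^2$-averaged convergence $\lim_{r\to0}\dashint_{B(x,r)}\abs{\nabla\psi(y)-\nabla\psi(x)}^2\,dy=0$.

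Next I would apply parts (i) and (ii) of Lemma \ref{lem:fine} (in its $\Omega$-form) to each of $v_1$, $v_2$ and $\Phi$, producing sets $A_{v_1},A_{v_2},A_\Phi$ of $\SH^1$-measure zero off which the corresponding $L^2$-averages converge. One point worth recording is that $\Phi$ lies only in $W^{1,q}(\Omega)$ with $q<2$; however, in two dimensions the Sobolev exponent satisfies $q^{*}=\tfrac{2q}{2-q}\ge2$ for every $q\in[1,2)$, so the $L^2$-averaged conclusion of Lemma \ref{lem:fine}(ii) — stated there for all $q\in[1,\infty]$ — applies to $\Phi$ verbatim. Setting $A_{\bv}\equiv A_{v_1}\cup A_{v_2}\cup A_\Phi\cup A_\psi$, finite subadditivity gives $\SH^1(A_{\bv})=0$, which is (i), and the averaged convergence statements for $v_1,v_2,\Phi$ give (ii). The identification $\nabla\psi(x)=(-v_2(x),v_1(x))$ for $x\in\Omega\setminus A_{\bv}$ follows from \eqref{iden:stream}: there $\nabla\psi$ and $(-v_2,v_1)$ coincide as elements of $L^2(\Omega)$, so their precise representatives, both defined as the limit of the same ball-averages, must agree at every point at which the limit exists, in particular off $A_{\bv}$.

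For part (iii), I would fix $\epsilon>0$ and apply Lemma \ref{lem:fine}(iii) to $v_1,v_2,\Phi$ to obtain open sets $V_1,V_2,V_\Phi$ with $\SH^1_{\infty}(V_j)\le\epsilon/4$, containing $A_{v_1},A_{v_2},A_\Phi$ respectively, on whose complements the respective precise representatives are continuous. Since $A_\psi$ has $\SH^1$-measure zero, hence zero Hausdorff content, it can be covered by an open set $V_0$ with $\SH^1_{\infty}(V_0)\le\epsilon/4$. Taking $V\equiv V_0\cup V_1\cup V_2\cup V_\Phi$, subadditivity of the Hausdorff content gives $\SH^1_{\infty}(V)\le\epsilon$, while $A_{\bv}\subseteq V$ by construction; on $\overline{\Omega}\setminus V$ each of $v_1,v_2,\Phi$ is the restriction of a function continuous on the complement of the corresponding $V_j$, so $\bv$ and $\Phi$ are continuous there, which is (iii).

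The only genuinely delicate points, on which I would concentrate the written argument, are the two indicated above: first, justifying the $L^2$-averaging property of Lemma \ref{lem:fine}(ii) for $\Phi$ despite $\Phi\notin W^{1,2}(\Omega)$, which is settled by the inequality $q^{*}\ge2$ valid in the plane; and second, upgrading the a.e.\ identity $\nabla\psi=(-v_2,v_1)$ from \eqref{iden:stream} to a pointwise identity of precise representatives off $A_{\bv}$. Everything else reduces to the bookkeeping of finitely many $\SH^1$-null sets and the subadditivity of Hausdorff content, and should be routine.
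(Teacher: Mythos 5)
Your proof is correct and coincides with the paper's approach: the paper obtains Lemma \ref{lem:fine_Euler} precisely by applying Lemmas \ref{lem:fine} and \ref{lem:Dorr} together with Remark \ref{rem:extension_trace} to $\bv$, $\psi$ and $\Phi$, exactly as you do, with the union of the finitely many $\SH^1$-null sets and the Hausdorff-content bookkeeping filled in. The two points you flag — that Lemma \ref{lem:fine}(ii) applies verbatim to $\Phi\in W^{1,q}(\Omega)$, $q\in[1,2)$, and that the a.e.\ identity \eqref{iden:stream} upgrades to a pointwise identity of precise representatives off $A_{\bv}$ — are handled correctly and are consistent with what the paper takes for granted.
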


Taking into account $\eqref{Euler}_1$, by a direct calculation one finds that
\be\label{iden:Bernoulli}
\nabla\Phi=(\partial_2v_1-\partial_1v_2)(-v_2,v_1)=\omega\nabla\psi\enskip\text{in }\Omega,
\ee
where $\omega$ denotes the vorticity $\omega=\partial_2v_1-\partial_1v_2=\Delta\psi$. The identity \eqref{iden:Bernoulli} implies that the total head pressure $\Phi$ is  constant on each streamline. The following Bernoulli's law was established in \cite{MR2848783}*{Theorem 1}. (See also \cite{MR3004771}*{Theorem 3.2} for a more detailed proof.)
\begin{theo}[Bernoulli's law]\label{thm:Bernoulli} Let $A_{\bv}$ be the set from Lemma $\ref{lem:fine_Euler}$. For every compact connected set $K\subseteq\overline\Omega$, the following property holds: if
\be
\psi\big|_{K}=\const,
\ee
then
\be
\Phi(x_1)=\Phi(x_2)\enskip\text{for all }x_1,x_2\in K\setminus A_{\bv}.
\ee
\end{theo}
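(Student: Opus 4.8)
The plan is to build the proof on the pointwise identity \eqref{iden:Bernoulli}, $\nabla\Phi=\omega\nabla\psi$ with $\omega=\Delta\psi\in L^2(\Omega)$, which says that $\nabla\Phi$ is everywhere parallel to $\nabla\psi$. Geometrically this forces $\Phi$ to vary only transversally to the streamlines $\{\psi=\mathrm{const}\}$, so that it ought to be constant along each of them; the whole difficulty is to make this precise at the low regularity $\Phi\in W^{1,q}(\Omega)$, $q<2$, and for an arbitrary compact connected level set $K$ whose level value $t_0$ may well be critical.

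First I would dispose of the regular level sets. By the Morse--Sard theorem (Theorem \ref{thm:Morse-Sard} (iii)), for $\SH^1$-almost every $t\in\psi(\overline\Omega)$ the preimage $\psi^{-1}(t)$ is a finite disjoint union of $C^1$-curves $S$. At $\SH^1$-almost every point of such an $S$ the stream function is classically differentiable and, being constant along $S$, satisfies $\nabla\psi\cdot\boldsymbol\tau=0$, where $\boldsymbol\tau$ is the unit tangent to $S$. Combining the absolute continuity of the trace of $\Phi\in W^{1,q}(\Omega)$ along $S$ (Lemma \ref{lem:fine} (iv) and Remark \ref{rem:extension_trace}) with the identity \eqref{iden:Bernoulli}, the tangential derivative of $\Phi$ along $S$ equals $\nabla\Phi\cdot\boldsymbol\tau=\omega\,(\nabla\psi\cdot\boldsymbol\tau)=0$ for $\SH^1$-a.e. point of $S$; hence $\Phi$ is constant on each such regular component.

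Next I would pass from these regular level sets to the prescribed compact connected set $K$ at the possibly critical level $t_0$. Choosing regular values $t_n\to t_0$ and components $S_n\subseteq\psi^{-1}(t_n)$ lying in shrinking neighbourhoods of $K$, the previous step gives $\Phi\equiv c_n$ on each $S_n$. The goal is then to show that $c_n\to\Phi(x)$ for every $x\in K\setminus A_{\bv}$: the connectedness of $K$ will force this limit to be independent of $x$, which is exactly the assertion that $\Phi$ is constant on $K\setminus A_{\bv}$. To identify the limit I would invoke the near-continuity of $\Phi$ off a set of arbitrarily small Hausdorff content (Lemma \ref{lem:fine_Euler} (iii)) to select points $y_n\in S_n$ avoiding that set, together with the Lebesgue-point property at $x\in K\setminus A_{\bv}$ (Lemma \ref{lem:fine_Euler} (ii)), so that $c_n=\Phi(y_n)\to\Phi(x)$ as $y_n\to x$.

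The principal obstacle is precisely this final limit passage. When $t_0$ is a critical value one must control how the regular level sets $\psi^{-1}(t_n)$ accumulate onto $K$ and verify that the constants $c_n$ converge to the fine value $\Phi(x)$, all while circumventing the $\SH^1$-null exceptional set $A_{\bv}$ on which no pointwise information is available; this is where the $\SH^1_\infty$-continuity of Lemma \ref{lem:fine_Euler} (iii) and the Morse--Sard structure of the level sets have to be combined with care. By contrast, the analytic content is entirely contained in the differential identity \eqref{iden:Bernoulli}, and the remaining work is topological and measure-theoretic rather than a matter of new estimates.
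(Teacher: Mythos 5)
You should first note that the paper itself contains no proof of Theorem \ref{thm:Bernoulli}: it is quoted from \cite{MR2848783}*{Theorem 1}, with the detailed proof in \cite{MR3004771}*{Theorem 3.2}, where it occupies several pages and combines the fine properties of Lemma \ref{lem:fine}, the Morse--Sard theorem (Theorem \ref{thm:Morse-Sard}), and a delicate geometric construction. Your proposal therefore has to stand on its own, and it does not: the step you yourself call ``the principal obstacle'' is not a technicality to be ``combined with care'' --- it is the entire content of the theorem, and the mechanism you propose for it fails. (Your Step 1 also has gaps --- Lemma \ref{lem:fine} (iv) concerns straight lines, not $C^1$ level curves, and the identity \eqref{iden:Bernoulli} holds only $\SL^2$-a.e., so using it $\SH^1$-a.e.\ \emph{on} a curve requires a coarea-type selection of good levels --- but these are repairable for almost every level value; the limit passage is not, as proposed.)

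Two things go wrong in the limit passage. First, a topological failure: there need not exist a single sequence of regular components $S_n\subseteq\psi^{-1}(t_n)$, $t_n\to t_0$, approaching every point of $K$. Consider $\psi(x)=\sin x_1\sin x_2$, a genuine Euler stream function (here $\omega=\Delta\psi=-2\psi$, so $\nabla\Phi=\omega\nabla\psi=-\nabla(\psi^2)$ and \eqref{iden:Bernoulli} holds), and let $K$ be a connected piece of the nodal grid $\{\psi=0\}$ containing the edges of several squares; nothing in your argument uses the boundary condition, so it would have to apply to such $K$. Every connected component of $\{\psi=t\}$ for small $t\neq0$ is a closed curve contained in a single grid cell, converging in Hausdorff distance to the boundary of that one cell and never to all of $K$. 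Hence for $x_1,x_2\in K\setminus A_{\bv}$ on edges of non-adjacent cells, no sequence $S_n$ comes close to both points, and ``$c_n\to\Phi(x)$ for every $x\in K\setminus A_{\bv}$'' is unobtainable; what would remain is a patching argument over the various sub-continua of $K$ approached by different families of regular curves (using that overlaps have positive $\SH^1$-measure, hence contain points off $A_{\bv}$), and carrying this out for an \emph{arbitrary} continuum $K$ is precisely the hard part you have not supplied. Second, a measure-theoretic failure: even when $S_n$ does approach $x$, your identification $c_n=\Phi(y_n)\to\Phi(x)$ selects points $y_n\in S_n$ avoiding the exceptional open set $V$ of Lemma \ref{lem:fine_Euler} (iii). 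But $V$ is small only in global $\SH^1_\infty$-content; nothing prevents $V$ from containing an entire ball $B(x,r)$ --- indeed $x\in V$ is possible, since you only know $x\notin A_{\bv}$ --- in which case every point of $S_n$ near $x$ lies in $V$ and continuity off $V$ gives nothing. The Lebesgue-point property of Lemma \ref{lem:fine_Euler} (ii) cannot rescue this: it controls solid averages, and the curves $S_n$ are $\SL^2$-null, so it is blind to the constants $c_n$. Repairing both defects forces a more global argument --- for instance, exploiting that $\nabla\psi=0$ $\SL^2$-a.e.\ on $K$, so that $\int_N\abs{\nabla\Phi}\,dx=\int_N\abs{\omega}\abs{\nabla\psi}\,dx\to0$ as the neighborhood $N$ shrinks to $K$, combined with Fubini-type selection of connecting segments avoiding the small-content bad set --- which is in substance the proof of the cited references rather than a routine completion of yours.
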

\begin{lemm}\label{lem:stream}There exist constants $\xi_0, \xi_1,\dots,\xi_N\in\BR$ such that
\be\label{202307222007}
\psi(x)\equiv\xi_j\quad\text{for all }x\in\Gamma_j,\enskip j=0,1,\dots,N.
\ee
\end{lemm}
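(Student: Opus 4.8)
The plan is to show that the tangential derivative of the stream function $\psi$ along each boundary component $\Gamma_j$ vanishes; since $\Gamma_j$ is a connected closed curve, this forces $\psi$ to be constant on $\Gamma_j$. The whole argument rests on the elementary observation that the tangential derivative of $\psi$ coincides with the normal component of $\bv$, which is zero by $\eqref{Euler}_3$.

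First I would collect the regularity facts. By \eqref{iden:stream} we have $\nabla\psi=(-v_2,v_1)$ in $\overline\Omega$, and by Lemma \ref{lem:fine_Euler} the function $\psi$ is continuous on $\overline\Omega$. Since $\psi\in W^{2,2}(\Omega)$ with $\Omega\subseteq\BR^2$ smooth, the trace of $\psi$ on $\partial\Omega$ lies in $W^{3/2,2}(\partial\Omega)$, while the trace of $\nabla\psi=(-v_2,v_1)\in W^{1,2}(\Omega)$ lies in $W^{1/2,2}(\partial\Omega)\subseteq L^2(\partial\Omega)$. Moreover $\bv\in J(\Omega)$ satisfies $\bv\cdot\bn=0$ on $\partial\Omega$ in the trace sense.

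Next, fix $j\in\{0,1,\dots,N\}$ and parametrize the smooth closed curve $\Gamma_j$ by arc length $s\mapsto\gamma(s)$, with unit tangent $\boldsymbol{\tau}(s)=\gamma'(s)$; choosing the orientation so that $\boldsymbol{\tau}=(-n_2,n_1)$. For $\psi\in C^\infty(\overline\Omega)$ the classical chain rule gives $\frac{d}{ds}\psi(\gamma(s))=\nabla\psi(\gamma(s))\cdot\boldsymbol{\tau}(s)$. Approximating the given $\psi$ by functions $\psi_m\in C^\infty(\overline\Omega)$ in the $W^{2,2}(\Omega)$-norm and using the continuity of the trace operators $W^{2,2}(\Omega)\to W^{3/2,2}(\partial\Omega)$ and $W^{1,2}(\Omega)\to L^2(\partial\Omega)$, this identity passes to the limit: the restriction $\psi|_{\Gamma_j}$ is absolutely continuous and
\[
\frac{d}{ds}\psi(\gamma(s))=\bigl(\gamma(\nabla\psi)\bigr)(\gamma(s))\cdot\boldsymbol{\tau}(s)=(-v_2,v_1)\cdot(-n_2,n_1)=v_1n_1+v_2n_2=(\bv\cdot\bn)(\gamma(s))=0
\]
for almost every $s$, where the last equality is $\eqref{Euler}_3$. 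Since the tangential derivative of the continuous function $\psi|_{\Gamma_j}$ vanishes a.e.\ along the connected curve $\Gamma_j$, we conclude $\psi\equiv\xi_j$ on $\Gamma_j$ for some constant $\xi_j\in\BR$. Applying this for each $j=0,1,\dots,N$ yields \eqref{202307222007}.

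The only point requiring care is the justification of the tangential chain rule $\frac{d}{ds}(\psi|_{\Gamma_j})=\gamma(\nabla\psi)\cdot\boldsymbol{\tau}$ at the $W^{2,2}$ regularity level rather than for smooth fields; this is precisely what the density of $C^\infty(\overline\Omega)$ in $W^{2,2}(\Omega)$ and the boundedness of the two trace maps deliver, and no flux condition is needed here since $\bv\cdot\bn$ vanishes pointwise on $\partial\Omega$.
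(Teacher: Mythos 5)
Your proof is correct and follows essentially the same route as the paper's: both reduce the claim to the trace-level chain rule $\frac{d}{ds}\bigl(\psi|_{\Gamma_j}\bigr)=\gamma(\nabla\psi)\cdot\boldsymbol{\tau}=\pm\,\bv\cdot\bn=0$, justified by smooth approximation in $W^{2,2}(\Omega)$ together with continuity of the trace operators, and then conclude by continuity and connectedness of each $\Gamma_j$. The only cosmetic difference is that the paper carries out this computation in local graph coordinates on boundary portions, whereas you use a global arc-length parametrization of each component.
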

\begin{proof} We shall show that $\psi$ takes a constant value on the outer boundary $\Gamma_0$. Since $\Gamma_0$ is smooth, for every $x_0\in\Gamma_0$ there exist a $2\times2$ rotation matrix $R=(r_{ij})_{1\le i,j\le2}$, a smooth function $\zeta\colon\BR\to\BR$ with $\zeta(0)=0$ and $\frac{d\zeta}{dy_1}(0)=0$, and $r>0$ such that, setting $y=T(x)=R(x-x_0)$, we have $T(\Omega\cap B(x_0,r))=\{y\in B(0,r):y_2<\zeta(y_1)\}$. By taking a sufficiently small $0<\epsilon<r$, in the original coordinate system $x$, a boundary portion $\sigma$ of $\Gamma_0$ around the point $x_0$ is represented as 
\be
\sigma(y_1)=R^{\top}(y_1,\zeta(y_1))+x_0,\enskip-\epsilon<y_1<\epsilon,
\ee 
and the unit outer normal $\bn(x)$ at the point $x\in \sigma\left((-\epsilon,\epsilon)\right)\subseteq\Gamma_0$ is given by
\be
\bn(x)=(1+\frac{d\zeta}{dy_1}(y_1)^2)^{-\frac{1}{2}}R^{\top}(-\frac{d\zeta}{dy_1}(y_1),1),\enskip(y_1,\zeta(y_1))=T(x).
\ee
Since $\psi\in W^{2,2}(\Omega)$ is continuous on $\overline{\Omega}$, the composition $\psi\circ \sigma$ is continuous on $(-\epsilon,\epsilon)$. Using the fact that the restriction to $\Omega$ of functions from $C^{\infty}_{0}(\BR^2)$ is dense in $W^{2,2}(\Omega)$ (see \cite{MR3726909}*{Theorem 11.35}), we can show that $\psi\circ \sigma$ admits a weak derivative in $L^1_{\mathrm{loc}}(-\epsilon,\epsilon)$ and 
\be
\frac{d(\psi\circ \sigma)}{dy_1}(y_1)=\gamma(\nabla\psi)(\sigma(y_1))\cdot \sigma^{\prime}(y_1)\text{ in }L^1_{\mathrm{loc}}(-\epsilon,\epsilon),
\ee
where $\gamma\colon W^{1,2}(\Omega)\to L^2(\partial\Omega)$ is the trace operator. In view of \eqref{iden:stream} and $\eqref{Euler}_3$, we observe that
\be
\ba
\frac{d(\psi\circ \sigma)}{dy_1}(y_1)&=\gamma(\nabla\psi)(\sigma(y_1))\cdot \sigma^{\prime}(y_1)\\
&=\Big(-v_2(\sigma(y_1)),v_1(\sigma(y_1))\Big)\cdot\Big(r_{11}+r_{21}\frac{d\zeta}{dy_1}(y_1),r_{12}+r_{22}\frac{d\zeta}{dy_1}(y_1)\Big)\\
&=-\bv(\sigma(y_1))\cdot(-r_{12}-r_{22}\frac{d\zeta}{dy_1}(y_1), r_{11}+r_{21}\frac{d\zeta}{dy_1}(y_1))\\
&=-\bv(\sigma(y_1))\cdot R^{\top}(-\frac{d\zeta}{dy_1}(y_1),1)\\
&=-(1+\frac{d\zeta}{dy_1}(y_1)^2)^{\frac{1}{2}}\bv(\sigma(y_1))\cdot\bn(\sigma(y_1))=0
\ea
\ee
for almost all $y_1\in (-\epsilon,\epsilon)$, where we have used the fact that $R=(r_{ij})_{1\le i,j\le2}$ is a rotation matrix (i.e., $r_{11}=r_{22}$ and $r_{12}=-r_{21}$). As a consequence, there exists a constant $\xi_0\in\BR$ such that $\psi\circ \sigma(y_1)\equiv\xi_0$ for all $y_1\in (-\epsilon,\epsilon)$ (recall that $\psi\circ \sigma$ is continuous on $(-\epsilon,\epsilon)$). Since $\Gamma_0$ is connected and compact, we conclude $\psi(x)\equiv\xi_0$ for all $x\in\Gamma_0$. In the same way, it can be shown that there exist constants $\xi_1, \xi_2, \dots, \xi_N\in\BR$ such that $\psi(x)\equiv\xi_j$ for all $x\in\Gamma_j$, $j=1,2,\dots,N$.
\end{proof}
From Theorem \ref{thm:Bernoulli} and Lemma \ref{lem:stream}, we obtain the following corollary, which can be regarded as a generalization of \cite{MR720205}*{Lemma 4} and \cite{MR763943}*{Theorem 2.2}. See also \cite{MR3004771}*{Remark 3.2}.
\begin{cor}\label{cor:Bernoulli} Let $A_{\bv}$ be the set in Lemma $\ref{lem:fine_Euler}$. Then there exist constants $\widehat\Phi_0, \widehat\Phi_1,\dots,\widehat\Phi_N\in\BR$ such that 
\be\label{08251509}
\Phi(x)\equiv\widehat\Phi_j\enskip\text{for all}\enskip x\in\Gamma_j\setminus A_{\bv},\enskip j=0,\dots,N.
\ee
\end{cor}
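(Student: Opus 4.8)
The plan is to obtain the corollary as an immediate consequence of Bernoulli's law (Theorem \ref{thm:Bernoulli}) combined with the fact, already established in Lemma \ref{lem:stream}, that the stream function $\psi$ is constant on each component of $\partial\Omega$. First I would observe that each $\Gamma_j$ is, by hypothesis, a smooth closed curve, hence homeomorphic to the unit circle $\BS^1$, and therefore a compact connected subset of $\ov{\Omega}$. This is exactly the kind of set $K$ to which Theorem \ref{thm:Bernoulli} applies.

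Next I would invoke Lemma \ref{lem:stream}: for each $j$ there is a constant $\xi_j\in\BR$ such that $\psi(x)\equiv\xi_j$ for all $x\in\Gamma_j$. Thus the hypothesis $\psi\big|_{K}=\const$ of Bernoulli's law is met with the choice $K=\Gamma_j$. Applying Theorem \ref{thm:Bernoulli} then gives $\Phi(x_1)=\Phi(x_2)$ for all $x_1,x_2\in\Gamma_j\setminus A_{\bv}$, i.e.\ $\Phi$ assumes a single constant value on $\Gamma_j\setminus A_{\bv}$, which I would label $\wh\Phi_j$. Letting $j$ range over $0,1,\dots,N$ produces the constants $\wh\Phi_0,\dots,\wh\Phi_N$ asserted in \eqref{08251509}, completing the proof.

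Since the deduction consists entirely of invoking two results proved earlier in this subsection, there is no genuine analytic obstacle; all the substantive work lies in Lemma \ref{lem:stream} (the constancy of $\psi$ on each $\Gamma_j$) and in Bernoulli's law itself. The only points meriting a moment's care are bookkeeping: confirming that $\Gamma_j$ legitimately qualifies as a compact connected set for Theorem \ref{thm:Bernoulli}, and noting that the exceptional set $A_{\bv}$ in the statement of the corollary is precisely the set furnished by Lemma \ref{lem:fine_Euler} and already employed in Theorem \ref{thm:Bernoulli}, so that no compatibility issue arises between the two applications.
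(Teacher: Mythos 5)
Your proposal is correct and coincides with the paper's own deduction: the corollary is obtained there precisely by combining Lemma \ref{lem:stream} (constancy of $\psi$ on each $\Gamma_j$) with Bernoulli's law (Theorem \ref{thm:Bernoulli}) applied to the compact connected set $K=\Gamma_j$. Your care about $\Gamma_j$ qualifying as such a set and about the exceptional set $A_{\bv}$ being the same one from Lemma \ref{lem:fine_Euler} matches the paper's implicit bookkeeping.
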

\vspace{0.3cm}
In view of \eqref{08251507} and \eqref{08251509}, we have
\be\label{202309101143}
\frac{1}{\lambda_0}=-\int_{\partial\Omega}\Phi a_{\ast}\,ds=-\sum_{j=0}^{N}\widehat\Phi_j\int_{\Gamma_j}a_{\ast}\,ds.
\ee
\begin{rema}\label{rem:Bernoulli}
(i) The identity \eqref{202309101143} gives a contradiction if the fluxes of $a_{\ast}$ satisfy the restriction \eqref{zero_flux_slip}.

(ii) In general, one cannot claim that all constants $\widehat\Phi_j$ in Corollary \ref{cor:Bernoulli} are equal; $\widehat\Phi_0=\widehat\Phi_1=\dots=\widehat\Phi_N$. See \cite{MR763943}*{Example 3.1}.

(iii) Unlike the result in \cite{MR720205}*{Lemma 4} and in \cite{MR763943}*{Theorem 2.2}, Corollary \ref{cor:Bernoulli} fails in the three-dimensional case. To see this, let us suppose that $\Omega=B(0,1)=\{x=(x_1,x_2,x_3)\in\BR^3:\abs{x}<1\}$ and set $\bu_{0}\equiv e_3\times x=(-x_2,x_1,0)$, where $e_3=(0,0,1)$. Then we find that $\bu_{0}$ satisfies the Euler system \eqref{Euler} together with $p=\frac{\abs{\bu_{0}}^2}{2}=\frac{x_1^2+x_2^2}{2}$, and the corresponding total head pressure $\Phi(x)=p+\frac{\abs{\bu_{0}}^2}{2}=x_1^2+x_2^2$ does not take a constant value on the unit sphere $\abs{x}=1$.
\end{rema}
\subsection{Arriving at a contradiction}\label{theo1_contradiction}

In this subsection, we shall derive the desired contradiction by employing the approach of Korobkov-Pileckas-Russo \cite{MR3275850}*{Subsection 3.3}. We consider separately the two possible cases.

(a) The maximum of $\Phi$ is attained on the boundary $\partial\Omega$:
\be\label{case_a}
\max\limits_{j=0,\dots,N} \widehat\Phi_{j} \ge \esssup\limits_{x\in\Omega}\Phi(x).
\ee

(b) The  maximum of $\Phi$ is not attained on $\partial\Omega$:\footnote{The case $\esssup\limits_{x\in\Omega}\Phi(x)=+\infty$ is not excluded. }
\be\label{case_b}
\max\limits_{j=0,\dots,N}\widehat\Phi_{j} <\esssup\limits_{x\in\Omega}\Phi(x).
\ee
\subsubsection{The maximum of $\Phi$ is attained on the boundary $\partial\Omega$}\label{subsub1}
Let us first consider the case \eqref{case_a}. Adding a constant to the pressure (if necessary) we may assume that
\be
0=\max\limits_{j=0,\dots,N}\widehat\Phi_j\ge\esssup\limits_{x\in\Omega}\Phi(x).
\ee
In particular,
\be
\Phi(x)\le0\quad\text{in }\Omega.
\ee
If $\widehat{\Phi}_0=\widehat{\Phi}_1=\dots=\widehat{\Phi}_N$, then the flux condition \eqref{flux} and the identity \eqref{202309101143} give a contradiction. Thus, assume that
\be
\min\limits_{j=0,\dots,N}\widehat\Phi_j<0.
\ee

Change (if necessary) the numbering of the boundary components $\Gamma_0,\Gamma_1,\dots,\Gamma_N$ in such a way that
\be\label{202309121048}
\widehat{\Phi}_j<0,\enskip j=0,\dots,M,
\ee
\be\label{202309121049}
\widehat{\Phi}_{M+1}=\dots=\widehat{\Phi}_{N}=0.
\ee

We denote by $\Phi_k\equiv p_k+\frac{\abs{\bu_k}^2}{2}$ the total head pressures corresponding to the solutions $(\bu_k, p_k)$ to the Navier-Stokes system \eqref{eq:NS_k}. Our first goal is to separate the boundary component $\Gamma_{N}$ (where $\Phi=0$) from the boundary components $\Gamma_{j}$, $j=0,\dots,M$ (where $\Phi<0$), by level sets of $\Phi$ compactly contained in $\Omega$. More precisely, for any $j=0,\dots,M$ and for some sequence of positive values $\{t_{i}\}_{i\in\BN}$ we construct a compact connected set $A^{j}_{i}\subset\subset\Omega$ satisfying the following properties:
\begin{enumerate}
\item $\psi|_{A^{j}_{i}}\equiv\const$ and $\Phi(A^{j}_{i})=\{-t_{i}\}$;
\item Each $A^{j}_{i}$ is homeomorphic to the unit circle $\BS^{1}$, and the boundary components $\Gamma_{j}$ and $\Gamma_{N}$ lie in different connected components of $\BR^2\setminus A^{j}_{i}$;
\item There exists a subsequence $\Phi_{k_l}$ such that the restrictions $\Phi_{k_l}|_{A^{j}_{i}}$ and $\Phi|_{A^{j}_{i}}$ are continuous and $\Phi_{k_l}|_{A^{j}_{i}}$ converges to $\Phi|_{A^{j}_{i}}$ uniformly: $\Phi_{k_l}|_{A^{j}_{i}}\rightrightarrows\Phi|_{A^{j}_{i}}$. 
\end{enumerate}

For the construction of the set $A^{j}_{i}$, we shall need some information on the behavior of the total head pressure $\Phi$ on streamlines. To this end we first recall the classical result due to Kronrod \cite{MR34826} concerning level sets of continuous functions.
\begin{lemm}[see \cite{MR34826}*{Lemma 13}]\label{lem:Kro} Let $Q=[0,1]\times[0,1]$ be a square in $\BR^2$ and let $f\in C(Q,\BR)$. Denote by $T_{f}$ a family of all connected components of level sets of $f$. Then for any two different points $A,B\in T_{f}$ there exists a continuous injective function $\phi\colon[0,1]\to T_{f}$\footnote{The convergence in $T_{f}$ is defined as follows: $T_{f}\ni C_i\to C$ if and only if $\sup\limits_{x\in C_i}\dist(x,C)\to 0$.} satisfying the following properties:
\begin{enumerate}
\item $\phi(0)=A$ and $\phi(1)=B$. 
\item For any $t_{0}\in [0,1]$,
\be
\lim_{[0,1]\ni t\to t_{0}}\sup_{x\in \phi(t)}\dist(x,\phi(t_{0}))=0.
\ee
\item For each $t\in (0,1)$, the sets $A$ and $B$ lie in different connected components of the set $Q\setminus \phi(t)$.
\end{enumerate}
\end{lemm}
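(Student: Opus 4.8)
The plan is to realize $T_f$ as the quotient space obtained by collapsing each connected component of each level set of $f$ to a point, and to show that, under this identification, $T_f$ is a \emph{dendrite} (a locally connected continuum containing no simple closed curve). Once this is established, all three assertions are standard consequences of the structure theory of dendrites, so the real content lies in the construction and in the dendrite property. Concretely, let $\pi\colon Q\to T_f$ be the map sending $x\in Q$ to the component of $f^{-1}(f(x))$ containing $x$, and equip $T_f$ with the quotient topology; the convergence $C_i\to C$ iff $\sup_{x\in C_i}\dist(x,C)\to0$ from the footnote is precisely the convergence induced by this topology.

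First I would verify that the decomposition of $Q$ into components of level sets is upper semicontinuous with compact connected blocks; this follows from the continuity of $f$ together with the compactness of $Q$. By the classical theory of upper semicontinuous decompositions (Moore--Whyburn), the quotient $T_f$ is then a compact metric space and $\pi$ is a continuous, closed, \emph{monotone} surjection (its point-preimages are connected by construction). Since $Q$ is a Peano continuum and $T_f$ is a metric continuum that is a continuous image of $Q$, the Hahn--Mazurkiewicz theorem shows that $T_f$ is again a Peano continuum, i.e. locally connected.

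The crucial and most delicate step is to prove that $T_f$ contains no simple closed curve; this is where the simple connectivity (unicoherence) of the disk $Q$ is indispensable and is the heart of Kronrod's lemma. Suppose, for contradiction, that $T_f$ contained a simple closed curve $J$. Then two distinct points $a,b\in J$ would be joined inside $T_f$ by two arcs meeting only at their endpoints, and taking preimages under the monotone map $\pi$ would produce two subcontinua $K_1,K_2\subseteq Q$ with $K_1\cup K_2=\pi^{-1}(J)$ and $K_1\cap K_2=\pi^{-1}(a)\cup\pi^{-1}(b)$ disconnected; analyzing the behaviour of the induced light function $\bar f$ (with $f=\bar f\circ\pi$) along these two arcs and invoking planar separation contradicts the fact that a disk admits no such configuration. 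I expect this no-loop argument to be the main obstacle, and I would either reproduce Kronrod's original separation argument or deduce the dendrite property from hereditary unicoherence of the monotone image of $Q$.

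Granting that $T_f$ is a dendrite, assertions (1)--(3) follow readily. A dendrite is uniquely arcwise connected, so there is a unique arc joining $A$ to $B$; parametrizing it yields the continuous injective $\phi\colon[0,1]\to T_f$ with $\phi(0)=A$ and $\phi(1)=B$, which is (1). Property (2) is exactly the continuity of $\phi$ read in the quotient topology, whose convergence is the one-sided Hausdorff convergence of the footnote (a restatement of upper semicontinuity of the decomposition). For (3), every non-endpoint of an arc in a dendrite separates its two endpoints, so for $t\in(0,1)$ the points $A$ and $B$ lie in different components of $T_f\setminus\{\phi(t)\}$; pulling back through the monotone surjection $\pi$ — using that preimages of connected sets are connected and that $Q\setminus\phi(t)=\pi^{-1}(T_f\setminus\{\phi(t)\})$, since the element $\phi(t)\in T_f$ is literally the component $\pi^{-1}(\phi(t))\subseteq Q$ — shows that $A$ and $B$ lie in different connected components of $Q\setminus\phi(t)$, as claimed.
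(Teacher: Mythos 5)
The paper itself contains no proof of this lemma: it is quoted directly from Kronrod's 1950 paper (Lemma 13 there), so your proposal can only be judged on its own merits. Your dendrite-theoretic outline is indeed the standard modern route to Kronrod's lemma, and most of the reductions are sound: the decomposition of $Q$ into components of level sets is upper semicontinuous; the quotient $T_f$ is then a compact metrizable space whose topology induces exactly the one-sided Hausdorff convergence of the footnote; Hahn--Mazurkiewicz makes $T_f$ a Peano continuum; and, granting that $T_f$ is a dendrite, your derivations of (1)--(3) are correct, including the pull-back argument for (3), which works because $Q\setminus\phi(t)$ is saturated and its components are saturated (each connected fiber lies in a single component), so the image of a component is a connected subset of $T_f\setminus\{\phi(t)\}$.

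The genuine gap is the step you yourself flag as the heart of the matter: that $T_f$ contains no simple closed curve. As written, your contradiction does not exist: the configuration of two subcontinua $K_1,K_2\subseteq Q$ with $K_1\cup K_2=\pi^{-1}(J)$ and disconnected intersection is perfectly possible in a disk --- a round circle in $Q$, written as the union of its two closed arcs, is exactly such a configuration --- so monotonicity plus planarity alone cannot finish the argument. Your fallback of deducing the dendrite property from ``hereditary unicoherence of the monotone image of $Q$'' is also a dead end: the identity map is monotone and $Q$ itself is not hereditarily unicoherent (it contains circles). The contradiction must use the light factor $\bar{f}$ in an essential way, for instance as follows. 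If $J\subseteq T_f$ is a simple closed curve, then $\bar{f}|_J$ is non-constant (otherwise $\pi^{-1}(J)$ would be a continuum inside one level set of $f$, hence inside one component of it, forcing $J$ to be a single point). Let $p,q\in J$ realize the maximum $M$ and minimum $m$ of $\bar{f}|_J$ and pick $c\in(m,M)$. Any connected subset of $Q$ meeting both $\pi^{-1}(p)$ and $\pi^{-1}(q)$ has an $f$-image containing $[m,M]$, hence meets $f^{-1}(c)$; thus the compact set $f^{-1}(c)$ separates $\pi^{-1}(p)$ from $\pi^{-1}(q)$ in $Q$. Now invoke unicoherence of $Q$ through the classical separation theorem for unicoherent Peano continua (a Phragm\'en--Brouwer-type statement): if a closed set separates two points (or two continua), then already some \emph{single component} of it separates them. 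This yields one component $Z$ of $f^{-1}(c)$, i.e.\ one point $z=\pi(Z)\in T_f$ with $\bar{f}(z)=c$, separating $\pi^{-1}(p)$ from $\pi^{-1}(q)$ in $Q$. The preimages of the two arcs $\alpha,\beta$ of $J$ joining $p$ to $q$ are continua meeting both fibers, hence both meet $Z$; since $Z$ is a full fiber and $\pi^{-1}(\alpha),\pi^{-1}(\beta)$ are saturated, $z\in\alpha\cap\beta=\{p,q\}$, contradicting $\bar{f}(z)=c\notin\{m,M\}$. Some such use of the level-set structure together with a unicoherence/separation theorem for $Q$ is exactly what your sketch is missing, and without it the no-loop claim --- and hence the whole dendrite framework --- remains unproved.
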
 
The range $\{\phi(t):0\le t\le1\}$ will be called the Kronrod arc joining $A$ to $B$ and denoted by $[A,B]$.
\begin{rema}\label{rem:Kro}
The assertion of Lemma \ref{lem:Kro} remains valid for level sets of continuous functions $f\colon\overline{\Omega}\to\BR$, where $\Omega$ is a multiply-connected bounded domain of type \eqref{domain}, provided $f(x)\equiv\xi_j=\const$ on each inner boundary $\Gamma_j$, $j=1,\dots,N$. Indeed, we can extend $f$ to the whole $\overline{\Omega}_{0}$ by setting $f(x)\equiv\xi_j$ for $x\in\overline{\Omega}_j$, $j=1,\dots,N$. The extended function $f$ will be continuous on $\overline{\Omega}_{0}$, which is homeomorphic to the unit square $Q=[0,1]\times[0,1]$.
\end{rema}
From Lemma \ref{lem:stream} and Remark \ref{rem:Kro} one can apply Lemma \ref{lem:Kro} to the stream function $\psi$. For any element $K\in T_{\psi}$ with $\diam K>0$, we define the value $\Phi(K)$ as $\Phi(K)=\Phi(x)$, where $x\in K\setminus A_{\bv}$ (notice that $K\setminus A_{\bv}\neq\emptyset$ if $\diam K>0$). By Theorem \ref{thm:Bernoulli} this is well-defined. 

The following lemma describes the continuity property of $\Phi$ on the Kronrod arc $[A,B]$. Note that the total head pressure $\Phi(x)$ itself is not necessarily continuous on $\overline{\Omega}$.
\begin{lemm}[see \cite{MR3275850}*{Lemma 3.5}]\label{lem:head_conti}Let $A,B\in T_{\psi}$ with $\diam A, \diam B>0$, and let $\phi\colon[0,1]\to T_{\psi}$ be the  function joining $A$ to $B$ in Lemma $\ref{lem:Kro}$. Then the restriction $\Phi|_{[A,B]}\equiv\Phi(\phi(t))\colon[0,1]\to\BR$ is a continuous function of $t\in[0,1]$.
\end{lemm}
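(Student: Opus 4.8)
The plan is to adapt the argument of Korobkov--Pileckas--Russo \cite{MR3275850}*{Lemma 3.5}. Write $K_t\equiv\phi(t)$ and fix an arbitrary $t_0\in[0,1]$; it suffices to show that $\Phi(K_t)\to\Phi(K_{t_0})$ as $t\to t_0$. First I would record two preliminary facts. Since $\psi$ is constant on each element of $T_{\psi}$, the value $\tilde\psi(t)\equiv\psi(K_t)$ is well defined, and by property (ii) of Lemma \ref{lem:Kro} together with the uniform continuity of $\psi$ on the compact set $\overline\Omega$ (Lemma \ref{lem:fine_Euler}), the scalar function $t\mapsto\tilde\psi(t)$ is continuous on $[0,1]$. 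Moreover, by Bernoulli's law (Theorem \ref{thm:Bernoulli}) the value $\Phi(K_t)=\Phi(x)$ (any $x\in K_t\setminus A_{\bv}$) is well defined whenever $\diam K_t>0$, so that $t\mapsto\Phi(K_t)$ is a genuine real-valued function; the isolated degenerate components (single points) are disposed of by the same fine-property arguments used below.

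The analytic heart of the matter is the identity \eqref{iden:Bernoulli}, namely $\nabla\Phi=\omega\nabla\psi$ with $\omega=\Delta\psi\in L^2(\Omega)$, which I would use to transport the value of $\Phi$ from one level component to a neighboring one along transversals. By Lemma \ref{lem:fine} (iv) (and Remark \ref{rem:extension_trace}) the restrictions of both $\psi$ and $\Phi$ to almost every straight line are absolutely continuous, and along such a line, parametrized by arclength $s$, the chain rule gives $\frac{d}{ds}\Phi=\omega\,\frac{d}{ds}\psi$. Hence, if such a line meets two level components $K'$ and $K''$ at points lying outside the exceptional set $A_{\bv}$ (so that the traced values of $\Phi$ there coincide with the Bernoulli constants $\Phi(K')$, $\Phi(K'')$), then
\be
\Phi(K'')-\Phi(K')=\int_{s'}^{s''}\omega\,\frac{d\psi}{ds}\,ds,
\ee
the integration running along the segment between the two crossings. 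Simultaneously, Theorem \ref{thm:Morse-Sard} ensures that for $\SH^1$-almost every value $y$ the level set $\psi^{-1}(y)$ is a finite disjoint union of $C^1$-curves along which $\nabla\psi\neq\boldsymbol{0}$; thus, for a full-measure set of values, the components $K_t$ are honest transversal curves and the transversals above can be chosen to meet the distinguished component lying on the Kronrod arc.

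With these ingredients in hand the estimate is closed as follows. Given $\epsilon>0$, the continuity of $\tilde\psi$ confines $\tilde\psi(t)$, for $t$ near $t_0$, to a short interval $[\tilde\psi(t_0)-\delta,\tilde\psi(t_0)+\delta]$, and since $K_t\subseteq\psi^{-1}(\tilde\psi(t))$ every such $K_t$ lies in the thin strip $\Omega_{\delta}\equiv\{x\in\Omega:\abs{\psi(x)-\tilde\psi(t_0)}\le\delta\}$. Selecting, by an averaging argument over a family of transversals, a good one across $\Omega_{\delta}$ joining $K_{t_0}$ to $K_t$ and invoking the coarea formula, one controls the variation by a bound of the form
\be
\abs{\Phi(K_t)-\Phi(K_{t_0})}\le\int_{\tilde\psi(t_0)-\delta}^{\tilde\psi(t_0)+\delta}\Big(\int_{\psi^{-1}(y)}\abs{\omega}\,d\SH^1\Big)dy=\int_{\Omega_{\delta}}\abs{\omega}\,\abs{\nabla\psi}\,dx,
\ee
which tends to $0$ as $\delta\to0$ because $\abs{\omega}\,\abs{\nabla\psi}\in L^1(\Omega)$. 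This gives the asserted continuity. The main obstacle is precisely that $\Phi$ belongs only to $W^{1,q}(\Omega)$, $q<2$, and need not be continuous on $\overline\Omega$, so the convergence cannot be read off directly from the topological convergence $K_t\to K_{t_0}$ in $T_{\psi}$; the delicate point is to reconcile that topological convergence with the measure-theoretic coarea bound, that is, to select transversals that genuinely meet the distinguished components $K_t$ at good points while discarding the $\SH^1$-null set of singular level values and the set $A_{\bv}$ by means of Theorem \ref{thm:Morse-Sard} and the fine properties of Sobolev functions.
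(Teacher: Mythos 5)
Your reduction to sequences and the continuity of $t\mapsto\psi(\phi(t))$ are fine, but the analytic core of your argument has a genuine gap: the inequality $\abs{\Phi(K_t)-\Phi(K_{t_0})}\le\int_{\Omega_{\delta}}\abs{\omega}\,\abs{\nabla\psi}\,dx$ is never actually established. Along a single transversal $\gamma$ the variation of $\Phi$ is $\int_{\gamma}\abs{\omega}\,\abs{\tfrac{d}{ds}(\psi\circ\gamma)}\,ds$, which after the change of variables $y=\psi(\gamma(s))$ is an integral of $\abs{\omega}$ sampled at \emph{one point of each level set}; there is no pointwise bound of $\abs{\omega}$ at a single point by $\int_{\psi^{-1}(y)}\abs{\omega}\,d\SH^1$ (a point has $\SH^1$-measure zero), so a path integral is not majorized by the coarea integral. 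Converting path integrals into your area bound would require averaging over a genuine foliation of the region between $K_{t_0}$ and $K_t$ by transversals, with control of the Jacobian of that foliation --- and this is precisely what may fail to exist, because $\psi(\phi(t_0))$ can be a critical value: Theorem \ref{thm:Morse-Sard} describes only $\SH^1$-almost every level, while $t_0$ is arbitrary. In addition, any transversal you select must terminate on $K_t$ itself and not on another component of $\{\psi=\psi(\phi(t))\}$ inside the strip $\Omega_{\delta}$, since distinct components of the same level set can be far apart and carry different Bernoulli constants. You flag exactly this selection problem as ``the delicate point'' but leave it unresolved; it is not a technicality, it is the heart of the proof, and as written your argument only gives continuity across regions that are already foliated by regular cycles. (A side remark: the case of degenerate single-point elements does not arise at all, because every $\phi(t)$ with $t\in(0,1)$ separates $A$ from $B$ by Lemma \ref{lem:Kro} (iii) and therefore has positive diameter.)

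The lemma can in fact be closed by a much softer argument --- the one behind the cited \cite{MR3275850}*{Lemma 3.5} --- which avoids $\nabla\Phi=\omega\nabla\psi$, the coarea formula and Morse--Sard entirely. Take $t_i\to t_0$, put $C_i=\phi(t_i)$, $C_0=\phi(t_0)$, so that $\sup_{x\in C_i}\dist(x,C_0)\to0$ by Lemma \ref{lem:Kro} (ii). Since each $C_i$ separates $A$ from $B$ and a continuum of vanishing diameter cannot separate two fixed continua of positive diameter, one has $\liminf_i\diam C_i\ge d_0>0$. Now invoke Lemma \ref{lem:fine_Euler} (iii): there is an open set $V\supseteq A_{\bv}$ with $\SH^1_{\infty}(V)<d_0$ such that $\Phi$ is continuous on $\overline{\Omega}\setminus V$. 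Since a connected set $E$ satisfies $\SH^1_{\infty}(E)\ge\diam E$ (project onto a diameter), no $C_i$ can be contained in $V$; pick $x_i\in C_i\setminus V$. Then $x_i\notin A_{\bv}$, so $\Phi(x_i)=\Phi(C_i)$ by Bernoulli's law (Theorem \ref{thm:Bernoulli}); passing to a subsequence, $x_i\to x^{\ast}\in C_0$ with $x^{\ast}\in\overline{\Omega}\setminus V$ (a closed set), whence $\Phi(C_i)=\Phi(x_i)\to\Phi(x^{\ast})=\Phi(C_0)$, again by Bernoulli's law and $x^{\ast}\notin A_{\bv}$. This uses only the tools already quoted in Section \ref{Prelimi} and Subsection \ref{Leray_arg_Euler}; by contrast, completing your coarea route would essentially force you to rebuild the regular-cycle machinery near an arbitrary, possibly critical, level.
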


For $x\in\overline{\Omega}$, denote by $K_{x}$ the connected component of the level set $\{z\in\overline{\Omega}:\psi(z)=\psi(x)\}$ containing the point $x$. By Lemma \ref{lem:stream}, $K_{x}\cap\partial\Omega=\emptyset$ for any $y\in\psi(\overline{\Omega})\setminus\{\xi_0, \xi_1,\dots,\xi_N\}$ and for any $x\in\psi^{-1}(y)$. According to Theorem \ref{thm:Morse-Sard} (ii) and (iii), for almost all $y\in\psi(\overline{\Omega})$ and for all $x\in\psi^{-1}(y)$, the equality $K_{x}\cap A_{\bv}=\emptyset$ holds and the component $K_{x}\subseteq\Omega$ is a $C^{1}$-curve homeomorphic to the unit circle $\BS^{1}$. We call such a $K_x$ an admissible cycle. The next uniform convergence result was obtained in \cite{MR3004771}*{Lemma 3.3}. 
\begin{lemm}\label{lem:regular-cycles} There exists a subsequence $\Phi_{k_l}$ such that for almost all $y\in\psi(\overline{\Omega})$ and for any admissible cycle $S\subseteq\psi^{-1}(y)$, the restrictions $\Phi_{k_l}|_{S}$ and $\Phi|_{S}$ are continuous and the sequence $\{\Phi_{k_l}|_{S}\}$ converges to $\Phi|_{S}$ uniformly: $\Phi_{k_l}|_{S}\rightrightarrows\Phi|_{S}$.
\end{lemm}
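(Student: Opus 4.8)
The plan is to reduce the assertion, via the coarea formula and Bernoulli's law, to two integral statements about the sequence $\{\Phi_k\}$, and then to extract a diagonal subsequence. Fix an admissible cycle $S\subseteq\psi^{-1}(y)$. By Theorem~\ref{thm:Bernoulli} applied to the compact connected set $S$ (on which $\psi$ is constant and which satisfies $S\cap A_{\bv}=\emptyset$), the restriction $\Phi|_{S}$ equals a single constant, say $\widehat\Phi_S$; in particular it is continuous. On the other hand, the interior regularity recorded in Lemma~\ref{lem_Leray} gives $\bu_k\in W^{3,2}_{\mathrm{loc}}(\Omega)$ and $p_k\in W^{2,2}_{\mathrm{loc}}(\Omega)$, so $\Phi_k\in C(\Omega)$ and $\Phi_k|_{S}$ is continuous for every cycle $S\subset\subset\Omega$. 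Since $\Phi|_{S}\equiv\widehat\Phi_S$, for a continuous function the sup-norm deviation is controlled by its mean deviation plus its oscillation,
\[
\max_{S}\abs{\Phi_k-\widehat\Phi_S}\le\frac{1}{\SH^1(S)}\int_{S}\abs{\Phi_k-\widehat\Phi_S}\,d\SH^1+\int_{S}\abs{\partial_\tau\Phi_k}\,d\SH^1,
\]
where $\partial_\tau$ denotes the tangential derivative along $S$. Thus it suffices to produce a subsequence for which, for a.e.\ $y$, both integrals on the right tend to $0$ on every admissible cycle $S\subseteq\psi^{-1}(y)$.

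For the first (mean) term I would use strong $L^1$-convergence of the total head pressures. By the compact embeddings $W^{1,2}(\Omega)\hookrightarrow\hookrightarrow L^4(\Omega)$ and $W^{1,q}(\Omega)\hookrightarrow\hookrightarrow L^q(\Omega)$, the convergences $\bu_k\rightharpoonup\bv$ and $p_k\rightharpoonup p$ from Lemma~\ref{lem_Leray} upgrade to $\bu_k\to\bv$ in $L^4(\Omega)$ and $p_k\to p$ in $L^q(\Omega)$, whence $\Phi_k\to\Phi$ in $L^{s}(\Omega)$ for some $s>1$. Since $\nabla\psi\in W^{1,2}(\Omega)\hookrightarrow L^{s'}(\Omega)$, Hölder's inequality gives $\int_{\Omega}\abs{\Phi_k-\Phi}\abs{\nabla\psi}\,dx\to0$, and the coarea formula
\[
\int_{\Omega}\abs{\Phi_k-\Phi}\abs{\nabla\psi}\,dx=\int_{\BR}\Big(\int_{\psi^{-1}(y)}\abs{\Phi_k-\Phi}\,d\SH^1\Big)\,dy
\]
allows me to pass to a subsequence along which $\int_{\psi^{-1}(y)}\abs{\Phi_{k}-\Phi}\,d\SH^1\to0$ for a.e.\ $y$; restricting to a component $S$ and using $\Phi|_S\equiv\widehat\Phi_S$ controls the mean term.

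For the oscillation term I exploit that on a regular level set $S$ of $\psi$ the unit tangent is $\pm\bv/\abs{\bv}$ (by \eqref{iden:stream}, $\nabla\psi=(-v_2,v_1)$ is orthogonal to $\bv$), so $\abs{\partial_\tau\Phi_k}=\abs{\nabla\Phi_k\cdot\bv}/\abs{\nabla\psi}$ and, again by coarea, $\int_{\BR}\int_{\psi^{-1}(y)}\abs{\partial_\tau\Phi_k}\,d\SH^1\,dy=\int_{\Omega}\abs{\nabla\Phi_k\cdot\bv}\,dx$. It therefore remains to prove $\int_{\Omega}\abs{\nabla\Phi_k\cdot\bv}\,dx\to0$. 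From the momentum equation in \eqref{eq:NS_k} and the identity $\nabla\tfrac{\abs{\bu_k}^2}{2}=(\bu_k\cdot\nabla)\bu_k+\omega_k\bu_k^{\perp}$ (with $\bu_k^{\perp}=(-u_{k2},u_{k1})$ and $\omega_k=\curl\bu_k$) one obtains $\nabla\Phi_k=\bff_k+\nu_k\Delta\bu_k+\omega_k\bu_k^{\perp}$. Dotting with $\bv$ and using $\bv^{\perp}\cdot\bv=0$ and $\Delta\bu_k\cdot\bv=\nabla\omega_k\cdot\nabla\psi$, I split
\[
\nabla\Phi_k\cdot\bv=\bff_k\cdot\bv+\nu_k\,\nabla\omega_k\cdot\nabla\psi+\omega_k\,(\bu_k-\bv)^{\perp}\cdot\bv .
\]
The first term tends to $0$ in $L^1$ since $\bff_k=\frac{\lambda_k\nu_k^2}{\nu^2}\bff\to\boldsymbol{0}$; the third tends to $0$ in $L^1$ by Hölder, because $\omega_k$ is bounded in $L^2(\Omega)$ while $\bu_k-\bv\to\boldsymbol{0}$ in $L^4(\Omega)$; and the middle term obeys $\nu_k\int_{\Omega}\abs{\nabla\omega_k}\abs{\nabla\psi}\,dx\le\nu_k^{1/2}\big(\nu_k^{1/2}\norm{\nabla\omega_k}_{L^2(\Omega)}\big)\norm{\nabla\psi}_{L^2(\Omega)}$.

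Hence the whole argument hinges on the uniform viscous estimate $\nu_k\norm{\nabla\omega_k}_{L^2(\Omega)}^2\le C$, and this is the step I expect to be the main obstacle. I would obtain it by testing the vorticity equation $-\nu_k\Delta\omega_k+\bu_k\cdot\nabla\omega_k=\curl\bff_k$ with $\omega_k$: the convective term reduces to the boundary integral $\tfrac12\int_{\partial\Omega}a_k\,\omega_k^2\,ds$ (using $\dv\bu_k=0$ and $\bu_k\cdot\bn=a_k$), while the dissipation produces $\nu_k\norm{\nabla\omega_k}_{L^2(\Omega)}^2$ together with $\nu_k\int_{\partial\Omega}\partial_n\omega_k\,\omega_k\,ds$. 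Unlike the no-slip situation of \cite{MR3004771}, here the trace of $\omega_k$ does not vanish; instead it is governed by the slip condition $\eqref{slip}_2$ through Lemma~\ref{lem_def-curl}, which expresses $\omega_k|_{\partial\Omega}$ in terms of the (uniformly bounded) trace of $\bu_k$, the Weingarten map $W$, and the data $\beta_k,\bb_k,a_k$. Controlling these boundary contributions is the delicate point; once $\nu_k\norm{\nabla\omega_k}_{L^2(\Omega)}^2\le C$ is secured, the middle term above is $O(\nu_k^{1/2})\to0$, so $\int_{\Omega}\abs{\nabla\Phi_k\cdot\bv}\,dx\to0$. Extracting a further subsequence and diagonalizing with the mean estimate then yields, for a.e.\ $y$ and every admissible cycle $S\subseteq\psi^{-1}(y)$, the uniform convergence $\Phi_{k_l}|_{S}\rightrightarrows\Phi|_{S}$, as required.
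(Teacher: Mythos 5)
Your reduction of the lemma---splitting $\sup_{S}\abs{\Phi_k-\Phi}$ into a mean term plus an oscillation term, converting both to area integrals by the coarea formula, and reducing everything to $\int_{\Omega}\abs{\nabla\Phi_k\cdot\bv}\,dx\to0$ via $\nabla\Phi_k=\bff_k-\nu_k\nabla^{\perp}\omega_k+\omega_k\bu_k^{\perp}$---is sound, and it is a reasonable reconstruction of the argument behind the result the paper invokes: note that the paper itself gives \emph{no} proof of Lemma~\ref{lem:regular-cycles}, but cites \cite{MR3004771}*{Lemma 3.3} for it. Your treatment of the terms $\bff_k\cdot\bv$ and $\omega_k(\bu_k-\bv)^{\perp}\cdot\bv$ is correct. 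The genuine gap is precisely the step you flag as ``the main obstacle'': the uniform bound $\nu_k\norm{\nabla\omega_k}_{L^2(\Omega)}^2\le C$ is never established, and the route you propose for it cannot work. Testing the vorticity equation with $\omega_k$ over all of $\Omega$ produces the boundary term $\nu_k\int_{\partial\Omega}(\nabla\omega_k\cdot\bn)\,\omega_k\,ds$, and nothing in Lemma~\ref{lem_Leray} or in the slip conditions controls the \emph{normal derivative} of the vorticity on $\partial\Omega$: Lemma~\ref{lem_def-curl} governs only the trace of $\omega_k$ itself (incidentally, your remark that in the no-slip setting of \cite{MR3004771} the vorticity trace vanishes is backwards---there it is even less controlled than here). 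Expressing $\nu_k\nabla\omega_k\cdot\bn$ through the momentum equation reintroduces $\partial_{\tau}\Phi_k$ and tangential derivatives of $\omega_k$ on $\partial\Omega$, whose available bounds grow like $J_k\sim\nu_k^{-1}$, so the argument becomes circular; in unscaled variables your claimed estimate amounts to $\norm{\widehat{\bu}_k}_{W^{2,2}(\Omega)}\le CJ_k^{3/2}$, whereas the elliptic estimates of Theorem~\ref{thm:grS} combined with the quadratic nonlinearity yield only powers $\ge2$ of $J_k$.

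The missing idea is localization. Admissible cycles lie on level sets $\psi^{-1}(y)$ with $y\notin\{\xi_0,\dots,\xi_N\}$ and are therefore compactly contained in $\Omega$; hence it suffices to show, for every compact $K\subset\subset\Omega$, that $\nu_k\norm{\nabla\omega_k}_{L^2(K)}\to0$, and then to run your coarea/diagonal extraction over an exhaustion $K_j=\{x\in\Omega:\dist(x,\partial\Omega)\ge1/j\}$. This interior estimate holds and requires no boundary information at all: test $-\nu_k\Delta\omega_k+\bu_k\cdot\nabla\omega_k=\curl\bff_k$ with $\omega_k\zeta^2$, where $\zeta\in C^{\infty}_{0}(\Omega)$, $\zeta\equiv1$ on $K$. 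The convective term reduces to $\int_{\Omega}\omega_k^2\,\zeta\,\bu_k\cdot\nabla\zeta\,dx$, which by H\"older and Ladyzhenskaya's inequality is bounded by $C\bigl(\norm{\zeta\nabla\omega_k}_{L^2(\Omega)}^{1/2}+1\bigr)$ using only $\norm{\bu_k}_{W^{1,2}(\Omega)}\le C$; absorbing the dissipation and solving the resulting algebraic inequality gives $\nu_k\norm{\zeta\nabla\omega_k}_{L^2(\Omega)}\le C\nu_k^{1/3}\to0$, exactly what your middle term needs. With this replacement (or by simply invoking \cite{MR3004771}*{Lemma 3.3}, whose proof is interior and hence indifferent to the boundary conditions), your argument closes; as written, it does not.
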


Below we assume (without loss of generality) that the subsequence $\Phi_{k_l}$ coincides with $\Phi_{k}$. Admissible cycles $S$ satisfying the statement of Lemma \ref{lem:regular-cycles} will be called regular cycles.

We are now ready to construct a compact connected set $A^{j}_{i}\subset\subset\Omega$ (see the comments below \eqref{202309121049}). Denote by $B_{0},\dots,B_{N}$ the elements from $T_{\psi}$ such that $B_j\supseteq\Gamma_j$, $j=0,\dots,N$\footnote{Let $\Omega_j$ be one of the inner holes of the domain $\Omega$. Then it holds that $\overline{\Omega}_j\subseteq B_j$, because we extended $\psi$ into $\Omega_j$ by setting $\psi|_{\Omega_j}\equiv\xi_j$, where $\xi_j=\psi|_{\Gamma_j}=\const$ (see Lemma \ref{lem:stream}).}. From Lemma \ref{lem:Kro} each element $C\in[B_i,B_j]\setminus\{B_i,B_j\}$ is a connected component of some level set of $\psi$ such that the sets $B_i$ and $B_j$ lie in different connected components of $\BR^2\setminus C$. Put
\be
\alpha\equiv\max_{j=0,\dots,M}\min_{C\in[B_j,B_N]}\Phi(C).
\ee
By Lemma \ref{lem:head_conti} and \eqref{202309121048}, the minimum of $\Phi$ on the arc $[B_j,B_N]$ exists and $\alpha<0$. Take a sequence of positive values $t_{i}\in(0,-\alpha)$ such that $t_{i+1}\le\frac{1}{2}t_i$ and the assertion
\be\label{202309161453}
\Phi(C)=-t_{i}\Rightarrow C\text{ is a regular cycle}
\ee 
holds for all $j=0,\dots,M$ and for any $C\in[B_j,B_N]$. The existence of such a sequence $\{t_i\}_{i\in\BN}$ follows from the fact that (see \cite{MR3275850}*{Corollary 3.2})
\be
\SH^{1}\big(\{\Phi(C):C\in[B_j,B_N]\text{ and }C\text{ is not a regular cycle}\}\big)=0,\enskip j=0,\dots,M.
\ee

Define the order on the arc $[B_j,B_N]$ as follows. For some different elements $C',C^{''}\in[B_j,B_N]$, we shall write $C'<C^{''}$ if and only if $C^{''}$ is closer to $B_N$ than $C'$, that is, $C'$ and $B_N$ belong to different connected components of the set $T_{\psi}\setminus\{C^{''}\}$. For $j=0,\dots,M$ and $i\in\BN$, put
\be
A^{j}_{i}\equiv\max\{C\in[B_j,B_N]:\Phi(C)=-t_{i}\}.
\ee
In other words, $A^{j}_{i}$ is the element from the set $\{C\in[B_j,B_N]:\Phi(C)=-t_{i}\}$ which is closest to $B_N$. Since each $A^{j}_{i}$ is a connected component of some level set of $\psi$, the sets $A^{j_1}_{i}$ and $A^{j_2}_{i}$ are either disjoint or identical, that is, if $A^{j_1}_{i}\neq A^{j_2}_{i}$, then $A^{j_1}_{i}\cap A^{j_2}_{i}=\emptyset$. By \eqref{202309161453} and the definition of regular cycles (see the comments below Lemma \ref{lem:regular-cycles}), each $A^{j}_{i}$ is a $C^1$-curve homeomorphic to the unit circle $\BS^{1}$ and $A^{j}_{i}\subset\subset\Omega$. In particular, for any $i\in\BN$, the compact set $\bigcup_{j=0}^{M}A^j_i$ is separated from $\partial\Omega$ and $\dist(\bigcup_{j=0}^{M}A^j_i,\partial\Omega)>0$. For each $i\in\BN$, denote by $V_i$ the connected component of the open set $\Omega\setminus\bigcup_{j=0}^{M}A^j_i$ such that $\Gamma_N\subseteq\partial V_i$. By construction, the sequence of domains $V_i$ is decreasing: $V_{i}\supseteq V_{i+1}$ (since $t_{i+1}\le\frac{1}{2}t_i$) and $\partial V_i$ consists of $A^{j}_{i}$, $j=0,\dots,M$, and several components $\Gamma_{k}$ with $k>M$. Changing the numbering of components $\Gamma_{M+1},\dots,\Gamma_{N}$ if necessary, we can find some $K\in\{M+1,\dots,N\}$ such that $\partial V_{i}=A^0_i\cup\dots\cup A^M_i\cup\Gamma_{K}\cup\dots\cup\Gamma_{N}$ for sufficiently large $i$\footnote{See \cite{MR3275850}*{pp.\,783-784} for more details.}. 

We shall next construct a domain $W_{ik}(t)$ and a level line $S_{ik}(t)\subseteq\{x\in\Omega:\Phi_{k}(x)=-t\}$ for all $t\in[\frac{5}{8}t_i,\frac{7}{8}t_i]$ and for sufficiently large $k\in\BN$ such that (i) $S_{ik}(t)\subseteq\partial W_{ik}(t)$; (ii) The level line $S_{ik}(t)$ separates $\Gamma_{N}$ from $\Gamma_{j}$, $j=0,\dots,M$; (iii) The minimum of $\Phi_k$ in $\overline{W_{ik}(t)}$ is achieved on $S_{ik}(t)$, i.e., $\Phi_k(x)>-t$ for all $x\in \overline{W_{ik}(t)}\setminus S_{ik}(t)$ and $\Phi_k\equiv-t$ on $S_{ik}(t)$. Since $A^j_i$ is a regular cycle, from Lemma \ref{lem:regular-cycles} we have the uniform convergence $\Phi_{k}|_{A^j_i}\rightrightarrows\Phi|_{A^j_i}=-t_i$ as $k\to\infty$. Therefore for any $i\in\BN$ there exists $k_i\in\BN$ such that the inequalities
\be
\Phi_{k}|_{A^j_i}<-\frac{7}{8}t_i,\enskip\Phi_{k}|_{A^j_{i+1}}>-\frac{5}{8}t_i,\enskip j=0,\dots,M
\ee
hold for all $k\ge k_i$ (recall the relation $t_{i+1}\le\frac{1}{2}t_i$). Thus for all $k\ge k_i$ and for all $t\in[\frac{5}{8}t_i,\frac{7}{8}t_i]$ we have
\be
\Phi_{k}|_{A^j_i}<-t,\enskip\Phi_{k}|_{A^j_{i+1}}>-t,\enskip j=0,\dots,M.
\ee

For $k\ge k_i$, $j=0,\dots,M$ and $t\in[\frac{5}{8}t_i,\frac{7}{8}t_i]$, denote by $W^j_{ik}(t)$ the connected component of the open set $\{x\in V_{i}\setminus\overline{V}_{i+1}:\Phi_{k}(x)>-t\}$ such that $\partial W^j_{ik}(t)\supseteq A^j_{i+1}$. Put
\be
W_{ik}(t)\equiv\bigcup_{j=0}^{M}W^j_{ik}(t),\quad S_{ik}(t)\equiv(\partial W_{ik}(t))\cap V_i\setminus\overline{V}_{i+1}.
\ee
By construction, $\Phi_k(x)\equiv-t$ on $S_{ik}(t)$ and
\be\label{202310041719}
\partial W_{ik}(t)=S_{ik}(t)\cup A^0_{i+1}\cup\dots\cup A^{M}_{i+1}.
\ee
Since $\bu_k\in W^{3,2}_{\text{loc}}(\Omega)$ and $p_k\in W^{2,2}_{\text{loc}}(\Omega)$, each $\Phi_k\equiv p_k+\frac{1}{2}\abs{\bu_k}^2$ belongs to $W^{2,2}_{\text{loc}}(\Omega)$. According to the Morse-Sard theorem for Sobolev functions (see Theorem \ref{thm:Morse-Sard}), for almost all $t\in[\frac{5}{8}t_i,\frac{7}{8}t_i]$ the level line $S_{ik}(t)$ consists of finitely many $C^1$-curves homeomorphic to the unit circle $\BS^1$ and $\Phi_k$ is differentiable in the classical sense at each point $x\in S_{ik}(t)$ with $\nabla\Phi_k(x)\neq\boldsymbol{0}$. The values $t\in[\frac{5}{8}t_i,\frac{7}{8}t_i]$ enjoying the above property will be called $(k,i)$-regular. For $(k,i)$-regular value $t\in[\frac{5}{8}t_i,\frac{7}{8}t_i]$, it holds that
\be\label{Hopf}
\int_{S_{ik}(t)}\nabla\Phi_k\cdot\widetilde{\bn}\,ds=-\int_{S_{ik}(t)}\abs{\nabla\Phi_k}\,ds<0,
\ee
where $\widetilde{\bn}$ is the outward (with respect to $W_{ik}(t)$) unit normal to $\partial W_{ik}(t)$. Indeed, the nonzero gradient $\nabla\Phi_k(x)$ at the point $x\in S_{ik}(t)$ is directed inside the domain $W_{ik}(t)$, and hence
\be
\frac{\nabla\Phi_k(x)}{\abs{\nabla\Phi_k(x)}}=-\widetilde{\bn}(x),\enskip x\in S_{ik}(t);
\ee
see Figue 1. Putting $\widetilde{\Gamma}\equiv\Gamma_K\cup\dots\cup\Gamma_N$. For $h>0$, denote $\Gamma_h\equiv\{x\in\Omega:\dist(x,\widetilde{\Gamma})=h\}$ and $\Omega_h\equiv\{x\in\Omega:\dist(x,\widetilde{\Gamma})<h\}$. Since $\Omega$ is of class $C^{\infty}$, there exists a positive constant
\be
\delta_{0}<\frac{1}{2}\min\{\abs{x-y}:x\in\Gamma_i,\,y\in\Gamma_j,\,i,j\in\{0,\dots,N\},\,i\neq j\}
\ee
such that $\dist(\cdot,\widetilde{\Gamma})\in C^{\infty}(\Omega_{\delta_0})$ and $\abs{\nabla\dist(x,\widetilde{\Gamma})}\equiv 1$ on $\Omega_{\delta_0}$; see \cite{MR1814364}*{Lemma 14.16}. By the implicit function theorem, for each $h\le\delta_{0}$ the set $\Gamma_{h}$ is a union of $N-K+1$ $C^{\infty}$-curves homeomorphic to the unit circle $\BS^1$.
\begin{center}
\includegraphics[scale=0.45]{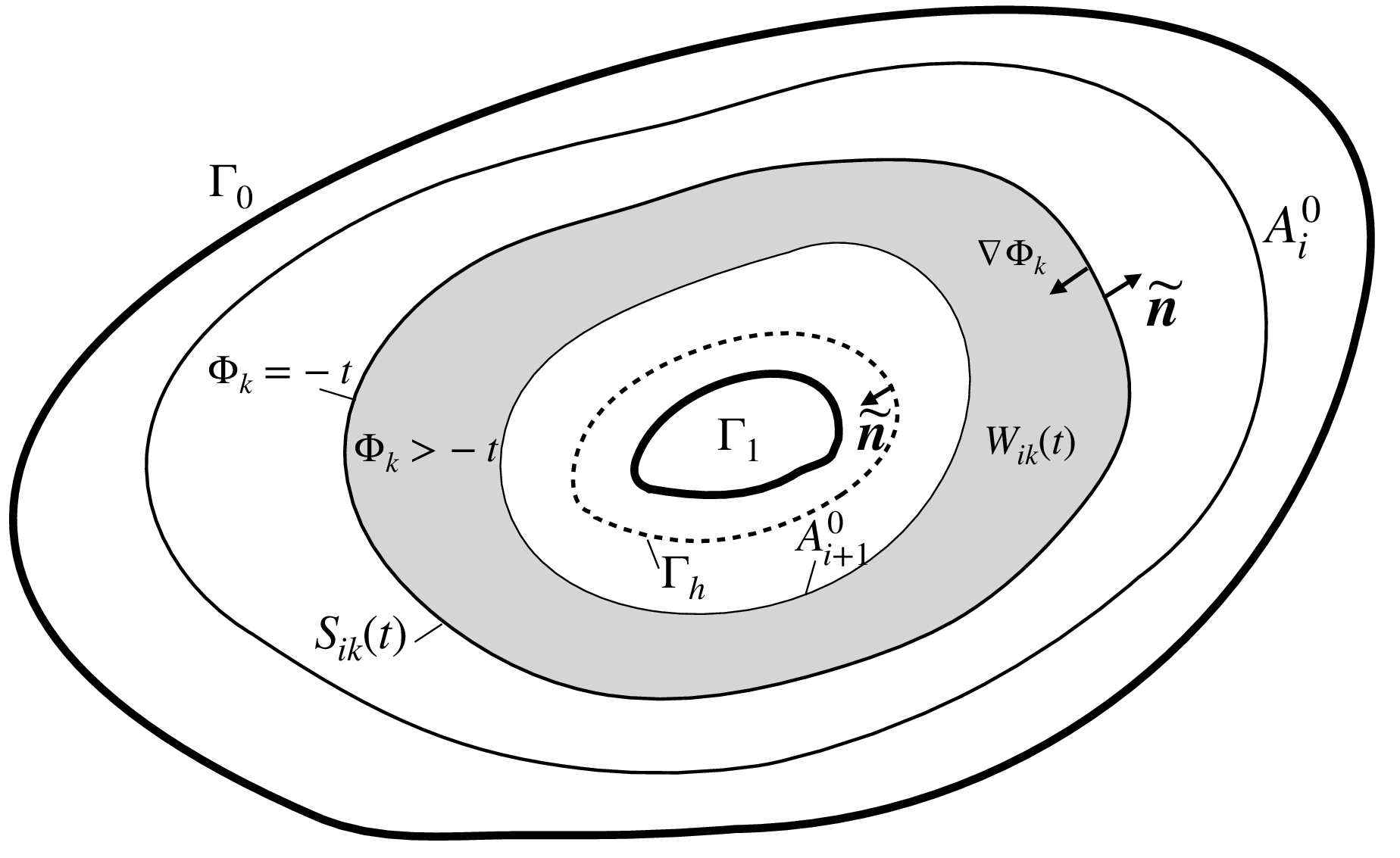}
\end{center}
\begin{center}
Figure 1. The case when $N=1$ and $\widetilde{\Gamma}=\Gamma_1$.
\end{center}
By construction, $\Phi\not\equiv\const$ on $V_i$, and hence the identity \eqref{iden:Bernoulli} implies $\int_{V_i}\omega^2\,dx>0$ for each $i$. Let $\omega_{k}$ be the vorticity of a velocity vector $\bu_k=(u_k^1,u_k^2)$ defined by $\omega_{k}\equiv\partial_2u_k^1-\partial_1u_k^2$. Taking into account the weak convergence $\omega_k\rightharpoonup\omega$ in $L^2(\Omega)$, we have
\begin{lemm}\label{est:vorticity_k} For any $i\in\BN$, there exist some constants $\sigma_i>0$, $\delta_i\in(0,\delta_0)$ and $\tilde{k}_i\in\BN$ such that $\overline{\Omega}_{\delta_i}\subseteq V_{i+1}$ and $\int_{V_{i+1}\setminus\overline{\Omega}_{\delta_i}}\omega_k^2\,dx>\sigma_i$ for all $k\ge\tilde{k}_{i}$.
\end{lemm}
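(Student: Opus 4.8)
The plan is to produce a strictly positive lower bound for the limiting vorticity $\omega$ on a region of the form $V_{i+1}\setminus\overline\Omega_{\delta_i}$, and then to transfer this bound to $\omega_k$ using the weak convergence $\omega_k\rightharpoonup\omega$ in $L^2(\Omega)$ (which follows from $\bu_k\rightharpoonup\bv$ in $W^{1,2}(\Omega)$, vorticity being a continuous linear combination of first derivatives). The crucial observation is that only weak convergence is available, but it points in exactly the right direction: weak lower semicontinuity of the $L^2$-norm gives $\omega$ as a \emph{lower} bound for the mass of $\omega_k$, which is precisely what the lemma asserts.

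First I fix $i$ and invoke the fact recorded immediately above the lemma (valid for every index) with $i$ replaced by $i+1$, so that $\int_{V_{i+1}}\omega^2\,dx>0$. Since $\SL^2(\Omega_\delta)\to0$ as $\delta\to+0$ and $\omega^2\in L^1(\Omega)$, the absolute continuity of the integral yields $\int_{V_{i+1}\cap\Omega_\delta}\omega^2\,dx\to0$, whence $\int_{V_{i+1}\setminus\overline\Omega_\delta}\omega^2\,dx\to\int_{V_{i+1}}\omega^2\,dx$ as $\delta\to+0$. I therefore choose $\delta_i\in(0,\delta_0)$ small enough that
\be
\int_{V_{i+1}\setminus\overline\Omega_{\delta_i}}\omega^2\,dx\ge\frac12\int_{V_{i+1}}\omega^2\,dx,
\ee
and I set $\sigma_i\equiv\frac14\int_{V_{i+1}}\omega^2\,dx>0$, so the left-hand side is at least $2\sigma_i$. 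Shrinking $\delta_i$ further if necessary, I also arrange $\overline\Omega_{\delta_i}\subseteq V_{i+1}$: this is possible because $\widetilde{\Gamma}\subseteq\partial V_{i+1}$ lies on $\partial\Omega$, whereas the only part of $\partial V_{i+1}$ interior to $\Omega$ consists of the admissible cycles $A^0_{i+1},\dots,A^M_{i+1}$, which are compactly contained in $\Omega$ and hence at a positive distance from $\widetilde{\Gamma}$; for $\delta_i$ below that distance the collar $\Omega_{\delta_i}$ of $\widetilde{\Gamma}$ stays on the fluid side inside $V_{i+1}$ and is separated from the cycles.

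It remains to pass from $\omega$ to $\omega_k$ on the fixed set $A\equiv V_{i+1}\setminus\overline\Omega_{\delta_i}$, which is independent of $k$. Extending test functions in $L^2(A)$ by zero to $L^2(\Omega)$, the weak convergence $\omega_k\rightharpoonup\omega$ in $L^2(\Omega)$ localizes to $\omega_k\rightharpoonup\omega$ in $L^2(A)$, and weak lower semicontinuity of the $L^2(A)$-norm gives
\be
\int_A\omega^2\,dx\le\liminf_{k\to\infty}\int_A\omega_k^2\,dx.
\ee
Since $\int_A\omega^2\,dx\ge2\sigma_i>\sigma_i$, the definition of $\liminf$ furnishes an index $\tilde{k}_i\in\BN$ with $\int_{V_{i+1}\setminus\overline\Omega_{\delta_i}}\omega_k^2\,dx>\sigma_i$ for all $k\ge\tilde{k}_i$, which is the desired conclusion. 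The only genuinely delicate point is the geometric inclusion $\overline\Omega_{\delta_i}\subseteq V_{i+1}$ and the attendant bookkeeping that $A$ is a single region, bounded away from both $\widetilde{\Gamma}$ and the cycles $A^j_{i+1}$, on which the weak convergence may be localized uniformly in $k$; the analytic content (absolute continuity of the integral and weak lower semicontinuity of the norm) is routine.
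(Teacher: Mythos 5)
Your proof is correct and takes essentially the same route the paper intends: the paper states this lemma without a separate proof, presenting it as an immediate consequence of the positivity $\int_{V_{i+1}}\omega^2\,dx>0$ (from $\Phi\not\equiv\const$ on $V_{i+1}$ and $\nabla\Phi=\omega\nabla\psi$) together with the weak convergence $\omega_k\rightharpoonup\omega$ in $L^2(\Omega)$, which is exactly what you make precise via absolute continuity of the integral and weak lower semicontinuity of the $L^2$-norm on the fixed set $V_{i+1}\setminus\overline{\Omega}_{\delta_i}$. Your justification of the tacit geometric inclusion $\overline{\Omega}_{\delta_i}\subseteq V_{i+1}$ (the cycles $A^j_{i+1}$ are compact subsets of $\Omega$, hence at positive distance from $\widetilde{\Gamma}$) is likewise the intended argument.
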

The crucial step in the proof is to establish the following estimate.
\begin{lemm}\label{lem:keyestimate} Assume that the inequality \eqref{theo1_assumption} holds for all $x\in\partial\Omega$. Then for any $i\in\BN$, there exists $k(i)\in\BN$ such that the inequality
\be\label{keyestimate}
\int_{S_{ik}(t)}\abs{\nabla\Phi_k}\,ds<\CF t
\ee
holds for every $k\ge k(i)$ and for almost all $t\in[\frac{5}{8}t_i,\frac{7}{8}t_i]$. Here the constant $\CF$ is independent of $i, k$ and $t$.
\end{lemm}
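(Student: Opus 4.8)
The plan is to turn the curve integral into a boundary flux over $\widetilde{\Gamma}=\Gamma_K\cup\dots\cup\Gamma_N$, where the slip condition and the hypothesis \eqref{theo1_assumption} can be brought to bear. Fix $i$ and a $(k,i)$-regular value $t\in[\frac58 t_i,\frac78 t_i]$, so that $S_{ik}(t)$ is a finite union of $C^1$-cycles on which $\nabla\Phi_k\neq\boldsymbol0$. By property (ii) of $S_{ik}(t)$ this level line separates $\widetilde\Gamma$ from $\Gamma_0,\dots,\Gamma_M$; let $\Omega_{ik}(t)$ be the subdomain of $\Omega$ enclosed between $S_{ik}(t)$ and $\widetilde\Gamma$, so that $\partial\Omega_{ik}(t)=S_{ik}(t)\cup\widetilde\Gamma$, with outward normal $\widetilde{\bn}=-\nabla\Phi_k/\abs{\nabla\Phi_k}$ on $S_{ik}(t)$ and $\bn$ on $\widetilde\Gamma$. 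By \eqref{Hopf} and the divergence theorem,
\[
\int_{S_{ik}(t)}\abs{\nabla\Phi_k}\,ds=-\int_{\Omega_{ik}(t)}\Delta\Phi_k\,dx+\int_{\widetilde\Gamma}\frac{\partial\Phi_k}{\partial\bn}\,ds .
\]
The regularity $\bu_k\in W^{2,2}(\Omega)\cap W^{3,2}_{\mathrm{loc}}(\Omega)$, $p_k\in W^{1,2}(\Omega)\cap W^{2,2}_{\mathrm{loc}}(\Omega)$ (Remark \ref{rem:theo1_reg}) legitimizes all the integrations by parts below.

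Next I would compute $\Delta\Phi_k$ from \eqref{eq:NS_k}. Using $\dv\bu_k=0$ one has $\Delta\Phi_k=\omega_k^2+\bu_k\cdot\Delta\bu_k$; substituting $\nu_k\Delta\bu_k=\nabla\Phi_k-\omega_k\bu_k^{\perp}-\bff_k$ (which follows from $\eqref{eq:NS_k}_{1}$ and $(\bu_k\cdot\nabla)\bu_k=\nabla\tfrac{\abs{\bu_k}^2}{2}-\omega_k\bu_k^{\perp}$, $\bu_k^\perp=(-u_k^2,u_k^1)$) and $\bu_k\cdot\bu_k^{\perp}=0$ gives $\Delta\Phi_k=\omega_k^2+\nu_k^{-1}\bigl(\bu_k\cdot\nabla\Phi_k-\bu_k\cdot\bff_k\bigr)$. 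Integrating the middle term over $\Omega_{ik}(t)$, using $\Phi_k\equiv-t$ on $S_{ik}(t)$, $\bu_k\cdot\bn=a_k$ on $\widetilde\Gamma$, and $\int_{S_{ik}(t)}\bu_k\cdot\widetilde{\bn}\,ds=-\int_{\widetilde\Gamma}a_k\,ds$ (vanishing total flux across $\partial\Omega_{ik}(t)$), I obtain $\int_{\Omega_{ik}(t)}\bu_k\cdot\nabla\Phi_k\,dx=t\int_{\widetilde\Gamma}a_k\,ds+\int_{\widetilde\Gamma}\Phi_k a_k\,ds$. Since $a_k=\tfrac{\lambda_k\nu_k}{\nu}a_{\ast}$ and $\bff_k=\tfrac{\lambda_k\nu_k^2}{\nu^2}\nabla^{\perp}g$, the factor $\nu_k^{-1}$ cancels, so the contribution of $-\int_{\Omega_{ik}(t)}\Delta\Phi_k$ is $-\tfrac{\lambda_k}{\nu}t\int_{\widetilde\Gamma}a_{\ast}\,ds-\tfrac{\lambda_k}{\nu}\int_{\widetilde\Gamma}\Phi_k a_{\ast}\,ds+O(\nu_k)$, while $-\int_{\Omega_{ik}(t)}\omega_k^2\,dx\le0$ may simply be discarded.

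The decisive step is the boundary term $\int_{\widetilde\Gamma}\partial\Phi_k/\partial\bn\,ds$. Writing $\nabla\Phi_k=\nu_k\Delta\bu_k+\omega_k\bu_k^{\perp}+\bff_k$ and noting $\bu_k^{\perp}\cdot\bn=\bu_k\cdot\boldsymbol{\tau}$ with $\boldsymbol{\tau}\equiv(n_2,-n_1)^{\top}$, while $\nu_k\Delta\bu_k\cdot\bn=-\nu_k\nabla^{\perp}\omega_k\cdot\bn$ is a tangential derivative of $\omega_k$ and integrates to zero over each closed component of $\widetilde\Gamma$ and $\bff_k\cdot\bn=O(\nu_k^2)$, I get $\int_{\widetilde\Gamma}\partial\Phi_k/\partial\bn\,ds=\int_{\widetilde\Gamma}\omega_k(\bu_k\cdot\boldsymbol\tau)\,ds+O(\nu_k^2)$. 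Now dividing $\eqref{eq:NS_k}_{4}$ by $\nu_k$ and applying Lemma \ref{lem_def-curl} (with $\gamma(\bu_k)\cdot\bn=a_k$ and $W\boldsymbol\tau=\kappa\boldsymbol\tau$) yields, on $\widetilde\Gamma$,
\[
\omega_k=-\Bigl(2\kappa+\frac{\beta}{\nu}\Bigr)(\bu_k\cdot\boldsymbol\tau)-2\,\nabla_{\tau}a_k\cdot\boldsymbol\tau+\frac{\lambda_k\nu_k}{\nu^2}\,\bb_{\ast}\cdot\boldsymbol\tau .
\]
Hence $\int_{\widetilde\Gamma}\omega_k(\bu_k\cdot\boldsymbol\tau)\,ds=-\int_{\widetilde\Gamma}\bigl(2\kappa+\tfrac{\beta}{\nu}\bigr)(\bu_k\cdot\boldsymbol\tau)^2\,ds+O(\nu_k)$, and by \eqref{theo1_assumption} the first term is $\le0$ and may be dropped. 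This is exactly where the curvature–friction condition enters, and it is the main obstacle: one must extract the curvature $\kappa$ from the Cauchy stress via the Weingarten map, and then verify that the powers of $\nu_k$ conspire so that the only surviving $O(1)$ boundary contribution carries the favorable sign (without \eqref{theo1_assumption} this term could be positive and unbounded).

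Collecting everything, for $k\ge k(i)$ and a.e. $t\in[\frac58 t_i,\frac78 t_i]$,
\[
\int_{S_{ik}(t)}\abs{\nabla\Phi_k}\,ds\le \frac{\lambda_k}{\nu}\Bigl|\int_{\widetilde\Gamma}a_{\ast}\,ds\Bigr|\,t+\frac{\lambda_k}{\nu}\Bigl|\int_{\widetilde\Gamma}\Phi_k a_{\ast}\,ds\Bigr|+O(\nu_k).
\]
The first coefficient is a fixed constant, since $\lambda_k\in[0,1]$ and $\int_{\widetilde\Gamma}a_\ast\,ds$ is fixed. In case \eqref{case_a} one has $\Phi\equiv0$ on $\widetilde\Gamma$, so $\int_{\widetilde\Gamma}\Phi_k a_{\ast}\,ds\to\int_{\widetilde\Gamma}\Phi a_{\ast}\,ds=0$ while the $O(\nu_k)$ remainders tend to $0$; choosing $k(i)$ so large that both are $\le\frac58 t_i\le t$ for all $k\ge k(i)$ then bounds the right-hand side by a fixed multiple of $t$, which gives \eqref{keyestimate} with $\CF$ independent of $i,k,t$.
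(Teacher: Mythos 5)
Your strategy coincides with the paper's own proof: integrate the elliptic identity $\Delta\Phi_k=\omega_k^2+\nu_k^{-1}\bigl(\bu_k\cdot\nabla\Phi_k-\bu_k\cdot\bff_k\bigr)$ over the region between $S_{ik}(t)$ and $\widetilde\Gamma$, use \eqref{Hopf} to produce $\int_{S_{ik}(t)}\abs{\nabla\Phi_k}\,ds$ with the correct sign, convert the flux of $\nabla\Phi_k$ through the outer piece of the boundary into $\int_{\widetilde\Gamma}\omega_k\bu_k^{\perp}\cdot\bn\,ds$ modulo vanishing terms, and control that integral via the slip condition, Lemma \ref{lem_def-curl}, the Weingarten relation $W\boldsymbol{\tau}=\kappa\boldsymbol{\tau}$, and hypothesis \eqref{theo1_assumption}; the cancellation of $\nu_k^{-1}$ against $\bu_k\cdot\bn=\tfrac{\lambda_k\nu_k}{\nu}a_{\ast}$ combined with $\Phi\equiv0$ on $\widetilde\Gamma$ in case \eqref{case_a} is likewise exactly the paper's device. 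Your one genuine deviation is benign and even simplifying: you discard $-\int\omega_k^2\,dx\le0$ and absorb all $o(1)$ errors into an extra multiple of $t$ by choosing $k(i)$ so that they fall below $\tfrac58 t_i$, which proves \eqref{keyestimate} with a slightly larger (still $i,k,t$-independent) constant and renders Lemma \ref{est:vorticity_k} unnecessary; the paper instead retains the vorticity term and uses its lower bound $\sigma_i$ to absorb the errors.

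There is, however, one concrete gap: your assertion that the stated regularity ``legitimizes all the integrations by parts'' fails precisely at the step the paper is most careful about. You only have $\Phi_k\in W^{1,2}(\Omega)\cap W^{2,2}_{\mathrm{loc}}(\Omega)$, so $\nabla\Phi_k$ — equivalently the term $\nu_k\nabla^{\perp}\omega_k$ in the decomposition $\nabla\Phi_k=-\nu_k\nabla^{\perp}\omega_k+\omega_k\bu_k^{\perp}+\bff_k$ — has no classical trace on $\widetilde\Gamma$; the trace of $\omega_k$ is merely in $W^{1/2,2}(\partial\Omega)$, so ``its tangential derivative integrates to zero over each closed component'' is not a statement you may invoke pointwise. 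Consequently both your divergence theorem over $\Omega_{ik}(t)$, whose boundary contains $\widetilde\Gamma$, and the identity $\int_{\widetilde\Gamma}\partial\Phi_k/\partial\bn\,ds=\int_{\widetilde\Gamma}\omega_k\bu_k^{\perp}\cdot\bn\,ds+O(\nu_k^2)$ are unjustified as written. The paper circumvents this by integrating over $\Omega^{h}_{ik}(t)$, bounded by the interior curve $\Gamma_h$ on which everything is classical by interior regularity, and then letting $h\to0$ (see \eqref{202309111105} and the footnote accompanying that limit). Alternatively one can interpret $\gamma_{\bn}(\nabla^{\perp}\omega_k)\in W^{-1/q,q}(\partial\Omega)$ as a generalized normal trace of a divergence-free $L^q$ field and prove $\langle\gamma_{\bn}(\nabla^{\perp}\omega_k),1\rangle_{\Gamma_j}=0$ by density of smooth functions in $W^{1,2}(\Omega)$. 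Either repair is routine, but one of them must be supplied for your argument to be complete.
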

\begin{proof}
First of all, it is immediately verified that the total head pressures $\Phi_k$ corresponding to the solutions $(\bu_k, p_k)$ to the Navier-Stokes system \eqref{eq:NS_k} satisfy the linear elliptic equation
\be\label{eq:202309111045}
\Delta \Phi_k = \omega_{k}^2+\frac{1}{\nu_k} \dv (\Phi_k \bu_k)-\frac{1}{\nu_k}  \bff_k \cdot \bu_k
\ee
almost everywhere in $\Omega$. For $(k,i)$-regular value $t\in [\frac{5}{8}t_i,\frac{7}{8}t_i]$ and $h\in (0,\delta_i]$ (see Lemma \ref{est:vorticity_k}), consider the domain
\be
\Omega^{h}_{ik}(t)\equiv W_{ik}(t)\cup\overline{V}_{i+1}\setminus \overline{\Omega}_h.
\ee
By construction, $\partial\Omega^{h}_{ik}(t)=S_{ik}(t)\cup\Gamma_{h}$, and denote by $\widetilde{\bn}$ the unit outward (with respect to $\Omega^{h}_{ik}(t)$) normal to $\partial\Omega^{h}_{ik}(t)$. Integrating the equation \eqref{eq:202309111045} over the domain $\Omega^{h}_{ik}(t)$ with $h\in(0,\delta_i]$, we have
\be\label{iden:202309111050}
\ba
&\int_{S_{ik}(t)}\nabla\Phi_k\cdot\widetilde{\bn}\,ds+\int_{\Gamma_{h}}\nabla\Phi_k\cdot\widetilde{\bn}\,ds\\
=&\int_{\Omega^{h}_{ik}(t)}\omega_{k}^2 \,dx+\frac{1}{\nu_k}\int_{S_{ik}(t)}\Phi_k\bu_k\cdot\widetilde{\bn}\,ds+\frac{1}{\nu_k}\int_{\Gamma_{h}}\Phi_k\bu_k\cdot\widetilde{\bn}\,ds-\frac{1}{\nu_k}\int_{\Omega^{h}_{ik}(t)}\bff_k \cdot \bu_k \,dx\\
=&\int_{\Omega^{h}_{ik}(t)}\omega_{k}^2 \,dx-\frac{\lambda_{k}t}{\nu}\int_{\partial\Omega\setminus\widetilde{\Gamma}}a_{\ast}\,ds+\frac{1}{\nu_k}\int_{\Gamma_{h}}\Phi_k\bu_k\cdot\widetilde{\bn}\,ds-\frac{1}{\nu_k}\int_{\Omega^{h}_{ik}(t)}\bff_k \cdot \bu_k \,dx,
\ea
\ee
where we have used the identity $\Phi_k(x)\equiv-t$ on $S_{ik}(t)$. It follows from \eqref{Hopf}, \eqref{iden:202309111050} and the inclusions $V_{i+1}\setminus\overline{\Omega}_{\delta_i}\subseteq\Omega^{h}_{ik}(t)\subseteq\Omega$ that
\be\label{202309111100}
\ba
\int_{S_{ik}(t)}\abs{\nabla\Phi_k}\,ds=&\int_{\Gamma_{h}}\nabla\Phi_k\cdot\widetilde{\bn}\,ds+\frac{\lambda_{k}t}{\nu}\int_{\partial\Omega\setminus\widetilde{\Gamma}}a_{\ast}\,ds-\frac{1}{\nu_k}\int_{\Gamma_{h}}\Phi_k\bu_k\cdot\widetilde{\bn}\,ds\\
&+\frac{1}{\nu_k}\int_{\Omega^{h}_{ik}(t)}\bff_k \cdot \bu_k \,dx-\int_{\Omega^{h}_{ik}(t)}\omega_{k}^2 \,dx\\
\le&\int_{\Gamma_{h}}\nabla\Phi_k\cdot\widetilde{\bn}\,ds+\CF t-\frac{1}{\nu_k}\int_{\Gamma_{h}}\Phi_k\bu_k\cdot\widetilde{\bn}\,ds\\
&+\frac{1}{\nu_k}\norm{\bff_k}_{L^2(\Omega)}\norm{\bu_k}_{L^2(\Omega)}-\int_{V_{i+1}\setminus\overline{\Omega}_{\delta_i}}\omega_{k}^2 \,dx,
\ea
\ee
where $\CF\equiv\frac{1}{\nu}\left|\int_{\partial\Omega\setminus\widetilde{\Gamma}}a_{\ast}\,ds\right|$.
We now show that
\be\label{202309111105}
\lim_{h\to 0}\int_{\Gamma_{h}}\nabla\Phi_k\cdot\widetilde{\bn}\,ds=\int_{\widetilde{\Gamma}}\omega_k\bu_k^{\perp}\cdot{\bn}\,ds,
\ee
where $\bn$ is the unit outward (with respect to $\Omega$) normal to $\partial\Omega$ and $(x_1,x_2)^{\perp}=(-x_2,x_1)$. From $\eqref{eq:NS_k}_{1,2}$ we deduce
\be
\nabla\Phi_k=-\nu_k\nabla^{\perp}\omega_k+\omega_k\bu_k^{\perp}+\bff_k=-\nu_k\nabla^{\perp}\omega_k+\omega_k\bu_k^{\perp}+\frac{\lambda_k\nu_k^2}{\nu^2}\nabla^{\perp}g\quad \text{in }\Omega.
\ee
By the Stokes theorem, for a function $g\in W^{2,2}(\Omega^{h}_{ik}(t),\BR)$ it holds that
\be
\int_{\Gamma_{h}}\nabla^{\perp}g\cdot\widetilde{\bn}=0.
\ee
Consequently, we have 
\be\label{202309111127}
\int_{\Gamma_{h}}\nabla\Phi_k\cdot\widetilde{\bn}\,ds=\int_{\Gamma_{h}}\omega_k\bu_k^{\perp}\cdot\widetilde{\bn}\,ds.
\ee
Since $-\widetilde{\bn}$ gives the unit outward (with respect to $\Omega_{h}$) normal to $\Gamma_h$ (see Figure 1 for the case when $N=1$ and $\widetilde{\Gamma}=\Gamma_{1}$), from \eqref{202309111127} and the Gauss divergence theorem, we observe that
\be\label{}
\ba
\biggr|\int_{\Gamma_{h}}\nabla\Phi_k\cdot\widetilde{\bn}\,ds-\int_{\widetilde{\Gamma}}\omega_k\bu_k^{\perp}\cdot{\bn}\,ds\biggr|&=\biggr|\int_{\Gamma_{h}}\omega_k\bu_k^{\perp}\cdot(-\widetilde{\bn})\,ds+\int_{\widetilde{\Gamma}}\omega_k\bu_k^{\perp}\cdot{\bn}\,ds\biggr|\\
&\le\int_{\Omega_{h}}\abs{\nabla\omega_k\cdot\bu_k^{\perp}}\,dx+\int_{\Omega_{h}}\omega_k^2\,dx\to0\enskip\text{as }h\to0,
\ea
\ee
which proves \eqref{202309111105}. Similarly we have
\be\label{202309111110}
\lim_{h\to 0}\int_{\Gamma_{h}}\Phi_k\bu_k\cdot\widetilde{\bn}\,ds=\int_{\widetilde{\Gamma}}\Phi_k\bu_k\cdot{\bn}\,ds,
\ee
and so letting $h\to 0$ in \eqref{202309111100}\footnote{Since we only know $\Phi_k\in W^{1,2}(\Omega)$, the value of $\nabla\Phi_k$ on $\widetilde{\Gamma}$ cannot be always well-defined. This is the reason why we first integrate the equation \eqref{eq:202309111045} over the domain $\Omega^{h}_{ik}(t)$ with $h\in(0,\delta_i]$, which strictly lies inside $\Omega$. Note that $\Phi_k\in W^{2,2}(\Omega^{h}_{ik}(t))$ by the interior regularity result, and that the value of $\omega_k\bu_k^{\perp}$ on $\widetilde{\Gamma}$ in \eqref{202309111105} is well-defined.}, we deduce
\be\label{202309111115}
\ba
\int_{S_{ik}(t)}\abs{\nabla\Phi_k}\,ds\le&\int_{\widetilde{\Gamma}}\omega_k\bu_k^{\perp}\cdot{\bn}\,ds+\CF t-\frac{1}{\nu_k}\int_{\widetilde{\Gamma}}\Phi_k\bu_k\cdot{\bn}\,ds\\
&+\frac{1}{\nu_k}\norm{\bff_k}_{L^2(\Omega)}\norm{\bu_k}_{L^2(\Omega)}-\int_{V_{i+1}\setminus\overline{\Omega}_{\delta_i}}\omega_{k}^2 \,dx.
\ea
\ee
Let us next show that for any $\epsilon_{0}>0$ there exists $k_0=k_0(\epsilon_{0})\in\BN$ such that the estimates
\be\label{202309111528}
\biggr|\frac{1}{\nu_k}\int_{\widetilde{\Gamma}}\Phi_k\bu_k\cdot{\bn}\,ds\biggr|<\epsilon_{0},
\ee
\be\label{202309111529}
\int_{\widetilde{\Gamma}}\omega_k\bu_k^{\perp}\cdot{\bn}\,ds<\epsilon_{0}
\ee
hold for all $k\ge k_{0}$. The weak convergence $\Phi_k\rightharpoonup \Phi$ in ${W}^{1,q}(\Omega)$, $q\in[1,2)$, implies 
\be\label{202309121054}
\gamma(\Phi_k)\to\gamma(\Phi)\enskip\text{in }L^{q}(\partial\Omega);
\ee
see, e.g., \cite{MR2808162}*{Theorem I\hspace{-1.2pt}I.4.1}, \cite{MR3726909}*{Corollary 18.4 and Exercise 18.6}. Recall that $\bu_k=\frac{\lambda_k\nu_k}{\nu}(\widehat{\bw}_{k}+\bU)$, where $\widehat{\bw}_{k}\in J(\Omega)$ satisfies $\gamma(\widehat{\bw}_{k})\cdot\bn=0$ on $\partial\Omega$ and $\bU\in W^{2,2}(\Omega)$ is a weak solution to the Stokes problem \eqref{S}. In particular, 
\be\label{202309121055}
\bu_k\cdot\bn=\frac{\lambda_k\nu_k}{\nu}\bU\cdot\bn\enskip\text{on }\partial\Omega.
\ee
Since $\gamma(\Phi)=0$ on $\widetilde{\Gamma}$ (see \eqref{202309121049}) and $\norm{\bU}_{C(\overline{\Omega})}\le c\norm{\bU}_{W^{2,2}(\Omega)}<\infty$, it follows from \eqref{202309121054} and \eqref{202309121055} that
\be\label{202409292013}
\frac{1}{\nu_k}\biggr|\int_{\widetilde{\Gamma}}\Phi_k\bu_k\cdot\bn \,ds\biggr|=\frac{\lambda_k}{\nu}\biggr|\int_{\widetilde{\Gamma}}\Phi_k \bU\cdot\bn \,ds\biggr|\le\frac{\norm{\bU}_{C(\overline{\Omega})}}{\nu}\int_{\widetilde{\Gamma}}\abs{\Phi_k} \,ds\to0 \enskip \text{as }k\to\infty,
\ee
which implies \eqref{202309111528}.

In order to show the estimate \eqref{202309111529}, we begin to observe that the pairs $(\bu_k,p_k)\in W^{2,2}(\Omega)\times W^{1,2}(\Omega)$ satisfy the boundary condition $\eqref{eq:NS_k}_4$ in the following sense:
\be\label{202309121218}
[T(\bu_k,p_k)\bn]_{\tau}+\beta_k {\bu_k}_{\tau}=\bb_k\enskip\text{in }L^2(\partial\Omega).
\ee
Indeed, the pairs $(\bu_k,p_k)$ satisfy the integral identity
\be\label{08191037}
\frac{\nu_k}{2}\int_{\Omega}S(\bu_k):S(\bfvarphi)\,dx-\int_{\Omega}p_k\,\dv\bfvarphi\,dx=\int_{\Omega}(-\nu_k\Delta\bu_k+\nabla p_k)\cdot\bfvarphi\,dx+\int_{\partial\Omega}[T(\bu_k,p_k)\bn]_{\tau}\cdot\bfvarphi\,ds
\ee
for every $\bfvarphi\in W^{1,2}(\Omega)$ with $\gamma(\bfvarphi)\cdot\bn=0$ on $\partial\Omega$. Combining the above identity with \eqref{08191350}, and then choosing $\bfvarphi$ from $C^{\infty}_{0}(\Omega)$, we find that $-\nu_k \Delta \bu_k+(\bu_k \cdot \nabla)\bu_k+ \nabla p_k=\bff_k$ for almost all $x\in\Omega$. As a consequence, the identity
\be\label{202308191054}
\int_{\partial\Omega}[T(\bu_k,p_k)\bn]_{\tau}\cdot\bfvarphi\,ds+\int_{\partial \Omega} \beta_k\bu_{k\tau} \cdot \bfvarphi \, ds-\int_{\partial \Omega} \bb_k\cdot \bfvarphi \, ds=0
\ee
holds for all $\bfvarphi\in W^{1,2}(\Omega)$ with $\gamma(\bfvarphi)\cdot\bn=0$ on $\partial\Omega$. In fact, \eqref{202308191054} is valid for any $\bfvarphi\in W^{1,2}(\Omega)$ because $\bb_k\cdot\bn=0$ on $\partial\Omega$. Since $W^{1/2,2}(\partial\Omega)\equiv\gamma(W^{1,2}(\Omega))$ is dense in $L^{2}(\partial\Omega)$, we conclude \eqref{202309121218}. Also, we shall make use of the following identity (see Lemma \ref{lem_def-curl})
\be\label{202309121217}
\curl\bu_k(n_2,-n_1)^\top=[S(\bu_k)\bn]_{\tau}-2\nabla_{\tau}(\bu_k\cdot\bn)-2W^\top\bu_k\enskip\text{in }L^2(\partial\Omega),
\ee
where $W$ is the Weingarten map of $\partial\Omega$ (see Proposition \ref{prop:Weingarten}). Using the identity \eqref{202309121217} along with the slip boundary conditions $\eqref{eq:NS_k}_{3,4}$, we have
\be\label{202309120753}
\ba
&\int_{\widetilde{\Gamma}}\omega_k\bu_k^{\perp}\cdot{\bn}\,ds=\int_{\widetilde{\Gamma}}\omega_k(n_2,-n_1)^\top\cdot\bu_k\,ds\\
=&\int_{\widetilde{\Gamma}}[S(\bu_k)\bn]_{\tau}\cdot\bu_k\,ds-2\int_{\widetilde{\Gamma}}\nabla_{\tau}(\bu_k\cdot\bn)\cdot\bu_k\,ds-2\int_{\widetilde{\Gamma}}W^\top\bu_k\cdot\bu_k\,ds\\
=&\frac{1}{\nu_k}\int_{\widetilde{\Gamma}}\bb_k\cdot\bu_k\,ds-\frac{1}{\nu_k}\int_{\widetilde{\Gamma}}\beta_k{\bu_k}_{\tau}\cdot{\bu_k}_{\tau}\,ds-2\int_{\widetilde{\Gamma}}\nabla_{\tau}a_k\cdot\bu_k\,ds-2\int_{\widetilde{\Gamma}}W^\top\bu_k\cdot\bu_k\,ds.
\ea
\ee
Since $W=W^\top$ on $\partial\Omega$ and the eigenvalues of $W=W(x)$ consist of the curvature $\kappa(x)$ of the boundary at $x\in\partial\Omega$ and $0$, there exists a $2\times2$ orthogonal matrix $P=P(x)$ such that
\be\label{iden:diagonal}
P(x)W(x)P^\top(x)=\mathrm{diag}[\kappa(x),0],\enskip x\in\partial\Omega.
\ee
Putting ${\bv}_k=(v_{k,1},v_{k,2})=P{\bu_k}_{\tau}$. It follows from Proposition \ref{prop:Weingarten} and \eqref{iden:diagonal} that
\be\label{202309120754}
\ba
W^\top(x)\bu_k(x)\cdot\bu_k(x)&={\bu_k}^\top (W\bu_k)={\bu_k}_{\tau}^{\top}W{\bu_k}_{\tau}={\bu_k}_{\tau}^{\top}P^{\top}(PWP^{\top})P{\bu_k}_{\tau}\\
&={\bv_k}^{\top}\mathrm{diag}[\kappa(x),0]{\bv}_k=\kappa(x){v}_{k,1}(x)^2\enskip\text{at }x\in\partial\Omega.
\ea
\ee
Recall that $\beta_k\equiv\frac{\nu_k}{\nu}\beta$, $a_k\equiv\frac{\lambda_k\nu_k}{\nu}a_{\ast}$, $\bb_k\equiv\frac{\lambda_k\nu^2_k}{\nu^2}\bb_{\ast}$ and the sequence $\{\bu_k\}$ is uniformly bounded in $W^{1,2}(\Omega)$. Using the equalities $\abs{\bv_k}^{2}=\abs{P{\bu_k}_{\tau}}^{2}=\abs{{\bu_k}_{\tau}}^{2}$ and \eqref{202309120754} in \eqref{202309120753}, and then recalling the inequality \eqref{theo1_assumption}, we deduce that
\be\label{202309120755}
\ba
\int_{\widetilde{\Gamma}}\omega_k\bu_k^{\perp}\cdot{\bn}\,ds=&\frac{\lambda_k\nu_k}{\nu^2}\int_{\widetilde{\Gamma}}\bb_{\ast}\cdot\bu_{k}\,ds-\frac{1}{\nu}\int_{\widetilde{\Gamma}}\beta\abs{{\bv_k}}^2\,ds\\
&-\frac{2\lambda_k\nu_k}{\nu}\int_{\widetilde{\Gamma}}(\nabla_{\tau}a_{\ast})\cdot\bu_k\,ds-2\int_{\widetilde{\Gamma}}\kappa (v_{k,1})^2\,ds\\
\le&\frac{\lambda_k\nu_k}{\nu^2}\int_{\widetilde{\Gamma}}\bb_{\ast}\cdot\bu_{k}\,ds-\frac{2\lambda_k\nu_k}{\nu}\int_{\widetilde{\Gamma}}(\nabla_{\tau}a_{\ast})\cdot\bu_k\,ds-\int_{\widetilde{\Gamma}}\left(\frac{\beta}{\nu}+2\kappa\right)(v_{k,1})^2\,ds\\
\le&\frac{\nu_k}{\nu^2}\norm{\bb_{\ast}}_{L^2(\partial\Omega)}\norm{\bu_k}_{L^2(\partial\Omega)}+\frac{2\nu_k}{\nu}\norm{\nabla_{\tau}a_{\ast}}_{L^2(\partial\Omega)}\norm{\bu_k}_{L^2(\partial\Omega)}\\
\le&c(\Omega,\nu)\sup_{k}\norm{\bu_k}_{W^{1,2}(\Omega)}(\norm{\bb_{\ast}}_{L^2(\partial\Omega)}+\norm{a_{\ast}}_{W^{3/2,2}(\partial\Omega)})\nu_k\to 0\enskip\text{as }k\to\infty,
\ea
\ee
which proves \eqref{202309111529}.

By definition, $\frac{1}{\nu_k}\norm{\bff_k}_{L^2(\Omega)}=\frac{\lambda_k\nu_k}{\nu^2}\norm{\bff}_{L^2(\Omega)}\to 0$ as $k \to \infty$. Since the sequence $\{\bu_k\}$ is uniformly bounded in $W^{1,2}(\Omega)$, for any $\epsilon_{1}>0$ there exists a constant $k_1=k_1(\epsilon_{1})\in\BN$ such that the inequality
\be\label{202309121447}
\frac{1}{\nu_k}\norm{\bff_k}_{L^2(\Omega)}\norm{\bu_k}_{L^2(\Omega)}<\epsilon_{1}
\ee
holds for all $k\ge k_1$. Collecting \eqref{202309111115}, \eqref{202309111528}, \eqref{202309111529}, \eqref{202309121447} and the estimate in Lemma \ref{est:vorticity_k} we find that the inequality
\be
\int_{S_{ik}(t)}\abs{\nabla\Phi_k}\,ds\le\CF t+2\epsilon_{0}+\epsilon_{1}-\sigma_{i}
\ee
holds for all $k\ge\max\{k_0,k_1,\tilde{k}_{i}\}$. Thus, by taking $\epsilon_{0}=\frac{\sigma_{i}}{8}$ and $\epsilon_{1}=\frac{\sigma_{i}}{4}$, we obtain the desired estimate \eqref{keyestimate}.
\end{proof}
Finally, with the estimate \eqref{keyestimate} in hand, we can derive the desired contradiction by using exactly the same argument employed in \cite{MR3275850}*{Lemma 3.9}. For the sake of completeness, we reproduce the argument here. For $i\in\BN$ and $k\ge k(i)$ (see Lemma \ref{lem:keyestimate}), put
\be
E_{ik}\equiv\bigcup\limits_{t\in [\frac{5}{8}t_i,\frac{7}{8}t_i]} S_{ik}(t).
\ee
By the co-area formula (see \cite{MR1932709}) for each integrable function $g\colon\Omega\to\BR$, the equality
\be\label{co-area}
\int_{E_{ik}}g\abs{\nabla\Phi_{k}}\,dx=\int_{\frac{5}{8}t_i}^{\frac{7}{8}t_i}\int_{S_{ik}(t)}g(x)\,d\SH^{1}(x)\,dt
\ee
holds. In particular, choosing $g=\abs{\nabla\Phi_k}$ and using \eqref{keyestimate}, we have
\be
\int_{E_{ik}}\abs{\nabla\Phi_{k}}^2\,dx=\int_{\frac{5}{8}t_i}^{\frac{7}{8}t_i}\int_{S_{ik}(t)}\abs{\nabla\Phi_{k}}\,d\SH^{1}(x)\,dt\le\CF\int_{\frac{5}{8}t_i}^{\frac{7}{8}t_i}t\,dt=\CF't_{i}^2,
\ee
where $\CF'\equiv\frac{3}{16}\CF$ is independent of $i$. Next, taking $g=1$ in \eqref{co-area} and employing the Schwarz inequality, we obtain
\be\label{202309131619}
\ba
\int_{\frac{5}{8}t_i}^{\frac{7}{8}t_i}\SH^{1}(S_{ik}(t))\,dt=\int_{E_{ik}}\abs{\nabla\Phi_k}\,dx&\le\biggl(\int_{E_{ik}}\abs{\nabla\Phi_k}^2\,dx\biggr)^{\frac{1}{2}}\SL^{2}(E_{ik})^{\frac{1}{2}}\\
&\le\sqrt{\CF'}t_i\SL^{2}(E_{ik})^{\frac{1}{2}}.
\ea
\ee
By construction, for almost all $t\in [\frac{5}{8}t_i,\frac{7}{8}t_i]$, the set $S_{ik}(t)$ consists of finitely many $C^{1}$-cycles and $S_{ik}(t)$ separates $A^j_i$ from $A^j_{i+1}$ , $j=0,\dots,M$. Thus, each set $S_{ik}(t)$ separates $\Gamma_j$ from $\Gamma_N$, and we have
\be
\SH^{1}(S_{ik}(t))\ge\min\bigl(\diam(\Gamma_j),\diam(\Gamma_N)\bigr)\equiv C^{\ast}.
\ee
Then \eqref{202309131619} yields
\be\label{202309131625}
\frac{1}{4}C^{\ast}\le\sqrt{\CF'}\SL^{2}(E_{ik})^{\frac{1}{2}}.
\ee
Recall that $S_{ik}(t)\subseteq V_{i}\setminus\overline{V}_{i+1}$ and that the sequence of domains $V_{i}$ is decreasing, i.e., $V_{i}\supseteq V_{i+1}$. Thus we observe that $\SL^{2}(E_{ik})\le\SL^{2}(V_{i}\setminus\overline{V}_{i+1})\to0$ as $i\to\infty$, and hence \eqref{202309131625} gives the desired contradiction. 
\subsubsection{The maximum of $\Phi$ is not attained on the boundary $\partial\Omega$}\label{subsub2}
Let us next consider the case (b), when \eqref{case_b} holds. In this case, we can find the regular cycle $F\subseteq\Omega$ such that $\Phi(F)>\max\limits_{j=0,\dots,N}\widehat\Phi_{j}$ (\cite{MR3275850}*{Lemma 3.10}). Adding a constant to the pressure, we may assume that $\Phi(F)=0$. The rest of the proof for the case (b) proceeds in a similar way to that presented in Subsection \ref{subsub1}. The differences are as follows: now $M=N$, the set $F$ plays the role of the component $\Gamma_{N}$ in the above argument and the calculations become much simpler since $F$ lies strictly inside $\Omega$. See \cite{MR3275850}*{Subsection 3.3.2} for a detailed procedure which leads to a contradiction.

\vspace{0.3cm}
The contradictions in both cases (a) and (b) are due to the assumption that the norms of all possible solutions $\bw^{(\lambda)}$ to the integral identity \eqref{eq:Leray-Schauder_lambda} are not uniformly bounded in the space $J(\Omega)$ with respect to $\lambda\in[0,1]$. Thus the desired uniform estimate is proved and the Leray-Schauder theorem gives the existence of a solution $\bw\in J(\Omega)$ to the equation \eqref{def:weakNShomo}. The proof of Theorem \ref{theo1} is therefore completed.
\subsection{Uniqueness and regularity of weak solutions}\label{subsec:rem_uni_and_reg}
We end this section with some results on uniqueness and regularity of weak solutions to the Navier-Stokes problem \eqref{NS}-\eqref{slip}. Example \ref{exam:Hamel} implies that weak solutions to the problem \eqref{NS}-\eqref{slip} are not unique in general. On the other hand, we can prove the uniqueness of weak solutions for small data.
\begin{theo}
Let $\Omega\subseteq\BR^2$ be a smooth bounded domain, and let $\beta\in C(\partial\Omega)$ be nonnegative. Suppose that $a_{\ast}\in W^{1/2,2}(\partial\Omega)$ and $\bb_{\ast}\in W^{-1/2,2}(\partial\Omega)$ satisfy the compatibility conditions \eqref{theo1_comp} and $\bff\in H(\Omega)'$. Assume further that $\beta\not\equiv0$ in the case when $\Omega$ possesses a circular symmetry around some point. Then there exists a constant $c=c(\Omega,\nu,\beta)>0$ such that if
\be
\norm{a_{\ast}}_{W^{1/2,2}(\partial\Omega)}+\norm{a_{\ast}}_{W^{1/2,2}(\partial\Omega)}^2+\norm{\bff}_{H(\Omega)'}+\norm{\bb_{\ast}}_{W^{-1/2,2}(\partial\Omega)}<c,
\ee
the Navier-Stokes problem \eqref{NS}-\eqref{slip} has no more than one weak solution in the class $W^{1,2}(\Omega)$.
\end{theo}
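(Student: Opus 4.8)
The plan is to argue by a standard energy/contraction method, but with careful attention to a boundary term that has no counterpart in the no-slip theory. Let $\bu_1,\bu_2\in W^{1,2}(\Omega)$ be two weak solutions corresponding to the same data $(\bff,a_{\ast},\bb_{\ast})$, and set $\bw=\bu_1-\bu_2$. Since $\gamma(\bu_1)\cdot\bn=\gamma(\bu_2)\cdot\bn=a_{\ast}$ and both fields are divergence-free, $\bw\in J(\Omega)$, so it is an admissible test function. Subtracting the two copies of the identity \eqref{def:weakNS_nonhom} and using the algebraic identity $(\bu_1\cdot\nabla)\bu_1-(\bu_2\cdot\nabla)\bu_2=(\bw\cdot\nabla)\bu_1+(\bu_2\cdot\nabla)\bw$, I would then test with $\bfvarphi=\bw$. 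The term $\into(\bu_2\cdot\nabla)\bw\cdot\bw\dx$ integrates by parts, using $\dv\bu_2=0$ and $\bu_2\cdot\bn=a_{\ast}$, to $\int_{\partial\Omega}\tfrac12\abs{\bw}^2 a_{\ast}\,ds$; this is the crucial boundary contribution. One arrives at
\[
\frac{\nu}{2}\into S(\bw):S(\bw)\dx+\int_{\partial\Omega}\beta\,\bw_{\tau}\cdot\bw\,ds=-\into(\bw\cdot\nabla)\bu_1\cdot\bw\dx-\int_{\partial\Omega}\frac{\abs{\bw}^2}{2}a_{\ast}\,ds.
\]

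For the left-hand side I would invoke the Korn-type inequality \eqref{theo4_Korn} (valid precisely because of the standing assumption that $\beta\not\equiv0$ whenever $\Omega$ has a circular symmetry, cf.\ Lemma \ref{lem:Korn} and Remark \ref{rem:Korn}), which bounds it below by $\frac{\nu}{2}K(\Omega,\frac{2\beta}{\nu})^{-1}\norm{\bw}_{W^{1,2}(\Omega)}^2$. The two terms on the right are controlled by the Sobolev embedding $W^{1,2}(\Omega)\hookrightarrow L^4(\Omega)$ and the trace embedding $W^{1,2}(\Omega)\hookrightarrow L^4(\partial\Omega)$ (note $W^{1/2,2}(\partial\Omega)\hookrightarrow L^q(\partial\Omega)$ for every $q<\infty$ in this critical $1$-dimensional case), giving $\abs{\into(\bw\cdot\nabla)\bu_1\cdot\bw\dx}\le c\norm{\bu_1}_{W^{1,2}(\Omega)}\norm{\bw}^2_{W^{1,2}(\Omega)}$ and $\bigl|\int_{\partial\Omega}\tfrac12\abs{\bw}^2a_{\ast}\,ds\bigr|\le c\norm{a_{\ast}}_{W^{1/2,2}(\partial\Omega)}\norm{\bw}^2_{W^{1,2}(\Omega)}$. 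Hence
\[
\frac{\nu}{2}K(\Omega,\tfrac{2\beta}{\nu})^{-1}\norm{\bw}^2_{W^{1,2}(\Omega)}\le c\bigl(\norm{\bu_1}_{W^{1,2}(\Omega)}+\norm{a_{\ast}}_{W^{1/2,2}(\partial\Omega)}\bigr)\norm{\bw}^2_{W^{1,2}(\Omega)},
\]
and it suffices to make the factor in parentheses strictly smaller than $\frac{\nu}{2c}K(\Omega,\frac{2\beta}{\nu})^{-1}$, which forces $\bw=\boldsymbol{0}$.

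The one genuinely non-trivial ingredient is an a priori bound on $\norm{\bu_1}_{W^{1,2}(\Omega)}$ by the (small) data; recall that no such bound survives for large data (Example \ref{exam:Hamel}). To obtain it I would write $\bu_1=\bw_1+\bU$ with $\bU$ the Stokes solution from Theorem \ref{thm:exist_unique_S}(i), so that $\norm{\bU}_{W^{1,2}(\Omega)}\le cD$ where $D\equiv\norm{\bff}_{H(\Omega)'}+\norm{a_{\ast}}_{W^{1/2,2}(\partial\Omega)}+\norm{\bb_{\ast}}_{W^{-1/2,2}(\partial\Omega)}$, test the identity \eqref{def:weakNShomo} with $\bfvarphi=\bw_1$ (the cubic term $\into(\bw_1\cdot\nabla)\bw_1\cdot\bw_1\dx$ vanishes), and repeat the Korn estimate. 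Exactly the same boundary integral $\int_{\partial\Omega}\tfrac12\abs{\bw_1}^2a_{\ast}\,ds$ reappears; absorbing it, together with the terms linear in $\bU$, into the left-hand side requires $D$ small, and yields $\norm{\bw_1}_{W^{1,2}(\Omega)}\le cD^2$, hence $\norm{\bu_1}_{W^{1,2}(\Omega)}\le c(D+D^2)$ --- a quantity of precisely the form appearing in the smallness hypothesis. Choosing the threshold constant in the statement small enough that $c(D+D^2)+c\norm{a_{\ast}}_{W^{1/2,2}(\partial\Omega)}$ stays below $\frac{\nu}{2c}K^{-1}$ then closes the argument. I expect the main obstacle to be the handling of the boundary term $\int_{\partial\Omega}\tfrac12\abs{\bw}^2a_{\ast}\,ds$: because the slip conditions leave the tangential trace of $\bw$ uncontrolled, this term cannot be made to vanish (as it does under no-slip, where $\bw|_{\partial\Omega}=\boldsymbol{0}$), and it is exactly what compels the smallness of $a_{\ast}$ itself --- not merely of its fluxes --- in the hypothesis.
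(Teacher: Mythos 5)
Your proposal is correct and is essentially the proof the paper has in mind: the paper omits the argument, referring to Galdi's book (Theorems IX.2.1, IX.4.1, IX.4.2) and to Acevedo Tapia--Amrouche--Conca--Ghosh (Proposition 8.4), which carry out exactly this energy/contraction scheme --- an a priori bound for an arbitrary weak solution via the Stokes extension $\bU$ and the Korn-type inequality (valid under the standing assumption $\beta\not\equiv0$ when $\Omega$ has a circular symmetry), followed by uniqueness from testing the difference equation with $\bw=\bu_1-\bu_2$, where the boundary term $\int_{\partial\Omega}\tfrac12\abs{\bw}^2a_{\ast}\,ds$ produced by the convective term is precisely what forces smallness of $\norm{a_{\ast}}_{W^{1/2,2}(\partial\Omega)}$ itself rather than only of its fluxes. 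One harmless slip: your intermediate bound should read $\norm{\bw_1}_{W^{1,2}(\Omega)}\le c(D+D^2)$ rather than $cD^2$, since the terms $\langle\bff,\bw_1\rangle_{\Omega}$, $\langle\bb_{\ast},\bw_1\rangle_{\partial\Omega}$ and $\frac{\nu}{2}\int_{\Omega}S(\bU):S(\bw_1)\,dx$ are linear in both the data and $\bw_1$ and cannot be absorbed, but the final estimate $\norm{\bu_1}_{W^{1,2}(\Omega)}\le c(D+D^2)$ that you state is the correct one and the argument closes exactly as you describe.
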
 
The proof of this theorem is entirely analogous to \cite{MR4231512}*{Proposition 8.4} or \cite{MR2808162}*{Theorems I\hspace{-1.2pt}X.2.1, I\hspace{-1.2pt}X.4.1 and I\hspace{-1.2pt}X.4.2}, and it is therefore omitted.
\begin{rema} Suppose that $\beta\equiv0$ and $H(\Omega)\cap\SR(\Omega)$\footnote{Recall that $\SR(\Omega)$ is the set of rigid displacements (see Remark \ref{rem:Korn} (i)).} is nontrivial (the domain $\Omega$ has a circular symmetry around some point). In this case, weak solutions to the Navier-Stokes problem \eqref{NS}-\eqref{slip} are not unique in the class $W^{1,2}(\Omega)$ even if the given data are small. Indeed, any vector field $\bv$ from $H(\Omega)\cap\SR(\Omega)$ and the scalar function $\pi=\frac{\abs{\bv}^2}{2}$ corresponding to $\bv$ satisfy the Navier-Stokes equations \eqref{NS}-\eqref{slip} with $\bff\equiv\bb_{\ast}\equiv\boldsymbol{0}$ and $a_{\ast}\equiv\beta\equiv0$.
\end{rema}
Finally, as in the case of the Stokes problem, we have the following regularity result for the Navier-Stokes problem.
\begin{theo}\label{thm:grNS} Let $\Omega\subseteq\BR^2$ be a smooth bounded domain, and let $\beta\in C^1(\partial\Omega)$ be nonnegative. Suppose that $\bu$ is a weak solution to the Navier-Stokes problem \eqref{NS}-\eqref{slip} corresponding to
\be
\bff\in L^{2}(\Omega),\;a_{\ast}\in W^{3/2,2}(\partial\Omega)\;\text{and }\bb_{\ast}\in W^{1/2,2}(\partial\Omega).
\ee
Denote by $p$ the pressure associated with $\bu$ in Remark $\ref{rem:def_weak}$ $(\mathrm{i})$. Then, $(\bu,p)\in W^{2,2}(\Omega)\times W^{1,2}(\Omega)$. Furthermore, the pair $(\bu,p)$ satisfies the second boundary condition $\eqref{slip}_2$ in the following sense:
\be
[T(\bu,p)\bn]_{\tau}+\beta \bu_{\tau}=\bb_{\ast}\enskip\text{in }L^2(\partial\Omega).
\ee
\begin{proof} The first part of the theorem can be shown in exactly the same way as in the proof of the regularity properties for $\widehat{\bu}_k$ and $\widehat{p}_k$ given in Subsection \ref{Leray_arg}. The second part has already been proved in the proof of Lemma \ref{lem:keyestimate}; see the arguments below \eqref{202409292013}.  
\end{proof}
\end{theo}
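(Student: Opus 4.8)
The plan is to establish the interior regularity by a bootstrap that regards $(\bu,p)$ as a solution of the \emph{linear} Stokes system \eqref{S} driven by the modified force $\bff-(\bu\cdot\nabla)\bu$, and then to extract the second boundary condition from the Green-type formula \eqref{iden:Green} once the gained regularity permits. Both steps mirror computations already present in the excerpt: the first imitates the regularity argument for the pairs $\widehat{\bu}_k,\widehat{p}_k$ in Subsection \ref{Leray_arg}, the second the calculation below \eqref{202409292013} in the proof of Lemma \ref{lem:keyestimate}. For the bootstrap, I first note that $\bu\in W^{1,2}(\Omega)$ with $\Omega\subseteq\BR^2$ gives $\bu\in L^r(\Omega)$ for all $r\in[1,\infty)$, whence $(\bu\cdot\nabla)\bu\in L^q(\Omega)$ for every $q\in[1,2)$ by H\"older's inequality with $\nabla\bu\in L^2(\Omega)$ and $\bu\in L^{2q/(2-q)}(\Omega)$. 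Since $\bff\in L^2(\Omega)\subseteq L^q(\Omega)$, the pair $(\bu,p)$ is a $q$-weak solution of \eqref{S} with right-hand side in $L^q(\Omega)$; no extra integral identity has to be verified because $J_{q'}(\Omega)\subseteq J(\Omega)$ when $q'>2$.

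Applying Theorem \ref{thm:grS} with $q=3/2$ yields $\bu\in W^{2,3/2}(\Omega)$, and the two-dimensional embeddings $W^{2,3/2}(\Omega)\hookrightarrow W^{1,6}(\Omega)\hookrightarrow L^{\infty}(\Omega)$ then give $\bu\in L^{\infty}(\Omega)$ and $\nabla\bu\in L^6(\Omega)$. Consequently $(\bu\cdot\nabla)\bu\in L^2(\Omega)$, so $\bff-(\bu\cdot\nabla)\bu\in L^2(\Omega)$, and a second application of Theorem \ref{thm:grS}, now with $q=2$, produces $\bu\in W^{2,2}(\Omega)$ and $p\in W^{1,2}(\Omega)$, which is the first assertion. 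In the degenerate case $\beta\equiv0$ with $\Omega$ circularly symmetric, the orthogonality hypothesis of Theorem \ref{thm:grS} is arranged by subtracting the appropriate multiple of the smooth rigid displacement $\bu_0$, which alters neither the regularity class of $\bu$ nor the equations it satisfies.

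For the second assertion, the formula \eqref{iden:Green} is now legitimate for the regular pair $(\bu,p)$ and every $\bfvarphi\in W^{1,2}(\Omega)$, and the trace $[T(\bu,p)\bn]_{\tau}$ belongs to $L^2(\partial\Omega)$. Testing the weak formulation of Remark \ref{rem:def_weak}~(i) against $\bfvarphi\in C^{\infty}_{0}(\Omega)$ recovers the pointwise identity $-\nu\Delta\bu+(\bu\cdot\nabla)\bu+\nabla p=\bff$ a.e.\ in $\Omega$; substituting \eqref{iden:Green} into the weak identity and using this pointwise equation to cancel the interior terms leaves
\[
\int_{\partial\Omega}[T(\bu,p)\bn]_{\tau}\cdot\bfvarphi\,ds+\int_{\partial\Omega}\beta\bu_{\tau}\cdot\bfvarphi\,ds-\int_{\partial\Omega}\bb_{\ast}\cdot\bfvarphi\,ds=0
\]
for all $\bfvarphi\in H(\Omega)$. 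As the three integrands are tangential and $\bb_{\ast}\cdot\bn=0$, the identity persists for every $\bfvarphi\in W^{1,2}(\Omega)$; since $\gamma(W^{1,2}(\Omega))=W^{1/2,2}(\partial\Omega)$ is dense in $L^2(\partial\Omega)$, I conclude $[T(\bu,p)\bn]_{\tau}+\beta\bu_{\tau}=\bb_{\ast}$ in $L^2(\partial\Omega)$.

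The one genuinely delicate point is the critical step upgrading the convective term from $L^q$ (with $q<2$) to $L^2$: it hinges on the borderline two-dimensional embedding $W^{2,3/2}(\Omega)\hookrightarrow L^{\infty}(\Omega)$, so I would fix the intermediate exponent (taking $q=3/2$) precisely so that $\bu\in L^{\infty}(\Omega)$ and $\nabla\bu\in L^6(\Omega)$ hold simultaneously, which is what forces $(\bu\cdot\nabla)\bu\in L^2(\Omega)$. Everything else is routine: the two-dimensional Sobolev calculus, the observation that a $2$-weak solution is automatically a $q$-weak solution for smaller $q$ (the test-function space shrinking as $q$ decreases), and the density argument on $\partial\Omega$.
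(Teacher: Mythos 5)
Your proposal is correct and follows essentially the same route as the paper: the paper's proof is precisely the cross-referenced bootstrap for $(\widehat{\bu}_k,\widehat p_k)$ in Subsection \ref{Leray_arg} (convective term in $L^q$ for $q<2$, Theorem \ref{thm:grS} at $q=3/2$, the embeddings $W^{2,3/2}(\Omega)\hookrightarrow W^{1,6}(\Omega)\hookrightarrow L^{\infty}(\Omega)$, then Theorem \ref{thm:grS} again at $q=2$), followed by the argument below \eqref{202409292013} for the boundary condition (recovering the pointwise equation from interior test functions, exploiting tangentiality of all boundary integrands together with $\bb_{\ast}\cdot\bn=0$, and using density of $W^{1/2,2}(\partial\Omega)$ in $L^2(\partial\Omega)$). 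Your explicit handling of the degenerate case $\beta\equiv0$ on a circularly symmetric domain---subtracting the appropriate multiple of the rigid displacement $\bu_0$ before invoking Theorem \ref{thm:grS}---is a detail the paper's cross-reference leaves implicit (Subsection \ref{Leray_arg} assumes $\beta\not\equiv0$), and it is handled correctly since $\bu_0$ solves the homogeneous Stokes problem and is smooth.
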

\section
{Proof of Theorem \ref{theo2}}\label{Proof of Theorem 2}
According to the proof of Theorem \ref{theo1}, it suffices to show that Lemma \ref{lem:keyestimate}, more specifically, the estimate \eqref{202309111529} holds under the assumption of Theorem \ref{theo2}. From \eqref{202309101143} and the identity $\int_{\Gamma_1}a_{\ast}\,ds=-\int_{\Gamma_0}a_{\ast}\,ds$, we have
\be\label{202309221134}
\ba
\frac{1}{\lambda_0}&=-\widehat\Phi_0\int_{\Gamma_0}a_{\ast}\,ds-\widehat\Phi_1\int_{\Gamma_1}a_{\ast}\,ds\\
&=(\widehat\Phi_1-\widehat\Phi_0)\int_{\Gamma_0}a_{\ast}\,ds,
\ea
\ee
and hence the assumption $\int_{\Gamma_0}a_{\ast}\,ds>0$\footnote{If $\int_{\Gamma_0}a_{\ast}\,ds=0$, then we derive the desired contradiction immediately from the identity \eqref{202309221134}. Thus we consider only the case when $\int_{\Gamma_0}a_{\ast}\,ds>0$.} implies $\widehat\Phi_1>\widehat\Phi_0$ (recall that $0<\lambda_0$). Suppose now that \eqref{case_a} holds, i.e., $\widehat\Phi_1=\max\limits_{j=0,1} \widehat\Phi_{j} \ge \esssup\limits_{x\in\Omega}\Phi(x)$ (the maximum of $\Phi$ is attained on the inner boundary $\Gamma_1$). Proceeding exactly as in the proof of Lemma \ref{lem:keyestimate}, we arrive at the following identity (see \eqref{202309120755}):
\be\label{202311131230}
\int_{\widetilde{\Gamma}}\omega_k\bu_k^{\perp}\cdot{\bn}\,ds=\frac{\lambda_k\nu_k}{\nu^2}\int_{\widetilde{\Gamma}}\bb_{\ast}\cdot\bu_{k}\,ds-\frac{2\lambda_k\nu_k}{\nu}\int_{\widetilde{\Gamma}}(\nabla_{\tau}a_{\ast})\cdot\bu_k\,ds-\frac{1}{\nu}\int_{\widetilde{\Gamma}}\beta\abs{{\bu_k}_{\tau}}^2\,ds-2\int_{\widetilde{\Gamma}}\kappa (v_{k,1})^2\,ds.
\ee
Here $\widetilde{\Gamma}=\Gamma_{1}$ (the inner boundary). By assumption, the inner hole $\Omega_1$ of the domain $\Omega$ is convex, and so the curvature $\kappa$ of the inner boundary $\Gamma_1$ is nonnegative. Moreover, since the friction coefficient $\beta\ge0$, we deduce 
\be
\int_{\Gamma_1}\omega_k\bu_k^{\perp}\cdot{\bn}\,ds\le\frac{\lambda_k\nu_k}{\nu^2}\int_{\widetilde{\Gamma}}\bb_{\ast}\cdot\bu_{k}\,ds-\frac{2\lambda_k\nu_k}{\nu}\int_{\widetilde{\Gamma}}(\nabla_{\tau}a_{\ast})\cdot\bu_k\,ds\to0\enskip\text{as}\enskip k\to\infty,
\ee
which implies \eqref{202309111529}. The rest of the proof is completely the same as that of Theorem \ref{theo1}, and so the proof of Theorem \ref{theo2} is complete.

\begin{rema}\label{rem:inflow}In the case when \eqref{case_a} holds with $\widehat\Phi_1<\widehat\Phi_0$ ($\Phi$ attains its maximum on the outer boundary $\Gamma_0$), one cannot prove the estimate \eqref{202309111529} in the same way as above. To fix the ideas, suppose that $\Omega=\{x=(x_1,x_2)\in\BR^2:R_1<\abs{x}<R_0\}$, $R_1>0$, $\Gamma_{j}=\{x\in\BR^2:\abs{x}=R_j\}$, $j=0,1$. In such a situation, $\kappa(x)=\frac{1}{R_1}$ if $x\in\Gamma_1$ and $\kappa(x)=-\frac{1}{R_0}$ if $x\in\Gamma_0$, and one need to evaluate the right-hand side of \eqref{202311131230} with $\widetilde{\Gamma}=\Gamma_{0}$ (the outer boundary). However, the last two terms
\be
-\frac{1}{\nu}\int_{\Gamma_0}\beta\abs{{\bu_k}_{\tau}}^2\,ds+\frac{2}{R_0}\int_{\Gamma_0}({v}_{k,1})^2\,ds
\ee
may not be handled without restrictions on the friction coefficient $\beta$.
\end{rema}

\section
{Proof of Theorem \ref{theo4} and Corollary \ref{cor:harmonic}}\label{Proof of Theorem 4}
In this section we prove Theorem \ref{theo4} and Corollary \ref{cor:harmonic}.
\begin{proof}[Proof of Theorem $\ref{theo4}$] Denote by $\widetilde{\bU}\in W^{1,2}(\Omega)$ the weak solution to the Stokes problem \eqref{S} corresponding to the data $\bff, \beta, a_{\ast}$ and $\bb_{\ast}$ given in the statement of Theorem \ref{theo4}. Repeating the same contradiction argument presented in Subsection \ref{Leray_arg} with $\widetilde{\bU}$ in place of $\bU$, we can find the pair $(\bv,p)\in J(\Omega)\times W^{1,3/2}(\Omega)$ of a solution to the Euler system \eqref{Euler} and some positive number $\lambda_{0}\in (0,1]$ satisfying the identity \eqref{08251507}. Recalling that the vector field $\bv$ is a weak limit of some sequence $\{\bw_k\}_{k\in\BN}\subseteq J(\Omega)$ with $\norm{\bw_k}_{J(\Omega)}=1$ (see the comment below \eqref{202312281626}), we have
\be\label{202312291110}
\norm{\bv}_{J(\Omega)}\equiv\left(\frac{\nu}{2}\int_{\Omega}S(\bv):S(\bv)\,dx+\int_{\partial\Omega}\beta\bv_{\tau}\cdot\bv_{\tau}\,ds\right)^{\frac{1}{2}}\le1.
\ee
Since $\dv\widetilde{\bU}=0$ in $\Omega$ and $\widetilde{\bU}\cdot\bn=a_{\ast}$ on $\partial\Omega$, from \eqref{08251507} we have
\be\label{202410181029}
\frac{1}{\lambda_0}=-\int_{\partial\Omega}\Phi\widetilde{\bU}\cdot\bn\,ds=-\int_{\Omega}\dv(\Phi\widetilde{\bU})\,dx=-\int_{\Omega}\widetilde{\bU}\cdot\nabla\Phi\,dx.
\ee
According to Proposition \ref{prop:harmonic} (ii), the solenoidal extension $\widetilde{\bU}\in W^{1,2}(\Omega)$ of $a_{\ast}$ is decomposed as $\widetilde{\bU}=\bh+\nabla^{\perp}w$ in $\Omega$, where $\bh\in\tilde{V}_{\text{har}}(\Omega)$ and $w\in W^{2,2}(\Omega,\BR)$. By integration by parts\footnote{Since $\nabla\Phi\in L^{3/2}(\Omega)$ and $\curl\nabla\Phi(=0)\in L^{3/2}(\Omega)$, the tangential component $\nabla\Phi\wedge\bn|_{\partial\Omega}$ is well defined as a member of $W^{1-1/3,3}(\partial\Omega)'=W^{-2/3,3/2}(\partial\Omega)$, and the integration by parts is justified in the sense of the generalized Stokes formula; see, e.g., \cite{MR521262}*{Chapter V\hspace{-1.2pt}I\hspace{-1.2pt}I, Lemma 4.2}, \cite{MR2542982}*{(2.2)}.}, we observe that
\be\label{202312291113}
\ba
\int_{\Omega}\widetilde{\bU}\cdot\nabla\Phi\,dx&=\int_{\Omega}\bh\cdot\nabla\Phi\,dx+\int_{\Omega}\nabla^{\perp}w\cdot\nabla\Phi\,dx\\
&=\int_{\Omega}\bh\cdot\nabla\Phi\,dx+\int_{\Omega}(\curl\nabla\Phi)w\,dx+\int_{\partial\Omega}\left(\nabla\Phi\wedge\bn\right)w\,ds\\
&=\int_{\Omega}\bh\cdot\nabla\Phi\,dx,
\ea
\ee
because $\nabla\Phi$ is parallel to $\bn$ on $\partial\Omega$ (see Corollary \ref{cor:Bernoulli}). Consequently, under the assumption \eqref{theo4_assumption}, from \eqref{202410181029}, \eqref{202312291113}, \eqref{iden:Bernoulli} and the Korn-type inequality \eqref{theo4_Korn} we deduce (notice that $\bv\in E(\Omega)$)
\be
1=-\lambda_{0}\int_{\Omega}\widetilde{\bU}\cdot\nabla\Phi\,dx=-\lambda_{0}\int_{\Omega}\bh\cdot\curl\bv(-v_2,v_1)\,dx<\lambda_{0}\norm{\bv}_{W^{1,2}(\Omega)}^2\frac{\nu}{2}K(\Omega,\frac{2\beta}{\nu})^{-1}\le\norm{\bv}_{J(\Omega)}^2,
\ee
which contradicts \eqref{202312291110}. The proof of Theorem \ref{theo4} is therefore completed.
\end{proof}
\begin{proof}[Proof of Corollary $\ref{cor:harmonic}$] By the H\"{o}lder inequality and the Sobolev embedding theorem, we have
\be
-\int_{\Omega}\bh\cdot\curl\bz(-z_2,z_1)\,dx\le\norm{\bh}_{L^q(\Omega)}\norm{\curl\bz}_{L^2(\Omega)}\norm{\bz}_{L^{2q/(q-2)}(\Omega)}\le\sqrt{2}C_{2q/(q-2)}\norm{\bh}_{L^q(\Omega)}\norm{\bz}_{W^{1,2}(\Omega)}^2
\ee
for all $\bz\in E(\Omega)$. Here $C_{2q/(q-2)}$ is the best constant of the Sobolev inequality \eqref{theo4_Sobolev} for $r=2q/(q-2)$. Then the above inequality and the assumption \eqref{202312201219} yield the estimate \eqref{theo4_assumption}.
\end{proof}

\section
{Proof of Theorem \ref{theo3}}\label{Proof of Theorem 3}
Let $\Omega\subseteq\BR^2$ be an admissible domain (see Definition \ref{def:admissible}). Since the boundary datum $a_{\ast}^{S}\in W^{1/2,2}(\partial\Omega)$ satisfies
\be\label{202401051721}
\int_{\partial\Omega} a_{\ast}^{S}\,ds=\sum_{j=0}^{N} \int_{\Gamma_{j}} a_{\ast}^{S} \, ds=0,
\ee
it follows from Lemma \ref{solenoidalextension} that there exists a solenoidal extension $\bA=(A_1,A_2)\in W^{1,2}(\Omega)$ of $a_{\ast}^{S}$ such that $\bA\cdot\bn=a_{\ast}^{S}$ on $\partial\Omega$ and $\norm{\bA}_{W^{1,2}(\Omega)}\le c\norm{a_{\ast}^{S}}_{W^{1/2,2}(\partial\Omega)}$. Note that $\bA$ is not necessarily symmetric. Define the symmetric vector field $\bA^{S}=(A_1^{S},A_2^{S})$ with components
\be\label{sym_part_A}
A_1^{S}(x_1,x_2)\equiv\frac{1}{2}(A_1(x_1,x_2)+A_1(x_1,-x_2)),\enskip A_2^{S}(x_1,x_2)\equiv\frac{1}{2}(A_2(x_1,x_2)-A_2(x_1,-x_2)).
\ee
Since the boundary datum $a_{\ast}^{S}$ and the unit outer normal $\bn$ to $\partial\Omega$ are symmetric with respect to the $x_1$-axis, we find that
\be
\dv\bA^S=0\enskip\text{in }\Omega\enskip\text{and}\enskip\bA^S\cdot\bn=a_{\ast}^{S}\enskip\text{on}\enskip\partial\Omega.
\ee
We then look for a symmetric solution $\bu^S=\bw+\bA^S$ where $\bw\in J^S(\Omega)$ satisfies the integral identity
\be\label{def:weakNS_symmetric}
\ba
& \frac{\nu}{2} \int_{\Omega} S(\bw):S(\bfvarphi) \, dx+\int_{\partial \Omega} \beta^{S} \bw_{\tau} \cdot \bfvarphi_{\tau} \, ds=-\frac{\nu}{2} \int_{\Omega} S(\bA^S):S(\bfvarphi) \, dx-\int_{\partial \Omega} \beta \bA^S \cdot \bfvarphi \, ds\\
&- \int_{\Omega} (\bA^S \cdot \nabla)\bA^S \cdot \bfvarphi \,dx- \int_{\Omega} (\bA^S \cdot \nabla)\bw \cdot \bfvarphi \, dx - \int_{\Omega} (\bw \cdot \nabla)\bw \cdot \bfvarphi \, dx- \int_{\Omega} (\bw \cdot \nabla)\bA^S \cdot \bfvarphi \,dx\\
&+\int_{\Omega} \bff^{S}\cdot\bfvarphi \, dx+\int_{\partial\Omega} \bb_{\ast}^{S}\cdot\bfvarphi \, ds
\ea
\ee
for all $\bfvarphi \in J^S(\Omega)$. Here $J^S(\Omega)\equiv\{\bfvarphi\in J(\Omega):\bfvarphi\colon\Omega\to\BR^2\enskip\text{is symmetric with respect to the $x_1$-axis}\}$.\footnote{If the integral identity \eqref{def:weakNS_symmetric} holds for all $\bfvarphi\in J^{S}(\Omega)$, then it is also valid for all $\bfvarphi\in J(\Omega)$. This can be verified by decomposing $\bfvarphi\in J(\Omega)$ into the sum of its symmetric and anti-symmetric parts as $\bfvarphi=\bfvarphi^{S}+\bfvarphi^{A}$, and then using the fact that the $L^2(\Omega)$- and $L^2(\partial\Omega)$-scalar products of a symmetric element and an anti-symmetric one always vanish. Here $\bfvarphi^{S}$ is the symmetric part of $\bfvarphi$ defined as in \eqref{sym_part_A}, and $\bfvarphi^{A}\equiv\bfvarphi-\bfvarphi^{S}$ is the anti-symmetric part of $\bfvarphi$.} Note that, since $J^S(\Omega)\cap\SR(\Omega)=\{\boldsymbol{0}\}$ for any admissible domain $\Omega$, the bilinear form
\be
(\bw,\bfvarphi)_{J^S(\Omega)}=\frac{\nu}{2} \int_{\Omega} S(\bw):S(\bfvarphi) \, dx+\int_{\partial \Omega} \beta^{S} \bw_{\tau} \cdot \bfvarphi_{\tau} \, ds,\enskip\bw,\bfvarphi\in J^S(\Omega),
\ee
defines a scalar product on the space $J^S(\Omega)$ even in the case when $\beta^S\equiv0$ and the domain $\Omega$ is an annulus; see Remark \ref{rem:Korn} (ii). Repeating the arguments employed in Subsections \ref{Leray_arg} and \ref{Leray_arg_Euler}, one can show the existence of $\bw\in J^S(\Omega)$ satisfying \eqref{def:weakNS_symmetric} provided that the following requirements on the pair $(\bv,p)\in J^S(\Omega)\times W^{1,q}(\Omega)$, $q\in [1,2)$, are incompatible (see \eqref{Euler} and \eqref{202309101143})
\begin{align} \label{Euler_symmetric}
\left\{
\ba
(\bv\cdot\nabla)\bv+\nabla p & =\boldsymbol{0} \quad & & \text{in} \enskip \Omega,\\
\dv \bv & =0 \quad & & \text{in} \enskip \Omega,\\
\bv \cdot\bn & = 0 \quad & & \text{on} \enskip \partial \Omega,
\ea 
\right. 
\end{align}
and
\be\label{202309101143_symmetric}
\frac{1}{\lambda_0}=-\int_{\partial\Omega}\Phi a_{\ast}^{S}\,ds=-\sum_{j=0}^{N}\widehat\Phi_j\int_{\Gamma_j}a_{\ast}^{S}\,ds,
\ee
where $\lambda_0\in (0,1]$ is some constant, $\Phi\equiv p+\frac{\abs{\bv}^2}{2}$ is the total head pressure and $\widehat\Phi_j=\gamma(\Phi)|_{\Gamma_j}=\const$ (see Corollary \ref{cor:Bernoulli}). The following theorem can be regarded as a generalization of \cite{MR763943}*{Theorem 2.3}.
\begin{theo}\label{thm:Amick_symmetric} Let $\Omega\subseteq\BR^2$ be an admissible domain and let the pair $(\bv,p)\in J^S(\Omega)\times W^{1,q}(\Omega)$, $q\in[1,2)$, satisfy the Euler system \eqref{Euler_symmetric}. Then
\be
\widehat\Phi_0=\widehat\Phi_1=\cdots=\widehat\Phi_N,
\ee
where $\widehat\Phi_j=\gamma(\Phi)|_{\Gamma_j}=\const$.
\end{theo}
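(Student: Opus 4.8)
The plan is to use the symmetry of $\bv$ to force the stream function to take a single value on the entire portion of the $x_1$-axis contained in $\overline{\Omega}$, and then to join all of the boundary components $\Gamma_j$ to $\Gamma_0$ by a connected set on which $\psi$ is constant; Bernoulli's law (Theorem \ref{thm:Bernoulli}) will then collapse all the constants $\widehat\Phi_j$ to one value. Throughout, let $\ell:=\{x\in\BR^2:x_2=0\}$ denote the $x_1$-axis, and let $\psi\in W^{2,2}(\Omega)$ be the stream function from \eqref{iden:stream}, so that $\nabla\psi=(-v_2,v_1)$ and $\psi$ is continuous on $\overline{\Omega}$ by Lemma \ref{lem:fine_Euler}.

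The first step is to record the parity of $\psi$. Since $\bv\in J^S(\Omega)$ is symmetric, $v_1$ is even and $v_2$ is odd in $x_2$, so the reflected function $\tilde\psi(x_1,x_2):=\psi(x_1,-x_2)$ satisfies $\nabla(-\tilde\psi)=\nabla\psi$ almost everywhere in the ($x_1$-symmetric) domain $\Omega$. Hence $\psi+\tilde\psi$ has vanishing gradient on the connected open set $\Omega$ and, being continuous on $\overline{\Omega}$, equals a constant $C$; evaluating on $\ell$ gives $2\psi(x_1,0)=C$, i.e. $\psi$ is constant on $\ell\cap\overline{\Omega}$. Normalizing the stream function (it is determined only up to an additive constant), I may assume $\psi\equiv0$ on $\ell\cap\overline{\Omega}$. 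Combining this with Lemma \ref{lem:stream}, which gives $\psi|_{\Gamma_j}\equiv\xi_j$, and with the admissibility assumption that each $\Gamma_j$ meets $\ell$ (Definition \ref{def:admissible}(ii)), I conclude $\xi_j=0$ for every $j$, so that $\psi\equiv0$ on the compact set $K:=(\ell\cap\overline{\Omega})\cup\partial\Omega$.

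It then remains to show that $K$ is connected, or at least that every $\Gamma_j$ lies in the connected component of $\Gamma_0$ in $K$. To this end I fix $P_0\in\Gamma_0\cap\ell$ and $P_j\in\Gamma_j\cap\ell$ and travel along $\ell$ from $P_0$ to $P_j$: whenever the axis would leave $\overline{\Omega}$ it does so either by entering a hole $\Omega_i$, in which case it must re-enter $\Omega$ across the \emph{same} curve $\Gamma_i=\partial\Omega_i$, or by leaving $\Omega_0$, in which case it re-enters across $\Gamma_0$; in each case I replace the excursion by an arc of the corresponding connected curve $\Gamma_i$ or $\Gamma_0$, which lies in $K$ and on which $\psi=0$. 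This yields a connected subset of $K$ joining $P_0$ to $P_j$, so $\Gamma_j$ and $\Gamma_0$ lie in the same component of $K$. Applying Bernoulli's law (Theorem \ref{thm:Bernoulli}) to the compact connected set $K$, on which $\psi\equiv0$, gives that $\Phi$ is constant on $K\setminus A_{\bv}$; since $\Gamma_j\setminus A_{\bv}\subseteq K\setminus A_{\bv}$ and $\Phi\equiv\widehat\Phi_j$ there by Corollary \ref{cor:Bernoulli}, all the $\widehat\Phi_j$ coincide.

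The main obstacle is the topological step in the previous paragraph: the assumption that every $\Gamma_j$ meets $\ell$ must be converted into genuine linkage of the boundary components through the axis, using that the axis enters and leaves each hole (respectively the exterior of $\Omega_0$) across a single component $\Gamma_i$ (respectively $\Gamma_0$). Some care is also needed at possible non-transversal intersections of $\ell$ with $\partial\Omega$; these degeneracies can be handled either by the Morse–Sard information in Theorem \ref{thm:Morse-Sard} applied to $\psi$ or by a perturbation of the detour argument, and they do not affect the constancy of $\psi$ on $\ell\cap\overline{\Omega}$ established above, which is the essential analytic input.
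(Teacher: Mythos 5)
Your proof is correct, but it takes a genuinely different route from the paper's. The paper reproduces Amick's quantitative argument: it labels the components so that their axis intersections are ordered, $\Fa_0<\Fa_1<\Fb_1<\cdots<\Fa_N<\Fb_N<\Fb_0$, represents the two boundary pieces adjacent to a gap as graphs $x_1=\zeta_0(x_2)$ and $x_1=\zeta_1(x_2)$, integrates $\partial_1\Phi=-\omega v_2$ over the thin strip $A(\delta)$ between them, and uses the one-dimensional Hardy inequality together with the oddness of $v_2$ (so that its even reflection-extension vanishes on the axis) to get $\abs{\widehat\Phi_1-\widehat\Phi_0}^2\le 16\norm{\nabla V_2}_{L^2(\BR^2)}^2\norm{\nabla v_2}_{L^2(A(\delta))}^2\to0$ as $\delta\to0$. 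You exploit exactly the same symmetry input --- $v_2=-\partial_1\psi=0$ on the axis --- but qualitatively: $\psi$ is constant on $\ell\cap\overline\Omega$, hence by Lemma \ref{lem:stream} and admissibility it is constant on all of $(\ell\cap\overline\Omega)\cup\partial\Omega$, and the full Bernoulli law (Theorem \ref{thm:Bernoulli}) applied to the connected component of this compact set containing the boundary collapses all the constants $\widehat\Phi_j$ at once. What each approach buys: yours is softer, treats all $N+1$ components in one stroke, and --- unlike the paper's proof --- never needs the simplifying structural assumption inherited from Amick that each $\Gamma_j$ meets the axis in exactly two points, nor graph representations of the boundary near the axis; the paper's proof, on the other hand, needs only Corollary \ref{cor:Bernoulli} plus an elementary Hardy estimate, whereas yours invokes the full strength of Theorem \ref{thm:Bernoulli} on arbitrary compact connected sets --- costless here, since the paper already has that machinery, but a substantially heavier tool.

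Two remarks on your final topological step, which is the only place where your write-up is informal. The clean way to make the ``detour'' rigorous is a separation argument: each maximal open subinterval of the segment $[P_0,P_j]\setminus\overline\Omega$ is connected and disjoint from $\overline\Omega$, hence lies in a single component of $\BR^2\setminus\overline\Omega$ (one hole $\Omega_i$, or the exterior of $\Omega_0$), so its two endpoints lie on the same connected curve $\Gamma_i$ (respectively $\Gamma_0$); if $P_0$ and $P_j$ belonged to different components of $K$, the assignment of a point of the segment to the component of $K$ containing it (extended over each excursion via its endpoints) would be a locally constant, nonconstant function on $[P_0,P_j]$, a contradiction. This works verbatim even with infinitely many excursions or tangential contacts, so no transversality is needed at all. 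In particular, your proposed fallback via Theorem \ref{thm:Morse-Sard} would not actually be usable there: Morse--Sard gives information only about generic level values of $\psi$, while you are pinned to the specific level containing $\partial\Omega$; fortunately no such remedy is required.
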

\begin{proof}[Proof of Theorem $\ref{thm:Amick_symmetric}$]
The proof goes exactly as in \cite{MR763943}*{Theorem 2.3}\footnote{A slight modification would be made for an extension of the function $v_2\in W^{1,2}(\Omega,\BR)$ to the whole of $\BR^2$.}. Recall that $\Gamma_{0}$ is the outer boundary. By the assumption, the set $\{x_{2}=0\}\cap\Gamma_{j}$ consists of two points $(\Fa_{j},0)$ and $(\Fb_{j},0)$ with $\Fa_{j}<\Fb_{j}$. We may label the components so that $\Fa_{0}<\Fa_{1}<\Fb_{1}<\cdots<\Fa_{N}<\Fb_{N}<\Fb_{0}$. Now we shall prove that $\widehat\Phi_0=\widehat\Phi_1$. Near the point $(\Fa_{0},0)$, the component $\Gamma_{0}$ can be represented by an equation $x_{1}=\zeta_{0}(x_{2})\in C^{\infty}$. Similarly, $\Gamma_{1}$ is given by an equation $x_{1}=\zeta_{1}(x_{2})\in C^{\infty}$ near the point $(\Fa_{1},0)$. We take $\delta>0$ sufficiently small so that $A(\delta)\equiv\{(x_1,x_2); x_1\in(\zeta_{0}(x_2), \zeta_{1}(x_2)), x_2\in(0,\delta)\}\subseteq\Omega$. Recall the identity
\be
\nabla\Phi=(\partial_2 v_1-\partial_1 v_2)(-v_2,v_1)=\omega\nabla\psi.
\ee
Integrating the equation $\partial_{1}\Phi=-\omega v_2$ over $A(\delta)$, we obtain
\be
\int_{0}^{\delta} \{\Phi(\zeta_{1}(x_2),x_2)-\Phi(\zeta_{0}(x_2),x_2)\} \,dx_2=-\int_{A(\delta)} \omega v_2 \, dx,
\ee 
which implies
\be\label{est:05090909}
\abs{\widehat\Phi_1-\widehat\Phi_0}\le\frac{1}{\delta}\biggr|\int_{A(\delta)} \omega v_2 \, dx\biggr|.
\ee
According to the Sobolev extension theorem (see, e.g., \cite{MR3409135}*{Theorems 4.7}), there exists a bounded linear operator $\mathfrak{E}:{W}^{1,2}(\Omega)\to{W}^{1,2}(\BR^2)$ such that $\mathfrak{E}f=f$ on $\Omega$ for all $f\in{W}^{1,2}(\Omega)$. Set $V_{2}(x_1, x_2)\equiv\frac{1}{2}(\mathfrak{E}v_2(x_1, x_2)-\mathfrak{E}v_2(x_1, -x_2))$. Since $v_2(x_1, x_2)=-v_2(x_1, -x_2)$ for all $(x_1,x_2)\in\Omega$, we notice that $V_2=v_2$ on $\Omega$. Also, by construction $V_{2}(x_1, x_2)=-V_{2}(x_1, -x_2)$ for all $(x_1,x_2)\in\BR^2$. In particular, $V_{2}(x_1, 0)=0$ for $\SL^1$-almost all $x_1\in\BR$. Hence we have
\be\label{est:05090910}
\ba
\int_{0}^{\delta}\int_{\zeta_{0}(x_2)}^{\zeta_{1}(x_2)} \frac{\abs{v_2(x_1, x_2)}^2}{x_2^2} \,dx_{1}\,dx_{2} & \le \int_{0}^{\delta}\int_{-\infty}^{\infty} \frac{\abs{V_2(x_1, x_2)}^2}{x_2^2} \,dx_{1}\,dx_{2}\\
& =\int_{-\infty}^{\infty}\int_{0}^{\delta} \frac{\abs{V_2(x_1, x_2)}^2}{x_2^2} \,dx_{2}\,dx_{1}\\
& \le4\int_{-\infty}^{\infty}\int_{0}^{\delta} \biggr|\frac{\partial}{\partial x_{2}}V_{2}(x_1, x_2)\biggr|^2 \,dx_{2}\,dx_{1}\\
& \le 4 \norm{\nabla V_2}_{L^2(\BR^2)}.
\ea
\ee
Here we have used the following one-dimensional Hardy inequality:
\be
\bigg\| \frac{u(x)}{x}\bigg\|_{L^2(0, \delta)} \le 2 \norm{u^{\prime}}_{L^2(0, \delta)} \quad \text{for all} \, u\in{W}^{1,2}(0,\delta) \enskip \text{with }u(0)=0.
\ee
By \eqref{est:05090909} and \eqref{est:05090910}, we deduce
\be
\ba
\abs{\widehat\Phi_1-\widehat\Phi_0}^2 & \le \frac{1}{\delta^2}\biggr|\int_{A(\delta)} \omega v_2 \, dx\biggr|^2\\
& \le \frac{1}{\delta^2}\biggr(\int_{0}^{\delta}\int_{\zeta_0(x_2)}^{\zeta_1(x_2)} x_2^2\frac{\abs{v_2(x_1, x_2)}^2}{x_2^2} \,dx_{1}\,dx_{2}\biggr)\biggr(\int_{A(\delta)}\omega^2\dx\biggr)\\
& \le \frac{1}{\delta^2}\biggr(4\delta^2\norm{\nabla V_2}_{L^2(\BR^2)}^2\biggr)\biggr(4\int_{A(\delta)}\abs{\nabla V}^2\,dx\biggr)=16\norm{\nabla V_2}_{L^2(\BR^2)}^2\norm{\nabla v_2}_{A(\delta)}^2.
\ea
\ee
Letting $\delta\to0$ in the above inequality gives $\widehat\Phi_0=\widehat\Phi_1$. We can prove similarly that
\be
\widehat\Phi_0=\widehat\Phi_1=\cdots=\widehat\Phi_N.
\ee
This completes the proof of Theorem \ref{thm:Amick_symmetric}.
\end{proof}
The use of Theorem \ref{thm:Amick_symmetric} and the flux condition \eqref{202401051721} in the identity \eqref{202309101143_symmetric} leads to a contradiction, and the proof of Theorem \ref{theo3} is achieved.
\section{Appendix (Proof of Theorem  \ref{thm:grS})}\label{Appendix}
We shall closely follow the methods of Amrouche-Rejaiba \cite{MR3145765}*{Theorem 4.1} and Acevedo Tapia-Amrouche-Conca-Ghosh \cite{MR4231512}*{Theorem 5.10} in which the same result was established in the case where the space dimension $n=3$. We recall that the spaces $H_q(\Omega)$ and $J_q(\Omega)$ are defined by \eqref{202401011200} and \eqref{202401011201}, respectively. Let
\be
\dot{H}(\Omega)\equiv\{\bfvarphi=(\varphi_1,\varphi_2)\in{C}^{\infty}(\overline{\Omega}): \bfvarphi\cdot\bn=0\enskip\text{on }\partial\Omega\}.
\ee
We begin to show the following density result.
\begin{lemm}\label{lem:density} Let $\Omega\subseteq\BR^2$ be a smooth bounded domain. Then $\dot{H}(\Omega)$ is dense in $H_q(\Omega)$, $1<q<\infty$. 
\end{lemm}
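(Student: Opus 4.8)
The plan is to take a generic smooth approximation of $\bu$ in $W^{1,q}(\Omega)$ and then correct it, restoring the vanishing normal trace by subtracting a corrector that is \emph{simultaneously} smooth and small in $W^{1,q}(\Omega)$. First I would use the density of $C^{\infty}(\overline{\Omega})$ in $W^{1,q}(\Omega)$ (valid on smooth bounded domains) to pick $\bu_k\in C^{\infty}(\overline{\Omega})$ with $\bu_k\to\bu$ in $W^{1,q}(\Omega)$. Setting $g_k\equiv\gamma(\bu_k)\cdot\bn$, each $g_k$ is smooth on $\partial\Omega$, and by continuity of the trace operator $\gamma\colon W^{1,q}(\Omega)\to W^{1-1/q,q}(\partial\Omega)$ together with the boundedness of multiplication by the smooth field $\bn$, we have $g_k\to\gamma(\bu)\cdot\bn=0$ in $W^{1-1/q,q}(\partial\Omega)$.

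The corrector is built by solving, for each $k$, the Neumann problem (exactly as in Lemma \ref{solenoidalextension}, but with a constant right-hand side to enforce compatibility)
\be
\Delta\phi_k=c_k\enskip\text{in }\Omega,\qquad\frac{\partial\phi_k}{\partial n}=g_k\enskip\text{on }\partial\Omega,\qquad c_k\equiv\frac{1}{\abs{\Omega}}\int_{\partial\Omega}g_k\,ds.
\ee
Since $g_k\in C^{\infty}(\partial\Omega)$ and $\Omega$ is smooth, boundary elliptic regularity yields $\phi_k\in C^{\infty}(\overline{\Omega})$, so that $\nabla\phi_k\in C^{\infty}(\overline{\Omega})$. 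Moreover, the $L^q$-theory for the Neumann problem gives the a priori bound $\norm{\nabla\phi_k}_{W^{1,q}(\Omega)}\le c(\norm{c_k}_{L^q(\Omega)}+\norm{g_k}_{W^{1-1/q,q}(\partial\Omega)})$, and since $\abs{c_k}\le c\norm{g_k}_{L^1(\partial\Omega)}\le c'\norm{g_k}_{W^{1-1/q,q}(\partial\Omega)}$, we conclude $\norm{\nabla\phi_k}_{W^{1,q}(\Omega)}\le c''\norm{g_k}_{W^{1-1/q,q}(\partial\Omega)}\to0$. Finally I would set $\bv_k\equiv\bu_k-\nabla\phi_k$, which lies in $C^{\infty}(\overline{\Omega})$ and satisfies $\bv_k\cdot\bn=g_k-\partial\phi_k/\partial n=0$ on $\partial\Omega$, hence $\bv_k\in\dot{H}(\Omega)$; and $\bv_k\to\bu$ in $W^{1,q}(\Omega)$ because $\bu_k\to\bu$ and $\nabla\phi_k\to\boldsymbol{0}$. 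This proves the density.

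The main obstacle is producing a corrector that is at once smooth and small in $W^{1,q}(\Omega)$, where the smallness must be measured through the weak trace norm $W^{1-1/q,q}(\partial\Omega)$ in which the defects $g_k$ tend to zero. Naive correctors of the form $g_k N$, with $N$ the extended unit outer normal (cf.\ Subsection \ref{subsec:Weingarten}), or shrinking boundary-layer liftings, are either non-smooth or else controlled only by the stronger $W^{1,q}(\partial\Omega)$-norm of $g_k$, which need not remain bounded as $k\to\infty$. Solving the Neumann problem circumvents this difficulty neatly: elliptic regularity furnishes smoothness from the smooth boundary datum, while the standard a priori estimate furnishes exactly the trace-norm control, so that the corrector vanishes in the limit. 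The remaining ingredients---density of $C^{\infty}(\overline{\Omega})$ and continuity of the trace---are entirely standard.
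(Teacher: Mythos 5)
Your proof is correct, but it takes a genuinely different route from the paper's. The paper follows the localization scheme of Duvaut--Lions: a partition of unity subordinate to a finite covering of $\partial\Omega$, local flattening of the boundary by a smooth diffeomorphism $\Psi$, a change of unknowns that turns the constraint $\bu\cdot\bn=0$ into the vanishing of a single component $\widehat{v}_2$ on the flat boundary of the half-space (so that $\widehat{v}_2\in W^{1,q}_{0}(\BR^2_{+})$ while $\widehat{v}_1$ is unconstrained), separate approximation of the two components by smooth functions respecting these constraints, and retransfer to $\Omega$. You instead argue globally: approximate $\bu$ by arbitrary $\bu_k\in C^{\infty}(\overline{\Omega})$, note that the normal-trace defect $g_k$ tends to zero precisely in the trace norm $W^{1-1/q,q}(\partial\Omega)$, and remove it by subtracting $\nabla\phi_k$ with $\phi_k$ solving a compatible Neumann problem, so that elliptic regularity supplies smoothness of the corrector while the $W^{2,q}$ a priori estimate (whose boundary-data norm is exactly $W^{1-1/q,q}(\partial\Omega)$) supplies smallness. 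All the details check out: your constant $c_k$ is the right compatibility normalization, $\abs{c_k}\le c\norm{g_k}_{L^1(\partial\Omega)}\le c'\norm{g_k}_{W^{1-1/q,q}(\partial\Omega)}$ since $\partial\Omega$ is compact, and multiplication by a smooth extension of $\bn$ is bounded on the trace space equipped with the quotient norm. The trade-off: your argument is shorter and coordinate-free, but it imports the $W^{2,q}$-theory of the Neumann Laplacian as a black box (standard, but heavier machinery than anything in the paper's proof, which only uses elementary density facts for Sobolev spaces on half-spaces together with change of variables). One side remark: your claim that correctors of the form $g_kN$ cannot work is slightly overstated --- taking the harmonic Dirichlet extension $h_k$ of $g_k$ (smooth by elliptic regularity, small in $W^{1,q}(\Omega)$ by the $L^q$ Dirichlet estimate) and using $\chi h_kN$ as corrector, with $\chi$ a cutoff near $\partial\Omega$, would serve equally well; but that is the same idea as yours with the Dirichlet problem in place of the Neumann one, so it does not affect the substance of your proof.
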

\begin{proof} We follow the argument of Duvaut-Lions  \cite{MR521262}*{Chapter V\hspace{-1.2pt}I\hspace{-1.2pt}I, Lemma 6.1}. Since $\Omega$ is of class $C^{\infty}$, for every point $x_{0}\in\partial\Omega$ there exist a $2\times2$ rotation matrix $R$, a $C^{\infty}$-function $\zeta\colon\BR\to\BR$ with $\zeta(x_0)=0$ and $\frac{d\zeta}{dx_1}(0)=0$ and $r>0$ such that, setting $y=T(x)=R(x-x_0)$, we have
\be
T(\Omega\cap B(x_0,r))=\{y=(y_1,y_2)\in B(0,r):y_2>\zeta(y_1)\}.
\ee
Take a sufficiently small $d>0$ so that the set
\be
U^{2d}\equiv\{y=(y_1,y_2)\in\BR^2:\abs{y_1}<2d, -2d+\zeta(y_1)<y_2<\zeta(y_1)+2d\}
\ee
is contained in $B(0,r)$. Put
\be
\ba
U^{d}&\equiv\{y=(y_1,y_2)\in\BR^2:\abs{y_1}<d, -d+\zeta(y_1)<y_2<\zeta(y_1)+d\},\\
\BR^2_{+}&\equiv\{z=(z_1,z_2)\in\BR^2:z_2>0\},\\
Q^{d}\enskip&\equiv\{z=(z_1,z_2)\in\BR^2:\abs{z_1}<d\enskip\text{and }\abs{z_2}<d\},\\
Q^{d}_{+}&\equiv Q^{d}\cap\BR^2_{+},\\
Q^{d}_{0}&\equiv\{z=(z_1,0)\in\BR^2:\abs{z_1}<d\},
\ea
\ee
and define the bijective map $\Psi\colon U^{d}\to Q^{d}$ by
\be
\Psi(y_1,y_2)=(y_1,y_2-\zeta(y_1))=(z_1,z_2).
\ee
Then we observe that $\Psi\in C^{\infty}(\overline{U^{d}})$, $\Psi^{-1}\in C^{\infty}(\overline{Q^{d}})$, $\Psi(U^{d}\cap T(\Omega))=Q^{d}_{+}$, $\Psi(U^{d}\cap T(\partial\Omega))=Q^{d}_{0}$ and $\det D\Psi=\det D\Psi^{-1}=1$.

Let $\bu\in H_q(\Omega)$ and let $\theta\in C^{\infty}_{0}(\BR^2,\BR)$ be a scalar function such that $0\le\theta\le1$ and $\supp\theta\subseteq T^{-1}(U^{d})$. We now show that the function $\theta\bu\in H_q(\Omega)$ can be approximated by elements of $\dot{H}(\Omega)$. Let us define the vector fields $\bv(y)=(v_1(y),v_2(y))$ on $T(\Omega)$ and $\widehat{\bv}(z)=(\widehat{v}_1(z),\widehat{v}_2(z))$ on $Q^{d}_{+}$ by
\be
\bv(y)\equiv R\theta\bu(T^{-1}(y)),\enskip y\in T(\Omega),
\ee
and
\be
\widehat{v}_1(z)\equiv v_1(\Psi^{-1}(z)),\enskip\widehat{v}_2(z)\equiv v_2(\Psi^{-1}(z))-\frac{d\zeta}{dz_1}(z_1)v_1(\Psi^{-1}(z)),\enskip z\in Q^{d}_{+},
\ee
respectively. Then $\widehat{v}_1,\widehat{v}_2\in W^{1,q}(Q^{d}_{+})$. Since $\supp\bv\subseteq U^{d}\cap T(\overline{\Omega})$, the transformed functions $\widehat{v}_1$ and $\widehat{v}_2$ are zero in the neighborhood of $z_2=d$ and of $\{-d,d\}\times[0,d]$. More precisely, there exists a constant $\delta>0$ such that $\supp\widehat{v}_1,\supp\widehat{v}_2\subseteq\{z=(z_1,z_2):\abs{z_1}\le d-\delta, 0\le z_2\le d-\delta\}$. Therefore, extending the transformed functions by zero outside $Q^{d}_{+}$, we may assume that $\widehat{v}_1, \widehat{v}_2\in W^{1,q}(\BR^2_{+})$. Also, the boundary condition $\gamma(\bu)\cdot\bn=0$ on $\partial\Omega$ implies $\gamma(\widehat{v}_2)=0$ on $\partial\BR^2_{+}=\BR$,\footnote{The outward unit normal $\bn_y$ to $T(\partial\Omega)$ at $(y_1,\zeta(y_1))\in U^{d}\cap T(\partial\Omega)$ is given by 
\be
\bn_y(y_1,\zeta(y_1))=(1+\frac{d\zeta}{dy_1}(y_1)^2)^{-\frac{1}{2}}(\frac{d\zeta}{dy_1}(y_1),-1).
\ee} and hence $\widehat{v}_2\in W^{1,q}_{0}(\BR^2_{+})$. Thus, $\widehat{v}_1$ can be approximated in $W^{1,q}(\BR^2_{+})$ by $\{\hat{\varphi}_j^1\}_{j\in\BN}\subseteq C^{\infty}_{0}(\BR^2)$ such that $\supp\hat{\varphi}_j^1\subseteq\{z=(z_1,z_2):\abs{z_1},\abs{z_2}\le d-\frac{\delta}{2}\}$ and $\widehat{v}_2$ can be approximated in $W^{1,q}(\BR^2_{+})$ by $\{\hat{\varphi}_j^2\}_{j\in\BN}\subseteq C^{\infty}_{0}(\BR^2)$ satisfying $\supp\hat{\varphi}_j^2\subseteq\{z=(z_1,z_2):\abs{z_1}\le d-\frac{\delta}{2},0\le z_2\le d-\frac{\delta}{2}\}$ and $\hat{\varphi}_j^2(z_1,0)=0$ for any $z_1\in\BR$. We now retransfer $\hat{\varphi}_j^1$ and $\hat{\varphi}_j^2$ to $U^{d}$ using $\Psi$ and denote them by $\varphi_j^1$ and $\varphi_j^2$, i.e.,
\be
\varphi_j^1(y)=\hat{\varphi}_j^1(\Psi(y)),\enskip\varphi_j^2(y)=\hat{\varphi}_j^2(\Psi(y))+\frac{d\zeta}{dy_1}(y_1)\hat{\varphi}_j^1(\Psi(y))
\ee
for $y\in U^{d}$. We then have $\bfvarphi_j=(\varphi_j^1,\varphi_j^2)\in C^{\infty}_{0}(U^{d})$. Extending $\bfvarphi_j$ by zero outside $U^{d}$, we have $\bfvarphi_j\in C^{\infty}_{0}(\BR^2)$ and $\bfvarphi_j\cdot\bn_y=0$ on $T(\partial\Omega)$, where $\bn_y=\bn_y(y)$ is the unit outer normal to $T(\partial\Omega)$ in the coordinate system $\{y\}$. Finally, put $\bfeta_j(x)\equiv R^{\top}\bfvarphi_j(T(x))$ for $x\in\BR^2$. It is then immediately verified that $\bfeta_j\in\dot{H}(\Omega)$ and $\norm{\theta\bu-\bfeta_j|_{\Omega}}_{W^{1,q}(\Omega)}\to0$ as $j\to\infty$. 

Since $\partial\Omega$ is compact, the statement of Lemma \ref{lem:density} will then follow via a partition of unity subordinate to the finite open covering of $\partial\Omega$. 
\end{proof}

We shall next prove the existence of a weak solution to the following auxiliary problem:
\begin{align} \label{eq:vorticity}
\left\{
\ba
\curl\nabla^{\perp}\omega& =\curl \bff \quad & & \text{in} \enskip \Omega,\\
\omega & = g \quad & & \text{on} \enskip \partial \Omega.
\ea 
\right. 
\end{align}
Recall that $\nabla^{\perp}\omega=(-\partial_{2}\omega,\partial_{1}\omega)$ for a scalar function $w$, and $\curl\bff=\partial_{2}f_1-\partial_{1}f_2$ for a vector-valued function $\bff=(f_1,f_2)$.
\begin{defi}Let $\bff\in L^q(\Omega)$ and $g\in W^{1-1/q,q}(\partial\Omega)$, $1<q<\infty$. A scalar function $\omega\in W^{1,q}(\Omega)$ is called a $q$-weak (or $q$-generalized) solution to the problem \eqref{eq:vorticity} if it satisfies the integral identity
\be
\int_{\Omega}\nabla^{\perp}\omega\cdot\nabla^{\perp}\varphi\,dx=\int_{\Omega}\bff\cdot\nabla^{\perp}\varphi\,dx
\ee
for all scalar functions $\varphi\in W^{1,q'}_{0}(\Omega)$, $q'=\frac{q}{q-1}$, and $\gamma(\omega)=g$. Here $\gamma\colon W^{1,q}(\Omega)\to L^q(\partial\Omega)$ is the trace operator. 
\end{defi}
\begin{prop}\label{prop:vorticity}
Let $\Omega\subseteq\BR^2$ be a smooth bounded domain, and let $1<q<\infty$. Then for any $\bff\in L^q(\Omega)$ and $g\in W^{1-1/q,q}(\partial\Omega)$, there exists a unique $q$-weak solution $\omega\in W^{1,q}(\Omega)$ to the problem \eqref{eq:vorticity}. Moreover, the following estimate holds:
\be\label{202409201252}
\norm{\omega}_{W^{1,q}(\Omega)}\le c(\norm{\bff}_{L^q(\Omega)}+\norm{g}_{W^{1-1/q,q}(\partial\Omega)}),
\ee
where $c=c(q,\Omega)$.
\end{prop}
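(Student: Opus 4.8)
The plan is to recognize the weak problem \eqref{eq:vorticity} as the Dirichlet problem for the Poisson equation and then to invoke the $L^q$-theory for the Laplacian. The starting point is the pointwise identity $\nabla^{\perp}\omega\cdot\nabla^{\perp}\varphi=\nabla\omega\cdot\nabla\varphi$, valid for any pair of scalar functions; hence the defining integral identity is equivalent to
\be
\int_{\Omega}\nabla\omega\cdot\nabla\varphi\dx=\int_{\Omega}\bff\cdot\nabla^{\perp}\varphi\dx\quad\text{for all }\varphi\in W^{1,q'}_{0}(\Omega),
\ee
which is precisely the variational formulation of $-\Delta\omega=\curl\bff$ in $\Omega$, $\omega=g$ on $\partial\Omega$ (note that the right-hand side is exactly the distributional pairing $\langle\curl\bff,\varphi\rangle$, so $\curl\bff$ need never be interpreted as a function). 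To remove the nonhomogeneous boundary datum I would first use the trace theorem to pick a lifting $G\in W^{1,q}(\Omega)$ with $\gamma(G)=g$ and $\norm{G}_{W^{1,q}(\Omega)}\le c\norm{g}_{W^{1-1/q,q}(\partial\Omega)}$, and then seek $\omega=G+u$ with $u\in W^{1,q}_{0}(\Omega)$.

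The reduced problem is to find $u\in W^{1,q}_{0}(\Omega)$ such that $\int_{\Omega}\nabla u\cdot\nabla\varphi\dx=F(\varphi)$ for all $\varphi\in W^{1,q'}_{0}(\Omega)$, where
\be
F(\varphi)\equiv\int_{\Omega}\bff\cdot\nabla^{\perp}\varphi\dx-\int_{\Omega}\nabla G\cdot\nabla\varphi\dx.
\ee
By the Hölder inequality, $\abs{F(\varphi)}\le(\norm{\bff}_{L^q(\Omega)}+\norm{\nabla G}_{L^q(\Omega)})\norm{\nabla\varphi}_{L^{q'}(\Omega)}$, so $F$ is a bounded linear functional on $W^{1,q'}_{0}(\Omega)$, i.e. $F\in W^{-1,q}(\Omega)$, with $\norm{F}_{W^{-1,q}(\Omega)}\le c(\norm{\bff}_{L^q(\Omega)}+\norm{g}_{W^{1-1/q,q}(\partial\Omega)})$. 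Existence and uniqueness of $u$ then follow from the fact that, for $1<q<\infty$ and smooth $\Omega$, the negative Dirichlet Laplacian $-\Delta\colon W^{1,q}_{0}(\Omega)\to W^{-1,q}(\Omega)$ is a topological isomorphism (Simader, or the $L^q$ Calderón--Zygmund theory of Agmon--Douglis--Nirenberg combined with a duality argument). Setting $\omega=G+u$ yields the desired solution, and the operator-norm bound for the inverse of $-\Delta$ together with the estimate for $\norm{F}_{W^{-1,q}(\Omega)}$ gives \eqref{202409201252}.

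For uniqueness of $\omega$, if $\omega_1,\omega_2$ are two $q$-weak solutions then their difference lies in $W^{1,q}_{0}(\Omega)$, since both have trace $g$, and is annihilated by the Dirichlet form against every $\varphi\in W^{1,q'}_{0}(\Omega)$; injectivity of the isomorphism above forces $\omega_1=\omega_2$. I expect the only genuine ingredient, and hence the main obstacle, to be the $L^q$-isomorphism property of the Dirichlet Laplacian: the reduction and the functional-analytic bookkeeping are routine, whereas the $W^{1,q}$-solvability for $q\neq2$ rests on the full elliptic regularity machinery and genuinely uses $1<q<\infty$ together with the smoothness of $\partial\Omega$. This is exactly where the restriction away from the endpoints $q=1,\infty$ enters.
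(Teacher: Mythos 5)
Your proposal is correct and follows essentially the same route as the paper: lift the boundary datum by a $W^{1,q}$ trace extension, observe $\nabla^{\perp}\omega\cdot\nabla^{\perp}\varphi=\nabla\omega\cdot\nabla\varphi$ to reduce to the homogeneous Dirichlet problem for the Poisson equation with a $W^{-1,q}$ right-hand side, and invoke the $L^q$-isomorphism property of the Dirichlet Laplacian (the paper cites Simader--Sohr, Chapter II, Theorems 1.2 and 3.1, which is exactly the ingredient you name). The only cosmetic difference is that the paper rewrites the functional via $\bff^{\perp}+\nabla G$ paired against $\nabla\varphi$ rather than keeping the term $\int_{\Omega}\bff\cdot\nabla^{\perp}\varphi\,dx$, which is an equivalent bookkeeping choice.
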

\begin{proof}From \cite{MR2808162}*{Theorem I\hspace{-1.2pt}I.4.3} there exists $G\in W^{1,q}(\Omega)$ with $\gamma(G)=g$ such that 
\be\label{202409201253}
\norm{G}_{W^{1,q}(\Omega)}\le c\norm{g}_{W^{1-1/q,q}(\partial\Omega)},
\ee
where $c=c(q,\Omega)>0$. We then look for a $q$-weak solution to the problem \eqref{eq:vorticity} of the form $\omega=\widehat{\omega}+G$, where $\widehat{\omega}\in W^{1,q}_{0}(\Omega)$ satisfies the integral identity
\be
\int_{\Omega}\nabla^{\perp}\widehat{\omega}\cdot\nabla^{\perp}\varphi\,dx=\int_{\Omega}(\bff-\nabla^{\perp}G)\cdot\nabla^{\perp}\varphi\,dx
\ee 
for all scalar functions $\varphi\in W^{1,q'}_{0}(\Omega)$, $q'=\frac{q}{q-1}$. The above identity is equivalent to the following one:
\be\label{202409201254}
\int_{\Omega}\nabla\widehat{\omega}\cdot\nabla\varphi\,dx=-\int_{\Omega}(\bff^{\perp}+\nabla G)\cdot\nabla\varphi\,dx,
\ee 
where $(x_1,x_2)^{\perp}=(-x_2,x_1)$. Since $\bff^{\perp}+\nabla G\in L^q(\Omega)$, the right-hand side of \eqref{202409201254} defines a linear functional on $W^{1,q'}_{0}(\Omega)$. Thus, according to \cite{MR1454361}*{Chapter I\hspace{-1.2pt}I, Theorem 1.2}, there exists a unique function $\widehat{\omega}\in W^{1,q}_{0}(\Omega)$ satisfying the identity \eqref{202409201254} for all $\varphi\in W^{1,q'}_{0}(\Omega)$. Moreover, such a $\widehat{\omega}$ satisfies the following estimate
\be\label{202409201255}
\norm{\widehat{\omega}}_{W^{1,q}(\Omega)}\le c\norm{\bff^{\perp}+\nabla G}_{L^q(\Omega)}\le c\left(\norm{\bff}_{L^q(\Omega)}+\norm{G}_{W^{1,q}(\Omega)}\right),
\ee
where $c=c(q,\Omega)>0$. The desired $q$-weak solution to the problem \eqref{eq:vorticity} is then given by $\omega=\widehat{\omega}+G\in W^{1,q}(\Omega)$, and the estimate \eqref{202409201252} follows from \eqref{202409201253} and \eqref{202409201255}. It remains to prove the uniqueness of the $q$-weak solution. To this end, denote by $\omega_1$ another $q$-weak solution corresponding to the same data. Then $\omega-\omega_1\in W^{1,q}_{0}(\Omega)$ satisfies the identity
\be
\int_{\Omega}\nabla^{\perp}(\omega-\omega_1)\cdot\nabla^{\perp}\varphi\,dx=\int_{\Omega}\nabla(\omega-\omega_1)\cdot\nabla\varphi\,dx=0
\ee
for all $\varphi\in W^{1,q'}_{0}(\Omega)$. By \cite{MR1454361}*{Chapter I\hspace{-1.2pt}I, Theorem 3.1}, it follows that $\omega-\omega_1=0$ for almost all $x\in\Omega$. The proof of the proposition is therefore completed. 
\end{proof}
Let $1<q<\infty$. We recall that for $\bu=(u_1,u_2)\in L^q(\Omega)$ with $\dv\bu\in L^q(\Omega)$ the normal component $\gamma_{\bn}\bu=\bn\cdot\bu|_{\partial\Omega}$ is well defined as a member of $W^{-1/q,q}(\partial\Omega)$ and the following inequality holds
\be
\norm{\gamma_{\bn}\bu}_{W^{-1/q,q}(\partial\Omega)}\le c(\norm{\bu}_{L^q(\Omega)}+\norm{\dv\bu}_{L^q(\Omega)})
\ee
with $c=c(q, \Omega)$; see \cite{MR2808162}*{Theorem I\hspace{-1.2pt}I\hspace{-1.2pt}I.2.2}, \cite{MR1928881}*{pp.\,50-51}. The following proposition is useful to prove the regularity of solutions to the Stokes system \eqref{S}.
\begin{prop}\label{prop:regularity}Let $\Omega\subseteq\BR^2$ be a smooth bounded domain. Let $m\ge1$ and $1<q<\infty$. Then
\be
W^{m,q}(\Omega)=\{\bu\in L^q(\Omega):\curl\bu\in W^{m-1,q}(\Omega), \dv\bu\in W^{m-1,q}(\Omega), \gamma_{\bn}\bu\in W^{m-1/q,q}(\partial\Omega)\}
\ee
and there exists $c=c(m, q, \Omega)>0$ such that
\be
\norm{\bu}_{W^{m,q}(\Omega)}\le c\left(\norm{\bu}_{L^q(\Omega)}+\norm{\curl\bu}_{W^{m-1,q}(\Omega)}+\norm{\dv\bu}_{W^{m-1,q}(\Omega)}+\norm{\gamma_{\bn}\bu}_{W^{m-1/q,q}(\partial\Omega)}\right)
\ee
for all $\bu\in W^{m,q}(\Omega)$.
\end{prop}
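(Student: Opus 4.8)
The easy inclusion is immediate: if $\bu\in W^{m,q}(\Omega)$ then $\curl\bu,\dv\bu\in W^{m-1,q}(\Omega)$ by the definition of the first-order operators, and $\gamma_{\bn}\bu\in W^{m-1/q,q}(\partial\Omega)$ by the trace theorem, with all the corresponding norms bounded by $\norm{\bu}_{W^{m,q}(\Omega)}$. The whole content of the proposition is therefore the reverse inclusion together with the estimate, and the plan is to reconstruct $\bu$ from the triple $(\curl\bu,\dv\bu,\gamma_{\bn}\bu)$ by means of a Helmholtz-type decomposition $\bu=\nabla\theta+\nabla^{\perp}\eta$, in which the scalar potentials $\theta$ and $\eta$ solve scalar second-order elliptic problems whose data are precisely $\dv\bu$, $\gamma_{\bn}\bu$ and $\curl\bu$.

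First I would capture the divergence and the normal trace through the Neumann problem $\Delta\theta=\dv\bu$ in $\Omega$, $\frac{\partial\theta}{\partial n}=\gamma_{\bn}\bu$ on $\partial\Omega$. Its solvability requires the compatibility condition $\int_{\Omega}\dv\bu\,dx=\langle\gamma_{\bn}\bu,1\rangle_{\partial\Omega}$, which is exactly the generalized Gauss--Green identity recorded just above the statement; standard elliptic regularity then gives $\theta\in W^{m+1,q}(\Omega)$ with $\norm{\nabla\theta}_{W^{m,q}(\Omega)}\le c(\norm{\dv\bu}_{W^{m-1,q}(\Omega)}+\norm{\gamma_{\bn}\bu}_{W^{m-1/q,q}(\partial\Omega)})$. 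Setting $\bw\equiv\bu-\nabla\theta$, one checks at once that $\dv\bw=0$, $\gamma_{\bn}\bw=0$ and $\curl\bw=\curl\bu$, so it remains to show $\bw\in W^{m,q}(\Omega)$ with the estimate governed by $\curl\bu$ and $\norm{\bu}_{L^q(\Omega)}$. To this end I would let $\eta_0$ solve the Dirichlet problem $-\Delta\eta_0=\curl\bu$ in $\Omega$, $\eta_0=0$ on $\partial\Omega$, so that $\eta_0\in W^{m+1,q}(\Omega)$ with $\norm{\eta_0}_{W^{m+1,q}(\Omega)}\le c\norm{\curl\bu}_{W^{m-1,q}(\Omega)}$, and observe, using $\curl\nabla^{\perp}\eta_0=-\Delta\eta_0$, that the remainder $\bw-\nabla^{\perp}\eta_0$ is simultaneously divergence-free, curl-free and tangent to $\partial\Omega$.

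The crux --- and the step I expect to be the main obstacle --- is the multiply-connected topology hidden in this remainder. The space of divergence-free, curl-free vector fields with vanishing normal trace is $N$-dimensional and is spanned by the smooth fields $\nabla^{\perp}q_1,\dots,\nabla^{\perp}q_N$, where $q_k$ are the harmonic functions of \eqref{eq:Laplace} (indeed each $\nabla^{\perp}q_k$ is divergence-free, curl-free because $q_k$ is harmonic, and tangent because $q_k$ is locally constant on $\partial\Omega$). Hence $\bw-\nabla^{\perp}\eta_0=\sum_{k=1}^{N}\xi_k\nabla^{\perp}q_k$ for some constants $\xi_k$, and since these basis fields are smooth and linearly independent, finite-dimensionality yields $\sum_{k}\abs{\xi_k}\le c\norm{\bw-\nabla^{\perp}\eta_0}_{L^q(\Omega)}\le c(\norm{\bu}_{L^q(\Omega)}+\norm{\nabla\theta}_{L^q(\Omega)}+\norm{\curl\bu}_{L^q(\Omega)})$. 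Here the delicate points are the characterization of that finite-dimensional kernel (which is where the shape of $\Omega$ enters) and the justification, via the vanishing of the fluxes $\int_{\Gamma_j}\bw\cdot\bn\,ds=0$, that $\bw$ admits a single-valued stream function in the first place.

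Assembling the pieces, $\bu=\nabla\theta+\nabla^{\perp}\eta_0+\sum_{k=1}^{N}\xi_k\nabla^{\perp}q_k$ lies in $W^{m,q}(\Omega)$ and the above bounds combine to give the asserted estimate. I would present the argument directly for general $m$, invoking the $W^{m+1,q}$ elliptic estimates for the Neumann and Dirichlet problems rather than arguing by induction; the smoothness of the $q_k$ ensures that the harmonic correction never costs regularity.
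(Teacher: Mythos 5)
Your proposal takes a genuinely different route from the paper, which proves this proposition purely by citation: Temam \cite{MR1846644}*{Proposition 1.4 in Appendix I} for $q=2$ and Kozono--Yanagisawa \cite{MR2542982}*{Lemmas 4.5 and 4.6} for general $1<q<\infty$ (with a footnote that the two-dimensional case is analogous to the three-dimensional one treated there). You instead reconstruct the argument underlying such results. Your two elliptic reductions are correct as stated: the Neumann problem for $\theta$ absorbs $\dv\bu$ and $\gamma_{\bn}\bu$ (the compatibility condition is indeed the generalized Gauss formula recorded just above the statement, and $W^{m-1/q,q}(\partial\Omega)$ is exactly the Neumann trace space compatible with $W^{m+1,q}$-regularity), the Dirichlet problem for $\eta_0$ absorbs $\curl\bu$, and bounding the coefficients via $\sum_k\abs{\xi_k}\le c\,\norm{\bw-\nabla^{\perp}\eta_0}_{L^q(\Omega)}$ by equivalence of norms on a finite-dimensional space is the right way to obtain a constant depending only on $(m,q,\Omega)$. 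What your route buys is a self-contained planar proof valid uniformly in $m$; what the paper's route buys is brevity and offloading to the literature precisely the point you flag as delicate.

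That point is the one genuine gap: you assert, but do not prove, that every $\bh\in L^q(\Omega)$ with $\dv\bh=0$ and $\curl\bh=0$ in the distributional sense and $\gamma_{\bn}\bh=0$ lies in $\mathrm{Span}\{\nabla^{\perp}q_1,\dots,\nabla^{\perp}q_N\}$; you verify membership and linear independence of the $\nabla^{\perp}q_k$, but not that they span, and your norm-equivalence step tacitly uses spanning. (By contrast, your worry about single-valued stream functions dissolves: since $\gamma_{\bn}\bw=0$, all fluxes $\int_{\Gamma_j}\bw\cdot\bn\,ds$ vanish automatically.) There are two ways to close this. Either quote it: this kernel is the space $\tilde{X}_{\text{har}}(\Omega)$ occurring in the Helmholtz--Weyl decomposition of \cite{MR2550139}*{Theorem 3.20}, which is $N$-dimensional. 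Or prove it: by Weyl's lemma each component of $\bh$ is harmonic, so $\bh$ is smooth in $\Omega$ and its circulations around the holes are well defined (independent of the representative loop, since $\curl\bh=0$); the period matrix $\bigl(\oint_{\gamma_j}\nabla^{\perp}q_k\cdot d\boldsymbol{\ell}\bigr)_{jk}$ coincides, up to sign, with the Gram matrix $\bigl(\int_{\Omega}\nabla q_j\cdot\nabla q_k\,dx\bigr)_{jk}$ and is therefore invertible, so a suitable combination $\sum_k\xi_k\nabla^{\perp}q_k$ kills all circulations of $\bh$; the difference is then a single-valued gradient $\nabla\phi$ with $\Delta\phi=0$ in $\Omega$ and vanishing weak Neumann data, hence constant by $L^q$-uniqueness for the weak Neumann problem (e.g. \cite{MR1454361}), i.e. the difference is $\boldsymbol{0}$. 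With either patch inserted, your proof is complete.
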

\begin{proof}
See Temam \cite{MR1846644}*{Proposition 1.4 in Appendix I} for the case $q=2$. For the general case $1<q<\infty$, see Kozono-Yanagisawa \cite{MR2542982}*{Lemmas 4.5 and 4.6}\footnote{In \cite{MR2542982}, the result has been proved in the case where the space dimension $n=3$, but the two-dimensional case may be dealt with analogously.}.
\end{proof}
\begin{prop}\label{weak_curl} Let $\Omega\subseteq\BR^2$ be a smooth bounded domain, let $\beta\equiv0$, and let $1<q<\infty$, $q'=\frac{q}{q-1}$. Suppose that $\bu\in J_q(\Omega)$ is a $q$-weak solution to the Stokes system \eqref{S} corresponding to $\bff\in H_{q'}(\Omega)'$, $a_{\ast}\equiv0$ and $\bb_{\ast}\in W^{-1/q,q}(\partial\Omega)$. Denote by $p$ the pressure associated with $\bu$ by Lemma $\ref{pressure_q}$. Then the pair $(\bu,p)$ satisfies the following integral identity
\be
\nu\int_{\Omega}\curl\bu\,\curl\bfvarphi\,dx-\langle {\bb_{\ast}},\bfvarphi \rangle_{\partial\Omega}+2\nu\int_{\partial\Omega}W^\top\gamma(\bu)\cdot\bfvarphi\,ds=\langle \bff,\bfvarphi \rangle_{\Omega}+\int_{\Omega}p\,\dv\bfvarphi\,dx
\ee
for all $\bfvarphi\in H_{q'}(\Omega)$. Here $W$ is the Weingarten map of $\partial\Omega$ defined in Proposition $\ref{prop:Weingarten}$.
\end{prop}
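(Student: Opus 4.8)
The plan is to reduce the claimed identity to a purely kinematic identity relating the symmetric-gradient bilinear form to the vorticity bilinear form plus a boundary term carrying the curvature, and then to feed this into the pressure identity of Lemma \ref{pressure_q}. Concretely, setting $\beta\equiv0$ in Lemma \ref{pressure_q} gives, for every $\bfvarphi\in H_{q'}(\Omega)$,
\[
\frac{\nu}{2}\int_{\Omega}S(\bu):S(\bfvarphi)\,dx=\langle\bb_{\ast},\bfvarphi\rangle_{\partial\Omega}+\langle\bff,\bfvarphi\rangle_{\Omega}+\int_{\Omega}p\,\dv\bfvarphi\,dx,
\]
so the assertion is equivalent to the single identity
\[
\frac{\nu}{2}\int_{\Omega}S(\bu):S(\bfvarphi)\,dx=\nu\int_{\Omega}\curl\bu\,\curl\bfvarphi\,dx+2\nu\int_{\partial\Omega}W^{\top}\gamma(\bu)\cdot\bfvarphi\,ds\qquad(\star)
\]
for all $\bfvarphi\in H_{q'}(\Omega)$. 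I would prove $(\star)$ for $\bu\in J_q(\Omega)$ and $\bfvarphi\in H_{q'}(\Omega)$ directly, with no appeal to the Stokes equations, so that only the regularity $\bu\in W^{1,q}(\Omega)$ of a $q$-weak solution is used.

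The first step is the elementary pointwise (algebraic) identity, valid in two dimensions for arbitrary vector fields,
\[
\tfrac{1}{2} S(\bu):S(\bfvarphi)=\curl\bu\,\curl\bfvarphi+2\,\nabla\bu:(\nabla\bfvarphi)^{\top},\qquad \nabla\bu:(\nabla\bfvarphi)^{\top}=\sum_{i,j}\partial_j u_i\,\partial_i\varphi_j,
\]
which follows by expanding both sides in components and uses no constraint on $\bu$ or $\bfvarphi$. Integrating over $\Omega$, it then suffices to show $\int_{\Omega}\nabla\bu:(\nabla\bfvarphi)^{\top}\,dx=\int_{\partial\Omega}W^{\top}\gamma(\bu)\cdot\bfvarphi\,ds$. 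I would first establish this for $\bfvarphi\in\dot{H}(\Omega)$ and then pass to general $\bfvarphi\in H_{q'}(\Omega)$ by density. For $\bfvarphi\in\dot{H}(\Omega)$ the crucial point is that both derivatives in $\sum_{i,j}\partial_j u_i\,\partial_i\varphi_j$ can be thrown onto the smooth factor: moving $\partial_j$ off $u_i$ produces $-\int_{\Omega}\bu\cdot\nabla(\dv\bfvarphi)\,dx+\int_{\partial\Omega}\sum_{i,j}u_i\,n_j\,\partial_i\varphi_j\,ds$, and a second integration by parts of the bulk term, combined with $\dv\bu=0$ and $\gamma(\bu)\cdot\bn=0$ (recall $a_{\ast}\equiv0$), makes it vanish. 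All these manipulations require only $\bu\in W^{1,q}(\Omega)$ and $\bfvarphi\in W^{2,q'}(\Omega)$, the relevant products lying in $W^{1,1}(\Omega)$, so no regularity of $\bu$ beyond that of a $q$-weak solution is needed.

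It then remains to identify the boundary integral. Writing $\sum_{j}n_j\partial_i\varphi_j=N\cdot\partial_i\bfvarphi$, where $N$ is the extended unit outer normal of Subsection \ref{subsec:Weingarten}, and using $\partial_i(\bfvarphi\cdot N)=\partial_i\bfvarphi\cdot N+\bfvarphi\cdot\partial_i N$, the constraint $\bfvarphi\cdot\bn=0$ on $\partial\Omega$ forces $\bu\cdot\nabla(\bfvarphi\cdot N)=0$ there (since $\bu$ is tangential while $\nabla(\bfvarphi\cdot N)$ is normal), whereas $\partial_i N_k=-W_{ki}$ together with the symmetry $W=W^{\top}$ on $\partial\Omega$ (Proposition \ref{prop:Weingarten}) turns the surviving term into $W^{\top}\gamma(\bu)\cdot\bfvarphi$; this is precisely the weak counterpart of the $2W^{\top}\gamma(\bu)$ contribution appearing in Lemma \ref{lem_def-curl}. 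Finally, since $\dot{H}(\Omega)$ is dense in $H_{q'}(\Omega)$ by Lemma \ref{lem:density} and each term in $(\star)$ is a bounded linear functional of $\bfvarphi$ in the $W^{1,q'}$-topology—using $S(\bu),\curl\bu\in L^{q}(\Omega)$, $W^{\top}\gamma(\bu)\in L^{q}(\partial\Omega)$, and continuity of the trace—the identity extends to all $\bfvarphi\in H_{q'}(\Omega)$, and combining $(\star)$ with the displayed form of Lemma \ref{pressure_q} yields the statement. The main obstacle is exactly the low-regularity integration by parts together with the accompanying boundary computation: one must justify the two integrations by parts with $\bu$ only in $W^{1,q}(\Omega)$ and recast the boundary term through the Weingarten map, instead of invoking Lemma \ref{lem_def-curl} directly (which would require the stronger regularity $\bu\in W^{2,q}(\Omega)$ and an additional approximation of $\bu$ within $J_q(\Omega)$).
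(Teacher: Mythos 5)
Your proposal is correct, and it proves the key identity by a genuinely different technical route than the paper. The paper shares your overall architecture (reduce to a purely kinematic identity, extend by the density Lemma \ref{lem:density}, conclude via Lemma \ref{pressure_q}), but it establishes the identity
\be
\frac{\nu}{2}\int_{\Omega}S(\bu):S(\bfvarphi)\,dx=\nu\int_{\Omega}\curl\bu\,\curl\bfvarphi\,dx+2\nu\int_{\Omega}\dv\bu\,\dv\bfvarphi\,dx+2\nu\int_{\partial\Omega}W^\top\bu\cdot\bfvarphi\,ds
\ee
for pairs $\bu,\bfvarphi\in\dot{H}(\Omega)$ of \emph{smooth} tangential fields, i.e.\ it approximates the solution $\bu$ as well as the test function. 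This forces it to carry the $\dv\bu\,\dv\bfvarphi$ term (smooth approximants of a solenoidal field need not be solenoidal) and lets it argue at second-order: it integrates $\dv(\nu S(\bu)\bfvarphi)$ over $\Omega$, uses $\Delta\bu+\nabla\dv\bu=2\nabla\dv\bu-\nabla^{\perp}\curl\bu$, and converts the resulting boundary term $\curl\bu\,(n_2,-n_1)^{\top}$ through Lemma \ref{lem_def-curl}. You instead keep $\bu$ at its natural $W^{1,q}$ regularity and approximate only $\bfvarphi$, starting from the first-order pointwise identity $\tfrac12 S(\bu):S(\bfvarphi)=\curl\bu\,\curl\bfvarphi+2\nabla\bu:(\nabla\bfvarphi)^{\top}$ (which is indeed valid in two dimensions, since $\nabla\bu:\nabla\bfvarphi-\nabla\bu:(\nabla\bfvarphi)^{\top}=\curl\bu\,\curl\bfvarphi$), and you rederive the curvature term directly from $W=-\nabla N$ and the symmetry $W=W^{\top}$ on $\partial\Omega$, never invoking Lemma \ref{lem_def-curl}. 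Your route buys self-containedness: no two-sided density passage, no divergence-term bookkeeping, and no appeal to a lemma whose hypotheses ($\bu\in W^{2,q}$) the weak solution does not satisfy; the paper's route buys reuse of Lemma \ref{lem_def-curl} (needed elsewhere in the paper anyway) and a computation parallel to the derivation of the weak formulation in Subsection \ref{subsec:variational_formulation}. The delicate points in your argument all check out: the integrations by parts are legitimate because the products $u_i\,\partial_i\varphi_j$ and $\bu\,\dv\bfvarphi$ lie in $W^{1,1}(\Omega)$, the bulk term vanishes by $\dv\bu=0$ and $\gamma(\bu)\cdot\bn=0$, no trace of $\nabla\bu$ is ever needed, and the vanishing of $\bu\cdot\nabla(\bfvarphi\cdot N)$ on $\partial\Omega$ is exactly the tangential/normal orthogonality you invoke.
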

\begin{proof}
Since $\dot{H}(\Omega)=\{\bfvarphi\in C^{\infty}(\overline{\Omega}):\bfvarphi\cdot\bn=0\enskip\text{on }\partial\Omega\}$ is dense in $H_q(\Omega)$, $1<q<\infty$ (see Lemma \ref{lem:density}), it suffices to show that the following identity holds for all $\bu,\bfvarphi\in\dot{H}(\Omega)$:
\be\label{09011714}
\frac{\nu}{2}\int_{\Omega}S(\bu):S(\bfvarphi)\,dx=\nu\int_{\Omega}\curl\bu\,\curl\bfvarphi\,dx+2\nu\int_{\Omega}\dv\bu\,\dv\bfvarphi\,dx+2\nu\int_{\partial\Omega}W^\top\bu\cdot\bfvarphi\,ds.
\ee
Let $\bu,\bfvarphi\in \dot{H}(\Omega)$. Since
\be
\dv(\nu S(\bu)\bfvarphi)=\frac{\nu}{2}S(\bu):S(\bfvarphi)+\nu\Delta\bu\cdot\bfvarphi+\nu\nabla\dv\bu\cdot\bfvarphi\quad\text{in}\enskip\Omega,
\ee
integrating this relation over $\Omega$, we have
\be\label{09011715}
\nu\int_{\partial\Omega}[S(\bu)\bn]_{\tau}\cdot\bfvarphi\,ds=\frac{\nu}{2}\int_{\Omega}S(\bu):S(\bfvarphi)\,dx+\nu\int_{\Omega}\Delta\bu\cdot\bfvarphi\,dx+\nu\int_{\Omega}\nabla\dv\bu\cdot\bfvarphi\,dx.
\ee
On the other hand, since
\be
\nu\Delta\bu+\nu\nabla\dv\bu=2\nu\nabla\dv\bu-\nu\nabla^{\perp}\curl\bu\quad\text{in}\enskip\Omega,
\ee
multiplying this relation by $\bfvarphi\in\dot{H}(\Omega)$ and integrating by parts over $\Omega$, we obtain
\be\label{09011716}
\ba
&\nu\int_{\Omega}\Delta\bu\cdot\bfvarphi\,dx+\nu\int_{\Omega}\nabla\dv\bu\cdot\bfvarphi\,dx=2\nu\int_{\Omega}\nabla\dv\bu\cdot\bfvarphi\,dx-\nu\int_{\Omega}\nabla^{\perp}\curl\bu\cdot\bfvarphi\,dx\\
=&-2\nu\int_{\Omega}\dv\bu\,\dv\bfvarphi\,dx-\nu\int_{\Omega}\curl\bu\,\curl\bfvarphi\,dx+\nu\int_{\partial\Omega}\curl\bu(n_2,-n_1)^\top\cdot\bfvarphi\,ds.
\ea
\ee
Since $\bu\cdot\bn=0$ on $\partial\Omega$, it follows from Lemma \ref{lem_def-curl} that
\be\label{09011717}
\curl\bu(n_2,-n_1)^\top=[S(\bu)\bn]_{\tau}-2W^\top\bu\quad\text{on}\enskip\partial\Omega.
\ee
Then the desired identity \eqref{09011714} follows from \eqref{09011715}, \eqref{09011716} and \eqref{09011717}. This proves Proposition \ref{weak_curl}.
\end{proof}
We are now ready to prove Theorem \ref{thm:grS}.
\begin{proof}[Proof of Theorem $\ref{thm:grS}$]The proof will be achieved in three steps. In the first two steps, we show that $(\bu,p)\in W^{2,q}(\Omega)\times W^{1,q}(\Omega)$ together with the following estimate
\be\label{est:grS1}
\norm{\bu}_{W^{2,q}(\Omega)}+\norm{p}_{W^{1,q}(\Omega)}\le c\Bigl(\norm{\bff}_{L^{q}(\Omega)}+\norm{a_{\ast}}_{W^{2-1/q,q}(\partial\Omega)}+\norm{\bb_{\ast}}_{W^{1-1/q,q}(\partial\Omega)}+\norm{\bu}_{W^{1,q}(\Omega)}\Bigr),
\ee
where $c=c(q,\Omega,\nu,\beta)$.

\textbf{Step 1:} Let us first consider the case $\beta\equiv0$. According to Lemma \ref{solenoidalextension} (more precisely, its proof), for the boundary datum $a_{\ast}\in W^{2-1/q,q}(\partial\Omega)$ with $\int_{\partial\Omega} a_{\ast}\,ds=0$, there exists a solenoidal extension $\bA\in W^{2,q}(\Omega)$ of $a_{\ast}$ such that
\be\label{09020733}
\norm{\bA}_{W^{2,q}(\Omega)}\le c\norm{a_{\ast}}_{W^{2-1/q,q}(\partial\Omega)},
\ee
with $c=c(q, \Omega)>0$. Define the vector field $\bv\in J_q(\Omega)$ by $\bv\equiv\bu-\bA$. Then $\bv\in J_q(\Omega)$ can be regard as a $q$-weak solution to the following system
\begin{align} \label{S_hom}
\left\{
\ba
-\nu \Delta \bv+ \nabla p & =\bff+\nu\Delta\bA \quad & & \text{in} \enskip \Omega,\\
\dv \bv & =0 \quad & & \text{in} \enskip \Omega,\\
\bv \cdot\bn & =0 \quad & & \text{on} \enskip \partial \Omega,\\
[T(\bv,p)\bn]_{\tau}& =\bb_{\ast}-[\nu S(\bA)\bn]_{\tau}  \quad & &\text{on} \enskip \partial \Omega,
\ea 
\right. 
\end{align}
and the $W^{2,q}$ regularity of $\bu=\bv+\bA$ follows from that of $\bv$. Applying the operator curl to both sides of $\eqref{S_hom}_1$, and using Lemma \ref{lem_def-curl} along with $\eqref{S_hom}_{3,4}$ we observe that $\omega\equiv\curl \bv$ formally satisfies the following boundary value problem:
\begin{align} \label{vorticity}
\left\{
\ba
\nu\,\curl\nabla^{\perp}\omega& =\curl(\bff+\nu\Delta\bA) \quad & & \text{in} \enskip \Omega,\\
\omega& =\big(\frac{1}{\nu}\bb_{\ast}-[S(\bA)\bn]_{\tau}-2W^\top\bv\big)\cdot(n_2,-n_1)^\top \quad & &\text{on} \enskip \partial \Omega.
\ea 
\right. 
\end{align}
Since $\bff+\nu\Delta\bA\in L^q(\Omega)$ and $\big(\frac{1}{\nu}\bb_{\ast}-[S(\bA)\bn]_{\tau}-2W^\top\bv\big)\cdot(n_2,-n_1)^\top\in W^{1-1/q,q}(\Omega)$, according to Proposition \ref{prop:vorticity}, the problem \eqref{vorticity} has a solution $z\in W^{1,q}(\Omega)$ such that
\be\label{09020723}
\nu\int_{\Omega}\nabla^{\perp}z\cdot\nabla^{\perp}\varphi\,dx=\int_{\Omega}(\bff+\nu\Delta\bA)\cdot\nabla^{\perp}\varphi\,dx
\ee
for all scalar functions $\varphi\in W^{1,q'}_{0}(\Omega)$ and $\gamma(z)=\big(\frac{1}{\nu}\bb_{\ast}-[S(\bA)\bn]_{\tau}-2W^\top\bv\big)\cdot(n_2,-n_1)^\top$. Moreover, the solution $z$ satisfies the following estimate:
\be\label{09020724}
\norm{z}_{W^{1,q}(\Omega)}\le c(\norm{\bff+\nu\Delta\bA}_{L^q(\Omega)}+\norm{b_{\ast}}_{W^{1-1/q,q}(\partial\Omega)}+\norm{\nabla\bA}_{W^{1,q}(\Omega)}+\norm{\bv}_{W^{1,q}(\Omega)})
\ee
with $c=c(q,\Omega,\nu)$. In order to prove the $W^{2,q}$ regularity of $\bv$ with the aid of Proposition \ref{prop:regularity}, we shall show that $\curl\bv\in W^{1,q}(\Omega)$. More precisely, one can prove that $\nabla^{\perp}\curl\bv=\nabla^{\perp}z$. For any $\bfeta=(\eta_1,\eta_2)\in C^{\infty}_{0}(\Omega)$ we apply the Helmholtz-Weyl decomposition (\cite{MR2550139}*{Theorem 3.20}) to obtain 
\be\label{09020422}
\bfeta=\bh+\nabla^{\perp}w+\nabla r, 
\ee
where $\bh\in\tilde{X}_{\text{har}}(\Omega)$, $w\in W^{1,q'}_{0}(\Omega)\cap W^{2,q'}(\Omega)$ and $r\in W^{2,q'}(\Omega)$. Here
\be
\tilde{X}_{\text{har}}(\Omega)\equiv\{\bh=(h_1,h_2)\in C^{\infty}(\overline{\Omega}):\dv \bh=0,\,\curl \bh=0\enskip\text{in }\Omega,\,\bh\cdot\bn=0\enskip\text{on }\partial\Omega\}.
\ee
For a vector-valued function $\bh=(h_1,h_2)$ we set $\bh\wedge\bn\equiv h_2n_1-h_1n_2$. Since $\curl\nabla r=0$ in $\Omega$ and $\bfeta|_{\partial\Omega}=\boldsymbol{0}$, by integration by parts we have
\be\label{09020720}
\ba
&\int_{\Omega}\nabla^{\perp}z\cdot\bfeta\,dx=\int_{\Omega}\nabla^{\perp}z\cdot \bh\,dx+\int_{\Omega}\nabla^{\perp}z\cdot\nabla^{\perp}w\,dx+\int_{\Omega}\nabla^{\perp}z\cdot\nabla r\,dx\\
=&\int_{\Omega}z\,\curl \bh\,dx+\int_{\partial\Omega}z\bh\wedge\bn\,ds+\int_{\Omega}\nabla^{\perp}z\cdot\nabla^{\perp}w\,dx+\int_{\Omega}z\,\curl\nabla r\,dx+\int_{\partial\Omega}z\nabla r\wedge\bn\,ds\\
=&\int_{\Omega}\nabla^{\perp}z\cdot\nabla^{\perp}w\,dx+\int_{\partial\Omega}z(\bfeta-\nabla^{\perp}w)\wedge\bn\,ds\\
=&\frac{1}{\nu}\int_{\Omega}(\bff+\nu\Delta\bA)\cdot\nabla^{\perp}w\,dx-\int_{\partial\Omega}\big(\frac{1}{\nu}\bb_{\ast}-[S(\bA)\bn]_{\tau}-2W^\top\bv\big)\cdot(n_2,-n_1)^\top(\nabla^{\perp}w\wedge\bn)\,ds.
\ea
\ee
On the other hand, since $\bv\in J_q(\Omega)$ is a $q$-weak solution to the system \eqref{S_hom}, from Proposition \ref{weak_curl} it follows that the pair $(\bv,p)$ satisfies the following identity
\be\label{iden:weak_curl}
\ba
&\nu\int_{\Omega}\curl\bv\,\curl\bfvarphi\,dx-\int_{\partial\Omega}(\bb_{\ast}-[\nu S(\bA)\bn]_{\tau})\cdot\bfvarphi\,ds+2\nu\int_{\partial\Omega}W^\top\bv\cdot\bfvarphi\,ds\\
=&\int_{\Omega}(\bff+\nu\Delta\bA)\cdot\bfvarphi\,dx+\int_{\Omega}p\,\dv\bfvarphi\,dx
\ea
\ee
for all $\bfvarphi\in H_{q'}(\Omega)$. Since the tangential gradient of $w\in W^{1,q'}_{0}(\Omega)\cap W^{2,q'}(\Omega)$ appearing in the decomposition \eqref{09020422} is equal to zero, we have $\gamma(\nabla^{\perp}w)\cdot\bn=0$ on $\partial\Omega$. This implies that $\nabla^{\perp}w\in H_{q'}(\Omega)$ and $\nabla^{\perp}w$ can be written as $\nabla^{\perp}w=(I_2-\bn\otimes\bn)\nabla^{\perp}w+\bn(\bn\cdot\nabla^{\perp}w)=-(\nabla^{\perp}w\wedge\bn)(n_2,-n_1)^\top$ on $\partial\Omega$. Taking into account that $\nabla^{\perp}w\in H_{q'}(\Omega)$, we have from \eqref{09020422} and \eqref{iden:weak_curl} that
\be
\ba
&\int_{\Omega}\curl\bv\,\curl\bfeta\,dx=\int_{\Omega}\curl\bv\,\curl\nabla^{\perp}w\,dx\\
&=\frac{1}{\nu}\int_{\Omega}(\bff+\nu\Delta\bA)\cdot\nabla^{\perp}w\,dx+\int_{\partial\Omega}\big(\frac{1}{\nu}\bb_{\ast}-[S(\bA)\bn]_{\tau}-2W^\top\bv\big)\cdot\nabla^{\perp}w\,ds\\
&=\frac{1}{\nu}\int_{\Omega}(\bff+\nu\Delta\bA)\cdot\nabla^{\perp}w\,dx-\int_{\partial\Omega}\big(\frac{1}{\nu}\bb_{\ast}-[S(\bA)\bn]_{\tau}-2W^\top\bv\big)\cdot(n_2,-n_1)^\top(\nabla^{\perp}w\wedge\bn)\,ds,
\ea
\ee
from which and \eqref{09020720} we conclude that
\be
\int_{\Omega}\nabla^{\perp}z\cdot\bfeta\,dx=\int_{\Omega}\curl\bv\,\curl\bfeta\,dx
\ee
for all $\bfeta=(\eta_1,\eta_2)\in C^{\infty}_{0}(\Omega)$. This relation implies that $\nabla^{\perp}\curl\bv=\nabla^{\perp}z$ in $\Omega$, and hence $\curl\bv\in W^{1,q}(\Omega)$. Using this fact, $\eqref{S_hom}_{2,3}$ and Proposition \ref{prop:regularity} together with \eqref{09020724}, we find that $\bv\in W^{2,q}(\Omega)$ with the estimate
\be
\ba
\norm{\bv}_{W^{2,q}(\Omega)}&\le c(\norm{\bv}_{L^{q}(\Omega)}+\norm{\curl\bv}_{W^{1,q}(\Omega)})\le c(\norm{\bv}_{W^{1,q}(\Omega)}+\norm{\nabla z}_{L^q(\Omega)})\\
&\le c(\norm{\bv}_{W^{1,q}(\Omega)}+\norm{\bff}_{L^q(\Omega)}+\norm{\bA}_{W^{2,q}(\Omega)}+\norm{\bb_{\ast}}_{W^{1-1/q,q}(\partial\Omega)}),
\ea
\ee
where $c=c(q,\Omega,\nu)$. Recalling that $\bu=\bv+\bA$, we then discover the estimate above and \eqref{09020733} yield that
\be\label{202409300716}
\norm{\bu}_{W^{2,q}(\Omega)}\le c(\norm{\bu}_{W^{1,q}(\Omega)}+\norm{\bff}_{L^q(\Omega)}+\norm{a_{\ast}}_{W^{2-1/q,q}(\partial\Omega)}+\norm{\bb_{\ast}}_{W^{1-1/q,q}(\partial\Omega)}).
\ee
The $W^{1,q}$ regularity of $p$ follows from the equation $\nabla p=\nu\Delta\bu+\bff\in L^q(\Omega)$ with the estimate $\norm{\nabla p}_{L^q(\Omega)}\le c(\nu)\left(\norm{\bu}_{W^{2,q}(\Omega)}+\norm{\bff}_{L^q(\Omega)}\right)$. This estimate, \eqref{202409300718} and \eqref{202409300716} yield the desired estimate \eqref{est:grS1}.

\textbf{Step 2:} Let us next consider the case $\beta\not\equiv0$. Since $\bu\in W^{1,q}(\Omega)$ and $\beta\in C^1(\partial\Omega)$, we see that $\norm{\beta\bu_{\tau}}_{W^{1-1/q,q}(\partial\Omega)}\le c\norm{\bu}_{W^{1,q}(\Omega)}$ with $c=c(q,\Omega,\beta)$. Thus we may regard $(\bu,p)$ as a solution corresponding to the boundary datum $\bb_{\ast}-\beta\bu_{\tau}\in W^{1-1/q,q}(\partial\Omega)$. The regularity result for the case $\beta\equiv0$ proved in the first step then implies that $(\bu,p)\in W^{2,q}(\Omega)\times W^{1,q}(\Omega)$ along with the estimate \eqref{est:grS1}. 

\textbf{Step 3:} To prove Theorem \ref{thm:grS} completely, it remains to show that the term $\norm{\bu}_{W^{1,q}(\Omega)}$ on the right-hand side of \eqref{est:grS1} can be dropped. To this end, it is enough to prove that there exists a constant $c=c(q,\Omega,\nu,\beta)>0$, which is independent of the given data and the particular solution, such that
\be
\norm{\bu}_{W^{1,q}(\Omega)}\le c\Bigl(\norm{\bff}_{L^{q}(\Omega)}+\norm{a_{\ast}}_{W^{2-1/q,q}(\partial\Omega)}+\norm{\bb_{\ast}}_{W^{1-1/q,q}(\partial\Omega)}\Bigr).
\ee
This estimate can be shown by employing the same contradiction argument used in \cite{MR2808162}*{p.\,280} together with Lemma \ref{lem:unique_q} given below. The proof of the theorem is therefore completed, once we prove the following uniqueness result. 
\end{proof}
\begin{lemm}\label{lem:unique_q}Let $\Omega\subseteq\BR^2$ be a smooth bounded domain. Let $\beta\in C^1(\partial\Omega,\BR)$ be nonnegative, and let $1<q<\infty$. Suppose that $\bu$ is a $q$-weak solution to the Stokes problem \eqref{S} with $\bff\equiv\bb_{\ast}\equiv\boldsymbol{0}$ and $a_{\ast}\equiv0$. Assume further that $\int_{\Omega} \bu\cdot\bu_0\,dx=0$ in the case when $\beta\equiv0$ and $H(\Omega)\cap\SR(\Omega)=\mathrm{Span}\{\bu_{0}\}$,\footnote{See footnote \ref{foot:kernel}.} where $\bu_{0}$ is a nonzero rigid displacement. Then, it follows that $\bu\equiv\boldsymbol{0}$, $p\equiv0$ for almost all $x\in\Omega$, where $p\in L^q(\Omega)$ is the pressure associated with $\bu$ by Lemma $\ref{pressure_q}$.
\end{lemm}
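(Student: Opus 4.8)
The plan is to reduce the general $q$-weak setting to a Hilbert-space energy identity. First I would invoke Steps~1 and 2 of the proof of Theorem~\ref{thm:grS} above, which do not rely on the present lemma: since $\bff\equiv\boldsymbol{0}$, $\bb_{\ast}\equiv\boldsymbol{0}$ and $a_{\ast}\equiv0$ trivially belong to $L^q(\Omega)$, $W^{1-1/q,q}(\partial\Omega)$ and $W^{2-1/q,q}(\partial\Omega)$, those steps already give $\bu\in W^{2,q}(\Omega)$ and $p\in W^{1,q}(\Omega)$. The purpose of this regularity upgrade is that in two dimensions $W^{2,q}(\Omega)\hookrightarrow W^{1,2}(\Omega)$ for every $q\in(1,\infty)$ (for $q\ge2$ by higher integrability on the bounded domain, for $q<2$ because $W^{2,q}\hookrightarrow W^{1,2q/(2-q)}$ with $2q/(2-q)\ge2$). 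Hence $\bu\in W^{1,2}(\Omega)$ with $\dv\bu=0$ and $\gamma(\bu)\cdot\bn=0$, so $\bu\in J(\Omega)$ is an admissible test function in the Hilbert space $J(\Omega)$. This circumvents the integrability mismatch that otherwise prevents one from simply inserting $\bu$ into the $q$-weak formulation, whose test functions live in $J_{q'}(\Omega)$.

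Next I would pass to the strong form. Starting from the $q$-weak identity with the pressure $p$ from Lemma~\ref{pressure_q} and vanishing data, the Green formula \eqref{iden:Green}, now legitimate since $\bu\in W^{2,q}$ and $p\in W^{1,q}$, gives
\[
\int_{\Omega}(-\nu\Delta\bu+\nabla p)\cdot\bfvarphi\,dx+\int_{\partial\Omega}\bigl([T(\bu,p)\bn]_{\tau}+\beta\bu_{\tau}\bigr)\cdot\bfvarphi\,ds=0
\]
for all $\bfvarphi\in H_{q'}(\Omega)$. Choosing $\bfvarphi\in C^{\infty}_{0}(\Omega)\subseteq H_{q'}(\Omega)$ yields $-\nu\Delta\bu+\nabla p=\boldsymbol{0}$ almost everywhere in $\Omega$, and then, arguing exactly as around \eqref{202308191054}, the density of traces forces $[T(\bu,p)\bn]_{\tau}+\beta\bu_{\tau}=\boldsymbol{0}$ in $L^q(\partial\Omega)$. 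Applying the Green formula once more, now with arbitrary $\bfvarphi\in J(\Omega)$ (the boundary pairing makes sense because $[T(\bu,p)\bn]_{\tau}\in L^q(\partial\Omega)$ while $\gamma(\bfvarphi)\in W^{1/2,2}(\partial\Omega)\hookrightarrow L^{q'}(\partial\Omega)$, and $S(\bu)\in L^2$, $S(\bfvarphi)\in L^2$), I would conclude that $\bu$ satisfies the weak identity at the level $q=2$, namely
\[
\frac{\nu}{2}\int_{\Omega}S(\bu):S(\bfvarphi)\,dx+\int_{\partial\Omega}\beta\bu_{\tau}\cdot\bfvarphi\,ds=0\qquad\text{for all }\bfvarphi\in J(\Omega).
\]

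Finally I would test with $\bfvarphi=\bu\in J(\Omega)$, which gives
\[
\frac{\nu}{2}\int_{\Omega}S(\bu):S(\bu)\,dx+\int_{\partial\Omega}\beta\abs{\bu_{\tau}}^2\,ds=0,
\]
so both nonnegative terms vanish: $S(\bu)=O$ in $\Omega$ and $\sqrt{\beta}\,\bu_{\tau}=\boldsymbol{0}$ on $\partial\Omega$. By Remark~\ref{rem:Korn}(i), $S(\bu)=O$ together with $\bu\in H(\Omega)$ yields $\bu\in H(\Omega)\cap\SR(\Omega)$. If $H(\Omega)\cap\SR(\Omega)=\{\boldsymbol{0}\}$ the conclusion is immediate. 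In the symmetric case $H(\Omega)\cap\SR(\Omega)=\mathrm{Span}\{\bu_{0}\}$ I would split according to the two hypotheses: when $\beta\equiv0$ the assumption $\int_{\Omega}\bu\cdot\bu_{0}\,dx=0$ forces the coefficient of $\bu_{0}$ to vanish; when $\beta\not\equiv0$, the set $\{\beta>0\}\subseteq\partial\Omega$ is open and nonempty, and there $\bu=\bu_{\tau}=\boldsymbol{0}$, whereas the rotation field $\bu_{0}$ vanishes only at the centre of circular symmetry, which does not lie on $\partial\Omega$; hence again the coefficient is zero and $\bu\equiv\boldsymbol{0}$. Then $\nabla p=\nu\Delta\bu=\boldsymbol{0}$, and the normalization $\int_{\Omega}p\,dx=0$ gives $p\equiv0$. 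The main obstacle, and the reason the regularity theory must be brought in first, is exactly this test-function integrability gap for small $q$; once $\bu$ is known to lie in $W^{1,2}(\Omega)$, the argument collapses to the standard $L^2$ energy computation.
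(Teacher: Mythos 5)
Your proof is correct, but it follows a genuinely different and more self-contained route than the paper's. The paper disposes of the lemma in three lines: for $q\ge2$ a $q$-weak solution is automatically a $2$-weak solution (since $W^{1,q}(\Omega)\subseteq W^{1,2}(\Omega)$ and $J(\Omega)\subseteq J_{q'}(\Omega)$), so the conclusion follows from the uniqueness statement in Theorem \ref{thm:exist_unique_S}; for $1<q<2$ it bootstraps, via the first part of Theorem \ref{thm:grS} and the Sobolev embedding, to $\bu\in W^{1,2}(\Omega)$, $p\in L^2(\Omega)$, and then cites Theorem \ref{thm:exist_unique_S} again. You instead upgrade regularity uniformly in $q$, pass to the strong form of the equations and of the slip condition, re-derive the $L^2$ energy identity, and carry out the rigid-displacement case analysis by hand --- in effect you inline the uniqueness proof of Theorem \ref{thm:exist_unique_S} rather than invoking it. What your route buys: it treats explicitly the test-function gap for $1<q<2$ (the $q$-weak identity is only available for $\bfvarphi\in J_{q'}(\Omega)\subsetneq J(\Omega)$, and your passage through the strong form legitimizes testing with $\bfvarphi=\bu\in J(\Omega)$ without appealing to density of $J_{q'}(\Omega)$ in $J(\Omega)$), a point the paper's reference to \cite{MR2808162} glosses over; and your remark that Steps 1 and 2 of Theorem \ref{thm:grS} do not use the present lemma correctly rules out circularity. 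What it costs: it is longer, and for $q\ge2$ the regularity upgrade is superfluous. Two spots could be tightened: in the case $\beta\not\equiv0$ you can avoid the geometric discussion entirely by citing the Korn inequality \eqref{ineq:Korn} with $\alpha=\beta$, since $S(\bu)=O$ and $\sqrt{\beta}\,\bu_{\tau}=\boldsymbol{0}$ then give $\norm{\bu}_{W^{1,2}(\Omega)}=0$ at once; and if you keep the geometric argument, the assertion that the centre of circular symmetry cannot lie on $\partial\Omega$ deserves a one-line justification (if it did, rotational invariance of $\partial\Omega$ would force $\partial\Omega$ to contain a family of concentric circles accumulating at the centre, contradicting that $\partial\Omega$ is a finite union of disjoint smooth closed curves).
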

\begin{rema} Remark \ref{rem:Unique_S} equally applies to Lemma \ref{lem:unique_q}.
\end{rema}
\begin{proof}[Proof of Lemma $\ref{lem:unique_q}$]If $q\ge2$, the $q$-weak solution $\bu$ is a (2-)weak solution. Thus, the result follows from the uniqueness of a weak solution; see Theorem \ref{thm:exist_unique_S}. If $1<q<2$, by using an iterative argument based on the first part of Theorem \ref{thm:grS} and the Sobolev embedding theorem, we can show that $\bu\in W^{1,2}(\Omega)$ and $p\in L^2(\Omega)$; see \cite{MR2808162}*{pp.\,280-281} for details. Again, Theorem \ref{thm:exist_unique_S} implies the desired result.
\end{proof}

\noindent
{\bf Acknowledgements.} We would like to express our deepest gratitude to Mikhail V. Korobkov for providing us with Example \ref{exam:Hamel} and for his valuable comments on the proof of Lemma \ref{lem:keyestimate} and detailed explanation of the argument used in \cite{MR3275850}. We would also like to thank Matthias K\"ohne for fruitful discussions on the proof of Theorem \ref{thm:Amick_symmetric} while T. Yamamoto was visiting Heinrich-Heine-Uni\-ver\-sit\"at D\"usseldorf in March 2023. Finally, we would like to thank the referees for their valuable comments and suggestions.

The work of G.P. Galdi is partially supported by the National Science Foundation Grant DMS--2307811. The work of T. Yamamoto was supported by Grant-in-Aid for JSPS Research Fellow, Grant Number JP22KJ2953.

\bigskip
\noindent
{\bf Data Availability Statement} Data sharing not applicable to this article as no datasets were generated or analysed during the current study.
\bigskip

\noindent
{\bf Declarations}

\noindent
{\bf Conflict of interest}
The authors state that there is no conflict of interest.


\end{document}